\newcommand{\rrvert}{\vert}
\newcommand{\llvert}{\vert}
\newtheorem{theorem}{Theorem}[section]
\newtheorem{lemma}[theorem]{Lemma}
\newtheorem{proposition}[theorem]{Proposition}
\newtheorem{corollary}[theorem]{Corollary}
\begin{document}
\begin{frontmatter}

\title{From duality to determinants for \lowercase{\textit{q}}-TASEP and ASEP}
\runtitle{From duality to determinants for \lowercase{\textit{q}}-TASEP and ASEP}

\begin{aug}
\author[A]{\fnms{Alexei} \snm{Borodin}\ead[label=e1]{borodin@math.mit.edu}\thanksref{T1}},
\author[B]{\fnms{Ivan} \snm{Corwin}\corref{}\ead[label=e2]{ivan.corwin@gmail.com}\thanksref{T2}}
\and
\author[C]{\fnms{Tomohiro} \snm{Sasamoto}\ead[label=e3]{sasamoto@math.s.chiba-u.ac.jp}\thanksref{T3}}
\runauthor{A. Borodin, I. Corwin and T. Sasamoto}
\affiliation{Massachusetts Institute of Technology and Institute for Information Transmission Problems,
Columbia University, Clay Mathematics\\ Institute and Massachusetts Institute of Technology,
and Chiba University}
\address[A]{A. Borodin\\
Department of Mathematics\\
Massachusetts Institute of Technology\\
77 Massachusetts Avenue\\
Cambridge, Massachusetts 02139-4307\\
USA\\
and\\
Institute for Information\\
\quad Transmission Problems\\
Bolshoy Karetny per. 19\\
Moscow 127994\\
Russia\\
\printead{e1}} 
\address[B]{I. Corwin\\
Department of Mathematics\\
Columbia University\\
2990 Broadway\\
New York, New York 10027\\
USA\\
and\\
Clay Mathematics Institute\\
10 Memorial Blvd. Suite 902\\
Providence, Rhode Island 02903\\
USA\\
and\\
Department of Mathematics\\
Massachusetts Institute of Technology\\
77 Massachusetts Avenue\\
Cambridge, Massachusetts 02139-4307\\
USA\\
\printead{e2}}
\address[C]{T. Sasamoto\\
Department of Mathematics\\
Chiba University\\
1-33 Yayoi-cho, Inage, Chiba, 263-8522\\
Japan\\
\printead{e3}}
\end{aug}
\thankstext{T1}{Supported in part by NSF Grant DMS-10-56390.}
\thankstext{T2}{Supported in part by NSF PIRE Grant OISE-07-30136 and DMS-12-08998 as well as by
Microsoft Research through the Schramm Memorial Fellowship, and by the Clay Mathematics Institute.}
\thankstext{T3}{Supported by KAKENHI (22740054).}

\received{\smonth{8} \syear{2012}}
\revised{\smonth{3} \syear{2013}}

%
\begin{abstract}
We prove duality relations for two interacting particle systems: the
\mbox{$q$-}deformed totally asymmetric simple exclusion process (\mbox{$q$-}TASEP)
and the asymmetric simple exclusion process (ASEP). Expectations of the
duality functionals correspond to certain joint moments of particle
locations or integrated currents, respectively. Duality implies that
they solve systems of ODEs. These systems are integrable and for
particular step and half-stationary initial data we use a nested
contour integral ansatz to provide explicit formulas for the systems'
solutions, and hence also the moments.

We form Laplace transform-like generating functions of these moments
and via residue calculus we compute two different types of Fredholm
determinant formulas for such generating functions. For ASEP, the first
type of formula is new and readily lends itself to asymptotic analysis
(as necessary to reprove GUE Tracy--Widom distribution fluctuations for
ASEP), while the second type of formula is recognizable as closely
related to Tracy and Widom's ASEP formula
[\textit{Comm. Math. Phys.} \textbf{279} (2008) 815--844,
\textit{J. Stat. Phys.} \textbf{132} (2008) 291--300,
\textit{Comm. Math. Phys.} \textbf{290} (2009) 129--154,
\textit{J. Stat. Phys.} \textbf{140} (2010) 619--634].
For \mbox{$q$-}TASEP, both formulas coincide with those computed via Borodin and
Corwin's Macdonald processes [\textit{Probab. Theory Related Fields} (2014) \textbf{158}
225--400].

Both \mbox{$q$-}TASEP and ASEP have limit transitions to the free energy of
the continuum directed polymer, the logarithm of the solution of the
stochastic heat equation or the Hopf--Cole solution to the
Kardar--Parisi--Zhang equation. Thus, \mbox{$q$-}TASEP and ASEP are integrable
discretizations of these continuum objects; the systems of ODEs
associated to their dualities are deformed discrete quantum delta Bose
gases; and the procedure through which we pass from expectations of
their duality functionals to characterizing generating functions is a
rigorous version of the replica trick in physics.
\end{abstract}

%
\begin{keyword}[class=AMS]
\kwd{82C22}
\kwd{82B23}
\kwd{60H15}
\end{keyword}
\begin{keyword}
\kwd{Interacting particle systems}
\kwd{Kardar--Parisi--Zhang universality class}
\kwd{Markov duality}
\kwd{asymmetric simple exclusion process}
\end{keyword}

\end{frontmatter}

\mbox{}

\tableofcontents[level=1]

\setcounter{footnote}{3}
\section{Introduction}\label{sec1}
One-dimensional driven diffusive systems play an important role in both
physics and mathematics (see, e.g., \cite{Lig,Spohn,CorwinReview}). As
physical models they are used to study mass transport, traffic flow,
queueing behavior, driven lattice gases, and turbulence. Their
integrated current defines height functions which model one-dimensional
interface growth. In certain cases, they can be mapped into models for
directed polymers in random media and propagation of mass in a
disordered environment. The particle systems provide efficient means to
implement simulations of these various types of systems and, in some
rare cases, yield themselves to exact and rigorous mathematical analysis.

This article is concerned with two interacting particle systems---\mbox{$q$-}TASEP with general particle jump rate parameters, and ASEP with
general bond jump rate parameters---which contain rich mathematical
structure. Presently, we seek to shed light on structure which exists
in parallel for both of these systems. We demonstrate duality relations
(see Definition~\ref{dualdef}) for both of these systems directly from
their Markovian dynamics: \mbox{$q$-}TASEP is dual to a totally asymmetric
zero range process TAZRP (Theorem~\ref{thmqtasepduality}) whereas
ASEP is self-dual (Theorems~\ref{thmASEPdualitytilde} and~\ref{thmASEPduality}). A consequence of duality is that expectations of a
large class of natural observables of these systems evolve according to
systems of ODEs.

For \mbox{$q$-}TASEP, the duality result is, to our knowledge, new. When all
particle jump rate parameters are equal, dynamics of \mbox{$q$-}TASEP can be
encoded via a quantum integrable system in terms of $q$-Bosons \cite
{BogIzerKit,SasWad}. For ASEP with all bond jump rate parameters equal,
the ASEP self-duality was observed by Sch\"{u}tz \cite{Schutz} (see
Remark~\ref{4.4}) via a spin chain representation of ASEP (which is
related to the XXZ~model---a well-studied quantum integrable system).
Our results apply for general rates and proceed directly via the Markov
dynamics.

The most surprising observation of this article is that, for certain
initial data called \textit{step} and \textit{half stationary}, we are able
to explicitly solve the systems of ODEs for \mbox{$q$-}TASEP and ASEP in terms
of simple nested-contour integrals. For \mbox{$q$-}TASEP, this works for the
full generality of particle jump rate parameters, whereas for ASEP we
must assume all bond jump rate parameters to be equal at this stage and
henceforth. For \mbox{$q$-}TASEP, the integral representations of the solution
can also be obtained via the formalism of Macdonald processes \cite
{BorCor,BorCorGorShak}, while for ASEP we were guided by analogy and
results of \cite{IS}.

Let us state the simplest versions of these formulas, focusing just on
step initial data in which initially half of the lattice is entirely
empty and the other half entirely full (see Definitions~\ref
{qTASEPstepdef} and~\ref{somedefs}). We also informally introduce the
dynamics of~\mbox{$q$-}TASEP and ASEP.

The \mbox{$q$-}TASEP is a continuous time Markov process $\vec{x}(t)$.
Particles occupy sites of $\mathbb{Z}$ and the location of particle
$i$ at
time $t$ is written as $x_i(t)$ and particles are ordered so that
$x_i(t)>x_j(t)$ for $i<j$. The rate at which the value of $x_i(t)$
increase by one (i.e., the particle jumps right by one) is $a
_i(1-q^{x_{i-1}(t)-x_i(t)-1})$; all jumps occur independently of each
other according to exponential clocks. Here, $q\in[0,1)$ represents
the strength of the repulsion particle $x_i$ feels from particle
$x_{i-1}$. For the purpose of this introduction, we restrict to
$a_i\equiv1$ and consider only step initial data where
particles start at every negative integer location and nowhere else
[i.e., for $i\geq1$, $x_i(0) = -i$]. The following result appears as
Corollary~\ref{cor:qformulas}.
%
\begin{theorem}\label{thm11}
Consider \mbox{$q$-}TASEP with step initial data and particle jump rate
parameters $a_i\equiv1$. Then for any $k\geq1$ and $n_1\geq
n_2\geq\cdots\geq n_k> 0$,
\begin{eqnarray*}
\hspace*{-4.5pt} && \mathbb{E} \Biggl[\prod_{j=1}^{k}
q^{x_{n_j}(t)+n_j} \Biggr]
\\
\hspace*{-4.5pt} &&\!\qquad = \frac{(-1)^k
q^{k(k-1)/2}}{(2\pi\iota)^k} \int\cdots\int\prod
_{1\leq
A<B\leq k} \frac{z_A-z_B}{z_A-qz_B} \prod_{j=1}^{k}
(1-z_j)^{-n_j} e^{(q-1)tz_j} \frac{dz_j}{z_j},
\end{eqnarray*}
where the integration contour for $z_A$ contains $\{qz_B\}_{B>A}$ and 1
but not 0.
\end{theorem}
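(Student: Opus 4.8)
The plan is to prove the identity in three stages: first use the $q$-TASEP--TAZRP duality (Theorem~\ref{thm:qtasepduality}) to characterize the left-hand side as the unique solution of a linear ODE system with constant initial data; then rewrite that system in the ``free evolution plus two-body boundary condition'' form familiar from the coordinate Bethe ansatz; and finally check that the nested contour integral on the right solves the free equation, satisfies the boundary condition, and matches the initial data, so that uniqueness closes the argument.

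Write $h(\vec n;t)=\EE\big[\prod_{j=1}^{k}q^{x_{n_j}(t)+n_j}\big]$ for $\vec n=(n_1\ge\cdots\ge n_k>0)$. Applying the $q$-TASEP generator to the duality functional and invoking Theorem~\ref{thm:qtasepduality} gives a closed equation $\tfrac{d}{dt}h(\vec n;t)=(\mathcal{L}h(\cdot;t))(\vec n)$, where $\mathcal{L}$ is the (conjugated) generator of the dual zero-range process acting in the $\vec n$ variable; under step initial data $x_{n_j}(0)=-n_j$, hence $h(\vec n;0)\equiv 1$. Since a zero-range particle leaving $\vec n$ only decreases the level $N=\sum_j n_j$ by one, the system is block lower-triangular in $N$ and each block (partitions of $N$ into at most $k$ parts) is finite-dimensional, so the initial value problem has a unique solution and it suffices to exhibit one.

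When the $n_j$ are pairwise distinct, $\mathcal{L}$ acts as the free operator $(1-q)\sum_j(T_j^--1)$ with $T_j^-$ decrementing $n_j$, and the standard reduction says that a function on all of $\Z^k$ solving the free equation and obeying, for each $i$ and all $m$, the two-body boundary condition $q\,u(\dots,m,m-1,\dots)-u(\dots,m-1,m,\dots)+(1-q)\,u(\dots,m,m,\dots)=0$, with the written arguments occupying positions $i$ and $i+1$, restricts on the Weyl chamber to a solution of $\tfrac{d}{dt}u=\mathcal{L}u$. Let $\bar h(\vec n;t)$ be the right-hand contour integral, defined by the same formula for all $\vec n\in\Z^k$. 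Differentiating under the integral and using $\tfrac{\partial}{\partial t}(1-z)^{-n}e^{(q-1)tz}=(q-1)z(1-z)^{-n}e^{(q-1)tz}$ together with $z(1-z)^{-n}=(1-z)^{-n}-(1-z)^{-(n-1)}$ shows $\bar h$ solves the free equation. For the boundary condition, form the combination $q\bar h(\dots,m,m-1,\dots)-\bar h(\dots,m-1,m,\dots)+(1-q)\bar h(\dots,m,m,\dots)$: in the integrand the common factor $(1-z_i)^{-m}(1-z_{i+1})^{-m}$ multiplies $q(1-z_{i+1})-(1-z_i)+(1-q)=z_i-qz_{i+1}$, which exactly cancels the pole of the cross term $\tfrac{z_i-z_{i+1}}{z_i-qz_{i+1}}$; with that pole gone the $z_i$-contour may be deformed onto the $z_{i+1}$-contour, after which the integrand is antisymmetric under $z_i\leftrightarrow z_{i+1}$ (the surviving cross factors $\tfrac{z_A-z_i}{z_A-qz_i}$, $\tfrac{z_A-z_{i+1}}{z_A-qz_{i+1}}$ and their analogues for $B>i+1$ pair up), so the integral vanishes. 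Finally $\bar h(\vec n;0)=1$ is verified by induction on $k$: integrating out $z_k$ (whose contour encircles only $1$) and peeling off variables one at a time collapses the integral to $1$, the base case being $-\tfrac1{2\pi\iota}\oint(1-z)^{-n}\tfrac{dz}{z}=1$ from the residue at $z=1$. Since $\bar h|_{\mathrm{Weyl}}$ and $h$ solve the same initial value problem, uniqueness yields $h=\bar h|_{\mathrm{Weyl}}$.

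The main obstacle is the contour bookkeeping in the boundary-condition step: after the $z_i=qz_{i+1}$ pole is cancelled, one must verify that collapsing the $z_i$-contour onto the $z_{i+1}$-contour crosses none of the remaining singularities $z_i=1$, $z_i=qz_B$ ($B>i+1$), $z_i=z_A/q$ ($A<i$), which requires a careful description of the nested contours. The $t=0$ residue evaluation is the secondary technical point, since the successive contour collapses may enclose poles at $z_a=z_b/q$ that have to be tracked through the induction.
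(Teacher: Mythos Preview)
Your approach is essentially the paper's: duality reduces the left-hand side to the unique solution of the system in Proposition~\ref{prop:systemsODEqTASEP}~(B), and one then checks that the nested contour integral satisfies the free evolution, the two-body boundary condition (your relation $q\,u(\dots,m,m-1,\dots)-u(\dots,m-1,m,\dots)+(1-q)\,u(\dots,m,m,\dots)=0$ is exactly $\nabla_i u=q\nabla_{i+1}u$ rewritten), and the initial data. Your checks of the free equation and of the boundary condition are correct and match the paper's; the contour deformation you worry about is harmless, since shrinking the $z_i$-contour onto the $z_{i+1}$-contour crosses no poles (the points $qz_B$ with $B>i+1$ and $1$ are inside both, while $0$ and $z_A/q$ for $A<i$ are outside both).

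The one place your write-up diverges from the paper and runs into trouble is the $t=0$ check. Peeling from the \emph{innermost} variable $z_k$ by taking the residue at $z_k=1$ does not reduce the integral to its $(k-1)$-variable analogue: for $n_k>1$ that pole has order $n_k$, and even for $n_k=1$ the residue deposits factors $\prod_{A<k}\tfrac{z_A-1}{z_A-q}$ into the remaining integrand, which spoils the induction. The paper instead peels from the \emph{outermost} variable: expand the $z_1$-contour to infinity (there is no pole at $\infty$ because $(1-z_1)^{-n_1}/z_1$ gives at least $z_1^{-2}$ decay), so the integral equals $-\mathrm{Res}_{z_1=0}$, and that residue is simply $\prod_{B>1}\tfrac{-z_B}{-qz_B}=q^{-(k-1)}$. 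This exactly cancels the change in the prefactor $(-1)^k q^{k(k-1)/2}\to(-1)^{k-1}q^{(k-1)(k-2)/2}$ and leaves the $(k-1)$-variable integral untouched, so the induction closes cleanly. Replace your $z_k$-peeling by this $z_1$-peeling and the argument is complete.
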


The ASEP (occupation process) is a continuous time Markov process $\eta
(t)=\{\eta_x(t)\}_{x\in\mathbb{Z}}$. The $\eta_x(t)$ are called occupation
variables and are 1 or 0 based on whether there is a particle or hole
at $x$ at time $t$. The dynamics of this process is specified by
nonnegative real numbers $p\leq q$ (normalized by $p+q=1$) and
uniformly bounded (from infinity and zero) rate parameters $\{a_{x}\}
_{x\in\mathbb{Z}}$. For each pair of neighboring sites $(y,y+1)$, the
following exchanges happen in continuous time:
\begin{eqnarray*}
\eta&\mapsto&\eta^{y,y+1}\qquad\mbox{at rate } a_{y}p\qquad\mbox{if }(\eta_y,\eta_{y+1})=(1,0),
\\
\eta&\mapsto&\eta^{y,y+1}\qquad\mbox{at rate } a_{y}q\qquad\mbox{if }(\eta_y,\eta_{y+1})=(0,1),
\end{eqnarray*}
where $\eta^{y,y+1}$ denotes the state in which the value of the
occupation variables at site $y$ and $y+1$ are switched, and all other
variables remain unchanged. All exchanges occur independently of each
other according to exponential clocks. For the purpose of this
introduction, we restrict to $a_x\equiv1$ and consider only step
initial data\footnote{Observe that the step initial data for \mbox{$q$-}TASEP
involves particles to the left of the origin, whereas for ASEP it
involves particles to the right of the origin. We decided to keep these
conventions to be consistent with previous works on the subject.} where
$\eta_x(0) = \mathbf{1}_{x\geq1}$. Assume $0<p<q$ and let $\tau
=p/q<1$. Finally, let $N_x(t) = \sum_{y\leq x} \eta_y(t)$ record the
number of particles to the left of position $x+1$ at time $t$.

The following result on ASEP appears as Theorem~\ref{qmomInt}.
%
\begin{theorem}\label{thm12}
Consider ASEP with step initial data and all bond rate parameters
$a_{x}\equiv1$. Then for all $n\geq1$ and $x\in\mathbb{Z}$,
\begin{eqnarray*}
\mathbb{E} \bigl[ \tau^{n N_x(t)} \bigr] &=& \tau^{n(n-1)/2}
\frac{1}{(2\pi\iota)^n}
\\
&&{}\times  \int\cdots\int \prod_{1\leq A<B\leq n}
\frac{z_A-z_B}{z_A-\tau z_B}
\\
&&\hspace*{50pt}{}\times \prod_{i=1}^n \exp
\biggl[ -\frac{z_i(p-q)^2}{(z_i+1)(p+qz_i)}t \biggr] \biggl(\frac{1+z_i}{1+z_i/\tau}
\biggr)^{x} \frac{dz_i}{z_i},
\end{eqnarray*}
where the integration contour for $z_A$ includes $0,-\tau$ but does not
include $-1$, or $\{\tau z_B\}_{B>A}$ (see Figure~\ref{ASEPnestedcontours} for an illustration of such contours).
\end{theorem}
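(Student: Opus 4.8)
The plan is to derive the formula from the self-duality of ASEP together with a nested contour integral ansatz. By the self-duality results (Theorems~\ref{thm:ASEPdualitytilde} and~\ref{thm:ASEPduality}), for step initial data the multi-point function
\[
\bar H(t;\vec x):=\EE\!\left[\prod_{i=1}^{n}\tau^{N_{x_i}(t)}\right],\qquad \vec x=(x_1\ge x_2\ge\cdots\ge x_n)\in\Z^{n},
\]
evolves according to a closed system of ODEs governed by the $n$-particle dual generator; specializing $x_1=\cdots=x_n=x$ recovers $\EE[\tau^{nN_x(t)}]$, so it suffices to identify $\bar H$. I would first recast the dual dynamics as the \emph{true evolution equation}: (i) a free $n$-body evolution $\tfrac{d}{dt}u(t;\vec x)=\sum_{i=1}^{n}\mathcal A_i\,u(t;\vec x)$ on all of $\Z^{n}$, with $\mathcal A_i=q\nabla_i^{+}+p\nabla_i^{-}-1$ and $\nabla_i^{\pm}$ shifting $x_i$ by $\pm1$; (ii) a two-body boundary condition imposed whenever $x_i=x_{i+1}$, namely the identity which makes the restriction of $\sum_i\mathcal A_i$ to the region of distinct coordinates agree with the genuine dual generator (this identity is precisely where $p+q=1$ and $\tau=p/q$ enter, as one already sees in the one-variable case of the duality computation); and (iii) the initial condition $u(0;\vec x)=\prod_i\tau^{N_{x_i}(0)}=\prod_i\tau^{(x_i)_{+}}$. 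The operator $\mathcal A$ is chosen so that $\big(\tfrac{1+z}{1+z/\tau}\big)^{x}$ is an eigenfunction with eigenvalue $-\tfrac{z(p-q)^2}{(z+1)(p+qz)}$, the very exponent in the statement; this is a one-line computation.

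Second, I would establish a uniqueness statement: the true evolution equation has at most one solution that is bounded in $\vec x$, uniformly on compact time intervals. Since $0\le\tau^{nN_x(t)}\le1$ (using $\tau<1$), $\bar H$ lies in this class. Uniqueness follows by folding the boundary condition: a bounded solution of (i)--(ii) restricts to a solution of $\dot v=\mathcal L^{(n)}v$ on the $n$-particle configuration space, where $\mathcal L^{(n)}$ is the dual generator, a \emph{bounded} operator on $\ell^{\infty}$ since the total jump rate of $n$ dual particles is at most $n$; hence the solution is determined by its value at $t=0$ by the uniqueness theorem for linear ODEs in a Banach space.

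Third, I would verify that the right-hand side of the theorem --- more precisely its natural multi-point version, with $x$ replaced by $x_i$ in the $i$-th factor and integrated over admissible nested contours as in Figure~\ref{ASEPnestedcontours} --- solves the true evolution equation. The free equation (i) is immediate: the contours are fixed, so one differentiates under the integral sign, and by construction each $z_i$-factor is an $\mathcal A_i$-eigenfunction with the correct eigenvalue. The initial condition (iii) is checked by an iterated residue expansion at $t=0$: since the contour for $z_A$ excludes $-1$, a coordinate with $x_i\le0$ contributes only the residue at $z_i=0$ (where $\tfrac{1+z_i}{1+z_i/\tau}=1$), while a coordinate with $x_i\ge1$ contributes residues at $z_i=0$ and $z_i=-\tau$ which combine to $\tau^{x_i}$; the nesting then organizes the full $n$-fold residue expansion into $\prod_i\tau^{(x_i)_{+}}$, with the prefactor $\tau^{n(n-1)/2}$ produced by the residues of the cross terms $\tfrac{z_A-z_B}{z_A-\tau z_B}$ at coalescing variables. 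The one-variable identity $\tfrac{1}{2\pi\iota}\oint\big(\tfrac{1+z}{1+z/\tau}\big)^{x}\tfrac{dz}{z}=\tau^{x_{+}}$, for a contour around $0$ and $-\tau$ but not $-1$, is the model computation.

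The main obstacle is the two-body boundary condition (ii). Here one applies the boundary operator in the pair $(x_i,x_{i+1})$, evaluated on the diagonal $x_i=x_{i+1}$, to the integrand, and must show the result integrates to zero. The mechanism is the standard Tracy--Widom one: the factor $\prod_{A<B}\tfrac{z_A-z_B}{z_A-\tau z_B}$ is engineered so that, after interchanging $z_i\leftrightarrow z_{i+1}$, the obstruction created by the boundary operator is antisymmetrized away; but because the contours are nested this interchange is not free --- it generates a residue at $z_i=\tau z_{i+1}$ --- and the delicate point is to check that this residue contribution vanishes too, which is exactly what the hypothesis that $z_i$'s contour does not enclose $\tau z_{i+1}$ secures. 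Executing this contour deformation and residue cancellation carefully is the technical heart of the argument; once it is in place, together with the (routine but lengthy) residue bookkeeping for the initial condition and the soft uniqueness argument, the theorem follows.
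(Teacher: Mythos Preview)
Your strategy has a genuine gap at the crucial step. The duality of Theorem~\ref{thm:ASEPduality} and the free evolution equation of Proposition~\ref{prop:asepeqns}~(B) only identify the contour integral with $\bar H(t;\vec x)=\EE[\prod_i Q_{x_i}(\eta(t))]$ on the \emph{physical} region $\WASEP{n}=\{x_1<x_2<\cdots<x_n\}$: your own uniqueness argument works by restricting to that region and invoking the bounded dual generator there. The point $x_1=\cdots=x_n=x$ lies outside $\WASEP{n}$, so nothing you have written shows that the contour integral at the diagonal equals $\EE[\tau^{nN_x(t)}]$. In fact $\bar H$ does \emph{not} satisfy the free evolution (B.1) on the diagonal: already for $n=2$ one computes $\Locc\tau^{2N_x}$ and $2\big[pQ_{x-1}Q_x+qQ_xQ_{x+1}-Q_x^2\big]$ and finds they differ (e.g.\ on $(\eta_x,\eta_{x+1})=(1,0)$ the first gives $p(\tau^{-2}-1)\tau^{2N_x}$, the second $2p(\tau^{-1}-1)\tau^{2N_x}$). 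So the two functions you are trying to match do not solve the same equation at the point where you need them to agree. This is precisely the obstruction the paper flags at the start of Section~4.4: ``Even if we were to solve the system of equations in Proposition~\ref{prop:asepeqns}~(B), this would not suffice since $\vec x$ is restricted to lie in $\WASEP{k}$ \ldots\ The extension of that solution outside $\WASEP{k}$ does not have any necessary meaning as an expectation.''

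The paper's route around this is quite different from yours. It works with the $\tilde H$-duality (Theorem~\ref{thm:ASEPdualitytilde}) to obtain $\EE[\tilde Q_{x_1}\cdots\tilde Q_{x_k}]$ for strictly ordered $x_1<\cdots<x_k$ (Corollary~\ref{QtildeCor}), with all integration contours equal to a small circle $C_{-\tau}$ (so the boundary-condition check is a clean antisymmetry, no nested-contour residues). The diagonal moment is then reached via the combinatorial identity of Lemma~\ref{qmomQtilde},
\[
(Q_x)^n=\sum_{k=0}^{n}{n\choose k}_{\tau}(\tau;\tau)_k(-1)^k\sum_{x_1<\cdots<x_k\le x}\tilde Q_{x_1}\cdots\tilde Q_{x_k},
\]
which expresses $\EE[\tau^{nN_x}]$ entirely through expectations on the physical region. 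The geometric sum over $x_1<\cdots<x_k\le x$ is performed inside the integral (Lemma~\ref{QtildeIntlemma}), and finally Lemma~\ref{lem:ASEPmu} shows that the resulting sum over $k$ is exactly what one obtains by splitting each nested contour in the theorem's statement into its pieces around $0$ and around $-\tau$. In short, the nested contours around $\{0,-\tau\}$ in Theorem~\ref{thm12} are not there to encode a multi-point $Q$-function evaluated on a diagonal; they encode the binomial sum of Lemma~\ref{qmomQtilde}, and that is the missing idea in your approach.
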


These expectations contain sufficient information to uniquely
characterize the distribution of the location of a given collection of
particles (after the system has evolved for some time) in each of these
systems. Focusing on a single $x_n(t)$ or $N_x(t)$ distribution, we can
concisely characterize this via generating functions of suitable
expectations. There are two types of generating functions we consider---both related to \mbox{$q$-}deformed (or for ASEP $\tau$-deformed) Laplace
transform introduced by Hahn \cite{Hahn} in 1949.

These generating functions are naturally suggested from the nested
structure of the contour integral formulas for these expectations.
There are two ways to deform the nested contour integrals so all
contours coincide. Accounting for the residues encountered during these
deformations, we are led to two types of formulas for expectations:
those involving partition-indexed sums of contour integrals and those
involving sums of contour integrals indexed by natural numbers.

Using the partition-indexed formulas, we prove that the first
generating function is equal to a Fredholm determinant which we call
\textit{Mellin--Barnes} type. The following result is contained in Theorem
\ref{ASEPMellinBarnesThm}.

%
\begin{theorem}\label{thm13}
Consider ASEP with step initial data and all bond rate parameters
$a_{x}\equiv1$. Then for all $x\in\mathbb{Z}$ and $\zeta\in\mathbb
{C}\setminus
\mathbb{R}_{+}$,
\[
\mathbb{E} \biggl[\frac{1}{(\zeta\tau^{N_x(t)};\tau)_{\infty
}} \biggr] = \det\bigl(I+K^{\mathrm{ASEP}}_{\zeta}
\bigr),
\]
where $(a;\tau)_{\infty} = (1-a)(1-\tau a)\cdots,$ and where the
$L^2$ space on which $K^{\mathrm{ASEP}}_{\zeta}$ acts can be found in the
statement of Theorem~\ref{ASEPMellinBarnesThm}. The operator $K_{\zeta
}$ is defined in terms of its integral kernel
\[
K^{\mathrm{ASEP}}_{\zeta}\bigl(w,w'\bigr) =
\frac{1}{2\pi\iota}\int_{D_{R,d}} \Gamma (-s)\Gamma(1+s) (-
\zeta)^s \frac{f_{{w}}({x},{t})}{f_{{\tau^s
w}}({x},{t})} \frac{1}{w'-\tau^s w}\,ds.
\]
(The contour $D_{R,d}$ is specified in the statement of Theorem~\ref
{ASEPMellinBarnesThm}.) The function $f_{{z}}({x},{t})$ is given by
\[
f_{{z}}({x},{t})= \exp \biggl[(q-p)t \frac{\tau}{z+\tau} \biggr]
\biggl(\frac{\tau}{z+\tau} \biggr)^x.
\]
\end{theorem}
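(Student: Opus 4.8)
The plan is to start from the moment formula of Theorem~\ref{thm12}, namely
\[
\EE\left[\tau^{nN_x(t)}\right]
= \tau^{\frac{n(n-1)}{2}}\frac{1}{(2\pi\iota)^n}\int\cdots\int
\prod_{1\le A<B\le n}\frac{z_A-z_B}{z_A-\tau z_B}\prod_{i=1}^n \gASEPnorho{x}{z_i}{t}\,\frac{dz_i}{z_i},
\]
with the nested contours described there (after checking the integrand matches the stated form of $\gASEPnorho{x}{z}{t}$, i.e.\ $\exp[-z(p-q)^2 t/((z+1)(p+qz))]\,((1+z)/(1+z/\tau))^x = \exp[(q-p)t\tau/(z+\tau)]\,(\tau/(z+\tau))^x\cdot$const, a direct change-of-variables identity). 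Then I would form the $\tau$-deformed Laplace transform
\[
\EE\left[\frac{1}{(\zeta\tau^{N_x(t)};\tau)_\infty}\right]
= \sum_{n\ge 0}\frac{\zeta^n}{\qq{n}}\,\EE\left[\tau^{nN_x(t)}\right],
\]
using the $q$-binomial theorem $1/(\zeta x;\tau)_\infty = \sum_n \zeta^n x^n/\qq{n}$ together with the bound on the moments (they should be controlled by $C^n\tau^{n(n-1)/2}$ or similar, since $|N_x(t)|$ grows at most linearly and $\tau<1$) to justify the interchange of sum and expectation and the convergence for $\zeta\in\C\setminus\Rplus$.

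The heart of the argument is the passage from the nested-contour moment formula to a single-contour ``partition-indexed'' sum, and then the summation of that series into a Fredholm determinant. For the first step I would deform all $n$ contours in the formula for $\EE[\tau^{nN_x(t)}]$ to a common contour, picking up residues at the poles $z_A = \tau z_B$ coming from the factors $1/(z_A-\tau z_B)$; organizing the resulting residue structure according to how indices cluster into ``strings'' yields a sum over partitions $\lambda\vdash n$ of contour integrals in the number-of-parts-many free variables, exactly as in the Macdonald-process computations of \cite{BorCor} and as foreshadowed in the introduction. This is the step I expect to be the main obstacle: one must track the combinatorics of the residue subsets carefully, verify that the cross-terms $\prod(z_A-z_B)/(z_A-\tau z_B)$ collapse to the expected product over strings, confirm that no pole at $z=0$ or $z=-1$ is crossed during the deformation (this is where the precise contour prescription ``includes $0,-\tau$ but not $-1$'' is used), and check the resulting string-integrand has the right Gamma-function weights $\tau^{\lambda}$-type prefactors.

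Once the partition sum is in hand, I would recognize it, after multiplying by $\zeta^n/\qq{n}$ and summing over $n$ and over $\lambda$, as the Fredholm-determinant expansion $\det(I+\KASEPMB{\zeta}) = \sum_{L\ge 0}\frac{1}{L!}\int\cdots\int\det[\KASEPMB{\zeta}(w_i,w_j)]_{i,j=1}^L\prod dw_i$. The key algebraic identity here is that the single integral in the definition of $\KASEPMB{\zeta}(w,w')$ over the Mellin--Barnes contour $D_{R,d}$, with integrand $\Gamma(-s)\Gamma(1+s)(-\zeta)^s\,\gASEPnorho{x}{w}{t}/\gASEPnorho{x}{\tau^s w}{t}\cdot(w'-\tau^s w)^{-1}$, reproduces — via the residues of $\Gamma(-s)$ at $s\in\Zgeqzero$, where $\Gamma(-s)\Gamma(1+s)$ has residue $(-1)^{s+1}$ — precisely the $\zeta$-weighted geometric-type series whose $L\times L$ determinant, integrated over $w_1,\dots,w_L$ on the chosen contour, equals the $\lambda$-indexed term with $\lambda$ having $L$ parts. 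This matching is essentially the Mellin--Barnes / residue bookkeeping of \cite{BorCor}, so I would invoke that machinery, being careful only about: (i) the choice of radius $R$ and distance $d$ defining $D_{R,d}$ so that the contour separates the poles of $\Gamma(-s)$ from those of $\Gamma(1+s)$ and from the zeros of $w'-\tau^s w$, and so the double series converges absolutely (giving a trace-class kernel and hence a well-defined Fredholm determinant); and (ii) the identification of the $L^2$ space and contour for $w$, which must be the image under $z\mapsto\tau/(z+\tau)$ — or its inverse — of the common contour produced in the deformation step, small enough to avoid $w=0$ and the singularities of $\gASEPnorho{x}{\cdot}{t}$. With these analytic points settled, equating the two expansions term by term in $\zeta$ completes the proof.
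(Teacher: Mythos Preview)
Your proposal is correct and follows essentially the same route as the paper: start from the nested contour integral moment formula of Theorem~\ref{thm12}, deform all contours to a common one while collecting the string residues at $z_A=\tau z_B$ into a partition-indexed sum (this is Proposition~\ref{ASEPmukprop}, the ASEP analogue of the Macdonald-process residue expansion), then apply Propositions~\ref{gendetprop} and~\ref{prop:mellindet} to sum the series into the Mellin--Barnes Fredholm determinant, and finally use the $\tau$-binomial theorem plus analytic continuation in $\zeta$. Two small corrections: the moment bound you want is simply $\EE[\tau^{nN_x(t)}]\le 1$ (since $N_x(t)\ge 0$ and $\tau<1$), which gives convergence only for small $|\zeta|$ and then requires the analytic-continuation step you gloss over; and the $L^2$ contour for $w$ is just the common contour $\CASEPMB$ in the original $z$-variable (containing $0,-\tau$, excluding $-1$), with no change of variables $z\mapsto\tau/(z+\tau)$ involved.
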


This type of formula lends itself to rigorous asymptotic analysis. For
ASEP, this formula is new and in Appendix~\ref{GUEasym} we sketch how
it can be used to recover Tracy and Widom's celebrated fluctuation
result \cite{TW3} which states that
%
%
\begin{equation}
\label{ASEPlimitthm} \lim_{t\to\infty} \mathbb{P} \biggl(\frac{N_0(t/\gamma) - (t/4)}{t^{1/3}}
\geq-r \biggr) = F_{\mathrm{GUE}}\bigl(2^{4/3} r\bigr).
\end{equation}
Here, $\gamma=q-p$ is assumed to be strictly positive and $F_{\mathrm{GUE}}$ is the GUE Tracy--Widom distribution. The case when $\gamma=1$
($q=1$ and $p=0$) was proved earlier by Johansson \cite{KJ}. Theorem
\ref{thm13} also allows to access (under a certain weakly asymmetric
scaling) the narrow wedge KPZ equation one point formula \cite{ACQ,SaSp1}.

For \mbox{$q$-}TASEP, such a Mellin--Barnes type formula was obtained from the
theory of Macdonald processes \cite{BorCor}, Theorem 4.1.40. It should
be possible to use this Fredholm determinant to prove cube-root GUE
Tracy--Widom fluctuations for the current past the origin in \mbox{$q$-}TASEP.
This has not yet been done, though in an stationary version of the
TAZRP associated to \mbox{$q$-}TASEP gaps, the cube-root fluctuation scale is
shown in \cite{BKS} (via a different approach). In \cite
{BorCor,BorCorFer}, the \mbox{$q$-}TASEP Mellin--Barnes-type Fredholm
determinant formula is used (via a limit transition) to write the
Laplace transform of the O'Connell--Yor semidiscrete polymer partition
function \cite{OY}. Then \cite{BorCor,BorCorFer} perform rigorous
asymptotic analysis to show cube-root GUE Tracy--Widom free energy
fluctuations as well as to provide a second rigorous derivation of the
narrow wedge KPZ equation formula (first rigorously derived in \cite
{ACQ}). From the perspective of asymptotics, this second approach is a
little less involved than that of \cite{ACQ}.

On the other hand, using the formulas of the second type (i.e.,
deforming contours in Theorems~\ref{thm11} and~\ref{thm12}
differently), we prove that the second generating function is equal to
a Fredholm determinant which we call \textit{Cauchy} type. The ASEP
Fredholm determinant Tracy and Widom derived in \cite{TW1,TW2} is also
of this type (and in fact, after inverting the $e_{\tau}$-Laplace
transform we recover the same formula as in \cite{TW1,TW2}).
Asymptotic analysis of this type of determinant is not as
straightforward as the Mellin--Barnes type. In \cite{TW3}, Tracy and
Widom employ a significant amount of post-processing to turn this type
of formula into one for which they could perform asymptotic analysis.
The final formula still involves a complicated term related to the
Ramanujan summation formula (as observed in \cite{SaSp1}). One should
note that while we do recover (among other formulas) the Tracy--Widom
ASEP Fredholm determinant formula, our approach via duality is entirely
different, our contour integral ansatz is not a version of the
coordinate Bethe ansatz and, along the way, we gain access to other
information about ASEP, like joint moment formulas. The Cauchy-type
Fredholm determinant formula for \mbox{$q$-}TASEP was also first derived in
\cite{BorCor} via Macdonald processes.

In short, by utilizing duality for \mbox{$q$-}TASEP and ASEP, we are able to
provide a short and direct route from Markov dynamics to Fredholm
determinant formulas characterizing single particle location or single
integrated current distributions.

Both \mbox{$q$-}TASEP and ASEP are integrable discretizations of the KPZ
equation. As stochastic processes, they converge to the Hopf--Cole
solution to the KPZ equation \cite{BG,ACQ,QRMF}. The systems of ODEs
associated with their duality appear (though no exact results to this
effect have yet been proved) to have limit transitions to the
attractive quantum delta Bose gas which describes the evolution of
joint moments of the stochastic heat equation (whose logarithm is the
KPZ equation and which describes the partition function for the
continuum random polymer).

An advanced version of the popular physics polymer replica trick
attempts to recover the Laplace transform of the one point distribution
of the solution to the stochastic heat equation in terms of its moments
(see Section~\ref{secreptrick}). However, the moments grow far too
quickly to characterize this distribution, and hence drawing
conclusions from them is mathematically unjustifiable and in any case,
risky. Nevertheless, Dotsenko \cite{Dot} and Calabrese, Le Doussal and
Rosso \cite{CDR} were eventually able to use this trick to recover the
exact formulas of \cite{ACQ,SaSp1}.

It was then natural to consider a discrete analog of this replica
approach. The fact that duality gives a useful tool for computing the
moments for ASEP was first noted in \cite{IS}.
By combining this observation with some of the calculational techniques
developed in \cite{BorCor}, in the present paper we provide a unified
and complete scheme to study both \mbox{$q$-}TASEP and ASEP. Given the results
of our work, the nonrigorous replica trick manipulations can be seen as
shadows of the rigorous duality to determinant approach developed
presently. That is to say that by going to a suitable discrete
approximation we are able to rigorously recover analogs of Laplace
transforms from moments and then in the limit transition these converge
to formulas for the stochastic heat equation's Laplace transform. The
replica trick has proved computationally useful (see, e.g.,
\cite{ImSaKPZ}), thus providing additional motivation for the present work.

The limit transition of \mbox{$q$-}TASEP to the O'Connell--Yor semidiscrete
directed polymer \cite{OY} (and associated semidiscrete stochastic
heat equation) is explored in Appendix~\ref{seclimtran}. Under that
limit transition, duality becomes the replica approach and the duality
system of ODEs become a semidiscrete version of the delta Bose gas. The
nested contour integral ansatz provides means to succinctly compute the
solution to the Bose gas. The Fredholm determinants for \mbox{$q$-}TASEP limit
to Fredholm determinants for the Laplace transform of the polymer
partition function.


While a variety of probabilistic systems arise as degenerations of
Macdonald processes, ASEP is not known to be one of them. For ASEP, it
is not known what, if anything, replaces this additional integrable
structure endowed to \mbox{$q$-}TASEP from its connection to symmetric
functions. However, it is compelling that both \mbox{$q$-}TASEP and ASEP have
duality relations and that the associated systems of ODEs can both be
solved via a nested contour integral ansatz. This leads one to ask
whether \mbox{$q$-}TASEP and ASEP can be unified via a theory even higher than
Macdonald processes. Spohn \cite{SpohnStochasticIntegrability} has
coined the term \textit{stochastic integrability} to describe stochastic
processes which display a great deal of integrable structure. Perhaps,
so as to avoid confusion with stochastic integrals, a more appropriate
name for the present area of study is \textit{integrable sotchastich
particle systems}. Both \mbox{$q$-}TASEP and ASEP are clear examples of such
systems and the contributions of this work provide an additional layer
to that integrability. An upcoming work \cite{BCdiscrete} introduces
two discrete time variants of \mbox{$q$-}TASEP and shows how the methods and
ideas of the present paper extend to the study of these systems as well.

\subsection{Outline}
The paper is organized as follows. In Section~\ref{sec2}, we prove
duality for \mbox{$q$-}TASEP and explicitly solve the associated systems of
ODEs via a nested contour integral ansatz. In Section~\ref{sec3}, we
provide a general scheme to go from such nested contour integral
formulas to two types of Fredholm determinants and in Section~\ref
{secqtasepapp} we apply this to \mbox{$q$-}TASEP in order to prove Theorem
\ref{thm11}. In Section~\ref{sec4}, we prove duality and nested
contour integral formulas for ASEP. In Section~\ref{sec5}, we explain
the passage from Theorems~\ref{thm12}--\ref{thm13}.
Appendix~\ref{seclimtran} deals with a degeneration of \mbox{$q$-}TASEP to a
semidiscrete directed polymer. Appendix~\ref{sec7} collects necessary
combinatorial facts. Appendix~\ref{appenduniq} proves a uniqueness
result for the system of ODEs associated with ASEP duality.
Appendix~\ref{GUEasym} provides critical point analysis of the Fredholm
determinant in Theorem~\ref{thm13}, as necessary to obtain (\ref
{ASEPlimitthm}).

\subsection{Notations}
We fix a few notations used throughout this paper. The imaginary unit
$\iota= \sqrt{-1}$. The indicator function of an event $E$ is denoted
by either $\delta_{E}$ or $\mathbf{1}_{E}$. We write $a_x\equiv1$ if
$a_x=1$ for all $x$. All contours we consider are simple, smooth,
closed and counterclockwise oriented (unless otherwise specified). For
a contour $C$, we write $\alpha C$ as the dilation of $C$ by a factor
of $\alpha>0$. When we write that the integration contour for $z_A$
contains $\{qz_B\}_{B>A}$, we mean that the contour contains the image
of the $z_B$ contour dilated by $q$. Containment is strict so that if
$C$ contains a point $\alpha$, then $C$ separates $\alpha$ from
infinity and the distance from $C$ to~$\alpha$ is strictly positive.

\section{Duality and the nested contour integral ansatz for $q$-TASEP}\label{sec2}
The \mbox{$q$-}deformed totally asymmetric simple exclusion process
(\mbox{$q$-}TASEP) is a continuous time, discrete space interacting particle
system $\vec{x}(t)$. Particles occupy sites of $\mathbb{Z}$ and the location
of particle $i$ at time $t$ is written as $x_i(t)$ and particles are
ordered so that $x_i(t)>x_j(t)$ for $i<j$. The rate at which the value
of $x_i(t)$ increase by one (i.e., the particle jumps right by one) is
$a_i(1-q^{x_{i-1}(t)-x_i(t)-1})$; all jumps occur independently
of each other according to exponential clocks. Here, $q\in[0,1)$,
$a_i>0$ is particle $i$'s jump rate parameter,
$x_{i-1}(t)-x_i(t)-1$ is the number of empty sites to its right (before
particle $x_{i-1}$) and all jumps occur independently of each other
(see left-hand side of Figure~\ref{qtasepfig}). We
will use $\mathbb{E}^x$ and $\mathbb{P}^x$ to denote expectation and
probability
(resp.) of the Markov dynamics with initial data $x$. When the
initial data is itself random, we write $\mathbb{E}$~and~$\mathbb{P}$
to denote
expectation and probability (resp.) of the Markov dynamics as
well as the initial data. We also use $\mathbb{E}$ and $\mathbb{P}$
when the
initial data is otherwise specified.

%
\begin{figure}

\includegraphics{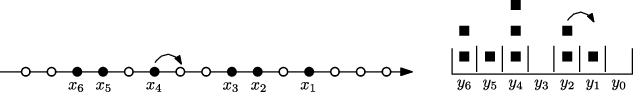}

\caption{Left: \mbox{$q$-}TASEP with six particles. The indicated jump of
$x_4$ occurs at rate $a_4 (1-q^2)$ since the gap $x_{3}-x_{4}-1=2$.
Right: The dual TAZRP with sites $\{0,1,\ldots, 6\}$. The indicated
jump occurs at rate $a_2 (1-q^{2})$ since $y_2=2$.}\label{qtasepfig}
\end{figure}

We presently focus on \mbox{$q$-}TASEP with $N$ particles $x_1>x_2>\cdots
>x_N$. However, to ease the statement of results we include a virtual
particle $x_0(t)\equiv\infty$ and define our state space as
\[
X^N= \bigl\{\vec{x} = (x_0,x_1,
\ldots,x_N)\in\{\infty\}\times \mathbb{Z}^N\dvtx  \infty=
x_0>x_1>\cdots>x_N \bigr\}.
\]
In this case, the dynamics are easily seen to be well defined. Observe
that the evolution of the right-most $M\leq N$ particles performs
\mbox{$q$-}TASEP with $M$ particles (i.e., particles are unaffected by those
to their left). On account of this, it is easy to extend the dynamics
to an infinite number of particles labeled $x_1>x_2>\cdots$ (i.e.,
there is a right-most particle). When studying these infinite systems,
it is generally enough to study related finite systems.

For \mbox{$q$-}TASEP with $N$ particles, the generator of $\vec{x}(t)$ acts
on functions $f\dvtx X^N\to\mathbb{R}$ and is given by
%
%
\begin{equation}
\label{eqnqTASEPgen} \bigl(L^{q\mbox{-}\mathrm{TASEP}}f\bigr) (\vec{x}) = \sum
_{i=1}^{N} a _i\bigl(1-q^{x_{i-1}-x_{i}-1}
\bigr) \bigl(f\bigl(\vec{x}_{i}^{+}\bigr)-f(\vec{x}) \bigr),
\end{equation}
where $\vec{x}_{i}^{+}$ indicates to increase the value of $x_i$ by
one. Note that one may also write down a generator in terms of
occupation variables and (as in \cite{BorCor}) show that for any
initial data \mbox{$q$-}TASEP is, in fact, well defined.

The totally asymmetric zero range process (TAZRP) on an interval\break $\{
0,1,\ldots,N\}$ with site-dependent rate functions $g_i\dvtx \mathbb
{Z}_{\geq0}
\to[0,\infty)$ [with $g_i(0)\equiv0$ fixed] is a Markov process
$\vec{y}(t)$ with state space
\[
Y^N=(\mathbb{Z}_{\geq0})^{\{0,1,\ldots,N\}}.
\]
The dynamics of TAZRP are given as follows: for each $i\in\{1,\ldots,N\}$, $y_i(t)$ decreases by one and $y_{i-1}(t)$ increase by one
(simultaneously) in continuous time at rate given by $g_i(y_i(t))$; for
different $i$'s these changes occur independently (see\vadjust{\goodbreak} right-hand side of Figure~\ref{qtasepfig}). Note\vspace*{1pt} that no
particles leave site 0. The rate functions we consider are given by
$g_{i}(k)=a_{i}(1-q^k)$. When all $a_i\equiv1$, this model was
first introduced in \cite{SasWad} and further studied in \cite{Pov}.



The generator of $\vec{y}(t)$ acts on functions $h\dvtx Y^N\to\mathbb{R}$
and is
given by
%
%
\begin{equation}
\label{eqnqTAZRPgen} \bigl(L^{q\mbox{-}\mathrm{TAZRP}}h\bigr) (\vec{y}) = \sum
_{i=1}^{N} a_{i}\bigl(1-q^{y_{i}}
\bigr) \bigl(h\bigl(\vec{y}^{i,i-1}\bigr) - h(\vec{y}) \bigr),
\end{equation}
where $\vec{y}^{i,i-1}$ indicates to decrease $y_i$ by one and
increase $y_{i-1}$ by one.

Observe that the gaps $\tilde{y}_i(t) = x_{i}(t)-x_{i+1}(t)-1$ of
\mbox{$q$-}TASEP evolve according to a TAZRP, but with boundary conditions
that $\tilde{y}_0(t)\equiv\tilde{y}_{N}(t)\equiv\infty$ for all
$t\in\mathbb{R}_{+}$. Our work will not draw on this obvious coupling.
Rather, our statement of duality will provide a different relationship
between $\vec{x}(t)$ and an independent $\vec{y}(t)$.


\subsection{Duality}
Recall the general definition of duality given in Definition 3.1 of~\cite{Lig}.

%
\begin{definition}\label{dualdef}
Suppose $x(t)$ and $y(t)$ are independent Markov processes with state
spaces $X$ and $Y$, respectively, and let $H(x,y)$ be a bounded
measurable function on $X\times Y$. The processes $x(t)$ and $y(t)$ are
said to be \textit{dual} to one another with respect to $H$ if
%
%
\begin{equation}
\label{eqndualdef} \mathbb{E}^{x} \bigl[H\bigl(x(t),y\bigr) \bigr] =
\mathbb{E}^{y} \bigl[H\bigl(x,y(t)\bigr) \bigr]
\end{equation}
for all $x\in X$ and $y\in Y$. Here $\mathbb{E}^{x}$ refers to the process
$x(t)$ started with $x(0)=x$ (likewise for $y$).
\end{definition}

%
\begin{theorem}\label{thmqtasepduality}
The \mbox{$q$-}TASEP $\vec{x}(t)$ with state space $X^N$ and particle jump
rate parameters $a_i>0$, and the TAZRP $\vec{y}(t)$ with state
space $Y^N$ and rate functions $g_i(k)=a_i(1-q^k)$ are dual
with respect to
\[
H(\vec{x},\vec{y}) = \prod_{i=0}^{N}
q^{(x_i+i)y_i}.
\]
\end{theorem}
%
%
\begin{remark}
The definition of $H(\vec{x},\vec{y})$ means that $H=0$ if $y_0>0$
and $H(\vec{x},\vec{y})=\prod_{i=1}^{N} q^{(x_i+i)y_i}$ if $y_0=0$.
\end{remark}

Before proving Theorem~\ref{thmqtasepduality}, we define the
following system of ODEs.

%
\begin{definition}\label{qtaseptrueevol}
We say that $h(t;\vec{y})\dvtx \mathbb{R}_{+}\times Y^N\to\mathbb{R}$
solves the \textit{true evolution equation} with initial data $h_0(\vec{y})$ if:
\begin{longlist}[(2)]
\item[(1)] For all $\vec{y}\in Y^N$ and $t\in\mathbb{R}_{+}$,
\[
\frac{d}{dt} h(t;\vec{y}) = L^{q\mbox{-}\mathrm{TAZRP}} h(t;\vec{y});
\]
\item[(2)] For all $\vec{y}\in Y^N$ such that $y_0>0$, $h(t;\vec{y})\equiv
0$ for all $t\in\mathbb{R}_{+}$;
\item[(3)] For all $\vec{y}\in Y^N$, $h(0;\vec{y}) = h_0(\vec{y})$.
\end{longlist}
\end{definition}
%
%
\begin{remark}\label{existuniqtrue}
The existence and uniqueness of global solutions to the true evolution
equation in Definition~\ref{qtaseptrueevol} is assured since it
reduces to a finite system of linear ODEs, from which the result
follows from standard methods \cite{Cod}.
\end{remark}

\begin{pf*}{Proof of Theorem~\ref{thmqtasepduality}}
We claim first that for $\vec{x}$ and $\vec{y}$ fixed,
%
%
\begin{equation}
\label{eqnqTASEPLq} L^{q\mbox{-}\mathrm{TASEP}} H(\vec{x},\vec{y}) = L^{q\mbox{-}\mathrm{TAZRP}} H(\vec{x},
\vec{y}),
\end{equation}
where in the above expression, the generator on the left acts in the
$x$ variables and the generator on the right in the $y$ variables.

To prove the claim is easy. Observe that
\begin{eqnarray*}
L^{q\mbox{-}\mathrm{TASEP}} H(\vec{x},\vec{y}) &=& \sum_{i=1}^N
a _i \bigl(1-q^{x_{i-1}-x_{i}-1} \bigr) \Biggl(\bigl(q^{y_i}-1
\bigr) \prod_{j=0}^N q^{(x_j+j)y_j}
\Biggr)
\\
&=& \sum_{i=1}^N a_i
\bigl(1-q^{y_{i}}\bigr) \bigl(H\bigl(\vec{x},\vec {y}^{i,i-1}\bigr)
- H(\vec{x},\vec{y}) \bigr)
\\
&=& L^{q\mbox{-}\mathrm{TAZRP}} H(\vec{x},\vec{y}).
\end{eqnarray*}

Given the claim we may now check that $\mathbb{E}^{\vec{x}}
[H(\vec
{x}(t),\vec{y}) ]$ and $\mathbb{E}^{\vec{y}} [H(\vec
{x},\vec
{y}(t)) ]$ both satisfy the true evolution equation given in
Definition~\ref{qtaseptrueevol}. By
the uniqueness of Remark~\ref{existuniqtrue}, this implies the desired
equality to complete our proof. That $\mathbb{E}^{\vec{y}}
[H(\vec
{x},\vec{y}(t)) ]$ satisfies this evolution equation follows
from the definition of the generator of $\vec{y}(t)$.

On the other hand,
\begin{eqnarray*}
\frac{d}{dt} \mathbb{E}^{\vec{x}} \bigl[H\bigl(\vec{x}(t),\vec {y}
\bigr) \bigr] &=& L^{q\mbox{-}\mathrm{TASEP}} \mathbb{E}^{\vec{x}} \bigl[H\bigl(
\vec{x}(t),\vec {y}\bigr) \bigr]
\\
&=&\mathbb{E}^{\vec{x}} \bigl[ L^{q\mbox{-}\mathrm{TASEP}} H\bigl(\vec {x}(t),\vec {y}
\bigr) \bigr]
\\
&=&\mathbb{E}^{\vec{x}} \bigl[ L^{q\mbox{-}\mathrm{TAZRP}} H\bigl(\vec {x}(t),\vec {y}
\bigr) \bigr]
\\
&=&L^{q\mbox{-}\mathrm{TAZRP}} \mathbb{E}^{\vec{x}} \bigl[ H\bigl(\vec {x}(t),\vec {y}
\bigr) \bigr].
\end{eqnarray*}
The equality of the first line is from the definition of the generator
of $\vec{x}(t)$; the equality between the first and second lines is
from the commutativity of the generator with the Markov semigroup; the
equality between the second and third lines is from applying equality
(\ref{eqnqTASEPLq}) to the expression inside the expectation; the
final equality is from the fact that the generator $L^{q\mbox{-}\mathrm{TAZRP}}$ now acts on the $\vec{y}$ coordinate and the expectation
acts on the $\vec{x}$ coordinate. This shows that $\mathbb{E}^{\vec
{x}} [H(\vec{x}(t),\vec{y}) ]$ solves the system of ODEs
in the true evolution equation (checking the boundary condition and
initial data is easy).
\end{pf*}

\subsection{Systems of ODEs}
As a result of duality, we provide three different systems of ODEs to
characterize $\mathbb{E}^{\vec{x}}  [H(\vec{x}(t),\vec
{y}) ]$.
It is convenient to introduce an alternative way to write a TAZRP state
$\vec{y}\in Y^N$. For a state with $k$ particles, we may instead list
the ordered particle locations $\vec{n}$ as below.
%
\begin{definition}
For $k\geq1$, define
\[
W^{k}_{>0}= \bigl\{\vec{n} =(n_1,n_2,
\ldots,n_k)\in(\mathbb{Z}_{>0})^k\dvtx  N\geq
n_1\geq n_2\geq\cdots\geq n_k\geq0 \bigr\}.
\]
For $\vec{y}\in Y^N$ with $\sum_{i=0}^{N} y_i =k$, we may associate a
vector $\vec{n} = \vec{n}(y)\in W^{k}_{>0}$ which records the
ordered locations of particles in $\vec{y}$. That is to say, for $i\in
\{0,\ldots,N\}$, the vector $\vec{n}(y)$ is specified by $|\{
n_j\dvtx n_j=i\}| = y_i$. Likewise, to a vector $\vec{n}\in W^{k}_{>0}$
we may associate $\vec{y}=\vec{y}(\vec{n})\in Y^N$ by the same
relationship $y_i=|\{n_j\dvtx n_j=i\}|$. For instance, if $N=3$, $y_1=2$,
$y_2=0$ and $y_3=1$ then $k=3$, $n_1=3$ and $n_2=n_3=1$. A vector $\vec
{n}$ naturally splits into clusters, which are maximal groupings of
consecutive equal valued elements. For instance, if $\vec{n}=
(4,4,2,1)$, we would say there are three clusters with the cluster of
$4$ containing two elements, and the clusters of $2$ and $1$ containing
only one elements each.

Also, define the difference operator $\nabla f(n) = f(n-1)-f(n)$. For a
function $f(\vec{n})$, $\nabla_i$ acts as $\nabla$ on the $n_i$
variable. Finally, let $\vec{n}_i^{-} = (n_1,\ldots, n_{i} -1,\ldots, n_k)$.
\end{definition}

%
\begin{proposition}\label{propsystemsODEqTASEP} Let $\vec{x}\in X^N$
and $\vec{x}(t)$ be the \mbox{$q$-}TASEP started from \mbox{$\vec{x}(0)=\vec{x}$}.
\begin{longlist}[(A)]
\item[(A)] \emph{True evolution equation}: If $h(t;\vec{y})\dvtx \mathbb{R}_{+}
\times Y^N\to\mathbb{R}$ solves the system of ODEs given\vspace*{1pt} in
Definition~\ref
{qtaseptrueevol} with initial data $h_0(\vec{y}) = H(\vec{x},\vec
{y})$, then for all \mbox{$\vec{y}\in Y^N$}, $\mathbb{E}^{\vec{x}}
[H(\vec
{x}(t),\vec{y}) ] = h(t;\vec{y})$.

\item[(B)] \emph{Free evolution equation with $k-1$ boundary
conditions}: If $u\dvtx \mathbb{R}_{+}\times(\mathbb{Z}_{\geq0})^k \to
\mathbb{R}$ solves:
\begin{enumerate}[(2)]
\item[(1)] For all $\vec{n}\in(\mathbb{Z}_{\geq0})^k$ and $t\in\mathbb{R}_{+}$,
\[
\frac{d}{dt} u(t;\vec{n}) = (1-q) \sum_{i=1}^{k}
a_{n_i} \nabla_i u (t;\vec{n});
\]
\item[(2)] For all $\vec{n}\in(\mathbb{Z}_{\geq0})^k$ such that for some
$i\in\{
1,\ldots, k-1\}$, $n_i=n_{i+1}$,
\[
\nabla_i u(t;\vec{n}) = q \nabla_{i+1} u(t;\vec{n});
\]
\item[(3)] For all $\vec{n}\in(\mathbb{Z}_{\geq0})^k$ such that $n_k=0$,
$u(t;\vec{n}) \equiv0$ for all $t\in\mathbb{R}_{+}$;
\item[(4)] For all $\vec{n}\in W^{k}_{>0}$, $u(0;\vec{n}) = H(\vec
{x},\vec{y}(\vec{n}))$.
\end{enumerate}
Then for all $\vec{y}\in Y^N$ such that $k=\sum_{i=1}^{N} y_i$,
$\mathbb{E}
^{\vec{x}}  [H(\vec{x}(t),\vec{y}) ] = u(t;\vec{n}(\vec{y}))$.

\item[(C)] \emph{Schr\"{o}dinger equation with Bosonic Hamiltonian}:
If $v\dvtx \mathbb{R}_{+}\times(\mathbb{Z}_{\geq0})^k\to\mathbb{R}$ solves:
\begin{enumerate}[(2)]
\item[(1)] For all $\vec{n}\in(\mathbb{Z}_{\geq0})^k$ and $t\in\mathbb{R}_{+}$,
\begin{eqnarray*}
\frac{d}{dt} v(t;\vec{n}) &=& \mathcal{H}v(t;\vec{n}),
\\
\mathcal{H} &=& (1-q) \Biggl[\sum_{i=1}^{k}
a_{n_i}\nabla_i + \bigl(1-q^{-1}\bigr) \sum
_{i<j}^k \delta_{n_i=n_j}
q^{j-i} a_{n_i}\nabla _i \Biggr];
\end{eqnarray*}
\item[(2)] For all permutations of indices $\sigma\in S_k$, $v(t;\sigma
\vec{n}) = v(t;\vec{n})$;
\item[(3)] For all $\vec{n}\in(\mathbb{Z}_{\geq0})^k$ such that $n_k=0$,
$v(t;\vec{n}) \equiv0$ for all $t\in\mathbb{R}_{+}$;\vspace*{2pt}
\item[(4)] For all $\vec{n}\in W^{k}_{>0}$, $v(0;\vec{n})= H(\vec
{x},\vec{y}(\vec{n}))$.
\end{enumerate}
Then for all $\vec{y}\in Y^N$ such that $k=\sum_{i=1}^{N} y_i$,
$\mathbb{E}
^{\vec{x}}  [H(\vec{x}(t),\vec{y}) ] = v(t;\vec{n}(\vec{y}))$.
\end{longlist}
\end{proposition}

%
\begin{remark}\label{rem:unique}
The existence and uniqueness of global solutions to (A) is explained in
Remark~\ref{existuniqtrue}. This then implies the existence of
solutions in (C). It is not clear, a priori, that there exist solutions
to (B). As we see in the proof of (B), the combination of the four
conditions in (B) implies that restricted to $\vec{n}\in W^{k}_{>0}$,
$u(t;\vec{n}) = h(t;\vec{y}(\vec{n}))$ for all $t\in\mathbb{R}_{+}$.
However, it is not clear that there exists a suitable extension of $u$
outside the physical region $W^{k}_{>0}$ which satisfies the four
conditions. Note that (B) should be considered as an advanced version
of the method of images. Finally, though the above results are written
for deterministic $\vec{x}$ (i.e., deterministic initial data) by
linearity one can average over random $\vec{x}$ and achieve the same
stated results with $\mathbb{E}^{\vec{x}}  [H(\vec{x}(t),\vec
{y}) ]$ replaced by its average over $\vec{x}$, written as
$\mathbb{E}
[H(\vec{x}(t),\vec{y}) ]$, and the initial data for the
ODEs likewise replaced by $\mathbb{E} [H(\vec{x},\vec{y}) ]$.
\end{remark}

\begin{pf*}{Proof of Proposition~\ref{propsystemsODEqTASEP}}
Call the three conditions contained in Definition~\ref{qtaseptrueevol} (A.1), (A.2) and~(A.3). Part~(A) follows from Theorem~\ref
{thmqtasepduality} since it implies that
\[
\frac{d}{dt} \mathbb{E}^{\vec{x}} \bigl[H\bigl(\vec{x}(t),\vec {y}
\bigr) \bigr] = L^{q\mbox{-}\mathrm{TAZRP}}\mathbb{E}^{\vec{x}} \bigl[H\bigl(\vec{x}(t),
\vec {y}\bigr) \bigr],
\]
which matches (A.1). Along with this, the value of $\mathbb{E}^{\vec{x}}
[H(\vec{x}(t),\vec{y}) ]$ is uniquely characterized by
the initial data and the fact that (due to the definition of $H$)
$\mathbb{E}
^{\vec{x}}  [H(\vec{x}(t),\vec{y}) ]=0$ for all $\vec
{y}\in Y^N$ with $y_0>0$. Conditions (A.3) and (A.2), respectively,
match these properties, and hence (A) follows.

Part (B) follows by showing that if the four conditions for $u$ given
in (B) hold, then it implies that $u(t;\vec{n}(\vec{y}))$ satisfies
part (A), and hence that $u(t;\vec{n}(\vec{y}))=h(t;\vec{y})$. Thus,
we must show that (B) implies (A). Going between $\vec{y}$ and $\vec
{n}$ notation, the initial data (A.3) and (B.4) match, as do the
conditions (A.2) and (B.3). To check the system of ODEs (A.1), recall
that the size of the cluster of elements of $\vec{n}$ equal to $i$
equals $y_i$. Consider the cluster of elements equal to $N \dvtx
n_1=n_2=\cdots=n_{y_N}$ (every other cluster works similarly). In
order to prove (A.1), it suffices to show that
%
%
\begin{equation}
\label{eqnidPartB} (1-q) \sum_{i=1}^{y_N}
a_N \nabla_i u(t;\vec{n}) = a_N
\bigl(1-q^{y_N}\bigr) \nabla_{y_N} u(t;\vec{n}).
\end{equation}
This\vspace*{1pt} is because $\nabla_{y_N} u(t;\vec{n}) = u(t;\vec{n}(y^{N,N-1}))
- u(t;\vec{n}(y))$. Summing these terms over all clusters yields
$L^{q\mbox{-}\mathrm{TAZRP}}u(t;\vec{n}(\vec{y}))$, and hence (A.1)
follows. But (B.2) implies $\nabla_i u(t;\vec{n}) = q^{y_N -i} \nabla
_{y_N} u(t;\vec{n})$ for $i=1,\ldots,y_N$, which implies (\ref{eqnidPartB}).


Part (C) also follows by showing that the combination of the four
conditions for~$v$ imply that $v(t;\vec{n}(\vec{y}))$ satisfies (A),
and hence $v(t;\vec{n}(\vec{y}))=h(t;\vec{y})$. As in (B), the
initial data (A.3) and (C.4) match, as do the conditions (A.2) and
(C.3). Also as in (B), it suffices to consider the cluster of $N$. The
portion of the Hamiltonian~$H$ corresponding to this cluster is
\[
(1-q) \Biggl[\sum_{i=1}^{y_N}
a_{N}\nabla_i + \bigl(1-q^{-1}\bigr) \sum
_{i<j}^{y_N} q^{j-i}
a_{N}\nabla_i \Biggr]= (1-q) a_N \sum
_{i=1}^{y_N} q^{y_{N}-i}
\nabla_i,
\]
where the equality follows from summing the factors involving each
$\nabla_i$. 
Due to the symmetry (C.2), $\nabla_i v(t;\vec{n}) = \nabla_{y_N}
v(t;\vec{n})$ for all $i\in\{1,\ldots, y_N\}$. Hence, the sum in~$i$
can be performed, yielding $a_N(1-q^{y_N})\nabla_{y_N}$. This is the
same as in (\ref{eqnidPartB}), and hence (C) follows as well.
\end{pf*}

\subsection{Nested contour ansatz solution}
It is not a priori clear how one might explicitly solve the systems of
ODEs in Proposition~\ref{propsystemsODEqTASEP}. Presently, we show
how this can be done for two distinguished types of initial data.

%
\begin{definition}\label{qTASEPstepdef}
For \mbox{$q$-}TASEP, \textit{step} initial data corresponds with $x_i(0) = -i$
for $i\geq1$.

For $\alpha\in[0,1)$, we say a random variable $X$ is $q$-Geometric
distributed with parameter $\alpha$ [written $X\sim q\operatorname{Geo}(\alpha)$] if
\[
\mathbb{P}(X= k) = (\alpha;q)_{\infty} \frac{\alpha^k}{(q;q)_{k}},
\]
where $(a;q)_{n}=(1-a)(1-aq)(1-aq^2)\cdots(1-aq^{n-1})$ and
$(a;q)_{\infty}=(1-a)(1-aq)(1-aq^2)\cdots.$ \textit{Half stationary}
initial data for \mbox{$q$-}TASEP corresponds with random initial locations
for particles $x_i$ for $i\geq1$ given as follows: let $X_i \sim
q\operatorname{Geo}(\alpha/a_i)$ for $i\geq1$ be independent; then
set $x_1(0)= -1-X_1$ and, for $i>1$, $x_i = x_{i-1} -1-X_i$. The result
is that the gaps between consecutive particles $i$ and $i+1$ are
distributed as $q$-Geometric with parameter $\alpha/a_i$ and
are independent. When $\alpha=0$, the step initial data is recovered
(regardless of the $a_i$).
\end{definition}
%
%
\begin{remark}
When $a_i\equiv1$, the translation invariant measure on
particle configurations in $\mathbb{Z}$ with independent $q\operatorname{Geo}(\alpha)$ distributed distances between neighbors is an invariant
or stationary measure\footnote{Within the probability literature, the
term equilibrium is sometimes also used to describe such a measure,
though to avoid confusion with the physical means of equilibrium
statistical mechanics, we avoid this term.} for \mbox{$q$-}TASEP, cf. \cite
{BorCor}. This explains the usage of the term half stationary (and
likewise for ASEP).
\end{remark}
%
%
\begin{theorem}\label{thmqformulas}
Fix $q\in(0,1)$, $a_i>0$ for $i\geq1$ and let $\vec
{n}=(n_1,\ldots, n_k)$. The system of ODEs given in Proposition~\textup{\ref{propsystemsODEqTASEP}(B)} is solved by the following formulas:
\begin{longlist}[(2)]
\item[(1)] For step initial data,
%
%
\begin{eqnarray}\label{FdefqTASEP}
\qquad u(t;\vec{n}) &=& \frac{(-1)^k q^{k(k-1)/2}}{(2\pi\iota)^k}
\nonumber\\[-8pt]\\[-8pt]
&&{}\times  \int\cdots\int\prod
_{1\leq A<B\leq k} \frac{z_A-z_B}{z_A-qz_B} \prod_{j=1}^{k}
\Biggl(\prod_{m=1}^{n_j} \frac{a_m}{a_m-z_j}
\Biggr) e^{(q-1)tz_j} \frac{dz_j}{z_j},\nonumber
\end{eqnarray}
where the integration contour for $z_A$ contains $\{qz_B\}_{B>A}$ and
all $a_m$'s but not 0.
\item[(2)] For half stationary initial data with parameter $\alpha>0$ [such
that $\alpha q^{-k}<a_m$ for all $1\leq m \leq\max_i(n_i)$],
%
%
\begin{eqnarray}\label{FdefqTASEPhalf}
u(t;\vec{n}) &=& \frac{(-1)^k q^{k(k-1)/2}}{(2\pi\iota)^k}\nonumber
\\
&&{}\times \int\cdots\int\prod
_{1\leq A<B\leq k} \frac{z_A-z_B}{z_A-qz_B}
\\
&&\hspace*{49pt}{}\times  \prod_{j=1}^{k}
\Biggl(\prod_{m=1}^{n_j} \frac{a_m}{a_m-z_j}
\Biggr) e^{(q-1)tz_j} \frac{dz_j}{z_j-\alpha/q},\nonumber
\end{eqnarray}
where the integration contour for $z_A$ contains $\{qz_B\}_{B>A}$ and
all $a_m$'s but not $\alpha/q$.
\end{longlist}
\end{theorem}

On account of Proposition~\ref{propsystemsODEqTASEP} and the
uniqueness of solutions restricted to $\vec{n}\in W^{k}_{>0}$ (see
Remark~\ref{rem:unique}), the above formulas when restricted to $\vec
{n}\in W^{k}_{>0}$ immediately yield the following (we will only
state it for step initial data, though a similar statement holds for
half stationary).
%
\begin{corollary}\label{cor:qformulas}
For \mbox{$q$-}TASEP with step initial data and $\vec{n}\in W^{k}_{>0}$,
%
%
\begin{eqnarray}\label{cor:FdefqTASEP}
\qquad && \mathbb{E} \Biggl[\prod_{j=1}^{k}
q^{x_{n_j}+n_j} \Biggr]\nonumber
\\
&&\qquad = \frac{(-1)^k
q^{k(k-1)/2}}{(2\pi\iota)^k}
\\
&&\quad\qquad {}\times  \int\cdots\int\prod
_{1\leq A<B\leq k} \frac{z_A-z_B}{z_A-qz_B} \prod_{j=1}^{k}
\Biggl(\prod_{m=1}^{n_j} \frac{a_m}{a_m-z_j}
\Biggr) e^{(q-1)tz_j} \frac{dz_j}{z_j},\nonumber
\end{eqnarray}
where the integration contour for $z_A$ contains $\{qz_B\}_{B>A}$ and
all $a_m$'s but not 0.
\end{corollary}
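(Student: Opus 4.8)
The plan is to verify directly that the nested contour integral formula \eqref{FdefqTASEP} satisfies all four conditions (B.1)--(B.4) of Proposition \ref{prop:systemsODEqTASEP}, and then invoke uniqueness. Write $u(t;\vec n)$ for the right-hand side of \eqref{FdefqTASEP}; the integrand factorizes as $\prod_{A<B}\frac{z_A-z_B}{z_A-qz_B}$ times a product $\prod_j \psi(t,n_j;z_j)\frac{dz_j}{z_j}$ of single-variable pieces, where $\psi(t,n;z)=\bigl(\prod_{m=1}^{n}\frac{a_m}{a_m-z}\bigr)e^{(q-1)tz}$. First I would check \textbf{(B.1)}, the free evolution equation. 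Since $\frac{\partial}{\partial t}\psi(t,n;z)=(q-1)z\,\psi(t,n;z)$ and $\nabla$ on the $n$-variable acts by $\nabla_z\bigl(\prod_{m=1}^n \frac{a_m}{a_m-z}\bigr)=\prod_{m=1}^{n-1}\frac{a_m}{a_m-z}-\prod_{m=1}^{n}\frac{a_m}{a_m-z}=-\frac{z}{a_n}\prod_{m=1}^{n}\frac{a_m}{a_m-z}$, one gets $a_n\nabla_i$ applied under the integral pulls out a factor $-z_i$; summing over $i$ and using $\frac{\partial}{\partial t}$ hitting each $z_i$-factor gives $(1-q)\sum_i a_{n_i}\nabla_i u = \sum_i (q-1)z_i\cdot(\text{integrand})$, which matches $\frac{d}{dt}u$. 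This step is a routine differentiation under the integral sign, justified by the fact that for fixed contours the integrand is smooth and the contours are compact.

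The crux is \textbf{(B.2)}, the two-body boundary condition: when $n_i=n_{i+1}$ one needs $\nabla_i u = q\nabla_{i+1}u$. Here the Bethe-type interaction factor $\frac{z_i-z_{i+1}}{z_i-qz_{i+1}}$ does its job. Acting with $\nabla_i-q\nabla_{i+1}$ on the integrand when $n_i=n_{i+1}=n$ produces, from the computation above, a factor $-\frac{z_i}{a_n}+q\frac{z_{i+1}}{a_n}=-\frac{1}{a_n}(z_i-qz_{i+1})$ multiplying $\prod_{m=1}^n\frac{a_m}{a_m-z_i}\prod_{m=1}^n\frac{a_m}{a_m-z_{i+1}}$; this $(z_i-qz_{i+1})$ cancels the denominator of the interaction factor $\frac{z_i-z_{i+1}}{z_i-qz_{i+1}}$, leaving an integrand that is antisymmetric under $z_i\leftrightarrow z_{i+1}$ up to the remaining factors. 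The key point is then that the $z_i$ and $z_{i+1}$ contours can be taken to coincide (when $n_i=n_{i+1}$ the nesting constraints on these two variables relative to the others are the same, and after the cancellation there is no longer a pole at $z_i=qz_{i+1}$), so the integral of an antisymmetric function over the product of two identical contours vanishes. I would need to argue carefully that deforming the $z_i$ contour onto the $z_{i+1}$ contour crosses no poles once the $z_i=qz_{i+1}$ pole is gone — the poles at $z_i=a_m$ and $z_i=qz_B$ for $B\neq i+1$ are on the same side for both contours. \textbf{This contour-coincidence-plus-antisymmetry argument is the main obstacle} and the only genuinely nontrivial step.

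Condition \textbf{(B.3)}, vanishing when $n_k=0$, is immediate: when $n_k=0$ the empty product $\prod_{m=1}^{0}\frac{a_m}{a_m-z_k}=1$, so the $z_k$-integrand is $e^{(q-1)tz_k}\prod_{A<k}\frac{z_A-z_k}{z_A-qz_k}\frac{dz_k}{z_k}$, and the $z_k$-contour encircles only $0$ among its required poles; but wait — the contour for $z_k$ contains $0$? No: by the stated contour prescription the $z_A$ contour contains $\{qz_B\}_{B>A}$ and all $a_m$ but \emph{not} $0$, and for $z_k$ there are no $z_B$ with $B>k$, so the $z_k$ contour encircles only the $a_m$'s, none of which are poles when $n_k=0$, hence $\frac{1}{z_k}$ times a function holomorphic inside integrates to zero. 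Finally \textbf{(B.4)}, matching the initial data $u(0;\vec n)=H(\vec x,\vec y(\vec n))=\prod q^{x_{n_j}+n_j}$ with $x_i=-i$ for step data, i.e. the claim that at $t=0$ the integral equals $1$ for all $\vec n\in\WqTASEP{k}$ (since $x_{n_j}+n_j=0$): this is a residue computation — one expands the nested integral by iteratively taking residues at $z_A=a_m$ and $z_A=qz_B$, and the combinatorial identity needed is exactly of the type collected in Section \ref{sec7}; alternatively one can cite the Macdonald-process evaluation of \cite{BorCor}. For the half-stationary case \eqref{FdefqTASEPhalf} the only change is replacing $\frac{dz_j}{z_j}$ by $\frac{dz_j}{z_j-\alpha/q}$, which does not affect (B.1)--(B.3) (the $t$-derivative and $\nabla_i$ computations are untouched, and the antisymmetry argument for (B.2) is unchanged since $\frac{1}{z_i-\alpha/q}\frac{1}{z_{i+1}-\alpha/q}$ is symmetric), while (B.4) becomes the computation that the residue expansion reproduces the $q$-Geometric generating function $\EE[\prod_j q^{x_{n_j}+n_j}]$ for the half-stationary initial configuration; again this matches a known Macdonald-process formula or can be done directly via the residue bookkeeping of Section \ref{sec7}. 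The corollary then follows by restricting to $\vec n\in\WqTASEP{k}$ and invoking the uniqueness statement of Remark \ref{rem:unique}.
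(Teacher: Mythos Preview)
Your verification of (B.1)--(B.3) matches the paper's proof essentially line for line, including the key antisymmetry-plus-contour-coincidence argument for (B.2). The only substantive divergence is in (B.4).

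For (B.4) you propose to compute the $t=0$ integral by ``iteratively taking residues at $z_A=a_m$ and $z_A=qz_B$,'' i.e.\ by shrinking the contours inward. This is the hard direction: those residues proliferate into a partition-indexed sum (exactly the structure of Proposition~\ref{mukprop}), and showing it collapses to $1$ would require a nontrivial combinatorial identity rather than anything in Section~\ref{sec7}. The paper instead expands each contour \emph{outward} to infinity, starting with $z_1$. Since $n_1\geq 1$, the factor $\prod_{m=1}^{n_1}\frac{a_m}{a_m-z_1}$ gives at least $1/z_1$ decay, so there is no pole at infinity; the only pole crossed is $z_1=0$ (which lay outside the original contour), and its residue is $-q^{-(k-1)}$ times the $(k-1)$-fold integral with $z_1$ removed. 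Iterating gives $\prod_{j=1}^{k}(-q^{-(k-j)})$, which exactly cancels the prefactor $(-1)^k q^{k(k-1)/2}$ to yield $1$. For the half-stationary case the same outward expansion works, but now the denominator is $z_j-\alpha/q$, so the pole crossed by $z_1$ sits at $\alpha/q$; the residue there shifts the remaining denominators to $z_j-\alpha/q^2$, and iterating produces precisely the $q$-Geometric moment product~\eqref{eqn:halfeqid}. So your plan is correct in outline, but the specific residue strategy you name for (B.4) is not the one that works cleanly; expand outward, not inward.
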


\begin{pf*}{Proof of Theorem~\ref{thmqformulas}}
We need to prove that $u(t;\vec{n})$ as defined in (\ref{FdefqTASEP})
satisfies the four conditions of Proposition~\textup{\ref{propsystemsODEqTASEP}(B)}.

Condition (B.1) is satisfied by linearity and the fact that
\[
\biggl[\frac{d}{dt} - (1-q)a_{n_i}\nabla_i \biggr]
\Biggl( \Biggl(\prod_{m=1}^{n_i}
\frac{a_m}{a_m-z} \Biggr) e^{(q-1)tz} \Biggr) = 0.
\]


Condition (B.2) relies on the Vandermonde-like factors as well as the
nested choice of contours. Without loss of generality, assume that
$n_1=n_2$. We wish to show that
%
%
\begin{equation}
\label{eqnqrelprove} [\nabla_1-q\nabla_2 ] u(t;\vec{n}) = 0.
\end{equation}
Applying $\nabla_1-q\nabla_2$ to the integrand in (\ref{FdefqTASEP})
brings down a factor of $-a_{n_1}^{-1} (z_1-qz_2)$. We must show that
the integral of this new integrand is zero. This new factor cancels the
denominator $(z_1-q z_2)$ in
%
%
\begin{equation}
\label{eqnqskewprod} \prod_{1\leq A<B\leq k} \frac{z_A-z_B}{z_A-qz_B}.
\end{equation}
On account of this, we may deform the contours for $z_1$ and $z_2$ to
be the same without encountering any poles. The term $z_1-z_2$ in the
numerator of (\ref{eqnqskewprod}) remains, and hence we can write
\[
u(t;\vec{n})= \int\!\!\int(z_1-z_2) G(z_1)G(z_2)
\,dz_1 \,dz_2,
\]
where $G(z)$ involves the integrals in $z_3,\ldots, z_k$. Since the
two contours are identical, this integral is clearly zero, proving (B.2).

Condition (B.3) follows from simple residue calculus. When $n_k=0$,
there are no poles in the $z_k$ integral at $\{a_m\}_{m=1}^{n_k}$.
Therefore, by Cauchy's theorem the integral is zero.

Condition (B.4) likewise follows from residue calculus. Let us first
consider the step initial data case of Theorem~\ref{thmqformulas}.
This corresponds to initial data in (B.4) given by
\[
u(0;\vec{n}) = H\bigl(x;\vec{y}(\vec{n})\bigr) = 1
\]
for all $\vec{n}\in W^{k}_{>0}$. Now consider (\ref{FdefqTASEP})
with $t=0$. The $z_1$ contour can be expanded to infinity. The only
pole encountered is at $z_1=0$ [$z_1=\infty$ is not a pole because of
the decay coming from $a_m/(a_m-z_j)$]. Because we pass it from the
outside, the contribution of the residue is $-q^{-(k-1)}$ times the
same integral but with every term involving $z_1$ removed. Repeating
this procedure for $z_2$ leads to $-q^{-(k-2)}$ and so on. Therefore,
the integral can be evaluated and canceling terms we are left with it
equal to $1$ exactly as desired.

Now consider the half stationary initial data case of Theorem~\ref
{thmqformulas}. (B.1)--(B.3) follow in the same way as for the step
initial data. We claim that this corresponds to initial data in (B.4)
given by
%
%
\begin{equation}
\label{eqnhalfeqid} u(0;\vec{n}) = \mathbb{E} \bigl[H\bigl(\vec{x};\vec{y}(\vec{n})
\bigr) \bigr] = \prod_{i=1}^{k}\,
\prod_{m=n_{i+1}+1}^{n_i}\,
\prod_{j=1}^{i} \frac
{a_m}{a_m-\alpha/q^{j}}.
\end{equation}
Showing this requires a calculation.\vadjust{\goodbreak}
%
\begin{lemma}
Fix $r\geq1$. If $X$ is a $q$-Geometric random variable with parameter
$\alpha\in[0,1)$ then
\[
\mathbb{E} \bigl[q^{-rX} \bigr]= \prod_{i=1}^{r}
\frac{1}{1-\alpha/q^{i}},
\]
so long as $\alpha q^{-r}<1$; and otherwise the expectation is infinite.
\end{lemma}
\begin{pf}
Using the $q$-Binomial theorem (see Section~\ref{secqdef}), we may calculate
\[
\mathbb{E} \bigl[q^{-rX} \bigr] = (\alpha;q)_{\infty} \sum
_{k=0}^{\infty} \frac{(\alpha/q^r)^k}{(q;q)_k} =
\frac{(\alpha;q)_{\infty
}}{(\alpha/q^r;q)_{\infty}},
\]
which after canceling terms is exactly as desired.
\end{pf}
Recall that under half stationary initial data, the locations $\{
x_i(0)\}$ are defined in terms of $q$-Geometric random variables $\{
X_j\}$. Using this, we have
\[
\prod_{i=1}^{k} q^{x_{n_i}(0)+n_i} = \prod
_{i=1}^{k} q^{-\sum
_{m=1}^{n_i} X_{m}} = \prod
_{i=1}^{k}\, \prod
_{m=n_{i+1}+1}^{n_i} q^{-i X_{m}}.
\]
Since the $X$'s are independent, we can evaluate individually the
expectation of~$q^{-iX_m}$ using the above lemma, and we immediately
find the right-hand side of~(\ref{eqnhalfeqid}).

Now consider (\ref{FdefqTASEPhalf}) with $t=0$. As in the step initial
data case, we successively peel off the contours and evaluate the
effect via residue calculus. When we expand~$z_1$ to infinity, we now
only encounter a pole at $z_1=\alpha/q$ (which becomes zero when
$\alpha=0$ recovering the step initial data). Evaluating this residue,
we find
\begin{eqnarray*}
u(0;\vec{n}) &=& \prod_{m=1}^{n_1}\frac{a_m}{a_m-\alpha/q}
\frac{(-1)^{k-1} q^{((k-1)(k-2))/2}}{(2\pi\iota)^{k-1}}
\\
&&{}\times  \int\cdots \int\prod
_{2\leq A<B\leq k} \frac{z_A-z_B}{z_A-qz_B} \prod_{j=2}^{k}
\Biggl(\prod_{m=1}^{n_j} \frac{a_m}{a_m-z_j}
\Biggr)\frac
{dz_j}{z_j-\alpha/q^2}.
\end{eqnarray*}
Expanding $z_2$ the pole is now at $z_2=\alpha/q^2$ and a similar
formula results from evaluating the residue. Repeating this procedure
shows that $u(0;\vec{n})$ is given by the right-hand side of (\ref
{eqnhalfeqid}) as desired.
\end{pf*}

\section{A general scheme from nested contour integrals to Fredholm determinants}\label{sec3}

The output of Theorem~\ref{thmqformulas} and Corollary~\ref
{cor:qformulas} is that for step and half stationary initial data we
have relatively simple formulas for a large class of expectations. In
principle, these expectations should characterize the joint
distribution of the locations of any fixed collection of particles
$x_{n_1}(t),\ldots, x_{n_\ell}(t)$ in \mbox{$q$-}TASEP. One may hope to
achieve this via certain generating functions. However, the challenge
is to find expressions for these generating functions which have clear
asymptotic limits (in time and particle labels). For this, we focus
here only on the distribution of a single particle $x_n(t)$. Applying
Corollary~\ref{cor:qformulas} with $n_i\equiv n$ yields a nested
contour integral formula for $\mathbb{E} [q^{kx_n(t)} ]$.

There are two ways to deform this type of nested contour integrals so
that all contours coincide. After accounting for the residues
encountered during these deformations, we are led to two types of
formulas for expectations: those involving partition-indexed sums of
contour integrals and those involving single row-indexed sums of
contour integrals. By taking suitable generating functions of these
indexed sums of contour integrals, we are led to two types of Fredholm
determinants. All of these manipulations are quite general and can be
done purely formally.
Given some analytic estimates, these manipulations turn into numerical
equalities as is the case for \mbox{$q$-}TASEP.

We record these manipulations (and conditions for them to hold as
numerical equalities) as well as their consequences without proofs,
since they can be found in Section~3.2 of \cite{BorCor}. We do this
for completeness and also because when we turn to consider ASEP, the
same manipulations will be used. When we apply this to \mbox{$q$-}TASEP, we
will only consider step initial data. If we consider half stationary
for any $\alpha>0$ fixed, then when $k$ gets so large that $\alpha
>q^k$, the expectation $\mathbb{E} [q^{k x_n(t)} ]$ is infinite.
Thus, when forming a generating function from these $q$-moments, we are
forced to take $\alpha=0$, which corresponds to the step initial data.

Before going into these manipulations, the reader may want to quickly
browse Section~\ref{secqdef} where we record some useful
$q$-deformations as well as briefly review Fredholm determinants.

\subsection{Mellin--Barnes type determinants}\label{secmellin}

The below proposition describes the result of deforming the contours of
a general nested contour integral formula in such a way that all of the
poles corresponding to $z_A=qz_B$ for $A<B$ are encountered. The
residues associated with these poles group into clusters, and hence the
resulting formula is naturally indexed by partitions $\lambda=(\lambda
_1\geq\lambda_2\geq\cdots\geq0)$. Notationally, we write $\lambda
\vdash k$ if $\sum_i \lambda_i = k$, $\lambda=1^{m_1} 2^{m_2}\cdots
$ if $i$ appears $m_i$ times in $\lambda$ (for all $i\geq1$), and
$\ell(\lambda) = \sum_{i}m_i$ for the number of nonzero elements of
$\lambda$.

%
\begin{definition}\label{def:mukdef}
For a meromorphic function $f(z)$ and $k\geq1$ set $\mathbb{A}$ to be a
fixed set of poles of $f$ (not including 0) and assume that $q^m\mathbb{A}
$ is disjoint from $\mathbb{A}$ for all $m\geq1$. Define
%
%
\begin{equation}
\label{mukdef} \mu_k=\frac{(-1)^k q^{k(k-1)/2}}{(2\pi\iota)^k} \int \cdots\int\prod
_{1\leq A<B\leq k} \frac{z_A-z_B}{z_A-qz_B} \prod_{i=1}^{k}
f(z_i) \frac{dz_i}{z_i},
\end{equation}
where the integration contour for $z_A$ contains $\{q z_B\}_{B> A}$,
the fixed set of poles $\mathbb{A}$ of $f(z)$ but not 0 or any other poles.
\end{definition}

%
\begin{proposition}\label{mukprop}
We have that for $\mu_k$ as in Definition~\ref{def:mukdef},
%
%
\begin{eqnarray}\label{muk}
\mu_k &=& k_q! \mathop{\sum_{\lambda\vdash k}}_{\lambda
=1^{m_1}2^{m_{2}}\cdots} \frac{1}{m_1!m_2!\cdots} \frac
{(1-q)^{k}}{(2\pi\iota)^{\ell(\lambda)}}\nonumber
\\
&&{}\times
\int\cdots\int\det \biggl[\frac{1}{w_i q^{\lambda_i}-w_j} \biggr]_{i,j=1}^{\ell(\lambda
)}
\\
&&\hspace*{45pt}{}\times \prod_{j=1}^{\ell(\lambda)} f(w_j)f(qw_j)
\cdots f\bigl(q^{\lambda
_j-1}w_j\bigr) \,dw_j,\nonumber
\end{eqnarray}
where the integration contour for $w_j$ contains the same fixed set of
poles $\mathbb{A}$ of $f$ and no other poles.
\end{proposition}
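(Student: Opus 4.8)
The plan is to deform the nested contours in the definition of $\mu_k$ outward one variable at a time, starting with $z_1$, collapsing each $z_A$ contour onto a common contour that encircles only the fixed pole set $\poles$, and carefully bookkeep the residues picked up at the poles $z_A = q z_B$. The key structural observation is the one already exploited in the proof of Theorem \ref{thm:qformulas}: the cross term $\prod_{A<B}\frac{z_A-z_B}{z_A-q z_B}$ both supplies the poles at $z_A=q z_B$ and, via its numerator, kills any contribution from two contours that have been merged without an intervening pole. So when I push $z_1$ outward past $z_2, \ldots, z_k$, the only residues that survive and are nonzero come from strings $z_1 = q z_{i_1}$, $z_{i_1} = q z_{i_2}$, etc., i.e.\ from forming a geometric cluster $w, qw, q^2 w, \ldots$ of the original variables. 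Iterating this over all variables, the residue structure organizes itself exactly into a partition $\lambda = 1^{m_1} 2^{m_2}\cdots \vdash k$: $\ell(\lambda)$ surviving "free" variables $w_1, \ldots, w_{\ell(\lambda)}$, with $w_j$ contributing a cluster of length $\lambda_j$, which produces the product $f(w_j) f(q w_j)\cdots f(q^{\lambda_j-1} w_j)$.

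Concretely, the steps I would carry out are: (1) Fix the combinatorial bookkeeping — show that a sequence of residue extractions corresponds to choosing an ordered set partition of $\{1,\ldots,k\}$ into blocks, where each block, read in increasing order, becomes a geometric string; then show that summing over all orderings of the blocks of a given shape produces the symmetry factor $\frac{k_q!}{m_1! m_2!\cdots}$ after dividing out overcounting (the $q$-factorial $k_q!$ arises from the residues of the $\frac{z_A-z_B}{z_A-q z_B}$ factors \emph{within} a string, combined with the prefactor $(-1)^k q^{k(k-1)/2}$). (2) Compute the residue of a single cluster of length $m$: extracting residues at $z_1 = q z_2, z_2 = q z_3, \ldots, z_{m-1}=q z_m$ from $\prod_{i=1}^m f(z_i)\frac{dz_i}{z_i}$ together with the relevant cross terms yields $f(w) f(qw)\cdots f(q^{m-1} w)\frac{dw}{w}$ up to an explicit power of $q$ and sign, where $w := z_m$; the factors $1/z_i$ and the cross terms between the cluster and the rest of the variables conspire to give the correct power. (3) Assemble the cross terms \emph{between} distinct clusters: after collapsing, the leftover factor coupling cluster $i$ (with head $w_i$, length $\lambda_i$) to cluster $j$ is what builds the determinant $\det\left[\frac{1}{w_i q^{\lambda_i} - w_j}\right]$ — this is the Cauchy-determinant-like identity one must verify by comparing the product of surviving cross-ratio factors against the determinant expansion. (4) Track the contour for each $w_j$: since all residues $z_A = q z_B$ have been accounted for and the $\poles$ set satisfies $q^m \poles \cap \poles = \emptyset$, each $w_j$ contour can be taken to enclose exactly $\poles$ and nothing else.

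The main obstacle I expect is step (3): verifying that the accumulated inter-cluster cross-ratio factors, after the residue extractions, collapse precisely into the determinant $\det\left[\frac{1}{w_i q^{\lambda_i}-w_j}\right]_{i,j=1}^{\ell(\lambda)}$ rather than into a less symmetric-looking product of $\frac{w_i q^{\lambda_i} - w_j q^{\lambda_j}}{w_i q^{\lambda_i}-w_j}$ type terms. The resolution is an application of a Cauchy determinant evaluation combined with the antisymmetrization that one gets for free by symmetrizing the integrand in $w_1,\ldots,w_{\ell(\lambda)}$ (the integral is over identical contours, so one may replace the integrand by its symmetrization). One also has to be a little careful that this symmetrization is legitimate — i.e.\ that the extra cross factors $\prod_{i<j}(w_i - w_j)$-type numerators that appear are exactly what is needed to promote a product into a determinant, so that no spurious poles at $w_i = w_j$ are introduced. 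Since a proof of precisely this proposition appears in Section 3.2 of \cite{BorCor}, I would ultimately cite that reference for the detailed residue bookkeeping, and here only indicate the mechanism; the statement is purely formal (an identity of nested contour integrals) and carries over verbatim.
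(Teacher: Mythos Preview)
Your proposal is correct and matches the paper's approach: the paper's proof is simply a citation to \cite{BorCor}, Proposition 3.2.1, stating that the result follows via residue calculus, and your outline describes exactly that residue mechanism (deforming the nested contours to a common contour, picking up residues at $z_A=qz_B$, organizing the resulting strings into a partition $\lambda$, and recognizing the inter-cluster factors as a Cauchy determinant). One minor slip: you speak of pushing $z_1$ ``outward,'' but since the $z_1$ contour is the outermost (it contains $\{qz_B\}_{B>1}$), the deformation is actually a shrinking of $z_1$ inward toward the common small contour around $\poles$; the poles crossed and the residue bookkeeping you describe are nonetheless correct.
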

\begin{pf}
This is proved in \cite{BorCor} as Proposition 3.2.1 via residue calculus.
\end{pf}

%
\begin{figure}

\includegraphics{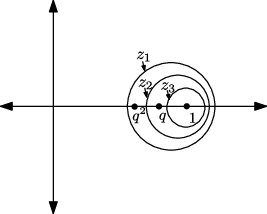}

\caption{Possible contours when $k=3$ for the $z_j$ contour integrals
in Proposition \protect\ref{mukprop}.}\label{circontours}
\end{figure}

As a quick example, consider $f(z)$ which has a pole at $z=1$. Then the
\mbox{$z_k$-}contour is a small circle around 1, the $z_{k-1}$-contour goes
around $1$ and $q$, and so on until the $z_1$-contour encircles $\{
1,q,\ldots, q^{k-1}\}$ (this is illustrated for $k=3$ in Figure~\ref
{circontours}). All the $w$ contours are small circles around 1 and can
be chosen to be the same.

We form a generating function of the $\mu_k$ and identify the result
as a Fredholm determinant.

%
\begin{proposition}\label{gendetprop}
Consider $\mu_k$ as in equation (\ref{muk}) defined with respect to
the same set of poles $\mathbb{A}$ of $f(w)$ for $k=1,2,\ldots$ and set
$C_{\mathbb{A}}$ to be a closed contour which contains $\mathbb{A}$
and no
other poles of $f(w)/w$. Then the following formal equality holds:
\[
\sum_{k\geq0}\mu_k \frac{\zeta^k}{k_q!} =
\det\bigl(I+K_\zeta^1\bigr),
\]
where $\det(I+K_\zeta^1)$ is the formal Fredholm determinant
expansion of
$K_\zeta^1\dvtx\break  L^2(\mathbb{Z}_{>0}\times C_{\mathbb{A}}) \to L^2(\mathbb
{Z}_{>0}\times
C_{\mathbb{A}})$
defined in terms of its integral kernel
\[
K_\zeta^1(n_1,w_1;n_2;w_2)
= \frac{(1-q)^{n_1}\zeta^{n_1}
f(w_1)f(qw_1)\cdots f(q^{n_1-1}w_1)}{q^{n_1} w_1 - w_2}.
\]
The above identity is formal, but also holds numerically if the
following is true: for all $w,w'\in C_{\mathbb{A}}$ and $n\geq1$, $|q^n
w-w'|^{-1}$ is uniformly bounded from zero; and there exists a positive
constant $M$ such that for all $w\in C_{\mathbb{A}}$ and all $n\geq0$,
$|f(q^n w)|\leq M$ and $|(1-q)\zeta| <M^{-1}$.
\end{proposition}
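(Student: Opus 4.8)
The plan is to substitute the partition-indexed expression (\ref{muk}) for $\mu_k$ into the generating function, cancel the common factor $k_q!$, and recognize the outcome as the Fredholm expansion of $\det(I+K_\zeta^1)$. First I would process each summand of (\ref{muk}): for a partition $\lambda\vdash k$ with parts $\lambda_1,\dots,\lambda_{\ell(\lambda)}$, write $(1-q)^k\zeta^k=\prod_i(1-q)^{\lambda_i}\zeta^{\lambda_i}$ and distribute these scalars, together with the factors $f(w_i)f(qw_i)\cdots f(q^{\lambda_i-1}w_i)$, down the $i$-th row of the determinant $\det\big[(w_iq^{\lambda_i}-w_j)^{-1}\big]$. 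By the definition of the kernel, the $(i,j)$ entry then becomes exactly $K_\zeta^1(\lambda_i,w_i;\lambda_j,w_j)$. Merging the sum over $k$ with the sum over $\lambda\vdash k$ into a single sum over all partitions $\lambda=1^{m_1}2^{m_2}\cdots$, this gives
\begin{equation*}
\sum_{k\geq 0}\mu_k\frac{\zeta^k}{k_q!}=\sum_{\lambda}\frac{1}{m_1!m_2!\cdots}\,\frac{1}{(2\pi\iota)^{\ell(\lambda)}}\int_{C_{\poles}}\!\!\cdots\!\int_{C_{\poles}}\det\big[K_\zeta^1(\lambda_i,w_i;\lambda_j,w_j)\big]_{i,j=1}^{\ell(\lambda)}\prod_{j=1}^{\ell(\lambda)}dw_j .
\end{equation*}

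Next comes the combinatorial step of turning this partition sum into the canonical Fredholm expansion
\begin{equation*}
\det(I+K_\zeta^1)=\sum_{L\geq0}\frac{1}{L!}\sum_{(n_1,\dots,n_L)\in(\Zgzero)^L}\frac{1}{(2\pi\iota)^L}\int_{C_{\poles}}\!\!\cdots\!\int_{C_{\poles}}\det\big[K_\zeta^1(n_i,w_i;n_j,w_j)\big]_{i,j=1}^{L}\prod_{j=1}^{L}dw_j .
\end{equation*}
The integrand just produced is invariant under simultaneously permuting the pairs $(\lambda_i,w_i)$ --- the determinant is unchanged by a simultaneous row-and-column permutation, and $\prod_j dw_j$ and $1/(2\pi\iota)^{\ell(\lambda)}$ are symmetric --- and since every $w_j$ runs over the same contour $C_{\poles}$, the resulting $w$-integral is a symmetric function of the discrete arguments $\lambda_1,\dots,\lambda_{\ell(\lambda)}$. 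A partition $\lambda$ with $m_i$ parts equal to $i$ and $L=\ell(\lambda)$ parts corresponds to exactly $L!/(m_1!m_2!\cdots)$ ordered tuples in $(\Zgzero)^L$ (its permutations), all giving the same integral value; hence, grouping the partition sum by $L$ and replacing $\sum_{\lambda:\,\ell(\lambda)=L}\frac1{m_1!m_2!\cdots}(\cdots)$ by $\frac1{L!}\sum_{(n_1,\dots,n_L)\in(\Zgzero)^L}(\cdots)$ produces exactly $\det(I+K_\zeta^1)$. This settles the formal identity, for which no estimates are needed.

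For the numerical statement the content lies in justifying absolute convergence, which then makes the generating function well-defined and legitimizes all the rearrangements above. Put $r:=M|(1-q)\zeta|<1$ and $c_0:=\sup\{|q^nw-w'|^{-1}:w,w'\in C_{\poles},\,n\geq1\}<\infty$. The hypotheses give the uniform bound $|K_\zeta^1(n_1,w_1;n_2,w_2)|\leq c_0\,r^{n_1}$ for all $w_1,w_2\in C_{\poles}$, so Hadamard's inequality yields $\big|\det[K_\zeta^1(n_i,w_i;n_j,w_j)]_{i,j=1}^L\big|\leq L^{L/2}c_0^L\,r^{n_1+\cdots+n_L}$. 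Consequently the $L$-th term of the Fredholm series is bounded in absolute value by $\frac{L^{L/2}}{L!}\big(\tfrac{c_0|C_{\poles}|}{2\pi}\big)^L\big(\sum_{n\geq1}r^n\big)^L=\frac{L^{L/2}}{L!}B^L$ for a constant $B$, and since $L^{L/2}/L!$ decays super-exponentially (Stirling), the series converges absolutely. Absolute convergence then permits regrouping the double series over $\big(L,(n_1,\dots,n_L)\big)$ by the value $k=n_1+\cdots+n_L$; by Proposition \ref{mukprop} the $k$-th group equals $\mu_k\zeta^k/k_q!$, which gives the numerical equality.

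I expect the main obstacle to be the bookkeeping in the second paragraph: verifying carefully that the integrand is symmetric under simultaneous permutation of the $(\lambda_i,w_i)$, and matching the combinatorial weight $1/(m_1!m_2!\cdots)$ against $1/L!$ (it is easy to be off by a factorial). The residue calculus behind (\ref{muk}) is already supplied by Proposition \ref{mukprop}, and the Hadamard/Stirling estimate is routine once written out, so those are not the difficulty.
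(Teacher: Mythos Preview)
Your proposal is correct and follows essentially the same approach that the paper indicates: substitute the partition-indexed formula (\ref{muk}) into the generating series, cancel $k_q!$, absorb the scalar factors into the rows of the Cauchy-like determinant to recognize the kernel $K_\zeta^1$, and then match the partition sum against the Fredholm series by the orbit-counting argument $1/(m_1!m_2!\cdots)\leftrightarrow 1/L!$. The numerical justification via Hadamard's inequality and Stirling is likewise the standard route.
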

\begin{pf}
This is proved in \cite{BorCor}, Proposition 3.2.8. The proof amounts
to reordering the sums defining $\mu_k$ and recognizing a Fredholm determinant.
\end{pf}

We may replace the space $L^2(\mathbb{Z}_{>0}\times C_{\mathbb{A}})$ by
$L^2(C_{\mathbb{A}})$ via the following Mellin--Barnes representation.

%
\begin{figure}

\includegraphics{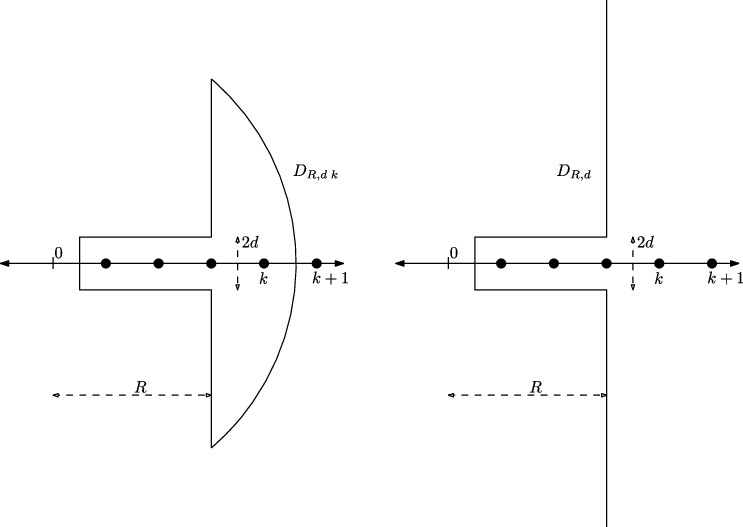}

\caption{Left: The contour $D_{R,d;k}$; Right: The contour $D_{R,d}$.}\label{DRdcontour}
\end{figure}
%
%
\begin{lemma}\label{gammasumlemma}
For all functions $f$ which satisfy the conditions below, we have the
identity that for $\zeta\in\{\zeta\dvtx |\zeta|<1, \zeta\notin\mathbb
{R}_{+}\}$:
%
%
\begin{equation}
\label{gammares} \sum_{n=1}^{\infty} f
\bigl(q^n\bigr) \zeta^n = \frac{1}{2\pi\iota} \int
_{C_{1,2,\ldots}} \Gamma(-s)\Gamma(1+s) (-\zeta)^s f
\bigl(q^s\bigr) \,ds,
\end{equation}
where the infinite contour $C_{1,2,\ldots}$ is a negatively oriented
contour which encloses $1,2,\ldots$ and no poles of $f(q^s)$ (e.g.,
$C_{1,2,\ldots} = \frac{1}{2} + \iota\mathbb{R}$ oriented from
$\frac
{1}{2}-\iota\infty$ to $\frac{1}{2}+\iota\infty$), and $z^s$ is
defined with respect to a branch cut along $z\in\mathbb{R}^{-}$. For the
above equality to be valid, the left-hand side must converge, and the
right-hand side integral must be able to be approximated by integrals
over a sequence of finite contours $C_{k}$ which enclose the poles at
$1,2,\ldots, k$ and which partly coincide with $C_{1,2,\ldots}$ in
such a way that the integral along the symmetric difference of the
contours $C_{1,2,\ldots}$ and $C_{k}$ goes to zero as $k$ goes to infinity.
\end{lemma}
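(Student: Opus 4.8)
The final statement to prove is Lemma \ref{gammasumlemma}, the Mellin--Barnes representation of the series $\sum_{n\geq 1} f(q^n)\zeta^n$.

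\medskip

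\noindent\textbf{Plan of proof.} The plan is to evaluate the right-hand contour integral by residue calculus, identifying it term-by-term with the left-hand series. First I would recall the two standard analytic facts about the Gamma function that drive the computation: $\Gamma(-s)\Gamma(1+s) = -\pi/\sin(\pi s)$, and $\Gamma(-s)$ has simple poles precisely at $s = 0, 1, 2, \ldots$ with residue $\Res_{s=n}\Gamma(-s) = (-1)^{n+1}/n!$, while $\Gamma(1+s)$ is holomorphic and equals $n!$ at $s=n$. Hence the integrand $\Gamma(-s)\Gamma(1+s)(-\zeta)^s f(q^s)$ has, inside the region enclosed by $C_{1,2,\ldots}$, only the simple poles at $s = 1, 2, 3, \ldots$ (recall that $f(q^s)$ is assumed analytic there, and the point $s=0$ is excluded since $C_{1,2,\ldots}$ encloses only $1,2,\ldots$), with residue at $s=n$ equal to
\begin{equation*}
\Res_{s=n}\Big[\Gamma(-s)\Gamma(1+s)(-\zeta)^s f(q^s)\Big] = \frac{(-1)^{n+1}}{n!}\cdot n!\cdot (-\zeta)^n f(q^n) = -(-1)^{n+1}(-1)^n \zeta^n f(q^n) = \zeta^n f(q^n),
\end{equation*}
using $(-\zeta)^n = (-1)^n\zeta^n$ for the principal branch. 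Because the contour is \emph{negatively} oriented, $\frac{1}{2\pi\iota}\oint_{C_k}$ picks up $-1$ times the sum of residues enclosed; combined with the $-1$ hiding in the clockwise orientation this gives exactly $\sum_{n=1}^k \zeta^n f(q^n)$ over a truncated contour $C_k$ enclosing the poles $1, \ldots, k$.

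\medskip

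\noindent\textbf{Key steps in order.} (1) State the residue computation above on a finite contour $C_k$ that encloses exactly $1,2,\ldots,k$ and partly coincides with $C_{1,2,\ldots}$, obtaining $\frac{1}{2\pi\iota}\int_{C_k}\Gamma(-s)\Gamma(1+s)(-\zeta)^s f(q^s)\,ds = \sum_{n=1}^{k}\zeta^n f(q^n)$. (2) Let $k\to\infty$. By the hypothesis of the lemma, the integral over the symmetric difference of $C_{1,2,\ldots}$ and $C_k$ tends to zero, so the left side converges to $\frac{1}{2\pi\iota}\int_{C_{1,2,\ldots}}\Gamma(-s)\Gamma(1+s)(-\zeta)^s f(q^s)\,ds$. (3) By the other hypothesis, the right side $\sum_{n=1}^{k}\zeta^n f(q^n)$ converges to $\sum_{n=1}^{\infty}f(q^n)\zeta^n$. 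Equating the two limits gives (\ref{gammares}). (4) Finally I would remark that along the specific contour $C_{1,2,\ldots} = \tfrac12 + \iota\R$, the reflection formula gives $|\Gamma(-s)\Gamma(1+s)| = \pi/|\sin(\pi s)|$ which decays like $e^{-\pi|\Imag s|}$, while $(-\zeta)^s$ on this line has modulus $|\zeta|^{1/2} e^{-\arg(-\zeta)\Imag s}$; since $\zeta\notin\Rplus$ we have $|\arg(-\zeta)|<\pi$, so the product still decays and the integral converges absolutely — this is the sense in which the technical hypotheses are automatically satisfied when $f(q^s)$ is, say, bounded on a strip, but I would keep the statement conditional as written.

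\medskip

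\noindent\textbf{Main obstacle.} The residue bookkeeping and the Gamma-function asymptotics are entirely routine; the only genuinely delicate point is the interchange of limit and integral in step (2), i.e. controlling the integral over the part of $C_{1,2,\ldots}$ that lies beyond $C_k$ (and over the connecting arcs of $C_k$) as $k\to\infty$. This is precisely why the lemma is stated conditionally: the hypothesis that ``the integral along the symmetric difference of $C_{1,2,\ldots}$ and $C_k$ goes to zero'' is doing exactly this work, so in the proof it suffices to invoke it. When the lemma is applied later (in the $q$-TASEP and ASEP settings) this is where one must check a concrete decay estimate on $f(q^s)$ along vertical lines; in the abstract lemma there is nothing further to do.
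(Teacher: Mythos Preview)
Your approach is exactly the paper's: the entire proof there is the one-line remark that $\Res_{s=k}\,\Gamma(-s)\Gamma(1+s) = (-1)^{k+1}$, after which the residue theorem on the finite contours $C_k$ plus the stated hypotheses finish the job. Your expansion of this into steps (1)--(3) is the correct fleshing-out.

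There is, however, a sign slip in your displayed residue computation that you should clean up. You have
\[
\frac{(-1)^{n+1}}{n!}\cdot n!\cdot (-\zeta)^n f(q^n) = (-1)^{n+1}(-1)^n \zeta^n f(q^n) = -\,\zeta^n f(q^n),
\]
not $+\zeta^n f(q^n)$ (your first equality inserts a spurious extra minus). The negative orientation of $C_k$ then contributes a single factor of $-1$ (your sentence ``combined with the $-1$ hiding in the clockwise orientation'' reads as if you are applying the orientation twice), yielding $\tfrac{1}{2\pi\iota}\int_{C_k}=-\sum_{n=1}^k(-\zeta^n f(q^n))=\sum_{n=1}^k \zeta^n f(q^n)$, which is the correct partial sum. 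So your final identity is right, but the two intermediate sign errors cancel by luck; fix the bookkeeping and the argument is complete.
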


\begin{pf}
The identity follows from $\operatorname{Res}_{s=k}\Gamma(-s)\Gamma
(1+s) = (-1)^{k+1}$.
\end{pf}

%
\begin{definition}\label{DRd}
The infinite contour $D_{R,d}$ is defined as follows. $D_{R,d}$ goes by
straight lines from $R-\iota\infty$, to $R-\iota d$, to $1/2 - \iota
d$, to $1/2+\iota d$, to $R+\iota d$, to $R+\iota\infty$. See
Figure~\ref{DRdcontour}\vadjust{\goodbreak} for an illustration. The finite contour
$D_{R,d;k}$ is defined as follows. Let $p, \bar{p}$ be the points [let
$\operatorname{Im}(p)>0$] at which the circle of radius $k+1/2$,
centered at 0,
intersects $D_{R,d}$. Then $D_{R,d;k}$ is the union of the portion of
$D_{R,d}$ inside the circle with reversed orientation, with the arc
from $\bar{p}$ to $p$ (oriented counterclockwise).
\end{definition}

%
\begin{proposition}\label{propmellindet}
Assume $f(w)=g(w)/g(qw)$ for some function $g$. Then the following
formal equality holds:
\[
\det\bigl(I+K_\zeta^1\bigr) = \det\bigl(I+K_\zeta^2
\bigr),
\]
where $\det(I+K_\zeta^1)$ is given in Proposition~\ref{gendetprop}
and where $\det(I+K_\zeta^2)$ is the formal Fredholm determinant
expansion of
$
K_\zeta^2\dvtx L^2(C_{\mathbb{A}})\to L^2(C_{\mathbb{A}})$.
The operator $K_\zeta^2$ is defined in terms of its integral kernel
\[
K_\zeta^2\bigl(w,w'\bigr) =
\frac{1}{2\pi\iota}\int_{C_{1,2,\ldots}} \Gamma (-s)\Gamma(1+s)
\bigl(-(1-q)\zeta \bigr)^s \frac{g(w)}{g(q^s w)} \frac{1}{q^s w-w'}\,ds.
\]

The above identity holds numerically if $\det(I+K_\zeta^1)$ is a
convergent Fredholm expansion and if $C_{1,2,\ldots}$ is chosen as
$D_{R,d}$ with $d>0$ and $R>0$ such that
\[
\mathop{\inf_{w,w'\in C_{\mathbb{A}}}}_{k\in\mathbb{Z}_{>0}, s\in
D_{R,d;k}} \bigl|q^s
w-w'\bigr|>0\quad\mbox{and}\quad \mathop{\sup_{w,w'\in C_{\mathbb{A}}}}_{k\in\mathbb{Z}_{>0}, s\in
D_{R,d;k}}
\biggl\llvert \frac{g(w)}{g(q^sw)}\biggr\rrvert <\infty.
\]
In that case the function $\zeta\mapsto\det(I+K_\zeta^2)$ is
analytic for all $\zeta\notin\mathbb{R}_{+}$.
\end{proposition}
\begin{pf}
This result can readily be extracted from the proof of \cite{BorCor}
Theorem~3.2.11. In fact, the strong analytic bounds which we require
can be significantly relaxed, however, as they will be sufficient for
our purposes, we do not explore this.
\end{pf}

We say that Fredholm determinants similar to $\det(I+K_\zeta^2)$ are
of Mellin--Barnes-type [because (\ref{gammares}) is a basic tool for
classical Mellin--Barnes integrals].

\subsection{Cauchy-type determinants}\label{seccauchy}
Instead of deforming contours so as to encounter the $z_A=qz_B$ poles,
we may deform our contours to only encounter the pole at 0. The residue
calculus becomes easier and the resulting sum of contour integrals is
indexed by partitions with just a single row (equivalently by
nonnegative integers).

%
\begin{definition}\label{def:muktildedef}
For a meromorphic function $f(z)$ and $k\geq1$ set $\mathbb{A}$ to be a
fixed set of poles of $f$ and assume that $q^m\mathbb{A}$ is disjoint from
$\mathbb{A}$ for all $m\geq1$. Define
%
%
\begin{equation}
\label{muktildedef} \tilde\mu_k=\frac{(-1)^k q^{k(k-1)/2}}{(2\pi\iota)^k} \int\cdots\int\prod
_{1\leq A<B\leq k} \frac{z_A-z_B}{z_A-qz_B} \prod
_{i=1}^{k} f(z_i) \frac{dz_i}{z_i},
\end{equation}
where the integration contour for $z_A$ contains $\{q z_B\}_{B> A}$,
the fixed set of poles $\mathbb{A}$ of $f(z)$ \textit{and} 0, but no
other poles.
\end{definition}

Notice that $\mu_k$ and $\tilde\mu_k$ differ only by the inclusion
of 0 in the contour for $\tilde\mu_k$. They can be related via the following.

%
\begin{proposition}\label{muandmutildeprop}
Assume $f(0)=1$. Then
\[
\tilde{\mu}_k = (-1)^k q^{k(k-1)/2} \sum
_{j=0}^{k} \pmatrix{k\cr j}_{q^{-1}}(-1)^{j}
q^{-j(j-1)/2} \mu_j.
\]
\end{proposition}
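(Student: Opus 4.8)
The plan is to relate $\tilde\mu_k$ and $\mu_k$ by tracking the residues picked up when the contours are shrunk to exclude the pole at $0$. Concretely, I would start from the definition of $\tilde\mu_k$ in \eqref{muktildedef}, whose $z_A$-contour encircles $\poles$, $0$, and $\{qz_B\}_{B>A}$, and deform the contours one index at a time (say $z_1$ first, then $z_2$, and so on) so as to remove the pole at $0$ from each. Each such deformation leaves behind a residue term at the corresponding $z_i=0$. Because $f(0)=1$, evaluating the residue at $z_i=0$ simply deletes the factor $f(z_i)\,dz_i/z_i$ and, crucially, the remaining Vandermonde-type factor $\prod_{A<B}\frac{z_A-z_B}{z_A-qz_B}$ specializes: setting some subset of the $z$'s to $0$ produces explicit powers of $q$ and combinatorial factors. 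The bookkeeping of which $q$-powers appear is exactly what produces the $q$-binomial coefficient ${k\choose j}_{q^{-1}}$ and the Gaussian-type prefactors $q^{-j(j-1)/2}$, $(-1)^j$.

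The key steps in order would be: (1) Fix the contour for $\tilde\mu_k$ and, for each $i$, write the $z_i$-integral over its $\tilde\mu_k$-contour as the $z_i$-integral over the $\mu_k$-contour (which excludes $0$) plus the residue at $z_i=0$; note that once $z_i$ is pushed to the smaller contour, no new pole at $0$ is created for the remaining variables since $f$ is regular there. (2) Observe that because of the symmetry of the integrand under permutations of the $z$'s (after the pole structure is accounted for) and the structure of Proposition's formula, the sum over which indices get sent to $0$ collapses: if exactly $k-j$ of the variables are evaluated at their residue at $0$ and $j$ remain, the contribution is a symmetric function of the remaining $j$ variables and reduces to a multiple of $\mu_j$. (3) Compute the exact scalar multiple: the $\prod_{A<B}\frac{z_A-z_B}{z_A-qz_B}$ factor, when $k-j$ of its arguments are set to $0$, and combined with the overall prefactor $(-1)^k q^{k(k-1)/2}$, must be matched against the prefactor $(-1)^j q^{j(j-1)/2}$ sitting inside $\mu_j$; the discrepancy is precisely $(-1)^{k} q^{\frac{k(k-1)}{2}} (-1)^j q^{-\frac{j(j-1)}{2}}$ times the number of ways the residues can be taken, weighted by $q$'s, which assembles into ${k\choose j}_{q^{-1}}$.

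The main obstacle I expect is step (3): correctly extracting the $q$-combinatorial factor from the residue evaluation. Care is needed because the order in which one peels the contours matters for intermediate $q$-powers even though the final answer is symmetric, and because the factor $\frac{z_A-z_B}{z_A-qz_B}$ evaluated partially at $0$ behaves asymmetrically in $A$ versus $B$ (it is $1/q$ if only $z_B=0$, but $1$ if only $z_A=0$, and indeterminate-but-resolvable if both are $0$ — one must use that these coincident-at-zero terms never actually both vanish in a single residue since the $z_A$ contour strictly contains $q z_B$). The cleanest route is probably to induct on $k$: peel off only the $z_1$ contour, splitting $\tilde\mu_k$ into $\tilde\mu_k$-with-$z_1$-on-the-small-contour plus a residue at $z_1=0$ which by $f(0)=1$ becomes $q^{-(k-1)}$ times a $(k-1)$-variable integral of the same shape, then recognize both pieces via the inductive hypothesis and reassemble using the $q$-Pascal recursion ${k\choose j}_{q^{-1}} = {k-1\choose j-1}_{q^{-1}} + q^{-(k-j)}{k-1\choose j}_{q^{-1}}$ (or its transpose). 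Matching this recursion against the two pieces, with the powers of $q$ that come out of the residue and the prefactor adjustments, is the one genuinely fiddly computation; everything else is routine contour manipulation justified by the disjointness hypothesis $q^m\poles \cap \poles = \emptyset$ and the regularity of $f$ at $0$. Since the statement is asserted as a formal identity, no analytic estimates are needed — it suffices to verify it at the level of the residue expansions.
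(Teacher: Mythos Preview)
Your approach is correct and is essentially the one the paper uses: the paper defers the actual proof to \cite{BorCor}, Proposition 3.2.5, but the ASEP analog proved in full here (Lemma \ref{lem:ASEPmu}) proceeds exactly as you describe --- split each contour into a piece around $0$ and a piece around $\poles$, sum over the $2^k$ subsets $S$ of indices integrated on the $0$-piece, evaluate each such term by taking residues at $0$ in order to get a power of $q$ times $\mu_{k-|S|}$, and then sum over $S$ of fixed cardinality to produce the $q$-binomial via \eqref{qBinexpansion}. One small slip to fix: in your parenthetical you have the $A$ and $B$ cases swapped (setting $z_A=0$ in $\tfrac{z_A-z_B}{z_A-qz_B}$ gives $1/q$, while setting $z_B=0$ gives $1$), though your subsequent computation of the $z_1=0$ residue as $q^{-(k-1)}$ is correct.
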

\begin{pf}
This is proved in \cite{BorCor}, Proposition 3.2.5.
\end{pf}

If, for instance, we assume now that $\mathbb{A}$ contains all poles of
$f$, then we can deform the contours in (\ref{muktildedef}) to all lie
on a single, large circle. The following symmetrization proposition
then applies.

%
\begin{proposition}\label{mukproplarge}
If the contours of integration in (\ref{muktildedef}) can be deformed
(without passing any poles) to all coincide with a contour $\widetilde
C_{\mathbb{A}}$, then
%
%
\begin{equation}
\label{mukproplargeeqn} \qquad\tilde\mu_k = \frac{k_q!}{k!} \frac{(1-q^{-1})^{k}}{(2\pi\iota
)^{k}}
\int_{\widetilde C_{\mathbb{A}}} \cdots\int_{\widetilde C_{\mathbb{A}}} \det \biggl[
\frac{1}{w_i q^{-1}-w_j} \biggr]_{i,j=1}^{k} \prod
_{j=1}^{k} f(w_j) \,dw_j.
\end{equation}
\end{proposition}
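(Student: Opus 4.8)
The plan is to prove Proposition~\ref{mukproplarge} by the standard symmetrization trick for nested contour integrals, following the same scheme used in \cite{BorCor}. First I would start from the definition \eqref{muktildedef} of $\tilde\mu_k$ and use the hypothesis that all contours can be deformed, without crossing poles, to a single common contour $\tilde C_{\poles}$. After this deformation the integrand is a genuine function on $(\tilde C_{\poles})^k$, so the ordering of the contours no longer matters and we may symmetrize: since the left-hand side is unchanged under relabeling the integration variables, we may replace the integrand by its average over all $k!$ permutations $\sigma\in S_k$ of the variables $z_1,\dots,z_k$. The key point is then to evaluate
\begin{equation*}
\frac{1}{k!}\sum_{\sigma\in S_k}\prod_{1\le A<B\le k}\frac{z_{\sigma(A)}-z_{\sigma(B)}}{z_{\sigma(A)}-qz_{\sigma(B)}}\prod_{i=1}^k\frac{1}{z_{\sigma(i)}},
\end{equation*}
which, because $f(z_1)\cdots f(z_k)$ and $dz_1\cdots dz_k$ are already symmetric, is the only part that needs simplification.

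The heart of the argument is the symmetrization identity: the symmetric sum above collapses to a single Cauchy-type determinant. Concretely, I would invoke the identity
\begin{equation*}
\sum_{\sigma\in S_k}\prod_{A<B}\frac{z_{\sigma(A)}-z_{\sigma(B)}}{z_{\sigma(A)}-qz_{\sigma(B)}}\prod_i\frac{1}{z_{\sigma(i)}}
= \prod_{i}\frac{1}{z_i}\cdot c_k\cdot \det\!\left[\frac{1}{z_iq^{-1}-z_j}\right]_{i,j=1}^k \cdot(\text{Vandermonde factors}),
\end{equation*}
or rather its clean packaged form: the Macdonald-type identity (see Section~\ref{sec7}, the combinatorial facts collected there, and \cite{BorCor} Proposition~3.2.6) stating that
\begin{equation*}
\sum_{\sigma\in S_k}\sigma\left(\prod_{A<B}\frac{z_A-z_B}{z_A-qz_B}\right)\frac{1}{\prod_i z_i}
\;=\; \frac{k!\,(1-q^{-1})^{-k}\,(-1)^k q^{-\frac{k(k-1)}{2}}}{k_q!}\;\det\!\left[\frac{1}{z_iq^{-1}-z_j}\right]_{i,j=1}^k,
\end{equation*}
where $k_q! = \prod_{i=1}^k \frac{1-q^i}{1-q}$. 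Substituting this into the symmetrized integral, the prefactor $(-1)^kq^{k(k-1)/2}/(2\pi\iota)^k$ from \eqref{muktildedef} combines with the constants produced by the identity, and after cancellation one is left precisely with
\begin{equation*}
\tilde\mu_k = \frac{k_q!}{k!}\frac{(1-q^{-1})^k}{(2\pi\iota)^k}\int_{\tilde C_{\poles}}\cdots\int_{\tilde C_{\poles}}\det\!\left[\frac{1}{w_iq^{-1}-w_j}\right]_{i,j=1}^k\prod_{j=1}^k f(w_j)\,dw_j,
\end{equation*}
which is \eqref{mukproplargeeqn}.

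I expect the main obstacle — or at least the only step requiring real care rather than bookkeeping — to be establishing (or correctly quoting) the algebraic symmetrization identity with the precise normalization constants; getting the powers of $q$, the factors of $(1-q^{-1})$, and the $q$-factorial $k_q!$ to line up exactly is where sign and normalization errors creep in. This is a purely rational-function identity in $z_1,\dots,z_k$ that can be checked by clearing denominators and comparing residues (or by induction on $k$), and it is exactly the content of the combinatorial lemmas in Section~\ref{sec7}; since the proposition only claims a numerical equality once the contour deformation hypothesis holds, no analytic subtleties arise beyond noting that the deformation is pole-free by assumption and that the determinant integrand is integrable on the compact contour $\tilde C_{\poles}$. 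The remaining steps — symmetrizing under $S_k$, interchanging finite sum and integral, and collecting constants — are routine.
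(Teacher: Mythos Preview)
Your approach is correct and is exactly the one behind the paper's proof (which simply cites \cite{BorCor}, Proposition~3.2.2): deform all contours to $\tilde C_{\poles}$, symmetrize over $S_k$, and collapse the sum using the Cauchy-determinant identity. The only issue is that the explicit ``clean packaged form'' you wrote down has the constants wrong (as you yourself anticipated); plugging your stated identity in does \emph{not} reproduce \eqref{mukproplargeeqn} but rather its reciprocal in the $k_q!$ and $(1-q^{-1})^k$ factors. The correct identity is precisely \eqref{combiden1} in Lemma~\ref{combiden}: with $q$ in place of $\tau$,
\[
\sum_{\sigma\in S_k}\prod_{A<B}\frac{z_{\sigma(A)}-z_{\sigma(B)}}{z_{\sigma(A)}-q z_{\sigma(B)}}
=(q;q)_k\,q^{-\frac{k(k-1)}{2}}\,z_1\cdots z_k\,\det\!\left[\frac{1}{z_i-qz_j}\right]_{i,j=1}^k.
\]
Substituting this into the symmetrized integrand of \eqref{muktildedef}, the factor $z_1\cdots z_k$ cancels $\prod_i dz_i/z_i$ to $\prod_i dz_i$, the $q^{-k(k-1)/2}$ cancels the prefactor $q^{k(k-1)/2}$, and then writing $(q;q)_k=k_q!(1-q)^k$ together with $\det[1/(z_i-qz_j)]=q^{-k}\det[1/(z_iq^{-1}-z_j)]$ and $(-1)^k(1-q)^kq^{-k}=(1-q^{-1})^k$ yields \eqref{mukproplargeeqn} on the nose.
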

\begin{pf}
This is proved in \cite{BorCor}, Proposition 3.2.2.
\end{pf}

%
\begin{proposition}\label{tildemuDetProp}
If\vspace*{1pt} the contours of integration in (\ref{muktildedef}) can be deformed
(without passing any poles) to all coincide with a contour $\widetilde
C_{\mathbb{A}}$, then the following formal identity holds:
\[
\sum_{k\geq0}\tilde{\mu}_k
\frac{\zeta^k}{k_q!} = \det\bigl(I+\zeta \widetilde{K}^1\bigr),
\]
where $\det(I+\widetilde{K}^1)$ is the formal Fredholm determinant
expansion of $
\widetilde K^1\dvtx\break  L^2(\widetilde{C}_{\mathbb{A}}) \to L^2(\widetilde{C}_{\mathbb{A}})
$
defined in terms of its integral kernel
\[
\widetilde{K}^1\bigl(w,w'\bigr) = (1-q)
\frac{f(w)}{qw'-w}.
\]
The above identity is formal, but also holds numerically for $\zeta$
such that the left-hand side converges absolutely and the right-hand
side operator $\widetilde{K}^1$ is trace-class.
\end{proposition}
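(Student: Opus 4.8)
The plan is to substitute the symmetrized contour-integral formula for $\tilde\mu_k$ supplied by Proposition \ref{mukproplarge} into the left-hand generating series and then recognize the resulting sum, term by term in $\zeta$, as the Fredholm-determinant expansion of $\det(I+\zeta\tilde K^1)$. Concretely, the hypothesis that the contours in (\ref{muktildedef}) may be deformed onto a single contour $\tilde C_{\poles}$ is exactly what is needed to apply Proposition \ref{mukproplarge}; dividing that identity by $k_q!$ gives
\[
\frac{\tilde\mu_k}{k_q!} \;=\; \frac{1}{k!}\,\frac{(1-q^{-1})^{k}}{(2\pi\iota)^{k}}\int_{\tilde C_{\poles}}\!\!\cdots\!\int_{\tilde C_{\poles}}\det\Big[\tfrac{1}{w_iq^{-1}-w_j}\Big]_{i,j=1}^{k}\,\prod_{j=1}^{k}f(w_j)\,dw_j ,
\]
so that $\sum_{k\geq0}\tilde\mu_k\zeta^k/k_q!$ is a sum of such integrals.

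The heart of the matter is a short algebraic rearrangement identifying the $k$-th summand with the $k$-th term of a Fredholm series. Since $\tilde K^1(w,w')=(1-q)f(w)/(qw'-w)$, multilinearity of the determinant in its rows lets one pull the scalar $1-q$ and the row-dependent factor $f(w_i)$ out of the $i$-th row, and the elementary identity $w_iq^{-1}-w_j=-q^{-1}(qw_j-w_i)$ converts the remaining entries; pulling the constant $-q^{-1}$ out of each of the $k$ rows and recombining prefactors via $(1-q)(-q^{-1})=1-q^{-1}$ yields
\[
\det\big[\tilde K^1(w_i,w_j)\big]_{i,j=1}^{k} \;=\; (1-q^{-1})^{k}\Big(\prod_{i=1}^{k}f(w_i)\Big)\det\Big[\tfrac{1}{w_iq^{-1}-w_j}\Big]_{i,j=1}^{k}.
\]
Hence $\tilde\mu_k\zeta^k/k_q! = \frac{\zeta^k}{k!(2\pi\iota)^k}\int_{\tilde C_{\poles}}\!\cdots\!\int_{\tilde C_{\poles}}\det[\tilde K^1(w_i,w_j)]_{i,j=1}^k\prod_j dw_j$, which is precisely the coefficient of $\zeta^k$ in the formal expansion $\det(I+\zeta\tilde K^1)=\sum_{k\geq0}\frac{\zeta^k}{k!}\int\!\cdots\!\int\det[\tilde K^1(w_i,w_j)]_{i,j=1}^k\prod_j\frac{dw_j}{2\pi\iota}$. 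Summing over $k$ gives the asserted identity as a formal power-series (equivalently, termwise) statement.

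For the numerical claim I would invoke the standard theory of Fredholm determinants of trace-class operators: parametrizing $\tilde C_{\poles}$ realizes $\zeta\tilde K^1$ as an integral operator on an $L^2$ space over a finite measure space, and when it is trace-class its operator-theoretic Fredholm determinant equals the Hadamard series whose $k$-th coefficient is $\frac1{k!}\int\!\cdots\!\int\det[\zeta\tilde K^1(w_i,w_j)]\prod_j\frac{dw_j}{2\pi\iota}$ --- exactly the series produced above, whose convergence follows from Hadamard's inequality. Absolute convergence of the left-hand series then upgrades the termwise identification to an equality of numbers, and matching coefficients of $\zeta^k$ finishes the proof; this is the same bookkeeping carried out for the Mellin-Barnes analogue and for the parallel statements in \cite{BorCor}. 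The only genuinely delicate point --- and hence the main obstacle --- is this last step: reconciling the contour-integral ``Fredholm expansion'' written in the statement (in which $\frac{dw}{2\pi\iota}$ is a complex measure rather than an $L^2$ pairing) with the operator Fredholm determinant of $\zeta\tilde K^1$, and verifying trace-class membership from smoothness and boundedness of the kernel on $\tilde C_{\poles}$. Since the proposition explicitly assumes the operator is trace-class, this reduces to citing classical trace-ideal theory.
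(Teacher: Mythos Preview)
Your proof is correct and follows the natural approach. The paper itself does not give an independent proof of this proposition---it simply cites \cite{BorCor}, Proposition~3.2.9---so there is no detailed argument in the paper to compare against. Your derivation (apply Proposition~\ref{mukproplarge}, divide by $k_q!$, and identify the integrand with $\det[\tilde K^1(w_i,w_j)]$ via the row manipulation $(1-q)(-q^{-1})=1-q^{-1}$) is exactly the computation one expects and is presumably what the cited reference contains.
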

\begin{pf}
This is proved in \cite{BorCor}, Proposition 3.2.9.
\end{pf}

%
\begin{remark}\label{k1k2}
By considering the Fredholm series expansion [whose terms are given by
(\ref{mukproplargeeqn})], it is clear that since $f$ arises
multiplicatively, it can be paired either with $w_i$ or $w_j$ in the
Cauchy determinant. As a consequence, it follows that
\[
\det\bigl(I+\zeta\widetilde{K}^1\bigr) = \det\bigl(I+\zeta
\widetilde{K}^2\bigr),
\]
where
$
\widetilde K^1\dvtx L^2(\widetilde{C}_{\mathbb{A}}) \to L^2(\widetilde{C}_{\mathbb{A}})
$
is defined in terms of its integral kernel
\[
\widetilde{K}^2\bigl(w,w'\bigr) = (1-q)\frac{f(w)}{qw-w'}.
\]
\end{remark}

We call Fredholm determinants of this form \textit{Cauchy} type.

\subsection{Application to \mbox{$q$-}TASEP}\label{secqtasepapp}
The following theorems about \mbox{$q$-}TASEP are applications of the
manipulations of the previous section. The required estimates necessary
to make these numerical equalities are provided in \cite{BorCor}.

\subsubsection{Mellin--Barnes-type Fredholm determinant for \mbox{$q$-}TASEP}
%
\begin{theorem}\label{PlancherelfredThm}
Fix $0<q<1$ and $n\geq1$. Fix $0<\delta<1$ and $a_1,\ldots, a_n$
such that for all $i$, $a_i>0$ and $|a_i -1|\leq d$ for some constant
$d <\frac{1-q^{\delta}}{1+q^{\delta}}$. Consider \mbox{$q$-}TASEP with step
initial data and jump parameters $a_i$. Then for all $t\in\mathbb
{R}_{+}$ and
$\zeta\in\mathbb{C}\setminus\mathbb{R}_{+}$, the following
characterizes the
distribution of $x_n(t)$:
%
%
\begin{equation}
\label{thmlaplaceeqn} \mathbb{E} \biggl[ \frac{1}{ (\zeta q^{x_n(t)};q )_{\infty
}} \biggr] = \det
\bigl(I+K^{q\mbox{-}\mathrm{TASEP}}_{\zeta}\bigr),
\end{equation}
where $\det(I+K^{q\mbox{-}\mathrm{TASEP}}_{\zeta})$ is the Fredholm
determinant of
$
K_{\zeta}\dvtx L^2(C_{a})\to L^2(C_{a})
$
for $C_a$ a positively oriented circle $|w-1|=d$. The operator
$K_{\zeta}$ is defined in terms of its integral kernel
\[
K^{q\mbox{-}\mathrm{TASEP}}_{\zeta}\bigl(w,w'\bigr) =
\frac{1}{2\pi\iota}\int_{-\iota\infty
+ \delta}^{\iota\infty+\delta} \Gamma(-s)
\Gamma(1+s) \bigl(-q^{-n}\zeta \bigr)^s \frac{g(w)}{g(q^s w)}
\frac{1}{q^s w - w'}\,ds,
\]
where
%
%
\begin{equation}
\label{gwwprimeeqn} g(w)= \prod_{m=1}^{n}
\frac{1}{(w/a_m;q)_{\infty}} e^{-t w}.
\end{equation}
%
\end{theorem}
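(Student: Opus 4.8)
The plan is to realize Theorem~\ref{PlancherelfredThm} as a direct application of the scheme of Section~\ref{sec3} to the $q$-moment formula of Corollary~\ref{cor:qformulas}. First I would specialize Corollary~\ref{cor:qformulas} to $n_1=n_2=\cdots=n_k=n$. This expresses $\EE[q^{k(x_n(t)+n)}]$, and hence $\EE[q^{kx_n(t)}]=q^{-kn}\EE[q^{k(x_n(t)+n)}]$, in the form of Definition~\ref{def:mukdef}: one has $\EE[q^{k(x_n(t)+n)}]=\mu_k$ for the meromorphic function
\[
f(z)=\left(\prod_{m=1}^{n}\frac{a_m}{a_m-z}\right)e^{(q-1)tz},
\]
whose only poles are at $z=a_1,\dots,a_n$ (so $\poles=\{a_1,\dots,a_n\}$) and which satisfies $f(0)=1$.

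Second, I would identify the $q$-Laplace transform with the generating function attached to these $\mu_k$. By the $q$-binomial theorem (Section~\ref{sec:qdef}), $1/(\zeta q^{x_n(t)};q)_\infty=\sum_{k\ge 0}\zeta^k q^{kx_n(t)}/\qq k$, so---interchanging the sum with the expectation---
\[
\EE\!\left[\frac{1}{(\zeta q^{x_n(t)};q)_\infty}\right]=\sum_{k\ge 0}\frac{\zeta^k q^{-kn}}{\qq k}\,\mu_k=\sum_{k\ge 0}\frac{\mu_k}{k_q!}\left(\frac{q^{-n}\zeta}{1-q}\right)^{k},
\]
using $\qq k=(1-q)^kk_q!$. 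By Proposition~\ref{gendetprop} this equals $\det(I+K^1_{\zeta'})$ with $\zeta'=q^{-n}\zeta/(1-q)$. To rewrite it in Mellin--Barnes form I would apply Proposition~\ref{prop:mellindet}, whose hypothesis $f(w)=g(w)/g(qw)$ holds with $g$ the function of (\ref{gwwprimeeqn}): indeed $(qw/a_m;q)_\infty/(w/a_m;q)_\infty=a_m/(a_m-w)$ and $e^{-tw}/e^{-tqw}=e^{(q-1)tw}$. This gives $\det(I+K^1_{\zeta'})=\det(I+K^2_{\zeta'})$, and substituting $(1-q)\zeta'=q^{-n}\zeta$ into the kernel of Proposition~\ref{prop:mellindet} reproduces $\KqTASEPMB\zeta$ exactly as stated. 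The admissibility of $C_a=\{|w-1|=d\}$ as $C_\poles$ and of the straight $s$-contour $\{\re s=\delta\}$ (in place of the more robust $D_{R,d}$ of Proposition~\ref{prop:mellindet}) is precisely what the hypotheses $0<\delta<1$, $d<(1-q^\delta)/(1+q^\delta)$ guarantee: for $|w|\le 1+d$, $|w'|\ge 1-d$, $\re s=\delta$ one has $|q^s w|\le q^\delta(1+d)<1-d\le|w'|$, so $q^sw$ stays strictly off $C_a$ (giving the lower bound on $|q^sw-w'|$, and also $q^m\poles\cap\poles=\emptyset$), while the sup-bound on $g(w)/g(q^sw)$ over this range is a routine estimate on the $q$-Pochhammer and exponential factors.

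The main obstacle---and the only genuinely analytic point---is turning this chain of identities from formal into numerical. This requires bounds of the shape $|\mu_k|\le C^k$, equivalently control on the growth in $k$ of the $q$-moments $\EE[q^{kx_n(t)}]$, uniform enough both to justify the interchange of $\sum_k$ with $\EE$ above and to make the Fredholm expansions in Propositions~\ref{gendetprop} and~\ref{prop:mellindet} absolutely convergent; these estimates are supplied by \cite{BorCor}, and I would cite them rather than reprove them. They yield the identity for $|\zeta|$ small, after which I would extend it to all $\zeta\in\C\setminus\Rplus$ by analytic continuation: the left side is analytic off $\Rplus$ since for a random variable $X$ valued in $\Z_{\ge-n}$ the only $\zeta$-singularities of $1/(\zeta q^X;q)_\infty$ sit at $\zeta=q^{-j}q^{-X}\in\Rplus$, while $\zeta\mapsto\det(I+\KqTASEPMB\zeta)$ is analytic off $\Rplus$ by the last assertion of Proposition~\ref{prop:mellindet}. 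Finally, that (\ref{thmlaplaceeqn}) characterizes the law of $x_n(t)$ follows because $x_n(t)$ is supported on $\Z_{\ge-n}$ and the transform $x\mapsto 1/(\zeta q^x;q)_\infty$ separates point masses on such a set---its simple pole at $\zeta=q^{-j-x}$ extracts $\PP(x_n(t)=x)$---a fact also recorded in \cite{BorCor}.
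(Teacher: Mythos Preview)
Your proposal is correct and follows essentially the same route as the paper: the paper's own proof simply cites \cite{BorCor}, Theorem~3.2.11, and points to the proof of Theorem~\ref{ASEPMellinBarnesThm} as the template, which is exactly the chain Corollary~\ref{cor:qformulas} $\Rightarrow$ Proposition~\ref{gendetprop} $\Rightarrow$ Proposition~\ref{prop:mellindet} plus analytic continuation that you describe. One small sharpening: the moment bound you defer to \cite{BorCor} is in fact trivial here, since for step initial data $x_n(t)\ge -n$ deterministically, so $\mu_k=\EE[q^{k(x_n(t)+n)}]\le 1$; this is the direct analogue of the bound $\tau^{kN_x(\eta(t))}\le 1$ used in the proof of Theorem~\ref{ASEPMellinBarnesThm}.
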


\begin{pf}
This is proved in \cite{BorCor}, Theorem 3.2.11. A similar approach is
described in its entirety in the proof of Theorem~\ref{ASEPMellinBarnesThm}, for ASEP.
\end{pf}

The above is an $e_q$-Laplace transform and can be inverted via
Proposition~\ref{qlaplaceinverse}. Since $x_n(t)$ is supported on $\{
-n,-n+1,\ldots\}$, in order\vspace*{1pt} to apply Proposition~\ref
{qlaplaceinverse} it is necessary to shift everything by $n$. Let
$\hat{f}^{q}(\zeta) = \det(I+K^{q\mbox{-}\mathrm{TASEP}}_{\zeta})$
and redefine $C_m$
to encircle the poles $\zeta=q^{-M}$ for $-n\leq M \leq m-n$. Under
these modifications, Proposition~\ref{qlaplaceinverse} gives $\mathbb{P}
(x_n(t) = m)$.

\subsubsection{Cauchy-type Fredholm determinant for \mbox{$q$-}TASEP}

%
\begin{theorem}\label{largeconThm}
Fix $0<q<1$, $n\geq1$ and $a_1,\ldots, a_n$ such that for all $i$,
\mbox{$a_i>0$}. Consider \mbox{$q$-}TASEP with step initial data and jump parameters
$a_i>0$ for all $i\geq1$. Let $x_n(t)$ by the location of particle $n$
at time $t$. Then for all $\zeta\in\mathbb{C}\setminus\{q^{-i}\}
_{i\in
\mathbb{Z}_{\geq0}}$
%
%
\begin{equation}
\label{thmlaplaceeqnLARGE} \mathbb{E} \biggl[ \frac{1}{ (\zeta q^{x_n(t)+n};q
)_{\infty
}} \biggr] = \frac{\det(I+\zeta\widetilde K^{q\mbox{-}\mathrm{TASEP}})}{(\zeta;q)_{\infty}},
\end{equation}
where\vspace*{1.5pt} $\det(I+\zeta\widetilde K^{q\mbox{-}\mathrm{TASEP}})$ is an entire
function of $\zeta
$ and is the Fredholm determinant of
$
\widetilde K^{q\mbox{-}\mathrm{TASEP}}\dvtx L^2(\widetilde{C}_{a}) \to L^2(\widetilde{C}_{a})
$
defined in terms of its integral kernel
\[
\widetilde K^{q\mbox{-}\mathrm{TASEP}}\bigl(w,w'\bigr) =\frac{f(w)}{q w' - w}
\]
with
\[
f(w) = \Biggl(\prod_{m=1}^{n}
\frac{a_m}{a_m-w} \Biggr) \exp\bigl\{(q-1)t w\bigr\}
\]
and $\widetilde{C}_{a}$ a star-shaped contour with respect to 0 (i.e., it
strictly contains 0 and every ray from 0 crosses $\widetilde{C}_{a}$
exactly once) contour containing $a_1,\ldots, a_n$.
\end{theorem}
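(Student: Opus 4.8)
The plan is to recognize the left-hand side as a $q$-deformed Laplace-type generating function of the $q$-moments already computed in Corollary~\ref{cor:qformulas}, and then to feed it through the Cauchy-type machinery assembled in Section~\ref{sec:cauchy}. \emph{Step 1.} First I would specialize Corollary~\ref{cor:qformulas} to $n_1=\cdots=n_k=n$, which gives $\EE[q^{k(x_n(t)+n)}]=\mu_k$, where $\mu_k$ is exactly the quantity of Definition~\ref{def:mukdef} built from $f(z)=\big(\prod_{m=1}^{n}\tfrac{a_m}{a_m-z}\big)e^{(q-1)tz}$ with pole set $\poles=\{a_1,\dots,a_n\}$; this is precisely the $f$ of the statement, it has no poles other than the $a_m$, and $f(0)=1$. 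Since $0\le q^{x_n(t)+n}\le 1$, these moments are bounded by $1$, so for $|\zeta|<1$ Fubini and the $q$-exponential identity $\frac{1}{(x;q)_\infty}=\sum_{k\ge 0}\frac{x^k}{\qq{k}}$ give
\[
\EE\!\left[\frac{1}{(\zeta q^{x_n(t)+n};q)_\infty}\right]=\sum_{k\ge 0}\frac{\zeta^k}{\qq{k}}\,\EE\big[q^{k(x_n(t)+n)}\big]=\sum_{k\ge 0}\frac{\zeta^k}{\qq{k}}\,\mu_k .
\]

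\emph{Step 2.} Next I would convert $\mu_k$ to $\tilde\mu_k$ (the same integral with $0$ adjoined to the contours, Definition~\ref{def:muktildedef}). Because $f(0)=1$, Proposition~\ref{muandmutildeprop} writes $\tilde\mu_k$ as an explicit triangular combination of $\mu_0,\dots,\mu_k$. Substituting this into $\sum_k\frac{\zeta^k}{\qq{k}}\tilde\mu_k$, interchanging the sums, and using the standard rewrite ${k\choose j}_{q^{-1}}=q^{-j(k-j)}{k\choose j}_{q}$ together with the elementary identity $\binom{j+m}{2}-jm=\binom{j}{2}+\binom{m}{2}$, the inner sum (over $k=j+m$) factors; summing it by the $q$-binomial theorem $\sum_{m\ge0}\frac{q^{\binom{m}{2}}(-\zeta)^m}{\qq{m}}=(\zeta;q)_\infty$ collapses everything to the clean identity $\sum_k\frac{\zeta^k}{\qq{k}}\tilde\mu_k=(\zeta;q)_\infty\sum_k\frac{\zeta^k}{\qq{k}}\mu_k$. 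Since $f$ has poles only at the $a_m$, the contours defining $\tilde\mu_k$ can be merged to a single star-shaped contour $\tilde{C}_a$ around $0$ enclosing $a_1,\dots,a_n$ (the required containments hold since $q\tilde{C}_a$ sits strictly inside $\tilde{C}_a$), which is exactly the hypothesis of Propositions~\ref{mukproplarge} and~\ref{tildemuDetProp}. Applying Proposition~\ref{tildemuDetProp} with its parameter taken to be $\zeta/(1-q)$ — using $\qq{k}=(1-q)^k k_q!$, which absorbs the kernel prefactor $(1-q)$ — gives $\sum_k\frac{\zeta^k}{\qq{k}}\tilde\mu_k=\det(I+\zeta\KqTASEPCauchy)$ with $\KqTASEPCauchy(w,w')=\frac{f(w)}{qw'-w}$. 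Chaining the displays yields the claimed formula for $|\zeta|<1$.

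\emph{Step 3.} To finish I would address analyticity and rigor. On the compact contour $\tilde{C}_a$ the function $f$ is continuous and $qw'-w$ is bounded away from $0$ (star-shapedness), so $\KqTASEPCauchy$ is trace-class and $\zeta\mapsto\det(I+\zeta\KqTASEPCauchy)$ is entire; hence $\det(I+\zeta\KqTASEPCauchy)/(\zeta;q)_\infty$ is meromorphic with poles confined to $\{q^{-i}\}_{i\ge 0}$. For $|\zeta|<1$ the formal equalities in Propositions~\ref{muandmutildeprop},~\ref{mukproplarge},~\ref{tildemuDetProp} become genuine numerical equalities under the analytic/growth estimates proved in \cite{BorCor}; and since $x_n(t)+n\in\Zgeqzero$, the left-hand side is analytic on $\C\setminus\{q^{-i}\}_{i\ge0}$ by the same moment bounds. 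Analytic continuation from $|\zeta|<1$ then extends the identity to all $\zeta\in\C\setminus\{q^{-i}\}_{i\ge0}$.

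\emph{Main obstacle.} Granting the propositions of Section~\ref{sec:cauchy}, the only real computation is the $q$-series manipulation in Step~2 that generates the denominator $(\zeta;q)_\infty$; the genuinely delicate point is purely analytic — verifying the hypotheses from \cite{BorCor} that upgrade the formal Fredholm identities to numerical ones and that justify analyticity, equivalently uniform integrability of $1/(\zeta q^{x_n(t)+n};q)_\infty$ over compact subsets of $\C\setminus\{q^{-i}\}_{i\ge0}$.
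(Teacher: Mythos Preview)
Your proposal is correct and follows essentially the same route as the paper (which defers to \cite{BorCor} and to the analogous ASEP proof of Theorem~\ref{ASEPcauchy}). The only minor difference is organizational: in Step~2 you derive the factorization $\sum_k \tfrac{\zeta^k}{(q;q)_k}\tilde\mu_k=(\zeta;q)_\infty\sum_k \tfrac{\zeta^k}{(q;q)_k}\mu_k$ by a direct $q$-series computation, whereas the paper (in the ASEP argument) first gives $\tilde\mu_k$ the probabilistic interpretation $\tilde\mu_k=\EE\big[(q^{x_n(t)+n}-1)(q^{x_n(t)+n}-q)\cdots(q^{x_n(t)+n}-q^{k-1})\big]$ and then applies the $q$-binomial theorem inside the expectation; both routes are equivalent and yield the same identity.
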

\begin{pf}
This is proved in \cite{BorCor}, Theorem 3.2.16. A similar approach is
described in its entirety in the proof of Theorem~\ref{ASEPcauchy},
for ASEP.
\end{pf}
The above shows that $\det(I+\zeta\widetilde K^{q\mbox{-}\mathrm{TASEP}})/
(\zeta;q)_{\infty}$ equals the $e_q$-Laplace transform of $q^{x_n(t)+n}$.

\section{Duality and the nested contour integral ansatz for ASEP}\label{sec4}

The asymmetric simple exclusion process (ASEP) was introduced by
Spitzer \cite{Spitzer} in 1970 and also arose in biology in the work
of MacDonald, Gibbs and Pipkin \cite{MGP} in 1968. Since then, it has
become a central object of study in interacting particle systems and
nonequilibrium statistical mechanics.

The ASEP is a continuous time Markov process with state $\eta(t)=\break \{
\eta_x(t)\}_{x\in\mathbb{Z}}\in\{0,1\}^{\mathbb{Z}}$ at time
$t\geq0$. The $\eta
_x(t)$ are called \textit{occupation} variables and can be thought of as
the indicator function for the event that a particle is at site $x$ at
time $t$. The dynamics of this process is specified by nonnegative
real numbers $p\leq q$ (normalized by $p+q=1$) and uniformly bounded
(from infinity and zero) rate parameters $\{a_{x}\}_{x\in\mathbb
{Z}}$. For
each pair of neighboring sites $(y,y+1)$, the following exchanges
happen in continuous time:
\begin{eqnarray*}
\eta&\mapsto&\eta^{y,y+1}\qquad\mbox{at rate } a_{y}p\qquad\mbox{if }(\eta_y,\eta_{y+1})=(1,0),
\\
\eta&\mapsto&\eta^{y,y+1}\qquad\mbox{at rate } a_{y}q\qquad\mbox{if }(\eta_y,\eta_{y+1})=(0,1),
\end{eqnarray*}
where $\eta^{y,y+1}$ denotes the state in which the value of the
occupation variables at site $y$ and $y+1$ are switched, and all other
variables remain unchanged. All exchanges occur independently of each
other according to exponential clocks. These dynamics are called the
\textit{ASEP occupation process} and are defined in terms of the generator
$L^{\mathrm{occ}}$ which acts on local functions $f\dvtx \{0,1\}^{\mathbb
{Z}}\to\mathbb{R}$ by
%
%
\begin{eqnarray}\label{ASEPgen}
&& \bigl(L^{\mathrm{occ}}f\bigr) (\eta)
\nonumber\\[-8pt]\\[-8pt]
&&\qquad = \sum
_{y\in\mathbb{Z}} a_{y} \bigl[p \eta_y (1-\eta
_{y+1}) + q(1-\eta_y)\eta_{y+1} \bigr] \bigl(f
\bigl(\eta^{y,y+1}\bigr) - f(\eta ) \bigr).\nonumber
\end{eqnarray}
The existence of a Markov process with this generator is shown, for
example, in~\cite{Lig}.

In terms of particles, the dynamics of ASEP are that each particle
attempts, in continuous time, to jump right at rate $pa_{y}$ and to the
left at rate $q a_{y-1}$ (presently the particle is at position $y\in
\mathbb{Z}$), subject to the exclusion rule that says that jumps are
suppressed if the destination site is occupied. We assume $p\leq q$
(drift to the left) and define $\gamma:=q-p\geq0$ and $\tau:=p/q\leq1$.

The ASEP preserves the number of particles, thus we can consider ASEP
with $k$ particles as a process on the particle locations. Define
\[
\widetilde W^{k}= \bigl\{\vec{x}=(x_1,x_2,
\ldots,x_k)\in\mathbb {Z}^k\dvtx  x_1<
x_2<\cdots<x_k \bigr\}
\]
and $\vec{x}_i^{\pm}=(x_1,\ldots,x_{i-1},x_i\pm1,x_{i+1},\ldots,x_k)$. Then $\vec{x}(t)= (x_1(t)<x_2(t)<\cdots< x_k(t) )\in
\widetilde W^{k}$ denotes the locations of the $k$ particles of ASEP at
time $t$.

%
\begin{figure}[b]

\includegraphics{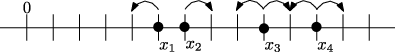}

\caption{ASEP with four particles: $x_1=5$, $x_2=6$, $x_3=9$ and
$x_4=11$. The first two particles form a cluster, and the third and
fourth form two separate clusters. The arrows represent admissible moves.}\label{ASEPfig}
\end{figure}

In order to describe the generator of ASEP in terms of particle
locations, it is convenient to introduce particle cluster notation
(see Figure~\ref{ASEPfig}). A
\textit{cluster} is a collection of particles next to each other:
$x_{i}=x_{i+1}-1=\cdots=x_{i+j}-j$. There is a unique way of
dividing the particles $\vec{x}$ into clusters so that each cluster is
separated by a buffer of at least one site: let $c(\vec{x})$ be the
number of such clusters, $\ell(\vec{x})=(\ell_1,\ldots, \ell_c)$
be the collection of labels of the left-most particles of each cluster,
and $r(\vec{x})=(r_1,\ldots, r_c)$ be the collection of labels for
the right-most particles of each cluster. For instance, if $k=4$ and
$x_1=5$, $x_2=6$, $x_3=9$, $x_4=11$ then $c(\vec{x})=3$, $\ell(\vec
{x})=(1,3,4)$ and $r(\vec{x}) =(2,3,4)$.

For $k\geq1$, the \textit{ASEP particle process} generator acts on
bounded functions $f\dvtx \widetilde W^{k}\to\mathbb{R}$ by
\[
\bigl(L^{\mathrm{part}}f\bigr) (\vec{x}) = \sum_{i\in\ell(\vec{x})}
a_{x_i-1}p \bigl[f\bigl(\vec{x}_{i}^{-}\bigr) - f(
\vec{x}) \bigr] + \sum_{i\in r(\vec{x})} a_{x_i}q
\bigl[f\bigl(\vec{x}_{i}^{+}\bigr) - f(\vec{x}) \bigr].
\]

We will consider initial configurations for ASEP in which there is at
most a finite number of nonzero occupation variables (i.e., particles)
to the left of the origin---we call these \textit{left-finite} initial
data. When ASEP is initialized with left-finite initial data, its state
remains left-finite for all time (simply because it will always have a
left-most particle). We will use $\mathbb{E}^{\eta}$ and $\mathbb
{P}^{\eta}$ to
denote expectation and probability (resp.) of the Markov
dynamics on occupation variables with initial data $\eta$ (and
likewise $\mathbb{E}^{\vec{x}}$ and $\mathbb{P}^{\vec{x}}$ for the Markov
evolution on particle locations with initial data~$\vec{x}$). When the
initial data is itself random, we write $\mathbb{E}$ and $\mathbb{P}$
to denote
expectation and probability (resp.) of the Markov dynamics as
well as the initial data. We also use $\mathbb{E}$ and $\mathbb{P}$
when the
initial data is otherwise specified.

\subsection{Duality}

Recall that $\tau=p/q\leq1$ by assumption and define the following
functions of a state $\eta$:
%
%
\begin{eqnarray}\label{NQdef}
N_x(\eta) &=& \sum_{y=-\infty}^{x}\eta_y,\qquad Q_x(\eta)= \tau ^{N_x(\eta)},
\nonumber\\[-8pt]\\[-8pt]
\widetilde{Q}_x(\eta) &=& \frac{Q_x(\eta)-Q_{x-1}(\eta)}{\tau-1} = \tau^{N_{x-1}(\eta)}
\eta_x.\nonumber
\end{eqnarray}

The following result shows that (with general bond rate parameters) the
ASEP occupation process and the ASEP particle process with the role of
$p$ and $q$ reversed, are dual with respect to a given function $\widetilde
{H}$. This is sometimes called self-duality, despite the fact that the
processes involved are independent and defined with respect to
different state spaces.

%
\begin{theorem}\label{thmASEPdualitytilde}
Fix nonnegative real numbers \mbox{$p\leq q$} (normalized by\break \mbox{$p+q=1$}) and
uniformly bounded (from infinity and zero) bond rate parameters $\{
a_{x}\}_{x\in\mathbb{Z}}$. For any $k\geq1$, the ASEP occupation\vspace*{1.5pt} process
$\eta(t)$ with state space $\{0,1\}^{\mathbb{Z}}$, and the ASEP particle
process $\vec{x}(t)$ with state space $\widetilde W^{k}$ and the role
of~$p$~and~$q$ reversed, are dual with respect to
\[
\widetilde H(\eta,\vec{x}) =\prod_{i=1}^{k}
\widetilde Q_{x_i}(\eta).
\]
\end{theorem}

If we restrict to $a_x\equiv1$ we can prove another ASEP duality.

%
\begin{theorem}\label{thmASEPduality}
Fix nonnegative real numbers $p\leq q$ (normalized by $p+q=1$) and bond
rate parameters $a_{x}\equiv1$. For any $k\geq1$, the ASEP occupation
process $\eta(t)$ with state\vspace*{1.5pt} space $\{0,1\}^{\mathbb{Z}}$, and the ASEP
particle process $\vec{x}(t)$ with state space $\widetilde W^{k}$ and the
role of $p$ and $q$ reversed, are dual with respect to
\[
H(\eta,\vec{x}) =\prod_{i=1}^{k}
Q_{x_i}(\eta).
\]
\end{theorem}

Recall that the concept of duality is given in Definition~\ref
{dualdef}. A few remarks are in order.
%
\begin{remark}
For $p<q$, both forms of duality are trivial for initial data which is
not left-finite, since then $Q_x(\eta)\equiv0$ and likewise $\widetilde
Q_x(\eta) \equiv0$. By working with a height function, rather than
$N_x(\eta)$ it is likely possible to extend consideration to
left-infinite initial data. We do not pursue this here.
\end{remark}

%
\begin{remark}\label{4.4}
For the symmetric simple exclusion process ($p=q=1/2$), the duality
from Theorem~\ref{thmASEPdualitytilde} has been known for some time
(see \cite{Lig}, Chapter~8, Theorem 1.1). For $p<q$, the result of
Theorem~\ref{thmASEPdualitytilde} was discovered by Sch\"{u}tz \cite
{Schutz} in the late 1990s via a spin chain representation of ASEP (the
result was stated for all $a_{x}\equiv1$, though the proof is easily
extended to general $a_{x}$). The approach used therein to show duality
was computationally based on a $U_q(sl_2)$ symmetry for ASEP \cite
{SanSch}. Our proof proceeds directly via the Markov dynamics, without
any use of, or reference to, the $U_q(sl_2)$. Even though in our
applications we quickly set $a_x\equiv1$, it is both useful (in simply
the proof) and informative (in showing that duality is weaker than
integrability) to prove our result for general $a_x$.

The duality of Theorem~\ref{thmASEPduality} appears to be new. It
does not seem possible to extend it to general $a_x$. For instance,
when $k=1$, as a function of the process $\eta(t)$, $H(\eta(t),x)$
only changes value when a particle moves across the bond between $x$
and $x+1$. This only involves the rate $a_x$. On the other hand, as a
process of $x(t)$, $H(\eta,x(t))$ changes value when a particle moves
across either the bond between $x-1$ and $x$, or the bond between $x$
and $x+1$. This involves the rates $a_{x-1}$~and~$a_x$. Hence, the two
sides can only match when $a_{x-1}=a_{x}$.
\end{remark}

%
\begin{remark}
G\"{a}rtner \cite{G} observed that ASEP respected a microscopic (i.e.,
particle-level) version of the Hopf--Cole transform (see, e.g., the
review \cite{CorwinReview}). This observation is equivalent to the
$k=1$, $a_x\equiv1$ case of the duality given in Theorem~\ref
{thmASEPduality}. It says that
\[
dQ_x\bigl(\eta(t)\bigr) = \bigl(p Q_{x-1}\bigl(\eta(t)
\bigr) +q Q_{x+1}\bigl(\eta(t)\bigr) - Q_x\bigl(\eta(t)
\bigr) \bigr)\,dt + Q_x\bigl(\eta(t)\bigr) \,dM(t),
\]
where $dM(t)$ is an explicit martingale. This is a particular
semidiscrete SHE (different than the one coming from \mbox{$q$-}TASEP,
Definition~\ref{def:semidiscSHE}) with a somewhat involved noise (the
martingale is not exactly a discrete space--time white noise).
A~Feynman--Kac representation for this equation shows that $Q_x(\eta
(t))$ can be thought of as a polymer partition function with respect to
an environment defined by the martingale. Therefore, Theorem~\ref
{thmASEPduality} can be thought of as a version of the polymer replica
approach (see Section~\ref{secrepappr}).
\end{remark}

The proof of the two duality theorems boils down to two propositions
which we now state and prove. After this, we prove the theorems.

%
\begin{proposition}\label{propLequivtilde}
Fix nonnegative real numbers $p\leq q$ (normalized by $p+q=1$) and
uniformly bounded (from infinity and zero) bond rate parameters $\{
a_{x}\}_{x\in\mathbb{Z}}$. Then, with $\eta$, $\vec{x}$, and
$\widetilde H(\eta,\vec{x})$ defined in Theorem~\ref{thmASEPdualitytilde},
%
%
\begin{equation}
\label{starstar26} L^{\mathrm{occ}}\widetilde H(\eta,\vec{x}) = L^{\mathrm{part}}\widetilde
H(\eta,\vec{x}),
\end{equation}
where the generator $L^{\mathrm{occ}}$ acts in the $\eta$ variable
and the
generator $L^{\mathrm{part}}$ acts in the $\vec{x}$ variable.
\end{proposition}
\begin{pf}
We will first prove the desired result for a single cluster
configuration $\vec{x}=(x,x+1,\ldots, x+\ell)$ and then easily
deduce it for general $\vec{x}\in\widetilde W^{k}$. For the single cluster
$\vec{x}$, by the definition of $L^{\mathrm{occ}}$,
%
\[
L^{\mathrm{occ}}\widetilde H(\eta,\vec{x}) = \sum_{i=-1}^{\ell}
a_{x+i} A_{x+i}(\eta),
\]
where
\[
A_{y}(\eta) = \bigl(p \eta_y (1-\eta_{y+1}) +
q(1-\eta_y)\eta _{y+1} \bigr) \bigl[\widetilde H\bigl(
\eta^{y,y+1},\vec{x}\bigr) - \widetilde H(\eta,\vec{x}) \bigr].
\]
We now compute the $A_y$'s explicitly. Recall the notations introduced
in (\ref{NQdef}). There are three different types of $A_y$ that must
be considered: (1) $A_{x-1}(\eta)$; (2) $A_{x+i}(\eta)$ for $0\leq
i\leq\ell-1$; (3) $A_{x+\ell}(\eta)$.
\begin{longlist}[(2)]
\item[(1)] Consider $A_{x-1}(\eta)$. We may rewrite
\[
\widetilde H(\eta,\vec{x}) = \tau^{N_{x-2}(\eta)} \tau^{\eta_{x-1}}
\eta_x \prod_{j=1}^{\ell}
\widetilde{Q}_{x+j}(\eta)
\]
and
\[
\widetilde H\bigl(
\eta^{x-1,x},\vec{x}\bigr) = \tau^{N_{x-2}(\eta)} \tau^{\eta_{x}}
\eta_{x-1} \prod_{j=1}^{\ell}
\widetilde {Q}_{x+j}(\eta).
\]
Thus,
%
%
\begin{eqnarray}\label{Axm1}
A_{x-1}(\eta)
&=& \tau^{N_{x-2}(\eta)}
\prod _{j=1}^{\ell} \widetilde {Q}_{x+j}(\eta) \bigl(p
\eta_{x-1}(1-\eta_{x}) + q(1-\eta_{x-1})\eta
_x \bigr)
\nonumber\\[-8pt]\\[-8pt]
&&{}\times  \bigl[\tau^{\eta_x} \eta_{x-1} -
\tau^{\eta_{x-1}}\eta _x \bigr].\nonumber
\end{eqnarray}
\item[(2)] Consider $A_{x+i}(\eta)$ for $0\leq i\leq\ell-1$. We may rewrite
\[
\widetilde H(\eta,\vec{x}) = \tau^{2N_{x+i-1}(\eta) + \eta_{x+i}} \eta _{x+i}
\eta_{x+i+1} \mathop{\prod_{j=0}}_{j\neq i,i+1}^{\ell}
\widetilde{Q}_{x+j}(\eta)
\]
and
\[
\widetilde H\bigl(\eta^{x+i,x+i+1},\vec{x}\bigr) = \tau^{2N_{x+i-1}(\eta)+\eta
_{x+i+1}}
\eta_{x+i+1}\eta_{x+i} \mathop{\prod
_{j=0}}_{j\neq
i,i+1}^{\ell} \widetilde{Q}_{x+j}(
\eta).
\]
Thus,
%
%
\begin{eqnarray}
\label{Axi} A_{x+i}(\eta) &=& \tau^{2N_{x+i-1}(\eta)} \Biggl( \mathop{
\prod_{j=0}}_{j\neq i,i+1}^{\ell}
\widetilde{Q}_{x+j}(\eta) \Biggr)
\\
&&{}\times \bigl(p\eta_{x+i}(1-\eta_{x+i+1}) + q(1-\eta
_{x+i})\eta_{x+i+1} \bigr)
\nonumber\\[-8pt]\\[-8pt]
&&{}\times  \bigl[\tau^{\eta_{x+i+1}} -
\tau^{\eta_{x+i}} \bigr]\eta_{x+i}\eta_{x+i+1}.\nonumber
\end{eqnarray}
\item[(3)] Consider $A_{x+\ell}(\eta)$. We may rewrite
\[
\widetilde H(\eta,\vec{x}) = \tau^{N_{x+\ell-1}(\eta)}\eta_{x+\ell} \prod
_{j=0}^{\ell-1} \widetilde{Q}_{x+j}(\eta)
\]
and
\[
\widetilde H\bigl(\eta^{x+\ell,x+\ell+1},\vec{x}\bigr) =
\tau^{N_{x+\ell
-1}(\eta)}\eta_{x+\ell+1} \prod_{j=0}^{\ell-1}
\widetilde {Q}_{x+j}(\eta).
\]
Thus,
%
%
\begin{eqnarray}\label{Axell}
A_{x+\ell}(\eta) &=& \tau^{N_{x+\ell-1}(\eta)} \Biggl(\prod
_{j=0}^{\ell-1} \widetilde{Q}_{x+j}(\eta)
\Biggr)
\\
&&{}\times \bigl(p\eta_{x+\ell}(1-\eta_{x+\ell+1}) + q(1-
\eta_{x+\ell})\eta_{x+\ell+1} \bigr)
\nonumber\\[-8pt]\\[-8pt]
&&{}\times  [\eta_{x+\ell+1} - \eta_{x+\ell} ].\nonumber
\end{eqnarray}
\end{longlist}

Observe that $A_{x+i}(\eta)=0$ for $0\leq i\leq\ell-1$. To see this,
it suffices to consider the four values that the pair $(\eta
_{x+i},\eta_{x+i+1})$ may take: for $(0,0)$ or $(1,1)$
\[
\bigl(p\eta_{x+i}(1-\eta_{x+i+1}) + q(1-\eta_{x+i})
\eta_{x+i+1} \bigr) \bigl[ \tau^{\eta_{x+i}} - \tau^{\eta_{x+i+1}}
\bigr]= 0
\]
and thus $(\ref{Axi})=0$; for $(0,1)$ or $(1,0)$, the factor $\eta
_{x+i}\eta_{x+i+1}=0$ and thus $(\ref{Axi})=0$ again.

The above observation shows that, in fact,
\[
\bigl(L^{\mathrm{occ}}f\bigr) (\eta) = a_{x-1}A_{x-1}(\eta)
+ a_{x+\ell
}A_{x+\ell}(\eta).
\]
In light of equations (\ref{Axm1}) and (\ref{Axell}), we may rewrite
\[
A_{x-1}(\eta) = M(\eta) A'_{x-1}(\eta) \quad\mbox{and}\quad A_{x+\ell}(\eta) = M(\eta) A'_{x+\ell}(
\eta),
\]
where
\[
M(\eta) = \tau^{N_{x-2}(\eta)+N_{x+\ell-1}(\eta)} \prod_{j=1}^{\ell-1}
\widetilde{Q}_{x+j}(\eta)
\]
and
\begin{eqnarray*}
A'_{x-1}(\eta) &=& \eta_{x+\ell} \bigl(p
\eta_{x-1}(1-\eta_{x}) + q(1-\eta_{x-1})
\eta_x \bigr) \bigl[\tau^{\eta_x} \eta_{x-1} - \tau
^{\eta_{x-1}}\eta_x \bigr],
\\
A'_{x+\ell}(\eta) &=& \tau^{\eta_{x-1}}
\eta_x \bigl(p\eta_{x+\ell
}(1-\eta_{x+\ell+1}) + q(1-
\eta_{x+\ell})\eta_{x+\ell+1} \bigr) [\eta_{x+\ell+1} -
\eta_{x+\ell} ].
\end{eqnarray*}

Now turn to the right-hand side of equation (\ref{starstar26}). We may
also factor $M(\eta)$ out from that expression
\begin{eqnarray*}
\mbox{RHS (\ref{starstar26})} &=& M(\eta) \bigl[ a_{x-1} p \eta
_{x-1}\eta_{x+\ell} + a_{x+\ell} q \tau^{\eta_{x-1}}
\eta_x \tau ^{\eta_{x+\ell}}\eta_{x+\ell+1}
\\
&&\hspace*{84pt}{} - (a_{x-1}q + a_{x+\ell}p)\tau ^{\eta_{x-1}}\eta_x \eta_{x+\ell}\bigr].
\end{eqnarray*}
Therefore, for the single cluster case of the proposition, we are left
to prove
\begin{eqnarray*}
&& a_{x-1}A'_{x-1}(\eta) + a_{x+\ell}A'_{x+\ell}(
\eta)
\\
&&\qquad  = a_{x-1} p \eta_{x-1}\eta_{x+\ell} +
a_{x+\ell} q \tau^{\eta_{x-1}}\eta_x \tau^{\eta_{x+\ell}}
\eta_{x+\ell+1}
\\
&&\quad\qquad{} - (a_{x-1}q + a_{x+\ell}p)
\tau^{\eta_{x-1}}\eta_x \eta_{x+\ell}.
\end{eqnarray*}
The above equation is a function of only four occupation variables
$\eta_{x-1},\eta_x,\break \eta_{x+\ell}$ and $\eta_{x+\ell+1}$ and one
can systematically check that for all sixteen combinations of values of
these variables, the above equation is true. In fact, it is even easier
than this since the coefficients of $a_{x-1}$ and $a_{x+\ell}$
coincide separately. For instance, we must show that $A'_{x-1}(\eta) =
\eta_{x+\ell} (p\eta_{x-1} - q \tau^{\eta_{x-1}} \eta
_x )$. There are only four cases of $(\eta_{x-1},\eta_x)$ that
have to be considered and this can be confirmed in one's head
[similarly for $A'_{x+\ell}(\eta)$]. This completes the proof of
Proposition~\ref{propLequivtilde} for $\vec{x}$ with just a single cluster.

For a general $\vec{x}\in\widetilde W^{k}$ there may be many
clusters, each
pair separated by at least one empty site. The terms in $\widetilde H(\eta,x)$ factor into clusters and the generator $L^{\mathrm{occ}}$ acts on
each of
these clusters according to the above proved single cluster result.
This immediately yields the general statement and completes the proof.
\end{pf}

%
\begin{proposition}\label{propLequiv}
Fix nonnegative real numbers $p\leq q$ (normalized by $p+q=1$) and set
all bond rate parameters $a_{x}\equiv1$. Then, with $\eta$, $\vec
{x}$, and $H(\eta,\vec{x})$ defined in Theorem~\ref{thmASEPduality},
\[
L^{\mathrm{occ}}H(\eta,\vec{x}) = L^{\mathrm{part}}H(\eta,\vec{x}),
\]
where the generator $L^{\mathrm{occ}}$ acts in the $\eta$ variable
and the
generator $L^{\mathrm{part}}$ acts in the $\vec{x}$ variable.
\end{proposition}

\begin{pf}
As in the proof of Proposition~\ref{propLequivtilde}, we will first
prove the desired result for a single cluster configuration $\vec
{x}=(x,x+1,\ldots, x+\ell)$ and then easily deduce it for general
$\vec{x}\in\widetilde W^{k}$. For the single cluster $\vec{x}$, by the
definition of $L^{\mathrm{occ}}$,
\[
L^{\mathrm{occ}}H(\eta,\vec{x}) = \sum_{i=0}^{\ell}
A_{x+i}(\eta),
\]
where
\[
A_{y}(\eta) = \bigl(p \eta_y (1-\eta_{y+1}) +
q(1-\eta_y)\eta _{y+1} \bigr) \bigl[H\bigl(
\eta^{y,y+1},\vec{x}\bigr) - H(\eta,\vec{x}) \bigr].
\]
By grouping terms, this may be rewritten as
\begin{eqnarray*}
A_{x+i}(\eta) &=& Q_{x+i-1}(\eta) \Biggl(\mathop{\prod
_{j=0}}_{j\neq i}^{\ell}
Q_{x+j}(\eta) \Biggr) \bigl(p\eta _{x+i}(1-
\eta_{x+i+1}) + q\eta_{x+i+1}(1-\eta_{x+i}) \bigr)
\\
&&{}\times \bigl[\tau^{\eta_{x+i+1}}-\tau^{\eta_{x+i}} \bigr]
\\
&=& \prod_{j=0}^{\ell} Q_{x+j-1}(
\eta) \mathop{\prod_{j=0}}_{j\neq i}^{\ell}
\tau^{\eta_{x+j}} \cdot \bigl(p+q\tau ^{\eta_{x+i}} \tau^{\eta_{x+i+1}} -
\tau^{\eta_{x+i}} \bigr).
\end{eqnarray*}
In order to get the second line above, we utilized the definition of
$Q_{x+i-1}(\eta)$ and separately the fact (which can readily be
checked) that for the four possible pairs of values that $(\eta
_{x+i},\eta_{x+i+1})$ can take
\begin{eqnarray*}
&& \bigl(p\eta_{x+i}(1-\eta_{x+i+1}) + q\eta_{x+i+1}(1-
\eta_{x+i}) \bigr) \bigl[\tau^{\eta_{x+i+1}}-\tau^{\eta_{x+i}}
\bigr]
\\
&&\qquad = p+q\tau ^{\eta_{x+i}} \tau^{\eta_{x+i+1}} - \tau^{\eta_{x+i}}.
\end{eqnarray*}

Recall that we seek to show
\begin{eqnarray*}
&& \sum_{i=0}^{\ell} A_{x+i}(\eta)
\\
&&\qquad=
p Q_{x-1}(\eta) \prod_{j=1}^{\ell}
Q_{x+j}(\eta) + q Q_{x+\ell+1}(\eta) \prod
_{j=0}^{\ell-1} Q_{x+j}(\eta) - \prod
_{j=0}^{\ell} Q_{x+j}(\eta).
\end{eqnarray*}
Factoring out $ (\prod_{j=0}^{\ell} Q_{x+j-1}(\eta) )$
from both sides we are left to prove
%
%
\begin{eqnarray}\label{eqnqleftprove}
&& \Biggl(\sum_{i=0}^{\ell}
\mathop{\prod_{j=0}}_{j\neq i}^{\ell}
\tau^{\eta_{x+j}} \Biggr) \bigl(p+q\tau^{\eta_{x+i}}\tau^{\eta
_{x+i+1}}-
\tau^{\eta_{x+i}} \bigr)
\nonumber\\[-8pt]\\[-8pt]
&&\qquad = p \prod_{j=1}^{\ell}
\tau ^{\eta_{x+j}} + q \prod_{j=0}^{\ell+1}
\tau^{\eta_{x+j}} - \prod_{j=0}^{\ell}
\tau^{\eta_{x+j}}.\nonumber
\end{eqnarray}
The terms in the left-hand side of the above expression can be grouped as
\[
p \prod_{j=1}^{\ell} \tau^{\eta_{x+j}} +
\sum_{i=1}^{\ell} \mathop{\prod
_{j=0}}_{j\neq i}^{\ell} \tau^{\eta_{x+j}}
\bigl(p+q \tau^{2\eta_{x+i}} - \tau^{\eta_{x+i}} \bigr) + q \prod
_{j=0}^{\ell+1} \tau^{\eta_{x+j}} - \prod
_{j=0}^{\ell} \tau^{\eta_{x+j}}.
\]
We may now utilize the easily checked identity that for $\eta\in\{
0,1\}$,
\[
p+q\tau^{2\eta}-\tau^{\eta}=0,
\]
to see that the above expression reduces to the right-hand side of
(\ref{eqnqleftprove}), thus completing the proof\vspace*{1pt} of Proposition~\ref{propLequiv} for $\vec{x}$ with just a single cluster.

From a general $\vec{x}\in\widetilde W^{k}$, there might be many clusters,
each pair separated by at least one empty site. The terms in $H(\eta,x)$ factor into clusters and the generator $L^{\mathrm{occ}}$ acts on
each of these clusters according to the above proved single cluster
result. This immediately yields the general statement and completes the proof.
\end{pf}

Before giving the proof of Theorem~\ref{thmASEPdualitytilde} we
define the following system of~ODEs.

%
\begin{definition}\label{WASEPtrueevol}
We say that $\tilde h(t;\vec{x})\dvtx \mathbb{R}_{+}\times\widetilde
W^{k} \to\mathbb{R}$
solves the
\textit{true evolution equation} with initial data $\tilde h_0(\vec{x})$ if:
\begin{longlist}[(2)]
\item[(1)] For all $\vec{x}\in\widetilde W^{k}$ and $t\in\mathbb{R}_{+}$,
\[
\frac{d}{dt} \tilde h(t;\vec{x}) = L^{\mathrm{part}} \tilde h(t;\vec{x});
\]
\item[(2)] There exist constants $c,C>0$ and $\delta>0$ such that for all
$\vec{x}\in\widetilde W^{k}$, $t\in[0,\delta]$
\[
\bigl|\tilde h(t;\vec{x})\bigr| \leq C e^{c\|\vec{x}\|_1};
\]
%
%
\item[(3)] As $t\to0$, $\tilde h(t;\vec{x})$ converges pointwise to
$\tilde h_0(\vec{x})$.
\end{longlist}
\end{definition}

%
\begin{proposition}\label{asepuniq}
Assume that there exists constants $c,C>0$ such that for all $\vec
{x}\in\widetilde W^{k}$,
%
%
\begin{equation}
\label{ineq1a} \bigl|\tilde h_0(\vec{x})\bigr| \leq C e^{c\|\vec{x}\|_1}.
\end{equation}
Then there exists a unique solution to the system of ODEs given in
Definition~\ref{WASEPtrueevol} which is given by
%
%
\begin{equation}
\label{propsoln} \tilde h(t;\vec{x}):= \mathbb{E}^{-t;\vec{x}}
\bigl[h_0\bigl(\vec {x}(0)\bigr) \bigr],
\end{equation}
where the expectation is with respect to the ASEP particle process
$\vec{x}(\cdot)$ started at time $-t$ in configuration $\vec{x}$.
\end{proposition}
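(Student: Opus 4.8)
The plan is to verify that the candidate
$\tilde h(t;\vec{x}) := \EE^{-t;\vec{x}}[\tilde h_0(\vec{x}(0))]$ --- which by time-homogeneity of the ASEP particle process equals $(P_t\tilde h_0)(\vec{x}) := \EE^{\vec{x}}[\tilde h_0(\vec{x}(t))]$ --- satisfies the three conditions of Definition \ref{WASEPtrueevol}, and then to prove uniqueness via a martingale argument. The single estimate that powers everything is a \emph{Poisson domination bound}: since the bond rates $a_x$ are uniformly bounded, the total jump rate out of any configuration in $\WASEP{k}$ is at most a constant $c_0=c_0(k)$, so the number $N_t$ of jumps the particle process makes up to time $t$ is stochastically dominated by a $\mathrm{Poisson}(c_0 t)$ variable; and since every jump changes $\|\vec{x}(\cdot)\|_1$ by at most $1$, one has $\|\vec{x}(t)\|_1 \le \|\vec{x}\|_1 + N_t$. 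Combined with $(\ref{ineq1a})$ this gives
\begin{equation*}
\EE^{\vec{x}}\big[|\tilde h_0(\vec{x}(t))|\big] \le C\, e^{c\|\vec{x}\|_1}\, \EE\big[e^{cN_t}\big] \le C\, e^{c\|\vec{x}\|_1}\, e^{c_0 t(e^{c}-1)},
\end{equation*}
so $P_t\tilde h_0$ is well defined and obeys an exponential-in-$\|\vec{x}\|_1$ bound that is uniform on every bounded time interval; in particular condition (2) holds.

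For condition (3), as $t\to 0$ we have $\vec{x}(t)=\vec{x}$ on the event $\{N_t=0\}$, whose probability tends to $1$, while the contribution of $\{N_t\ge 1\}$ is controlled by the moment bound above (dominated convergence), so $(P_t\tilde h_0)(\vec{x})\to\tilde h_0(\vec{x})$. For condition (1), I would use the first-jump decomposition
\begin{equation*}
(P_t\tilde h_0)(\vec{x}) = e^{-r(\vec{x})t}\,\tilde h_0(\vec{x}) + \int_0^t r(\vec{x})\,e^{-r(\vec{x})s}\sum_{\vec{x}\to\vec{x}'} \frac{r(\vec{x},\vec{x}')}{r(\vec{x})}\,(P_{t-s}\tilde h_0)(\vec{x}')\, ds,
\end{equation*}
where $r(\vec{x})$ is the total exit rate and the sum runs over the at most $2k$ one-step transitions; the moment bound gives local boundedness and continuity of $t\mapsto (P_t\tilde h_0)(\vec{x})$, and differentiating this identity in $t$ produces exactly $\tfrac{d}{dt}(P_t\tilde h_0)(\vec{x}) = (\Lpart P_t\tilde h_0)(\vec{x})$. (Equivalently, one may note that $\Lpart$ is a bounded operator on the weighted space $\{f:\sup_{\vec{x}}|f(\vec{x})|e^{-c\|\vec{x}\|_1}<\infty\}$ and invoke the Banach-space theory of linear ODEs.)

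Uniqueness is the heart of the matter. Given two solutions $\tilde h,\tilde h'$, set $g=\tilde h-\tilde h'$; then $\partial_t g=\Lpart g$ on $\Rplus$, $|g(t;\vec{x})|\le Ce^{c\|\vec{x}\|_1}$ for $t\in[0,\delta]$ (common constants), and $g(t;\vec{x})\to 0$ pointwise as $t\to 0$. Fix $t\in(0,\delta)$ and $\vec{x}$, run the particle process $\vec{x}(\cdot)$ from $\vec{x}$, and set $M_s:=g(t-s;\vec{x}(s))$ for $s\in[0,t]$. Since $(\partial_s+\Lpart)[g(t-s;\cdot)]=-\partial_t g+\Lpart g=0$, Dynkin's formula shows $M_s$ is a local martingale; and because $t-s\in[0,\delta]$, the growth bound together with Poisson domination gives $\sup_{0\le s\le t}|M_s|\le C e^{c(\|\vec{x}\|_1+N_t)}\in L^1$, so $M_s$ is in fact a uniformly integrable martingale. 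Hence $g(t;\vec{x})=M_0=\EE[M_s]$ for all $s\in[0,t)$. Letting $s\uparrow t$: a.s. the path has only finitely many jumps on $[0,t]$ and none at $t$, so $\vec{x}(s)$ is eventually equal to $\vec{x}(t)$ and $M_s=g(t-s;\vec{x}(t))\to 0$ by condition (3); dominated convergence gives $\EE[M_s]\to 0$, whence $g(t;\vec{x})=0$. So the solution is unique and equals $P_t\tilde h_0$ on $[0,\delta]$; since $P_t\tilde h_0$ meets the exponential growth bound on every bounded interval, applying the same argument successively on $[\delta,2\delta],[2\delta,3\delta],\dots$ (with $P_{\delta}\tilde h_0,P_{2\delta}\tilde h_0,\dots$ as the new initial data) extends uniqueness to all of $\Rplus$.

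I expect the main obstacle to be the rigorous execution of this last step: the passage to the limit $s\uparrow t$ inside the expectation and the upgrade of $M_s$ from a local to a genuine martingale. Both hinge on the Poisson domination estimate, so establishing that bound cleanly (and checking that it dominates uniformly over $s\in[0,t]$) is the technical core; the pointwise-only nature of hypothesis (3) is precisely why the argument must be routed through the process rather than through a Gronwall estimate in a weighted sup-norm, and a minor secondary point is checking that the growth bound, assumed only near $t=0$, propagates forward so that uniqueness is global rather than merely local.
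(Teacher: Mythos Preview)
Your proof is correct and rests on the same engine as the paper's: Poisson domination of the total jump count to control $e^{c\|\vec{x}(t)\|_1}$, from which existence (conditions 1--3) follows routinely. The uniqueness arguments differ in packaging. You run the martingale $M_s=g(t-s;\vec{x}(s))$ via Dynkin, upgrade to uniform integrability by the Poisson bound, and pass $s\uparrow t$ with dominated convergence; this restricts you to $t\in(0,\delta)$ and forces the time-stepping at the end. The paper instead asserts the semigroup identity $g(t;\vec{x})=\sum_{\vec{x}'}\PP^{-t;\vec{x}}(\vec{x}(-\delta)=\vec{x}')\,g(\delta;\vec{x}')$ for \emph{all} $t\in[0,T]$, truncates the sum to $\{\vec{x}':\|\vec{x}-\vec{x}'\|\le n(T)\}$, bounds the tail by the Poisson estimate (using the growth bound only at the small time $\delta$), and sends $\delta\to 0$ on the remaining finite sum. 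The advantage of the paper's route is that it reaches every $t>0$ in one stroke, with no iteration; the advantage of yours is that the martingale step is fully justified, whereas the paper's use of the semigroup identity for $t>\delta$ is stated rather than proved. Your concern about propagating the growth bound beyond $[0,\delta]$ is exactly the point where the two proofs diverge, and the paper's finite-truncation device is one way to sidestep it.
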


This existence and uniqueness result is proved in Appendix~\ref{appenduniq}. We use this result presently in the proof of Theorem~\ref
{thmASEPdualitytilde}, and also later in the proof of Theorem~\ref
{QtildeInt}. It is in the second application of this result that we
fully utilize the weakness of conditions 2 and 3 in Definition~\ref
{WASEPtrueevol}.

\begin{pf*}{Proof of Theorem~\ref{thmASEPdualitytilde}}
We follow the same approach as in the proof of Theorem~\ref
{thmqtasepduality}. Our present theorem follows from Proposition~\ref
{propLequivtilde} along with Proposition~\ref{asepuniq}. Observe that
\begin{eqnarray*}
\frac{d}{dt} \mathbb{E}^{\eta} \bigl[\widetilde H\bigl(\eta(t),\vec
{x}\bigr) \bigr] &=& L^{\mathrm{occ}}\mathbb{E}^{\eta} \bigl[\widetilde H
\bigl(\eta(t),\vec {x}\bigr) \bigr]
\\
&=& \mathbb{E}^{\eta} \bigl[L^{\mathrm{occ}}\widetilde H\bigl(\eta(t),\vec
{x}\bigr) \bigr]
\\
&=& \mathbb{E}^{\eta} \bigl[L^{\mathrm{part}}\widetilde H\bigl(\eta(t),\vec
{x}\bigr) \bigr]
\\
&=& L^{\mathrm{part}}\mathbb{E}^{\eta} \bigl[\widetilde H\bigl(\eta(t),\vec
{x}\bigr) \bigr].
\end{eqnarray*}
The equality of the first line is from the definition of the generator
of $\eta(t)$; the equality between the first and second lines is from
the commutativity of the generator with the Markov semigroup; the
equality between the second and third lines is from applying
Proposition~\ref{propLequivtilde} to the expression inside the
expectation; the final equality is from the fact that the generator
$L^{\mathrm{part}}$ now acts on the $\vec{x}$ coordinate and the expectation
acts on the $\eta$ coordinate. This shows that, as a function of $t$~and~$\vec{x}$, $ \mathbb{E}^{\eta} [\widetilde H(\eta(t),\vec
{x})
]$ solves the true evolution equation of Definition~\ref
{WASEPtrueevol} (checking condition 2 is straightforward and condition
3 can be checked as in the proof of Proposition~\ref{asepuniq}).

On the other hand, Proposition~\ref{asepuniq} implies that $\mathbb
{E}^{\eta
} [\widetilde H(\eta,\vec{x}(t)) ]$ also solves the true
evolution equation of Definition~\ref{WASEPtrueevol} and that it is
the unique such solution. This proves the desired equality to show the
claimed duality.
\end{pf*}

\begin{pf*}{Proof of Theorem~\ref{thmASEPduality}}
This follows exactly as in the proof of Theorem~\ref
{thmASEPdualitytilde}, with Proposition~\ref{propLequivtilde}
replaced by Proposition~\ref{propLequiv}.
\end{pf*}

\subsection{Systems of ODEs}
As a result of duality, we provide two different systems of ODEs to
characterize $\mathbb{E}^{\eta} [\widetilde H(\eta(t),\vec
{x}) ]$.
These two systems should be compared to the first two systems of ODEs
associated to \mbox{$q$-}TASEP duality, given in Proposition~\ref
{propsystemsODEqTASEP}. It is not entirely clear how to formulate a
Schr\"{o}dinger\vspace*{1pt} equation with Bosonic Hamiltonian for ASEP due to the
\textit{strict} ordering of $\vec{x}\in\widetilde W^{k}$. This does not,
however, pose any significant impediment as we are more concerned with
solving the free evolution equation with $k-1$ boundary conditions.

We first state the result for the $\widetilde H(\eta,\vec{x})$ duality.

%
\begin{proposition}\label{propasepeqnstilde}
Let $\eta$ be a left-finite occupation configuration in $\{0,1\}
^\mathbb{Z}$
and $\eta(t)$ be ASEP started from $\eta(0)=\eta$.
\begin{longlist}[(A)]
\item[(A)] \emph{True evolution equation}: If $\tilde h(t;\vec
{x})\dvtx \mathbb{R}_{+}\times\widetilde W^{k} \to\mathbb{R}$ solves the
system of ODEs given
in Definition~\ref{WASEPtrueevol} with initial data $\tilde h_0(\vec
{x}) = \widetilde H(\eta,\vec{x})$, then for all \mbox{$\vec{x}\in\widetilde
W^{k}$}, $\mathbb{E}^{\eta}  [\widetilde H(\eta(t),\vec{x}) ]
= \tilde h(t;\vec{x})$.

\item[(B)] \emph{Free evolution equation with $k-1$ boundary
conditions}: If $\tilde u\dvtx \mathbb{R}_{+}\times\mathbb{Z}^k \to
\mathbb{R}$ solves:
\begin{enumerate}[(2)]
\item[(1)] For all $\vec{x}\in\mathbb{Z}^k$ and $t\in\mathbb{R}_{+}$,
%
%
\begin{eqnarray}\label{eqnASEP}
&& \frac{d}{dt} \tilde u(t;\vec{x})
\nonumber\\[-8pt]\\[-8pt]
&&\qquad = \sum
_{i=1}^{k} \bigl[ a_{x_i -1} p \tilde u
\bigl(t;\vec{x}_i^-\bigr) + a_{x_i} q \tilde u\bigl(t;
\vec{x}_i^+\bigr) - (a_{x_i -1}q + pa_{x_i})\tilde u(t;\vec{x}) \bigr];\nonumber
\end{eqnarray}
\item[(2)] For all $\vec{x}\in\mathbb{Z}^k$ such that for some $i\in\{
1,\ldots,
k-1\}$, $x_{i+1}=x_{i}+1$,
%
%
\begin{equation}
\label{bcASEP} p \tilde u\bigl(t;\vec{x}_{i+1}^{-}\bigr)+q
\tilde u\bigl(t;\vec{x}_{i}^{+}\bigr) = \tilde u(t;\vec{x});
\end{equation}
\item[(3)] There exist constants $c,C>0$ and $\delta>0$ such that for all
$\vec{x}\in\widetilde W^{k}$, $t\in[0,\delta]$
\[
\bigl|\tilde u(t;\vec{x})\bigr| \leq C e^{c\|\vec{x}\|_1};
\]
\item[(4)] For all $\vec{x}\in\widetilde W^{k}$, as $t\to0$, $\tilde u(t;\vec
{x}) \to\widetilde H(\eta,\vec{x})$.
\end{enumerate}
Then for all $\vec{x}\in\widetilde W^{k}$, $\mathbb{E}^{\eta}
[\widetilde
H(\eta(t),\vec{x}) ] = \tilde u(t;\vec{x})$.
\end{longlist}
\end{proposition}

\begin{pf}
Part (A) is an immediate consequence of the duality result of Theorem
\ref{thmASEPdualitytilde} along with its proof. Call the three
conditions contained in Definition~\ref{WASEPtrueevol}~(A.1), (A.2)
and (A.3).

Part (B) follows by showing that if the four conditions for $\tilde u$
given in (B) hold, then it implies that $u(t;\vec{x})$ restricted to
$\vec{x}\in\widetilde W^{k}$ actually satisfies conditions~(A.1),
(A.2)~and~(A.3). Conditions (B.3) and (B.4) immediately imply conditions (A.2)~and~(A.3), respectively. It is easy to check that the $k-1$ boundary
conditions (B.2) along with the free evolution equation (B.1) combine
to yield the generator $L^{\mathrm{part}}$ and hence yield (A.1).
Applying part
(A), we see that given the conditions of (B), we may conclude that for
all $\vec{x}\in\widetilde W^{k}$, $\mathbb{E}^{\eta}  [\widetilde
H(\eta
(t),\vec{x}) ] = \tilde u(t;\vec{x})$.
\end{pf}

We have an almost identical result and proof associated with the
$H(\eta,\vec{x})$ duality.

%
\begin{proposition}\label{propasepeqns}
Let $\eta$ be a left-finite occupation configuration in $\{0,1\}
^\mathbb{Z}$
and $\eta(t)$ be ASEP started from $\eta(0)=\eta$.
\begin{longlist}[(A)]
\item[(A)] \emph{True evolution equation}: If $h(t;\vec{x})\dvtx \mathbb{R}_{+}
\times\widetilde W^{k} \to\mathbb{R}$ solves the system of ODEs
given in
Definition~\ref{WASEPtrueevol} with initial data $h_0(\vec{x}) =
H(\eta,\vec{x})$, then for all \mbox{$\vec{x}\in\widetilde W^{k}$},
$\mathbb{E}^{\eta
}  [H(\eta(t),\vec{x}) ] = h(t;\vec{x})$.

\item[(B)] \emph{Free evolution equation with $k-1$ boundary
conditions}: If $\tilde u\dvtx \mathbb{R}_{+}\times\mathbb{Z}^k \to
\mathbb{R}$ solves:
\begin{enumerate}[(4)]
\item[(1)] For all $\vec{x}\in\mathbb{Z}^k$ and $t\in\mathbb{R}_{+}$,
\[
\frac{d}{dt} u(t;\vec{x}) = \sum_{i=1}^{k}
\bigl[ p u\bigl(t;\vec {x}_i^-\bigr) + q u\bigl(t;
\vec{x}_i^+\bigr) - u(t;\vec{x}) \bigr];
\]
\item[(2)] For all $\vec{x}\in\mathbb{Z}^k$ such that for some $i\in\{
1,\ldots,
k-1\}$, $x_{i+1}=x_{i}+1$,
\[
p u\bigl(t;\vec{x}_{i+1}^{-}\bigr)+q u\bigl(t;
\vec{x}_{i}^{+}\bigr) = u(t;\vec{x});
\]
\item[(3)] There exist constants $c,C>0$ and $\delta>0$ such that for all
$\vec{x}\in\widetilde W^{k}$, $t\in[0,\delta]$
\[
\bigl|u(t;\vec{x})\bigr| \leq C e^{c\|\vec{x}\|_1};
\]
\item[(4)] For all $\vec{x}\in\widetilde W^{k}$, as $t\to0$, $u(t;\vec
{x}) \to
H(\eta,\vec{x})$.
\end{enumerate}
Then for all $\vec{x}\in\widetilde W^{k}$, $\mathbb{E}^{\eta}
[ H(\eta
(t),\vec{x}) ] = u(t;\vec{x})$.
\end{longlist}
\end{proposition}

\begin{pf}
Similar to that of Proposition~\ref{propasepeqnstilde}.
\end{pf}

\subsection{Nested contour integral ansatz}

From now on, we assume that all bond rate parameters $a_{x}\equiv1$,
in which case equation (\ref{eqnASEP}) becomes
%
%
\begin{equation}
\label{eqnASEPprime} \frac{d}{dt}\tilde u(t;\vec{x})= \sum
_{i=1}^{k} \bigl[ p \tilde u\bigl(t;
\vec{x}_i^-\bigr) + q \tilde u\bigl(t;\vec{x}_i^+\bigr)
- \tilde u(t;\vec {x}) \bigr].
\end{equation}

It is not a priori clear how one might explicitly solve the systems of
ODEs in Propositions~\ref{propasepeqnstilde} and~\ref
{propasepeqns}. For \mbox{$q$-}TASEP, when confronted with the analogous
problem of solving the system of ODEs in Proposition~\ref
{propsystemsODEqTASEP}, we appealed to a nested contour integral
ansatz which was suggested from the algebraic framework of Macdonald
processes (into which \mbox{$q$-}TASEP fits).

ASEP, on the other hand, is not known to fit into the Macdonald process
framework, nor any similar framework from which solutions to these
systems of ODEs would be suggested. Nevertheless, we demonstrate now
that we may apply a nested contour integral ansatz. We focus on solving
the system of ODEs in Proposition~\ref{propasepeqnstilde} for two
distinguished types of initial data. Notice that in the below theorem,
the contours are not nested, however, they are chosen in a particular
manner to avoid poles coming from the denominator $z_A-\tau z_B$.

%
\begin{definition}\label{somedefs}
For $\rho\in[0,1]$ consider an i.i.d. collection $\{Y_x\}_{x\geq1}$
of Bernoulli random variables taking value 1 with probability $\rho$.
Then the \textit{step Bernoulli} initial data for ASEP is given by setting
$\eta_x(0)=0$ for $x\leq0$ and $\eta_x(0)=Y_x$ for $x\geq1$. When
$\rho=1$, this is called \textit{step} initial data and
(deterministically) $\eta_x(0)=\mathbf{1}_{x\geq1}$. We also define
$\theta= \rho/(1-\rho)$.

Define the function
%
%
\begin{equation}
\label{fASEPdef} f_{{z}}({x},{t};{\rho}) = \exp \biggl[ -
\frac{z(p-q)^2}{(1+z)(p+qz)}t \biggr] \biggl(\frac{1+z}{1+z/\tau} \biggr)^{x-1}
\frac{1}{\tau+z}\frac
{-\tau\theta}{z-\tau\theta}.
\end{equation}
When $\rho=1$ (and hence $\theta=\infty$), the definition of
$f_{{z}}({x},{t};{1})$ corresponds to the expression above, with the final
fraction removed. Also define
%
%
\begin{equation}
\label{fASEPtwodef} F_{{z}}({x},{t};{\rho}) = \exp \biggl[ -
\frac
{z(p-q)^2}{(z+1)(p+qz)}t \biggr] \biggl(\frac{1+z}{1+z/\tau} \biggr)^{x}
\frac{-\tau\theta}{z -
\tau\theta}
\end{equation}
and likewise extend to $\rho=1$.

Finally, define an integration contour $C_{-\tau;-1}$ as a circle around
$-\tau$, chosen with small enough radius so that $-1$ is not included,
nor is the image of $C_{-\tau;-1}$ under multiplication by $\tau$. It
is also important that $\tau\theta$ and $0$ are not contained in
$C_{-\tau;-1}$, but these facts are necessarily true from the definition.
\end{definition}

%
\begin{theorem}\label{QtildeInt}
Fix nonnegative real numbers $0<p<q$ (normalized by $p+q=1$) and set
all bond rate parameters $a_{x}\equiv1$. Consider step Bernoulli
initial data with density $\rho\in(0,1]$. The system of ODEs given in
Proposition~\textup{\ref{propasepeqnstilde}(B)} is solved by the following formula:
%
\begin{equation}
\label{Qtilde_int_step_bern} \qquad\tilde u(t;\vec{x})= \frac{\tau^{k(k-1)/2}}{(2\pi\iota)^k} \int\cdots\int \prod
_{1\leq A<B\leq k} \frac{z_A-z_B}{z_A-\tau z_B} \prod
_{i=1}^{k} f_{{z_i}}({x_i},{t};{
\rho}) \,dz_i,
\end{equation}
where the integration contour is given by $C_{-\tau;-1}$.
\end{theorem}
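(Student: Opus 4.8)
\emph{Proof strategy.} The plan is to check directly that the function $\tilde u(t;\vec x)$ defined by \eqref{Qtilde_int_step_bern} satisfies the four conditions (B.1)--(B.4) of Proposition \ref{prop:asepeqnstilde}(B); by that proposition this identifies $\tilde u(t;\vec x)$ with $\EE^{\eta}[\tilde H(\eta(t),\vec x)]$, and for the random step Bernoulli configuration one averages over $\eta$ by linearity, exactly as in Remark \ref{rem:unique}, so that the initial datum required in (B.4) --- only for $\vec x\in\WASEP{k}$ --- is $\EE[\tilde H(\eta,\vec x)]$. The argument runs parallel to the proof of Theorem \ref{thm:qformulas}: conditions (B.1) and (B.2) reduce to elementary rational identities on the integrand, (B.3) is a crude bound, and (B.4) is obtained by residue calculus. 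The one structural difference is that the $k$ integration contours here all coincide with $\Cminustau$ rather than being nested; the radius of $\Cminustau$ is small enough that $z_A-\tau z_B$ never vanishes for $z_A,z_B\in\Cminustau$, so these factors contribute no singularities along the contour, and the coincidence of contours actually simplifies the verification of (B.2).

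\emph{Conditions (B.1) and (B.2).} Put $\psi(z)=(1+z)/(1+z/\tau)=\tau(1+z)/(\tau+z)$, so that in \eqref{Qtilde_int_step_bern} the $\vec x$-dependence of the integrand is carried entirely by $\prod_i\psi(z_i)^{x_i-1}$ and the $t$-dependence by the exponential prefactors. Since the right-hand side of \eqref{eqnASEPprime} is linear in each argument of the product, (B.1) follows from the single-variable identity $\tfrac{d}{dt}\fASEP{x}{z}{\rho}{t}=p\,\fASEP{x-1}{z}{\rho}{t}+q\,\fASEP{x+1}{z}{\rho}{t}-\fASEP{x}{z}{\rho}{t}$, which after dividing by $\fASEP{x}{z}{\rho}{t}$ reads
\begin{equation*}
p\,\psi(z)^{-1}+q\,\psi(z)-1=-\frac{z(p-q)^2}{(1+z)(p+qz)}
\end{equation*}
and is checked by clearing denominators, using $p+q=1$ and $q\tau=p$. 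For (B.2) fix $i$ and restrict to $x_{i+1}=x_i+1$; applying the operator $\tilde u(t;\vec x)\mapsto p\,\tilde u(t;\vec{x}_{i+1}^{-})+q\,\tilde u(t;\vec{x}_{i}^{+})-\tilde u(t;\vec x)$ to the integrand alters only $\psi(z_i)^{x_i-1}\psi(z_{i+1})^{x_i}$ and multiplies the integrand by
\begin{equation*}
p+q\,\psi(z_i)\psi(z_{i+1})-\psi(z_{i+1})=\frac{\tau(p-q)\,(z_i-\tau z_{i+1})}{(\tau+z_i)(\tau+z_{i+1})},
\end{equation*}
again an elementary identity. The factor $z_i-\tau z_{i+1}$ cancels the denominator of the $(A,B)=(i,i+1)$ term $\tfrac{z_i-z_{i+1}}{z_i-\tau z_{i+1}}$ of the Vandermonde product, and what remains is an integrand that is \emph{odd} under $z_i\leftrightarrow z_{i+1}$: the surviving $z_i-z_{i+1}$ is odd, while every other factor --- the balanced power $\psi(z_i)^{x_i-1}\psi(z_{i+1})^{x_i-1}$, the new factor $\tfrac{\tau(p-q)}{(\tau+z_i)(\tau+z_{i+1})}$, the single-variable pieces of the integrand in $z_i$ and in $z_{i+1}$, and the remaining Vandermonde terms, which occur in symmetric pairs such as $\tfrac{z_A-z_i}{z_A-\tau z_i}\tfrac{z_A-z_{i+1}}{z_A-\tau z_{i+1}}$ and $\tfrac{z_i-z_B}{z_i-\tau z_B}\tfrac{z_{i+1}-z_B}{z_{i+1}-\tau z_B}$ --- is even. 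Since $z_i$ and $z_{i+1}$ run over the same contour $\Cminustau$, relabeling shows the resulting integral equals its own negative, hence vanishes; this is (B.2). (In the nested setting of Theorem \ref{thm:qformulas} one first had to merge the two contours; here they already agree.)

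\emph{Condition (B.3).} On the compact curve $\Cminustau$ the function $|\psi(z)|$ is bounded above and bounded away from $0$ (it avoids its pole $z=-\tau$ and its zero $z=-1$), and the remaining factors of the integrand are bounded uniformly for $t$ in a bounded interval (the singular points $z=-\tau,\,-1,\,\tau\rhofrac$ and $z=\tau z_B$ all lie off $\Cminustau$); hence $|\tilde u(t;\vec x)|\le C'e^{c\|\vec x\|_1}$ for $t\in[0,\delta]$, which is (B.3).

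\emph{Condition (B.4), the main point.} As $t\to0$ the integrand converges uniformly on $\Cminustau^{\,k}$ to its value at $t=0$, so it suffices to evaluate $\tilde u(0;\vec x)$ for $\vec x\in\WASEP{k}$ and match it with $\EE[\tilde H(\eta,\vec x)]$. A direct computation with the step Bernoulli law --- the product $\prod_i\tilde Q_{x_i}(\eta)$ forces $\eta_{x_i}=1$ for all $i$, and the independent Bernoulli sites, grouped into the blocks $[x_j,x_{j+1})$ (together with the sites left of $x_1$), contribute factors $1-\rho+\rho\tau^{k-j}$ (resp.\ $1-\rho+\rho\tau^{k}$) --- yields
\begin{equation*}
\EE[\tilde H(\eta,\vec x)]=\mathbf 1_{x_1\ge1}\,\rho^{k}\,\tau^{k(k-1)/2}\,(1-\rho+\rho\tau^{k})^{x_1-1}\prod_{j=1}^{k-1}(1-\rho+\rho\tau^{k-j})^{\,x_{j+1}-x_j-1}.
\end{equation*}
On the integral side, the only pole of the $z_1$-integrand inside $\Cminustau$ is $z_1=-\tau$, and it is present (of order $x_1$) exactly when $x_1\ge1$; when $x_1\le0$ the $z_1$-integrand is holomorphic inside $\Cminustau$, so $\tilde u(0;\vec x)=0$, matching $\mathbf 1_{x_1\ge1}$. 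When $x_1\ge1$ --- so $x_i\ge i$ for all $i$ --- one evaluates $\tilde u(0;\vec x)$ by taking the residue at $z_i=-\tau$ one variable at a time; each residue produces an integral of the same type in the remaining $k-1$ variables (no new pole enters $\Cminustau$, the fresh singularities sitting at $z_B=-1$), with a factor $\rho$ at leading order from the value $-\tau\rhofrac/(-\tau-\tau\rhofrac)=\rho$ of the last fraction in \eqref{fASEPdef}, so an induction on $k$ reduces the claim to a finite residue identity verified directly. The case $\rho=1$ is the transparent degeneration, where $1-\rho+\rho\tau^{m}=\tau^{m}$ and the product collapses to $\mathbf 1_{x_1\ge1}\tau^{\sum_i(x_i-1)}=\tilde H(\eta,\vec x)$. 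I expect this last step --- the residue bookkeeping for arbitrary $x_1\ge1$ and the precise matching of the block factors $1-\rho+\rho\tau^{k-j}$ --- to be the main obstacle of the proof.
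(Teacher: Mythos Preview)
Your verification of (B.1), (B.2), and (B.3) is correct and essentially identical to the paper's. The place where your proposal diverges is (B.4), and there the approach you sketch runs into a real obstacle.

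You propose to evaluate $\tilde u(0;\vec x)$ by taking residues at $z_i=-\tau$, the unique pole inside $\Cminustau$. But this pole has order $x_i$, not order one: the factor $\big(\tfrac{1+z_i}{1+z_i/\tau}\big)^{x_i-1}\tfrac{1}{\tau+z_i}$ behaves like $(\tau+z_i)^{-x_i}$ near $z_i=-\tau$. Computing a residue of order $x_i$ forces $(x_i-1)$ derivatives via Leibniz, and those derivatives hit the Vandermonde factors $\prod_{B>i}\tfrac{z_i-z_B}{z_i-\tau z_B}$ as well; the result is not ``an integral of the same type in the remaining $k-1$ variables'' but a sum of such integrals with modified kernels. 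Your own caveat that this is ``the main obstacle'' is accurate, but the obstacle is structural rather than bookkeeping.

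The paper avoids this entirely by expanding the contours \emph{outward} rather than shrinking them. Writing $\tilde u(0;\vec x)=\tau^k\tau^{k(k-1)/2}g_1(x_1,\dots,x_k)$, one expands the $z_k$ (not $z_1$) contour to infinity. There is quadratic decay at infinity, so the only contributions are from the exterior poles: the \emph{simple} pole at $z_k=\tau^\ell\rhofrac$ and the poles at $z_k=\tau^{-1}z_j$ for $j<k$. The latter contribute nothing, because the substitution $z_k=\tau^{-1}z_j$ introduces a factor $(1+z_j/\tau)^{x_k-1}$ in the numerator that, since $x_k>x_j$, cancels the pole of the remaining $z_j$-integrand at $z_j=-\tau$; that integral then vanishes. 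Hence only the simple pole at $z_k=\tau^\ell\rhofrac$ survives, giving a clean one-step recursion $g_\ell(x_1,\dots,x_k)=\big(\tfrac{1+\tau^\ell\rhofrac}{1+\tau^{\ell-1}\rhofrac}\big)^{x_k-1}\tfrac{\rhofrac}{\tau+\tau^\ell\rhofrac}\,g_{\ell+1}(x_1,\dots,x_{k-1})$ which iterates immediately to the target formula. The key idea you are missing is this outward expansion, which trades a single high-order interior pole for a single simple exterior pole plus vanishing cross-residues.
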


As an immediate corollary of Theorem~\ref{QtildeInt} and Proposition
\ref{propasepeqnstilde}(B) we find formulas for joint moments of the
$\widetilde Q_x(t)$ defined in (\ref{NQdef}).
%
\begin{corollary}\label{QtildeCor}
Fix $k\geq1$, nonnegative real numbers $0<p<q$ (normalized by $p+q=1$)
and set all bond rate parameters $a_{x}\equiv1$.
For step Bernoulli initial data with density $\rho\in(0,1]$ and any
$\vec{x} \in\widetilde W^{k}$,
%
%
\begin{eqnarray}\label{Qtildefreeevol}
&& \mathbb{E} \bigl[\widetilde{Q}_{x_1}\bigl(\eta(t)\bigr)
\cdots\widetilde {Q}_{x_k}\bigl(\eta (t)\bigr) \bigr]
\nonumber\\[-8pt]\\[-8pt]
&&\qquad = \frac{\tau^{k(k-1)/2}}{(2\pi\iota)^k}
\int\cdots\int \prod_{1\leq A<B\leq k} \frac{z_A-z_B}{z_A-\tau z_B} \prod
_{i=1}^k f_{{z_i}}({x_i},{t};{
\rho}) \,dz_i,\nonumber
\end{eqnarray}
where the integration contour is given by $C_{-\tau;-1}$.
\end{corollary}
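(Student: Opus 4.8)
\smallskip
\noindent\textbf{Proof proposal.} The plan is to deduce the corollary directly from Theorem \ref{QtildeInt} together with Proposition \ref{prop:asepeqnstilde}(B), the only additional point being the passage from deterministic to random (step Bernoulli) initial data. First I would observe that, by definition, $\tilde H(\eta,\vec{x}) = \prod_{i=1}^{k}\tilde Q_{x_i}(\eta)$, so that for any fixed occupation configuration $\eta$ one has $\EE^{\eta}\big[\tilde H(\eta(t),\vec{x})\big] = \EE^{\eta}\big[\prod_{i=1}^{k}\tilde Q_{x_i}(\eta(t))\big]$. Since $\tilde Q_x(\eta) = \tau^{N_{x-1}(\eta)}\eta_x\in[0,1]$, this quantity is bounded by $1$ uniformly in $t$, $\vec{x}$ and $\eta$, so averaging it over a random initial configuration causes no difficulty.

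Next I would fix step Bernoulli initial data with density $\rho\in(0,1]$ and set $\tilde u(t;\vec{x}) := \EE\big[\tilde H(\eta(t),\vec{x})\big]$, where now $\EE$ averages over both the ASEP dynamics and the random $\eta(0)$. By linearity (the averaging principle recorded for $q$-TASEP in Remark \ref{rem:unique}, applied here to the ASEP duality of Theorem \ref{thm:ASEPdualitytilde}), Proposition \ref{prop:asepeqnstilde}(A) gives that $\tilde u(t;\vec{x})$ solves the true evolution equation of Definition \ref{WASEPtrueevol} with initial data $\tilde h_0(\vec{x}) = \EE\big[\tilde H(\eta(0),\vec{x})\big]$; this initial data is bounded by $1$ and hence satisfies (\ref{ineq1a}), so Proposition \ref{asepuniq} identifies $\tilde u$ as the unique such solution. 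On the other hand, Theorem \ref{QtildeInt} asserts that the right-hand side $\tilde v(t;\vec{x})$ of (\ref{Qtilde_int_step_bern}) satisfies the four conditions of Proposition \ref{prop:asepeqnstilde}(B) for step Bernoulli data, in particular condition (B.4), namely $\tilde v(t;\vec{x})\to\EE\big[\tilde H(\eta(0),\vec{x})\big]$ as $t\to0$ for $\vec{x}\in\WASEP{k}$. Proposition \ref{prop:asepeqnstilde}(B) then yields $\tilde v(t;\vec{x}) = \EE\big[\tilde H(\eta(t),\vec{x})\big]$ for all $\vec{x}\in\WASEP{k}$, which is exactly (\ref{Qtildefreeevol}); equivalently, $\tilde v$ restricted to $\WASEP{k}$ solves the true evolution equation with the same initial data as $\tilde u$, and one invokes the uniqueness just established.

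The genuinely substantive input is Theorem \ref{QtildeInt} itself, not the corollary: verifying conditions (B.1)--(B.4) for the ansatz (\ref{Qtilde_int_step_bern}). I expect (B.2) to be the delicate point — it should follow, as in the $q$-TASEP case (equation (\ref{eqn:qrelprove})), from the Vandermonde-type factors $\prod_{A<B}(z_A-z_B)/(z_A-\tau z_B)$ together with the precise choice of the contour $\Cminustau$, whose defining feature (that $\tau\,\Cminustau$ does not meet $\Cminustau$) makes the denominator $z_A-\tau z_B$ harmless and lets the two relevant contours be merged; and (B.4) should come from pushing one $t=0$ contour outward, collecting the residue at $z=\tau\rhofrac$ (which degenerates appropriately when $\rho=1$), and matching the resulting product against the step Bernoulli moments of $\tilde Q$. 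Granting Theorem \ref{QtildeInt}, the present corollary is immediate.
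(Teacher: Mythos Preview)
Your proposal is correct and matches the paper's approach: the paper states this result as ``an immediate corollary of Theorem \ref{QtildeInt} and Proposition \ref{prop:asepeqnstilde} (B)'' without further argument, and you have simply spelled out the averaging over random initial data and the appeal to uniqueness (Proposition \ref{asepuniq}) that make this passage rigorous. One small inaccuracy in your closing remarks about Theorem \ref{QtildeInt}: for (B.2) the point is not that $\tau\,\Cminustau$ avoids $\Cminustau$, but rather that all $k$ contours are already identical (all equal to $\Cminustau$), so once the boundary operator cancels the factor $z_1-\tau z_2$ in the denominator, the remaining integrand is antisymmetric in $z_1,z_2$ on coincident contours and integrates to zero --- no contour merging is needed.
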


%
\begin{remark}
The true evolution equation (A) in Proposition~\ref
{propasepeqnstilde} can alternatively be solved using the Green's
function formula of \cite{TW1} for the ASEP particle process
generator. This results in a rather different expression than we find
in (\ref{Qtildefreeevol}) since the Green's function is expressed as a
sum of $k!$, $k$-fold contour integrals. The equivalence of the
expression in (\ref{Qtildefreeevol}) to the expression one arrives at
using \cite{TW1} is a result of a nontrivial symmetrization. The
single $k$-fold contour integral formula we find is essential as it
enables us to proceed from duality to the two types (Mellin--Barnes and
Cauchy) of Fredholm determinant formulas we find for~ASEP.
\end{remark}
\begin{pf*}{Proof of Theorem~\ref{QtildeInt}}
We give the proof for step Bernoulli initial data with $\rho\in
(0,1)$, and hence $\theta\in(0,\infty)$. The modification for the
$\rho=1$ case is trivial.

We need to prove that $\tilde u(t;\vec{x})$, as defined in (\ref
{Qtilde_int_step_bern}), satisfies the four conditions of Proposition
\ref{propasepeqnstilde}(B).

Condition (B.1) is satisfied by linearity and the fact that
\[
\biggl[\frac{d}{dt} - \Delta^{p,q} \biggr] f_{{z}}({x},{t};{
\rho}) = 0,
\]
where $\Delta^{p,q}g(x) = p g(x-1) + q g(x+1) - g(x)$ acts on the
$x$-variable in $f_{{z}}({x},{t};{\rho})$.


Condition (B.2) relies on the Vandermonde factors as well as the choice
of contours. Without loss of generality, assume that $x_{2}=x_{1} +1$.
We wish to show that
\[
p\tilde u\bigl(t;\vec{x}_{2}^{-}\bigr)+q \tilde u\bigl(t;
\vec{x}_{1}^{+}\bigr)-\tilde u(t;\vec{x})=0.
\]
Thinking of the left-hand side as an operator applied to $\tilde u(t;\vec{x})$, we compute the effect of this operator on the integrand
of (\ref{Qtilde_int_step_bern}) and find that it just brings out an
extra factor in the integrand [when compared to $\tilde u(t;\vec
{x}_{2}^{-})$] which is
%
%
\begin{eqnarray}\label{ASEPbcinteff}
&& p + q \biggl(\frac{1+z_1}{1+z_1/\tau} \biggr) \biggl(\frac
{1+z_{2}}{1+z_{2}/\tau}
\biggr) - \biggl(\frac{1+z_{2}}{1+z_{2}/\tau
} \biggr)
\nonumber\\[-8pt]\\[-8pt]
&&\qquad  = (z_1-\tau z_{2}) \frac{(p-q)/\tau}{(1+z_1/\tau )(1+z_{2}/\tau)}.\nonumber
\end{eqnarray}
We must show that the integral with this new factor times the integrand
in (\ref{Qtilde_int_step_bern}) is zero. The factor $(z_1-\tau z_2)$
cancels the term corresponding to $A=1$ and $B=2$ in the denominator of
\[
\prod_{1\leq A<B\leq k} \frac{z_A-z_B}{z_A-\tau z_B}.
\]
The term $(z_1-z_2)$ in the numerator remains, and the additional terms
coming from~(\ref{ASEPbcinteff}) are symmetric in $z_1$ and $z_2$.
Therefore, we can write
\[
\tilde u(t;\vec{x}) = \int\!\!\int(z_1-z_2)
G(z_1)G(z_2) \,dz_1 \,dz_2,
\]
where $G(z)$ involves the integrals in $z_3,\ldots, z_k$. Since the
contours are identical, this integral is zero, proving (B.2).

Condition (B.3) follows via very soft bounds. Observe that as $z$
varies along the contour $C_{-\tau;-1}$, and as $t$ varies in
$[0,\delta
]$ for any $\delta$, it is easy to bound $|f_{{z}}({x},{t};{\rho
})|\leq Ce^{c\|x\|_1}$ for some constants $c,C>0$. Since the contours
are finite and since the other terms in the integrand defining $\tilde u$ are bounded along $C_{-\tau;-1}$, $\tilde u(t;\vec{x})$ is likewise
bounded, thus implying the desired inequality to show condition~(B.3).

Condition (B.4) follows from residue calculus. In order to check it,
however, we must first determine what initial data corresponds to step
Bernoulli ASEP initial data.

%
\begin{lemma}
For step Bernoulli initial data with density parameter \mbox{$\rho\in(0,1]$}
and $\vec{x}\in\widetilde W^{k}$,
%
%
\begin{eqnarray}\label{stepBerinitialdata}
&& \mathbb{E} \bigl[\widetilde{Q}_{x_1}\bigl(\eta(0)\bigr)
\cdots\widetilde {Q}_{x_k}\bigl(\eta (0)\bigr) \bigr]
\nonumber\\[-8pt]\\[-8pt]
&&\qquad  = \mathbf{1}_{x_1>0} \prod_{j=1}^{k}
\rho\tau ^{k-j}\bigl(\rho\tau^{k-j+1} +1-\rho
\bigr)^{x_j-x_{j-1}-1}\nonumber
\end{eqnarray}
with the convention that $x_0=0$.
\end{lemma}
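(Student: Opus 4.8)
The plan is to compute the left-hand side of \eqref{stepBerinitialdata} directly as an expectation over the Bernoulli product measure defining step Bernoulli initial data, using the explicit form $\tilde Q_{x}(\eta) = \tau^{N_{x-1}(\eta)}\eta_x$. The product $\tilde Q_{x_1}(\eta)\cdots\tilde Q_{x_k}(\eta)$ vanishes unless all of the sites $x_1<\cdots<x_k$ are occupied, so the expectation factors as $\left(\prod_{j=1}^k \rho\right)$ times $\EE\big[\tau^{\sum_{j=1}^k N_{x_j-1}(\eta)}\mid \eta_{x_1}=\cdots=\eta_{x_k}=1\big]$; also everything is supported on $x_1>0$ since sites $\leq 0$ are empty, giving the factor $\mathbf 1_{x_1>0}$. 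The remaining task is to evaluate the conditional expectation of $\tau$ raised to $\sum_{j=1}^k N_{x_j-1}(\eta)$.

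First I would rewrite $\sum_{j=1}^k N_{x_j-1}(\eta) = \sum_{j=1}^k \sum_{y\leq x_j-1}\eta_y$. Interchanging the order of summation, each occupation variable $\eta_y$ with $1\leq y\leq x_k-1$ is counted with multiplicity $\#\{j: x_j - 1\geq y\} = \#\{j : x_j\geq y+1\}$. Since the $x_j$ are strictly increasing, this multiplicity equals $k - (\text{number of }x_j\text{ that are }\leq y)$, which is constant on each interval between consecutive $x_j$'s: for $y$ with $x_{j-1}\leq y \leq x_j - 1$ (using $x_0=0$), the multiplicity is exactly $k-j+1$. Thus $\sum_{j=1}^k N_{x_j-1}(\eta) = \sum_{j=1}^k (k-j+1)\sum_{y=x_{j-1}}^{x_j-1}\eta_y$, and within the $j$-th block the site $y=x_{j-1}$ is one of the conditioned-on occupied sites when $j\geq 2$ (contributing a deterministic $\tau^{k-j+1}$ — note when $j=1$ the block starts at $y=0$ which is empty, contributing nothing), while the remaining $x_j - x_{j-1}-1$ sites $y = x_{j-1}+1,\dots,x_j-1$ are free i.i.d.\ Bernoulli($\rho$).

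Then I would use independence to factor the conditional expectation into a product over $j$ and over the free sites in each block. For a single free Bernoulli($\rho$) site contributing with multiplicity $m$, $\EE[\tau^{m\eta_y}] = \rho\tau^m + (1-\rho)$. Collecting: the block $j$ contributes $\tau^{k-j+1}$ from the forced occupied left endpoint $x_{j-1}$ when $j\geq 2$ — but I should be careful about the bookkeeping: the cleanest way is to note $\sum_{j=1}^k N_{x_j - 1}$ can equivalently be organized so that the occupied site $x_j$ itself (which is conditioned on) contributes $\tau^{(k-j)}$ and the free sites strictly between $x_{j-1}$ and $x_j$ each contribute multiplicity $k-j+1$, giving block factor $\rho\tau^{k-j}(\rho\tau^{k-j+1}+1-\rho)^{x_j-x_{j-1}-1}$ exactly as in \eqref{stepBerinitialdata}. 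I would double-check the exponent shift by testing $k=1$: there $\tilde Q_{x_1}=\tau^{N_{x_1-1}}\eta_{x_1}$, and $\EE[\tilde Q_{x_1}(\eta(0))] = \mathbf 1_{x_1>0}\,\rho\,(\rho\tau + 1-\rho)^{x_1-1}$, which matches the claimed formula with $k-j=0$, $k-j+1=1$.

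The main obstacle — really the only subtle point — is the combinatorial accounting of multiplicities: correctly identifying that each of the $x_j - x_{j-1}-1$ free sites strictly between $x_{j-1}$ and $x_j$ is counted in exactly $k-j+1$ of the sums $N_{x_i-1}$, and that the conditioned-on occupied site $x_j$ is counted in exactly $k-j$ of them (those with $i>j$), so as to land precisely on the exponents $\tau^{k-j}$ and $(\,\cdot\,)^{x_j-x_{j-1}-1}$ with $(\,\cdot\,) = \rho\tau^{k-j+1}+1-\rho$ appearing in the statement. Once the index arithmetic is pinned down (and cross-checked against the $k=1$ and $k=2$ cases), the rest is immediate from independence of the Bernoulli variables and the one-line identity $\EE[\tau^{m\eta}] = \rho\tau^m + 1-\rho$.
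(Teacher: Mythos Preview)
Your proposal is correct and takes essentially the same approach as the paper: both proofs rewrite $\prod_{j}\tilde Q_{x_j}(\eta)$ by expanding $\tau^{N_{x_j-1}}\eta_{x_j}$ and grouping the exponent of $\tau$ site-by-site, observing that the occupied site $x_j$ carries weight $k-j$ while each free site strictly between $x_{j-1}$ and $x_j$ carries weight $k-j+1$, then factor using the independence of the Bernoulli variables and the identities $\EE[\eta_x\tau^{\ell\eta_x}]=\rho\tau^{\ell}$ and $\EE[\tau^{\ell\eta_x}]=\rho\tau^{\ell}+1-\rho$. The paper's write-up is more compact (it records the factorization of the product directly rather than passing through a conditional expectation), but the combinatorics and the final computation are identical.
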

\begin{pf}
From the definition of $\widetilde{Q}_x(\eta)$, one readily sees that
%
%
\begin{equation}
\label{qtildefactor} \widetilde{Q}_{x_1}\bigl(\eta(0)\bigr)\cdots
\widetilde{Q}_{x_k}\bigl(\eta(0)\bigr) = \prod_{j=1}^k
\eta_{x_j} \tau^{(k-j)\eta_{x_j}} \tau^{(k-j+1)(\sum
_{y=x_{j-1}+1}^{x_{j}-1} \eta_y)}.
\end{equation}
This expression involves two types of terms: $\eta_x \tau^{\ell\eta
_x}$ and $\tau^{\ell\eta_x}$. Observe that
\[
\mathbb{E} \bigl[ \eta_x \tau^{\ell\eta_x} \bigr]= \rho\tau
^{\ell}, \qquad\mathbb{E} \bigl[ \tau^{\ell\eta_x} \bigr] = \rho\tau
^{\ell} + 1-\rho.
\]
Taking expectations of (\ref{qtildefactor}) and using the above
formulas, we get the desired result.
\end{pf}

Thus, in order to show (B.4) we must prove that
%
%
\begin{equation}
\label{u0vecx} \lim_{t\to0} \tilde u(t;\vec{x})=
\mathbf{1}_{x_1>0} \prod_{j=1}^{k}
\rho\tau^{k-j}\bigl(\rho\tau^{k-j+1} +1-\rho
\bigr)^{x_j-x_{j-1}-1}.
\end{equation}
(Note that for $\rho=1$ this simply reduces to $\mathbf{1}_{x_1>0}
\prod_{j=1}^{k} \tau^{x_j -1}$.) The first observation is that we can
take the limit of $t\to0$ inside of the integral defining $\tilde u(t;\vec{x})$. This is because the integral defining $\tilde u$ is
along a finite contour and the integrand is uniformly converging to its
$t=0$ limiting value along this contour.

When $t=0$ the exponential term in the integrand of (\ref
{Qtilde_int_step_bern}) disappears. If $x_1\leq0$, then the integrand
no longer has a pole at $z_1=-\tau$. Since there are no other poles
contained in the $z_1$ contour, Cauchy's theorem implies that the
integral is zero, hence the condition that $\tilde u(0;\vec{x})=0$ is
satisfied.

Alternatively, we must consider the case where $0<x_1<x_2<\cdots<x_k$.
We can write $\tilde u(0;\vec{x})$ as
%
%
\begin{equation}
\label{gl1} \tilde u(0;\vec{x}) = \tau^k \tau^{k(k-1)/2}
g_1(x_1,\ldots,x_k),
\end{equation}
where we define (for $\ell\geq1$),
\begin{eqnarray*}
g_{\ell}(x_1,\ldots,x_k) &=& \frac{(-1)^k}{(2\pi\iota)^k}
\int \cdots\int \prod_{1\leq A<B\leq k} \frac{z_A-z_B}{z_A-\tau z_B}
\\
&&\hspace*{70pt}{}\times  \prod
_{i=1}^k \biggl(\frac{1+z_i}{1+z_i/\tau}
\biggr)^{x_i-1} \frac
{1}{\tau+z_i}\frac{\theta}{z_i-\tau^\ell\theta} \,dz_i.
\end{eqnarray*}
As a convention, when $k=0$ we define $g_{\ell}\equiv1$.
%
\begin{lemma}\label{gllemma}
For $\ell\geq1$ and $0<x_1<x_2<\cdots<x_k$,
\[
g_{\ell}(x_1,\ldots,x_k) = \biggl(
\frac{ 1+\tau^{\ell} \theta}{
1+\tau^{\ell-1} \theta} \biggr)^{x_k-1} \frac{\theta}{\tau+
\tau^{\ell} \theta} g_{\ell+1}(x_1,
\ldots,x_{k-1}).
\]
\end{lemma}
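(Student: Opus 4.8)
\emph{Proof proposal.} The plan is to evaluate $g_\ell(x_1,\dots,x_k)$ by carrying out the $z_k$-integration first, via residues, and showing that only one pole contributes. Inside the contour $\Cminustau$ the only singularity of the $z_k$-integrand is the pole at $z_k=-\tau$, of order $x_k$, coming from the $i=k$ factors $\bigl(\tfrac{1+z_k}{1+z_k/\tau}\bigr)^{x_k-1}\tfrac{1}{\tau+z_k}=\tau^{x_k-1}\tfrac{(1+z_k)^{x_k-1}}{(\tau+z_k)^{x_k}}$; the remaining poles of the integrand, namely $z_k=z_A/\tau$ for $1\le A\le k-1$ (from $\tfrac{1}{z_A-\tau z_k}$, sitting near $-1$) and $z_k=\tau^\ell\rhofrac$, all lie outside $\Cminustau$ once its radius is small enough, which is part of the specification of $\Cminustau$. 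Since each Vandermonde factor $\tfrac{z_A-z_k}{z_A-\tau z_k}$ tends to $\tau^{-1}$ and the $i=k$ block to a constant multiple of $z_k^{-2}$ as $z_k\to\infty$, the $z_k$-integrand is $O(z_k^{-2})$ at infinity, so the sum of all its residues vanishes and, for the $z_k$-integrand, $\Res{z_k=-\tau}=-\Res{z_k=\tau^\ell\rhofrac}-\sum_{A=1}^{k-1}\Res{z_k=z_A/\tau}$.

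The second step is to identify the $\Res{z_k=\tau^\ell\rhofrac}$ contribution with the claimed right-hand side. At $z_k=\tau^\ell\rhofrac$ the factor $\tfrac{\rhofrac}{z_k-\tau^\ell\rhofrac}$ has residue $\rhofrac$, one has $1+z_k/\tau=1+\tau^{\ell-1}\rhofrac$, and $\prod_{A<k}\tfrac{z_A-z_k}{z_A-\tau z_k}$ becomes $\prod_{A<k}\tfrac{z_A-\tau^\ell\rhofrac}{z_A-\tau^{\ell+1}\rhofrac}$. The numerators $z_A-\tau^\ell\rhofrac$ cancel the factors $\tfrac{\rhofrac}{z_A-\tau^\ell\rhofrac}$ already present in the $i=A$ part of the integrand, replacing each of them by $\tfrac{\rhofrac}{z_A-\tau^{\ell+1}\rhofrac}$ — that is, turning the remaining $(k-1)$-fold integrand into the one defining $g_{\ell+1}(x_1,\dots,x_{k-1})$. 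Pulling out the scalar $\bigl(\tfrac{1+\tau^\ell\rhofrac}{1+\tau^{\ell-1}\rhofrac}\bigr)^{x_k-1}\tfrac{\rhofrac}{\tau+\tau^\ell\rhofrac}$ and keeping track of the prefactor $(-1)^k/(2\pi\iota)^{k-1}$ together with the extra minus sign, this residue contributes precisely $\bigl(\tfrac{1+\tau^\ell\rhofrac}{1+\tau^{\ell-1}\rhofrac}\bigr)^{x_k-1}\tfrac{\rhofrac}{\tau+\tau^\ell\rhofrac}\,g_{\ell+1}(x_1,\dots,x_{k-1})$ to $g_\ell(x_1,\dots,x_k)$.

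The remaining task, and the only delicate point, is to show that each $\Res{z_k=z_A/\tau}$ term integrates to zero. Fix $A\in\{1,\dots,k-1\}$ and consider the $z_A$-integral of that term. Taking the residue produces the factor $-\tfrac1\tau$, replaces $z_k$ by $z_A/\tau$, and in particular creates the block $\bigl(\tfrac{1+z_A/\tau}{1+z_A/\tau^2}\bigr)^{x_k-1}$ together with the factors $\prod_{B\ne A,\,B<k}\tfrac{z_B-z_A/\tau}{z_B-z_A}$. Each denominator $z_B-z_A$ is cancelled by the corresponding surviving Vandermonde factor $\tfrac{z_A-z_B}{z_A-\tau z_B}$ (or $\tfrac{z_B-z_A}{z_B-\tau z_A}$), so no new pole at $z_A=z_B$ appears, and one checks that the other poles in $z_A$ — at $-\tau^2$, at $z_B/\tau$, at $\tau z_B$, at $\tau^\ell\rhofrac$, and at $\tau^{\ell+1}\rhofrac$ — all lie outside $\Cminustau$. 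The only candidate pole inside $\Cminustau$ is $z_A=-\tau$, of order $x_A$ from the $i=A$ part, but the newly created $\bigl(\tfrac{1+z_A/\tau}{1+z_A/\tau^2}\bigr)^{x_k-1}$ has a zero of order $x_k-1\ge x_A$ there (since $x_A<x_k$, as $A<k$), which annihilates it. Hence the $z_A$-integrand is holomorphic inside $\Cminustau$ and its contour integral vanishes by Cauchy's theorem, so all $k-1$ of these residues drop out. Assembling the three steps yields the lemma; the tracking of pole orders and the cancellation pattern in this last paragraph is where care is required, the rest being routine residue calculus. (When $k=1$ the sum over $A$ is empty and only the main term survives, with the convention $g_{\ell+1}\equiv1$.)
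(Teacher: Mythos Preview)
Your proof is correct and follows essentially the same route as the paper's: both arguments evaluate the $z_k$-integral by pushing the contour to infinity (equivalently, using that the total residue sum vanishes since the integrand is $O(z_k^{-2})$), then show that each residue at $z_k=z_A/\tau$ contributes zero to the subsequent $z_A$-integral because the factor $(1+z_A/\tau)^{x_k-1}$ introduces a zero at $z_A=-\tau$ of order $x_k-1\ge x_A$ that kills the only interior pole, leaving only the residue at $z_k=\tau^\ell\rhofrac$ to produce the claimed recursion. Your version is somewhat more explicit about the Vandermonde cancellations and pole-order bookkeeping, but the argument is the same.
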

\begin{pf}
The lemma follows from residue calculus. Expand $z_k$ to infinity. Due
to quadratic decay in $z_k$ at infinity there is no pole. Thus, the
integral is equal to~$-1$ times the sum of the residues at $z_k= \tau
^{-1}z_j$ for $j<k$ and at $z_k = \tau^{\ell}\theta$.

First, consider the residue at $z_k= \tau^{-1}z_j$ for some $j<k$.
That residue equals an integral with one fewer variable:
\begin{eqnarray*}
&& \frac{(-1)^{k-1}}{(2\pi\iota)^{k-1}} \int\cdots\int \prod_{1\leq A<B\leq k-1}
\frac{z_A-z_B}{z_A-\tau z_B} \prod_{i=1}^{k-1} \biggl(
\frac{1+z_i}{1+z_i/\tau} \biggr)^{x_i-1} \frac{1}{\tau+z_i}\frac{dz_i}{z_i-\tau^\ell\theta}
\\
&&\qquad {}\times\frac{z_j/\tau-z_j}{\tau} \mathop{\prod_{i=1}}_{i\neq
j}^{k-1}
\frac{z_i-z_j/\tau}{z_i-z_j} \biggl(\frac{1+ z_j/\tau
}{1+z_j/\tau^2} \biggr)^{x_k-1}
\frac{1}{\tau+ z_j/\tau} \frac{1}{
z_j/\tau- \tau^{\ell}\theta}.
\end{eqnarray*}
This integrand has no pole at $z_j=-\tau$. This is because the new
factor contains $(1+z_j/\tau)^{x_k-1}$ in the numerator and, since
$x_k>x_j$, this factor cancels the pole coming from the denominator
$(1+z_j/\tau)^{x_j-1}$. Since the contour for $z_j$ was a small circle
around $-\tau$ the fact that this pole is no longer present implies
that the entire integral is zero. This shows that the residue at
$z_k=z_j/\tau$ for any $j<k$ is zero.

The remaining residue to consider is from $z_k = \tau^{\ell}\theta
$. One readily checks that evaluating this residue leads to the desired
recursion relation between $g_{\ell}(x_1,\ldots,x_k)$ and $g_{\ell
+1}(x_1,\ldots, x_{k-1})$. Finally, note that when $k=1$ the recursion
holds under the convention which we adopted that without any arguments,
$g_{\ell}$~equals~1.
\end{pf}

We may now conclude the proof of condition (B.4). Iteratively applying
Lemma~\ref{gllemma} leads to
\[
g_1(x_1,\ldots,x_k) = \prod
_{j=1}^{k} \biggl(\frac{1+ \tau^j
\theta}{1+\tau^{j-1} \theta}
\biggr)^{x_{k-j+1}-1} \frac
{\theta}{\tau+\tau^j \theta}.
\]
After some algebra one confirms that plugging this into (\ref{gl1})
leads to the desired equation of (\ref{u0vecx}), and hence completes
the proof of condition (B.4).
\end{pf*}

\subsection{ASEP moment formula}

We seek to compute an integral formula for the moments of $Q_x(\eta
(t))$. Even if we were to solve the system of equations in Proposition
\ref{propasepeqns}(B), this would not suffice since $\vec{x}$ is
restricted to lie in $\widetilde W^{k}$ (i.e., all $x_i$ distinct). The
extension of that solution outside $\widetilde W^{k}$ does not have any
necessary meaning as an expectation. Instead, the following lemma shows
that we may recover the moments of $Q_x$ from the formula given in
Corollary~\ref{QtildeCor} for $\mathbb{E} [\widetilde
{Q}_{x_1}(t)\cdots
\widetilde{Q}_{x_k}(t) ]$. Theorem~\ref{qmomInt} below gives the
final formula for $\mathbb{E} [(Q_x(\eta))^n ]$.

%
\begin{lemma}\label{qmomQtilde}
Recalling $Q_x(\eta)$ and $\widetilde{Q}_x(\eta)$ defined in (\ref
{NQdef}), we have
%
%
\begin{equation}
\label{QnQtilde} \bigl(Q_x(\eta)\bigr)^n = \sum
_{k=0}^n \pmatrix{n\cr k}_{\tau} (\tau;
\tau)_{k} (-1)^k \sum_{x_1<\cdots<x_k\leq x}
\widetilde{Q}_{x_1}(\eta)\cdots \widetilde{Q}_{x_k}(\eta),
\end{equation}
where the empty sum (when $k=0$) is defined as equal to $1$.
\end{lemma}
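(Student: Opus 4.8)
The plan is to prove the identity pointwise in $\eta$ by a generating-function / inversion argument, working entirely algebraically with $q$-series. Recall that $Q_x(\eta)=\tau^{N_x(\eta)}$ and $\tilde Q_y(\eta)=\tau^{N_{y-1}(\eta)}\eta_y$, so that $N_x-N_{x-1}=\eta_x$ and each step the quantity $N$ either stays put or increases by one. First I would fix $\eta$ and abbreviate $N:=N_x(\eta)$, $m:=\tau^N=Q_x(\eta)$. The key observation is that the inner sum on the right-hand side, namely $e_k:=\sum_{x_1<\cdots<x_k\le x}\tilde Q_{x_1}(\eta)\cdots\tilde Q_{x_k}(\eta)$, is an elementary-symmetric-type object: writing the occupied sites among $\{y\le x\}$ (left-finiteness guarantees there are finitely many, exactly $N$ of them) as $y_1<\cdots<y_N$, one checks directly from the definition that $\tilde Q_{y_i}(\eta)=\tau^{i-1}$. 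Hence $e_k=\sum_{1\le i_1<\cdots<i_k\le N}\tau^{(i_1-1)+\cdots+(i_k-1)}=e_k(1,\tau,\dots,\tau^{N-1})$, the $k$-th elementary symmetric polynomial in $1,\tau,\dots,\tau^{N-1}$.

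Next I would invoke the finite $q$-binomial theorem in the form $e_k(1,\tau,\dots,\tau^{N-1})=\tau^{\binom k2}\binom Nk_\tau$, so that the right-hand side of \eqref{QnQtilde} becomes $\sum_{k=0}^n \binom nk_\tau (\tau;\tau)_k (-1)^k \tau^{\binom k2}\binom Nk_\tau$. The claim is therefore reduced to the purely combinatorial identity
\begin{equation*}
\tau^{nN}=\sum_{k=0}^n \binom nk_\tau (\tau;\tau)_k (-1)^k \tau^{\binom k2}\binom Nk_\tau .
\end{equation*}
To prove this I would use the $q$-Vandermonde / finite $q$-binomial theorem once more: note $(\tau;\tau)_k\binom Nk_\tau = \binom Nk_\tau (\tau;\tau)_k$ and, using $\binom nk_\tau(\tau;\tau)_k=\frac{(\tau;\tau)_n}{(\tau;\tau)_{n-k}}$ together with $\binom Nk_\tau=\prod_{i=0}^{k-1}\frac{1-\tau^{N-i}}{1-\tau^{i+1}}$, the right-hand side is recognizable as a terminating ${}_2\phi_1$ (or ${}_1\phi_0$) summed by the $q$-binomial theorem. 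Concretely, $\sum_{k=0}^n (-1)^k\tau^{\binom k2}\binom nk_\tau \prod_{i=0}^{k-1}(1-\tau^{N-i})$ is the expansion of $\prod_{j=0}^{n-1}(1-\tau^{N}\tau^{-j}\cdot \tau^{j})$—more carefully, it is the $q$-binomial expansion of $(\tau^{N};\tau^{-1})$-type product which telescopes to $\tau^{nN}$. I would verify this by citing the standard identity $\sum_{k=0}^n (-1)^k\tau^{\binom k2}\binom nk_\tau a^k (b;\tau)_k /\cdots$ from the $q$-calculus section \ref{sec:qdef} of the paper, or simply by the clean special case: $\prod_{i=0}^{n-1}(1-\tau^{N-i}w)$ expanded in $w$ via the $q$-binomial theorem, then evaluated; an induction on $n$ also works cleanly here.

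The only genuine content is the first reduction — recognizing $\tilde Q_{y_i}(\eta)=\tau^{i-1}$ along the occupied sites and hence that $e_k$ is an elementary symmetric polynomial in a geometric progression — after which everything is a terminating $q$-series identity that the paper's Section \ref{sec7}/\ref{sec:qdef} machinery supplies. The main obstacle, such as it is, is bookkeeping: making sure the powers of $\tau$ from $\tau^{\binom k2}$, from $(\tau;\tau)_k$, and from the two $q$-binomial coefficients combine correctly, and handling left-finiteness so that $N_x(\eta)<\infty$ and the sums are finite (for $p<q$ non-left-finite data both sides vanish, as already noted in the remarks, so that case is trivial). An alternative, perhaps even shorter, route is to prove the scalar identity $\tau^{nN}=\sum_{k}\binom nk_\tau(\tau;\tau)_k(-1)^k\tau^{\binom k2}\binom Nk_\tau$ first as a stand-alone lemma by induction on $n$ using the Pascal-type recurrence for $\binom nk_\tau$, and then substitute; I would present whichever is cleaner once the routine computation is done.
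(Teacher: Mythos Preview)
Your proposal is correct, and it shares the paper's key observation---that if $y_1<\cdots<y_N$ are the occupied sites at or below $x$ then $\tilde Q_{y_i}(\eta)=\tau^{i-1}$---but it takes a genuinely different route from there. The paper multiplies both sides of \eqref{QnQtilde} by $u^n/(\tau;\tau)_n$, sums over $n\ge 0$, and reduces the claim to the product identity $\dfrac{1}{(uQ_x(\eta);\tau)_\infty}=\dfrac{\prod_{y\le x}(1-u\tilde Q_y(\eta))}{(u;\tau)_\infty}$, which follows immediately once the values $\tilde Q_{y_i}=\tau^{i-1}$ are plugged in. You instead fix $n$, evaluate the inner sum as $e_k(1,\tau,\dots,\tau^{N-1})=\tau^{\binom k2}\binom Nk_\tau$, and reduce to the finite scalar identity $\tau^{nN}=\sum_{k=0}^n\binom nk_\tau(-1)^k\tau^{\binom k2}(\tau;\tau)_k\binom Nk_\tau$. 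That identity is correct; a clean way to see it is to note that $(-1)^k\tau^{\binom k2}(\tau;\tau)_k\binom Nk_\tau=\prod_{i=0}^{k-1}(\tau^N-\tau^i)$, so it becomes the standard $q$-Newton expansion $x^n=\sum_{k=0}^n\binom nk_\tau\prod_{i=0}^{k-1}(x-\tau^i)$ at $x=\tau^N$, provable by induction on $n$ via the Pascal recurrence for $\binom nk_\tau$. Your approach is arguably more elementary (no infinite series, no convergence issues for small $|u|$), while the paper's generating-function route is slicker once the $q$-binomial theorem is in hand; either way the substantive step is the same elementary-symmetric reduction you identified.
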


\begin{pf}
This lemma can be found as Proposition 3 in \cite{IS}. The derivation
provided therein utilizes the $U_q(sl_2)$ symmetry of the spin chain
representation of~ASEP. We provide an elementary proof.

Recall that $Q_x(\eta)$ and $\widetilde Q_x(\eta)$ are functions of the
occupation variables $\eta$ and if $\eta$ is not left-finite, then
both sides above are zero.

In order to prove the identity, we develop generating functions for
both sides and show that they are equal. Multiply both sides of the
claimed identity by $u^n/(\tau;\tau)_n$ and sum over $n\geq0$. The
$\tau$-binomial theorem (see Section~\ref{secqdef} with $q$ replaced
by~$\tau$) implies that the generating function for the left-hand side
of (\ref{QnQtilde}) can be summed as
\[
\sum_{n=0}^{\infty}\frac{u^n}{(\tau;\tau)_n}
\bigl(Q_x(\eta)\bigr)^n = \frac{1}{(u Q_x(\eta);\tau)_{\infty}}.
\]
For $|u|$ small enough, this series is convergent and it represents an
analytic function of $u$.

Turning to the generating function for the right-hand side of (\ref
{QnQtilde}), if $|u|$ is sufficiently small, it is justifiable to
rearrange the series in $u$ into
\[
\sum_{k=0}^{\infty}\,  \sum
_{x_1<x_2<\cdots<x_k\leq x} (-1)^k u^k
\widetilde{Q}_{x_1}(\eta)\cdots\widetilde{Q}_{x_k}(\eta) \sum
_{n\geq
k}^{\infty} \frac{u^{n-k}}{(\tau;\tau)_{n-k}}.
\]
The summation over $n\geq k$ can be evaluated as $1/(u;\tau)_{\infty
}$ and factored out. Also, the summation over $k$ and ordered sets
$x_1<x_2<\cdots<x_k\leq x$ can be rewritten yielding the right-hand
side of (\ref{QnQtilde}) equals
\[
\frac{\prod_{y\leq x}(1-u \widetilde{Q}_{y}(\eta))}{(u;\tau)_{\infty}}.
\]
The above manipulations are justified as long as $|u|$ is small enough,
due to the fact that all but finitely many of the $\widetilde{Q}_y(\eta)$
are zero.

The proof now reduces to showing that
\[
\frac{1}{(u Q_x(\eta);\tau)_{\infty}} = \frac{\prod_{y\leq x}(1-u
\widetilde{Q}_{y}(\eta))}{(u;\tau)_{\infty}}.
\]
This, however, is an immediate consequence of the definitions of
$Q_x(\eta)$ and $\widetilde{Q}_y(\eta)$. To see this, assume that $\eta
_y=0$ for all $y\leq x$ except when $y=n_1,\ldots, n_r$. Then
$Q_x(\eta)= \tau^r$ and the left-hand side can be written as
\[
\frac{(1-u)\cdots(1-u\tau^{r-1})}{(u;\tau)_{\infty}}.
\]
On the other hand, note that $\widetilde{Q}_y(\eta)=0$ for all $y\leq x$
except $\widetilde{Q}_{n_i}(\eta)=\tau^{i-1}$. Thus, the right-hand side
can also be rewritten as
\[
\frac{(1-u)\cdots(1-u\tau^{r-1})}{(u;\tau)_{\infty}},
\]
hence completing the proof of the lemma.\vadjust{\goodbreak}
\end{pf}

For step and step Bernoulli initial data, using Corollary~\ref
{QtildeCor} and the symmetrization identities contained in Lemma~\ref
{combiden} we can evaluate part of (\ref{QnQtilde}) via the following result.

%
\begin{lemma}\label{QtildeIntlemma}
For step Bernoulli initial data with $\rho\in(0,1]$ and for all
$k\geq1$,
%
%
\begin{eqnarray}\label{QnQtildeprime}
&& (\tau;\tau)_k (-1)^k \sum
_{x_1<\cdots< x_k\leq x} \mathbb{E} \bigl[ \widetilde{Q}_{x_1}\bigl(
\eta(t)\bigr)\cdots\widetilde{Q}_{x_k}\bigl(\eta(t)\bigr) \bigr]
\nonumber\\[-8pt]\\[-8pt]
&&\qquad  = \frac{\tau^{k(k-1)/2}}{(2\pi\iota)^k} \int\cdots\int\prod_{1\leq A<B\leq k}
\frac{z_A-z_B}{z_A-\tau z_B} \prod_{i=1}^k
F_{{z_i}}({x},{t};{\rho}) \frac{dz_i}{z_i},\nonumber
\end{eqnarray}
where the contours of integration are all $C_{-\tau;-1}$.
\end{lemma}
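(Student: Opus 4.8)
The plan is to feed the formula of Corollary~\ref{QtildeCor} into the left side of~(\ref{QnQtildeprime}), carry out the ordered sum over $x_1<\dots<x_k\le x$ inside the contour integral, and then collapse the resulting non-symmetric integrand by a symmetrization identity. Write
\begin{equation*}
\fASEP{x}{z}{\rho}{t}=\zeta(z)^{x-1}h(z),\qquad \zeta(z)=\frac{1+z}{1+z/\tau},\qquad h(z)=\exp\!\left[-\frac{z(p-q)^2}{(1+z)(p+qz)}t\right]\frac{1}{\tau+z}\,\frac{-\tau\rhofrac}{z-\tau\rhofrac}.
\end{equation*}
If the radius of $\Cminustau$ is fixed small enough, then $|\zeta(z)|$ is of order $(\text{radius})^{-1}$ and in particular uniformly bounded below by a constant strictly larger than $1$ on $\Cminustau$, while $\prod_{A<B}\frac{z_A-z_B}{z_A-\tau z_B}$ stays bounded there (the contour is chosen so that $\Cminustau$ and $\tau\Cminustau$ are disjoint). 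Hence the summand of~(\ref{QnQtildeprime}) is dominated on $\Cminustau^k$ by an absolutely convergent product of geometric series in the $x_i$, which justifies interchanging the sum with the compact-contour integral.

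Next I would evaluate the nested geometric series. Summing $x_1$ over $(-\infty,x_2-1]$, then $x_2$ over $(-\infty,x_3-1]$, and so on up to $x_k$ over $(-\infty,x]$, the upper cutoffs cascade through the sums and give, on $\Cminustau^k$ (where every partial product has modulus $>1$),
\begin{equation*}
\sum_{x_1<\dots<x_k\le x}\ \prod_{i=1}^k\zeta(z_i)^{x_i}\ =\ \frac{\big(\zeta(z_1)\cdots\zeta(z_k)\big)^{x+1}}{\prod_{j=1}^k\big(\zeta(z_1)\cdots\zeta(z_j)-1\big)}.
\end{equation*}
Reinstating the pulled-out factor $\prod_i h(z_i)\zeta(z_i)^{-1}$, the power $(\zeta(z_1)\cdots\zeta(z_k))^{x+1}$ collapses to the \emph{symmetric} factor $\prod_i\zeta(z_i)^{x}$, so that, up to the constant $(\tau;\tau)_k(-1)^k$, the integrand of the left side of~(\ref{QnQtildeprime}) becomes the symmetric function $\prod_i\big(h(z_i)\zeta(z_i)^{x}\big)$ times the non-symmetric piece $\prod_{A<B}\frac{z_A-z_B}{z_A-\tau z_B}\cdot\prod_{j=1}^k\big(\zeta(z_1)\cdots\zeta(z_j)-1\big)^{-1}$.

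Since the integration contour $\Cminustau$ is the same in every variable, the integral is unchanged if the integrand is replaced by $\tfrac1{k!}$ times its full $S_k$-symmetrization in $z_1,\dots,z_k$; the already-symmetric factors pull out of the symmetrization, and one is left with the algebraic identity
\begin{equation*}
\frac1{k!}\sum_{\sigma\in S_k}\ \prod_{1\le A<B\le k}\frac{z_{\sigma(A)}-z_{\sigma(B)}}{z_{\sigma(A)}-\tau z_{\sigma(B)}}\ \prod_{j=1}^k\frac{1}{\zeta(z_{\sigma(1)})\cdots\zeta(z_{\sigma(j)})-1}\ =\ \frac{(-1)^k}{(\tau;\tau)_k}\ \prod_{1\le A<B\le k}\frac{z_A-z_B}{z_A-\tau z_B}\ \prod_{i=1}^k\frac{\tau+z_i}{z_i},
\end{equation*}
which is (an instance of) the symmetrization identity collected in Section~\ref{sec7} as Lemma~\ref{combiden}, in the variables $\zeta_i=\zeta(z_i)$. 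Granting it, the factors $\prod_i(\tau+z_i)$ cancel the $\prod_i(\tau+z_i)^{-1}$ hidden in $\prod_i h(z_i)$, the prefactors $(\tau;\tau)_k(-1)^k$ and $(-1)^k/(\tau;\tau)_k$ cancel, $h(z)\zeta(z)^{x}(\tau+z)$ is exactly $\fASEPtwo{x}{z}{\rho}{t}$, and the leftover $\prod_i z_i^{-1}$ produces the $\prod_i dz_i/z_i$; what remains is precisely the right side of~(\ref{QnQtildeprime}). The $\rho=1$ case is identical with the factor $-\tau\rhofrac/(z-\tau\rhofrac)$ deleted throughout (or obtained by letting $\rho\uparrow1$).

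The main obstacle is the symmetrization identity itself: a Tracy--Widom-type rational-function identity whose verification carries the real content. I would prove it by induction on $k$: clear denominators, view both sides as rational functions of $z_k$, and match them via their residues at the poles $z_k=\tau z_j$ (for which, by antisymmetry of the numerator, only the two permutations related by the transposition $(j\,k)$ contribute), together with the value at $z_k=0$ and the behaviour as $z_k\to\infty$; a generating-function argument is an alternative. Everything else — the sum/integral interchange, the geometric summation, and the bookkeeping of the constants $\tau^{k(k-1)/2}$, $(2\pi\iota)^{-k}$, $(\tau;\tau)_k(-1)^k$ — is routine once the smallness of the radius of $\Cminustau$ has been fixed at the outset so that $|\zeta(z_i)|>1$ and the partial products $\zeta(z_1)\cdots\zeta(z_j)$ stay away from $1$ on the contour.
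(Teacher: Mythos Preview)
Your overall strategy is the same as the paper's: insert Corollary~\ref{QtildeCor}, sum the geometric series inside the integral (using $|\zeta(z)|>1$ on a small $\Cminustau$), and then symmetrize. The geometric-series computation and the bookkeeping are fine.

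The problem is the displayed ``algebraic identity''. Its left-hand side is symmetric in $z_1,\dots,z_k$ by construction, while its right-hand side is not: the factor $\prod_{A<B}\frac{z_A-z_B}{z_A-\tau z_B}$ is not invariant under permutations (already for $k=2$). So that equation is false as an identity of rational functions, and it is not what Lemma~\ref{combiden} says. What \emph{is} true, and what the paper actually does, is to symmetrize \emph{both} integrands. The symmetrization of the right side of~(\ref{QnQtildeprime}) uses~(\ref{combiden1}) and produces $(\tau;\tau)_k\,\tau^{-k(k-1)/2}\prod_i z_i\cdot\det\!\big[\tfrac{1}{z_i-\tau z_j}\big]$ times the symmetric factors; the symmetrization of the left side uses~(\ref{combiden2}) and produces $(-1)^k\,\tau^{-k(k-1)/2}\prod_i(\tau+z_i)\cdot\det\!\big[\tfrac{1}{z_i-\tau z_j}\big]$ times the symmetric factors. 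The common Cauchy determinant cancels and the remaining constants match, which is the computation you sketched in words but not the single identity you wrote down. Once you state it this way you do not need to re-prove anything by induction: both pieces are already recorded in Lemma~\ref{combiden} (with references to Macdonald and Tracy--Widom for their proofs).
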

\begin{pf}
The starting point for this is the formula provided in Corollary~\ref
{QtildeCor} for $ \mathbb{E} [ \widetilde{Q}_{x_1}(\eta)\cdots
\widetilde
{Q}_{x_k}(\eta) ]$. In that formula, set $\xi_i =
(1+z_i)/(1+z_i/\tau)$ and note that the contour $C_{-\tau;-1}$ can be
chosen to be a sufficiently small circle around $-\tau$ so that $|\xi
_i|>1$ as $z_i$ varies in $C_{-\tau;-1}$. The summation over
$x_1<\cdots
<x_k\leq x$ on the left-hand side of (\ref{QtildeIntlemma}) can be
brought into the integrand and is performed by using (here we rely upon
$|\xi_i|>1$ for convergence)
\[
\sum_{x_1<\cdots< x_k\leq x}\, \prod_{i=1}^k
\xi_i^{x_i-1} = (\xi_1 \cdots
\xi_k)^x \prod_{i=1}^k
\frac{1}{\xi_{1}\cdots\xi_{i}-1}.
\]
After performing the summation as above, we observe that since all
contours are the same, we may symmetrize the left-hand side. For the
same reason, we may symmetrize the right-hand side integrand in (\ref
{QnQtildeprime}). The symmetrization is achieved by using the two
combinatorial identities in Lemma~\ref{combiden}---identity (\ref
{combiden1}) is used to symmetrize the left-hand side, while (\ref
{combiden2}) is used to symmetrize the right-hand side. The two
resulting symmetrized formulas are identical, thus yielding the proof.
\end{pf}
%

We may now prove the following moment formula.


%
\begin{theorem}\label{qmomInt}
Fix nonnegative real numbers $0<p<q$ (normalized by $p+q=1$) and set
all bond rate parameters $a_{x}\equiv1$. Consider step Bernoulli
initial data with density $\rho\in(0,1]$. Then for all $n\geq1$,
%
%
\begin{eqnarray}\label{eqnqmomInt}
\qquad && \mathbb{E} \bigl[ \tau^{n N_x(\eta(t))} \bigr]\nonumber
\\
&&\qquad = \mathbb{E} \bigl[\bigl(Q_x\bigl(\eta (t)\bigr)\bigr)^n \bigr]
\\
&&\qquad  = \tau^{n(n-1)/2}\frac{1}{(2\pi\iota)^n} \int\cdots\int \prod
_{1\leq A<B\leq n} \frac{z_A-z_B}{z_A-\tau z_B} \prod_{i=1}^n
F_{{z_i}}({x},{t};{\rho}) \frac{dz_i}{z_i},\nonumber
\end{eqnarray}
%
where the integration contour for $z_A$ is composed of two disconnected
pieces which include $0,-\tau$ but does not
include $-1$, $\tau\theta$ or $\{\tau z_B\}_{B>A}$ (see Figure~\ref{ASEPnestedcontours} for an illustration of such contours).
\end{theorem}

%
\begin{figure}[b]

\includegraphics{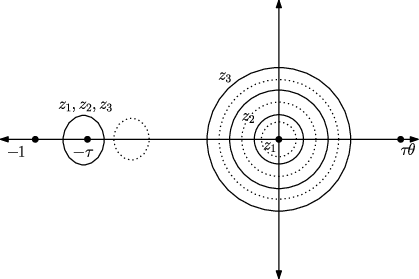}

\caption{The contour for $z_A$ includes $0,-\tau$ but does not
include $-1$, $\tau\theta$ or $\{\tau z_B\}_{B>A}$. The dotted
lines represent the images of the contours under multiplication by
$\tau$. For instance, observe that the $z_1$ contour does not include
the image under multiplication by $\tau$ of $z_2$ or $z_3$.}\label{ASEPnestedcontours}
\end{figure}

\begin{pf}
The left-most equality of (\ref{eqnqmomInt}) is just by definition.
The proof of the second equality relies on the following lemma. For an
illustration of the types of contours involved, see Figure~\ref
{ASEPnestedcontours}.

%
\begin{lemma}\label{lem:ASEPmu}
Fix $n\geq1$. Assume $f(z)$ is a meromorphic function on $\mathbb{C}$ which
has no poles in a ball around 0 and which has $f(0)=1$. Let $C^0$ be a
small circle centered at 0 and $C^{1}$ be another closed contour.
Assume that there exists \mbox{$r>\tau^{-1}$} such that $\tau C^1$ is not
contained inside~$r^n C^{0}$, and such that $f$ has no poles inside~$r^nC^0$. Define $C^0_i=r^i C^0$, $C^1_i = C^1$ and $C_i = C^0_i \cup
C^1_i$ for $1\leq i \leq n$. Let
\[
\nu_n = \frac{1}{(2\pi\iota)^n}\int_{C_1} \cdots\int
_{C_n} \prod_{1\leq A<B\leq n}
\frac{z_A-z_B}{z_A-\tau z_B} \prod_{i=1}^{n}
f(z_i)\frac{dz_i}{z_i}
\]
and
\[
\tilde\nu_k = \frac{1}{(2\pi\iota)^k}\int_{C^1}
\cdots\int_{C^1} \prod_{1\leq A<B\leq k}
\frac{z_A-z_B}{z_A-\tau z_B} \prod_{i=1}^{k}
f(z_i)\frac{dz_i}{z_i}
\]
with the convention that $\tilde\nu_0=1$.
Then
\[
\nu_n = \sum_{k=0}^{n} \pmatrix{n\cr k}_{\tau} \tau^{(k(k-1)/2)- (n(n-1)/2)} \tilde\nu_k.
\]
\end{lemma}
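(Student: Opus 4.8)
The plan is to reduce $\nu_n$ to a sum over the $\tilde\nu_k$ by expanding each contour $C_i = C^0_i \cup C^1_i$ and tracking which residues survive. First I would write $\nu_n$ as a sum of $2^n$ terms, one for each choice of which variables $z_i$ are integrated over the small circle $C^0_i = r^i C^0$ versus the outer contour $C^1$. The key structural observation is that the nesting $r^1 C^0 \subset r^2 C^0 \subset \cdots \subset r^n C^0$ for the small circles, together with the hypothesis that $\tau C^1$ is \emph{not} inside $r^n C^0$, is exactly what is needed to collapse the small-circle integrals: if $z_A$ runs over $C^0_A$ and $z_B$ over $C^0_B$ with $A<B$, then $C^0_A$ is strictly inside $C^0_B$, and since $r > \tau^{-1}$ we have that the radius of $C^0_B$ exceeds $\tau^{-1}$ times the radius of $C^0_A$, so $\tau z_B$ lies outside $C^0_A$ — i.e. the pole at $z_A = \tau z_B$ from the factor $1/(z_A-\tau z_B)$ is never enclosed by the $z_A$ contour in that configuration.

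The main computation is then to evaluate, for a fixed subset $S\subseteq\{1,\dots,n\}$ of indices assigned to small circles, the iterated residue obtained by shrinking all small circles in $S$ to the origin. Because $f$ is analytic near $0$ with $f(0)=1$, the only pole inside a small circle $C^0_i$ comes from the $dz_i/z_i$ factor at $z_i = 0$ and (potentially) from $z_i = \tau z_j$ terms — but the previous paragraph rules those out when $j$ is also a small-circle index with larger subscript, and by choosing $r$ large relative to $C^1$ one also arranges that no $\tau z_j$ with $z_j\in C^1$ sits inside any $r^iC^0$ either (this is where the hypothesis that $\tau C^1$ is not inside $r^nC^0$, applied to the innermost relevant circle, enters). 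So each small-circle integral just picks up the residue at $z_i = 0$. Setting $z_i = 0$ in $\prod_{A<B}\frac{z_A-z_B}{z_A-\tau z_B}$ and in $\prod f(z_i)$ and carefully collecting the powers of $\tau$ that come out of the cross terms $\frac{z_A - z_i}{z_A - \tau z_i}$ and $\frac{z_i - z_B}{z_i - \tau z_B}$ — here one uses $f(0)=1$ — one finds that a subset $S$ with $|S|=n-k$ contributes a pure power of $\tau$ times the $k$-fold integral over $C^1$ that defines $\tilde\nu_k$. Summing over all $\binom{n}{n-k}_{\tau}$ relevant subsets of a given size (the $\tau$-binomial coefficient appears precisely because of how the residue powers of $\tau$ interleave, a standard $q$-combinatorial identity of the type recorded in Section \ref{sec7}) yields the claimed formula $\nu_n = \sum_{k=0}^n \binom{n}{k}_\tau \tau^{\frac{k(k-1)}{2}-\frac{n(n-1)}{2}}\tilde\nu_k$.

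I expect the main obstacle to be the bookkeeping of the powers of $\tau$ and the emergence of the $\tau$-binomial coefficient: one must show that after removing the $|S|=n-k$ variables via residues at $0$, the sum over all admissible assignments of which concrete indices go to small circles (subject to the nesting that makes the residues vanish in the "wrong" order) reorganizes into $\binom{n}{k}_\tau$ copies of a single integral, with the leftover $\tau^{\frac{k(k-1)}{2}-\frac{n(n-1)}{2}}$ prefactor. This is closely parallel to the residue bookkeeping in Proposition \ref{mukprop} and Proposition \ref{muandmutildeprop}, and I would lean on the symmetrization/combinatorial identities of Lemma \ref{combiden} and Section \ref{sec7} rather than rederiving them. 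A secondary, more routine, point is justifying that the contour deformations encounter no poles of $f$ itself — this is immediate from the hypothesis that $f$ has no poles inside $r^nC^0$, and that the only poles between $C^0_i$ and its shrunk version at $0$ are the ones analyzed above.
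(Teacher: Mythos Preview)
Your proposal is correct and follows essentially the same approach as the paper's proof: split into $2^n$ terms by the subset $S$ of indices on small circles, shrink each small-circle variable to $0$ (picking up only the residue at the origin, since the nesting and the hypothesis on $\tau C^1$ exclude the $z_A=\tau z_B$ poles), obtain the contribution $\tau^{-n|S|+||S||}\tilde\nu_{n-|S|}$ with $||S||=\sum_{i\in S}i$, and then sum over $S$ using the identity $\sum_{|S|=k}\tau^{||S||-k(k+1)/2}=\binom{n}{k}_\tau$ from (\ref{qBinexpansion}). One small correction: the relevant combinatorial input is (\ref{qBinexpansion}), not Lemma~\ref{combiden} (which records symmetrization identities unrelated to this residue count); and there is no ``admissibility'' constraint on $S$ --- every subset contributes, the nesting merely guarantees the residue computation is clean for each one.
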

\begin{pf}
In order to evaluate the integrals defining $\nu_n$ we split them into
$2^n$ integrals indexed by $S\subset\{1,\ldots,n\}$ which determines
which integrations are along $C^0_i$ (all $z_i$ with $i\in S$) and
which are along $C_i^1$ (all $z_i$ with $i\notin S$). This shows that
\[
\nu_n= \sum_{k=0}^{n}\,
\mathop{\sum_{S\subset\{1,\ldots,n\}
}}_{|S|=k}
\frac{1}{(2\pi\iota)^n} \int_{C_1^{\varepsilon_1}} \cdots \int_{C_n^{\varepsilon_n}}
\prod_{1\leq A<B\leq n} \frac{z_A-z_B}{z_A-\tau z_B} \prod
_{i=1}^{n} f(z_i)\frac{dz_i}{z_i},
\]
where $\varepsilon_i=\mathbf{1}_{i\notin S}$, $1\leq i\leq n$.
We now claim that for any $S\subset\{1,\ldots,n\}$ with $|S|=k$,
%
%
\begin{equation}
\label{claimred} \qquad \frac{1}{(2\pi\iota)^n} \int_{C_1^{\varepsilon_1}} \cdots\int
_{C_n^{\varepsilon
_n}} \prod_{1\leq A<B\leq n}
\frac{z_A-z_B}{z_A-\tau z_B} \prod_{i=1}^{n}
f(z_i)\frac{dz_i}{z_i} = \tau^{-nk +\|S\|} \tilde\nu_{n-k},
\end{equation}
where we use the notation $\|S\| = \sum_{i\in S} i$.
Note that $C_i^{\varepsilon_i}$ is $C_i^0$ when $i\in S$ and $C_i^1$ (and hence
$C^1$) when $i\notin S$. To prove this claim, label the elements of $S$
as $i_1<i_2<\cdots<i_k$. By the fact that $z_{i_1}$ is contained in
$\tau C^0_{j}$ for all $j>i_1$, we may shrink the $z_{i_1}$ contour to
zero without crossing any poles except at $z_{i_1}=0$. The residue at
that pole is $\tau^{-(n-i_1)}$. Then we may shrink the $z_{i_{2}}$
contour to zero with contribution of $\tau^{-(n-i_{2})}$. Repeating
this up to $z_{i_k}$ yields a factor of
\[
\prod_{j=1}^{k} \tau^{-(n-i_{j})} =
\tau^{-nk +\|S\|}.
\]
The remaining integration variables can be relabeled so as to yield the
expression for $\tilde\nu_{n-k}$.

By using (\ref{claimred}), we find that
\begin{eqnarray*}
\nu_n &=& \sum_{k=0}^{n}\,
\mathop{\sum_{S\subset\{1,\ldots,n\}
}}_{|S|=k}
\tau^{-nk +\|S\|} \tilde\nu_{n-k}
\\
&=& \sum_{k=0}^{n} \tilde \nu_{n-k} \tau^{-nk + (k(k+1)/2)} \mathop{\sum
_{S\subset\{1,\ldots,n\}}}_{|S|=k} \tau^{\|S\| - (k(k+1)/2)}
\\
&=& \sum_{k=0}^{n} \tilde \nu_{n-k} \tau^{-nk + (k(k+1)/2)} \pmatrix{n\cr k}_{\tau}
\\
&=& \sum_{k=0}^{n} \pmatrix{n\cr k}_{\tau} \tau^{(k(k-1)/2) - (n(n-1)/2)} \tilde\nu_{k}
\end{eqnarray*}
as desired. From the first line to second line is by factoring. The
second line to third is by (\ref{qBinexpansion}). The third line to
fourth line is via changing $k$ to $n-k$.
\end{pf}

We return now to the proof of Theorem~\ref{qmomInt}. Consider the
second equality in (\ref{eqnqmomInt}). By virtue of the conditions
imposed on the contours, we may apply Lemma~\ref{lem:ASEPmu} with
$f(z) =F_{{z}}({x},{t};{\rho})$ and $C_i$ chosen to match the
contours defined in Theorem~\ref{qmomInt}. This shows that
%
%
\begin{eqnarray}\label{eqnthis}
\mbox{RHS of (\ref{eqnqmomInt})} &=& \sum
_{k=0}^{n} \pmatrix{n\cr k}_{\tau}
\tau^{k(k-1)/2} \frac{1}{(2\pi\iota)^k}
\nonumber\\[-8pt]\\[-8pt]
&&{}\times \int \cdots\int\prod_{1\leq A<B\leq k} \frac{z_A-z_B}{z_A-\tau z_B} \prod_{i=1}^{k}
F_{{z_i}}({x},{t};{\rho})\frac{dz_i}{z_i},\nonumber
\end{eqnarray}
%
where the integration contours are $C_{-\tau;-1}$ (which coincide with
$C^1$ from Lemma~\ref{lem:ASEPmu}).
By Lemma~\ref{QtildeIntlemma}, we rewrite (\ref{eqnthis}) as
\[
\mbox{RHS of (\ref{eqnqmomInt})} = \sum_{k=0}^{n}
\pmatrix{n\cr k}_{\tau} (\tau;\tau)_k (-1)^k \sum
_{x_1<\cdots<x_k\leq x} \widetilde {Q}_{x_1}\cdots
\widetilde{Q}_{x_k},
\]
where the empty sum (when $k=0$) is defined as equal to $1$. Lemma~\ref
{qmomQtilde} implies that this expression equals $\mathbb{E}
[(Q_x)^n ]$, proving the theorem.
\end{pf}

\section{From nested contour integrals to Fredholm determinants for ASEP}\label{sec5}

Using the nested contour integral formula of Theorem~\ref{qmomInt} for
$\mathbb{E} [\tau^{n N_x(\eta(t))} ]$ under step-Bernoulli
initial data for ASEP, we prove Mellin--Barnes and\vspace*{1pt} Cauchy-type Fredholm
determinant formulas for the $e_{\tau}$-Laplace transform of $\tau
^{N_x(\eta(t))}$. This transform characterizes the distribution of
$N_x(\eta(t))$ and is the starting point for asymptotic analysis. The
Mellin--Barnes-type formula we discover is new. The Cauchy-type formula
is, after inverting the $e_{\tau}$-Laplace transform, equivalent to
Tracy and Widom's ASEP formula for step Bernoulli \cite{TW4} initial
data (see also \cite{TW1,TW2} for step initial data where $\rho=1$).

The route from the nested contour integral of Theorem~\ref{qmomInt} to
the Fredholm determinants is similar to what was outlined in
Section~\ref{secmellin} (for the Mellin--Barnes-type) and
Section~\ref{seccauchy} (for the Cauchy-type). There are, however,
some differences due to the nature of the nested contours. For
\mbox{$q$-}TASEP the integration contour for $z_A$ was on a single connected
contour and the set of such contours (as $A$ varied) was nested so that
the $z_A$ contour contained $\{qz_B\}_{B>A}$. For ASEP, the integration
contour for $z_A$ is the union of two contours and the set of such
contours (as $A$ varies) is chosen such that the $z_A$ contour \textit{does not} contain $\{qz_B\}_{B>A}$. This difference in contours
necessitates an analogous result to Proposition~\ref{mukprop} (given
below as Proposition~\ref{ASEPmukprop}) when developing the
Mellin--Barnes-type formula, and an analogous result to Proposition
\ref{muandmutildeprop} (given via the combination of Lemmas~\ref
{qmomQtilde} and~\ref{QtildeIntlemma} above) when developing the
Cauchy-type formula.

\subsection{Mellin--Barnes-type determinant}

%
\begin{definition}\label{def:ASEPmukdef}
Fix $\alpha\in\mathbb{C}\setminus\{0\}$ and consider a meromorphic function
$f(z)$ which has a pole at $\alpha$ but does not have any other poles
in an open neighborhood of the line segment connecting $\alpha$ to 0.
For such a function and for any $k\geq1$, define
%
%
\begin{equation}
\label{ASEPmukdef} \mu_k= \frac{\tau^{k(k-1)/2}}{(2\pi\iota)^k} \int\cdots \int\prod
_{1\leq A<B\leq k} \frac{z_A-z_B}{z_A-\tau z_B} \prod
_{i=1}^{k} f(z_i) \frac{dz_i}{z_i},
\end{equation}
where the integration contour for $z_A$ contains 0, $\alpha$ but does
not include any other poles of $f$ or $\{\tau z_B\}_{B>A}$. For
instance, when $f$ is as in (\ref{fASEPtwodef}) and $\alpha=-\tau$,
then the contours illustrated in Figure~\ref{ASEPnestedcontours}
suffice (for $k=3$).
\end{definition}

\begin{proposition}\label{ASEPmukprop}
We have that for $\mu_k$ as in Definition~\ref{def:ASEPmukdef},
%
%
\begin{eqnarray}\label{ASEPmuk}
\mu_k &=& k_\tau! \mathop{\sum
_{\lambda\vdash k}}_{\lambda
=1^{m_1}2^{m_{2}}\cdots} \frac{1}{m_1!m_2!\cdots} \frac{(1-\tau
)^{k}}{(2\pi\iota)^{\ell(\lambda)}}\nonumber
\\
&&{}\times \int_C \cdots\int_C \det \biggl[
\frac{-1}{w_i \tau^{\lambda_i}-w_j} \biggr]_{i,j=1}^{\ell
(\lambda)}
\\
&&\hspace*{50pt}{}\times \prod_{j=1}^{\ell(\lambda)} f(w_j)f(\tau
w_j)\cdots f\bigl(\tau^{\lambda_j-1}w_j\bigr)
\,dw_j,\nonumber
\end{eqnarray}
where the integration contour $C$ for $w_j$ contains 0, $\alpha$ and
no other poles of $f$, and it does not intersect its image under
multiplication by any positive power of $\tau$ (see Figure~\ref
{ASEPunnestedcontours}).
\end{proposition}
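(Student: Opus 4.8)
The plan is to mimic the residue-calculus proof of Proposition \ref{mukprop} (i.e. Proposition 3.2.1 of \cite{BorCor}), but reorganized to account for the fact that here the $z_A$ contour does \emph{not} enclose $\{\tau z_B\}_{B>A}$; instead each contour is the union of a small circle $C^0$ around $0$ and a contour $C^1$ around $\alpha$, nested only in the first component (the $C^0_i = r^i C^0$ are strictly nested, as in Lemma \ref{lem:ASEPmu}). The key point is that, by the contour hypotheses, we may deform all $k$ integration contours in (\ref{ASEPmukdef}) one at a time inward so that every $z_A$ contour coincides with a single contour $C$ encircling $0$ and $\alpha$ (and nothing else), \emph{picking up residues at the poles $z_A = \tau z_B$}. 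So the first step is to set up this deformation carefully: starting from $z_1$ and moving its contour onto $C$, we cross exactly the poles $z_1 = \tau z_B$, $B>1$ (the factor $1/(z_1-\tau z_B)$ in the Vandermonde-type product is the only source of such poles inside the region swept, since $f$ has no other poles there and $z_1=0$ is already enclosed).

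First I would record the elementary residue computation: at $z_A = \tau z_B$ the factor $\prod_{A'<B'} (z_{A'}-z_{B'})/(z_{A'}-\tau z_{B'})$ simplifies, and, crucially, a string of consecutive residues $z_{i_1} = \tau z_{i_2} = \tau^2 z_{i_3} = \cdots$ collapses a ``cluster'' of variables into a single variable $w$ with the product $f(w) f(\tau w) \cdots f(\tau^{\lambda-1} w)$ appearing, exactly as in the $q$-TASEP case. The combinatorics of which residues survive (nonzero) is identical to that in \cite{BorCor}: a given term in the expansion is indexed by a set partition of $\{1,\dots,k\}$ into blocks (clusters), and after summing over set partitions with prescribed block sizes one obtains the partition-indexed sum with the multinomial factor $k_\tau!/(m_1! m_2! \cdots)$ and the $(1-\tau)^k$ prefactor. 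The determinant $\det[-1/(w_i \tau^{\lambda_i} - w_j)]$ arises, just as in Proposition \ref{mukprop}, from resumming the off-diagonal ``cross'' residues among different clusters (a Cauchy-determinant identity). The sign $-1$ in the determinant entry (versus the $+$ convention, $1/(w_i q^{\lambda_i}-w_j)$, in Proposition \ref{mukprop}) is the bookkeeping artifact of the opposite contour orientation/direction of deformation here — I would track this carefully and note that it matches the statement.

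Concretely the order of steps is: (i) deform $z_1,\dots,z_k$ successively onto $C$, writing $\mu_k$ as a sum over subsets/compositions recording which residues were taken; (ii) identify each surviving term with a set partition of $\{1,\dots,k\}$ and show vanishing of ``bad'' configurations (those where a would-be residue pole has been cancelled by a numerator factor, so the integrand is regular and the residue is $0$) — this is where the numerator $(z_A - z_B)$ inside the same cluster forces single-file clusters only, exactly as in the proof of condition (B.2) of Theorem \ref{thm:qformulas}; (iii) within a cluster of size $\lambda_j$, perform the iterated residue to get $f(w_j) f(\tau w_j)\cdots f(\tau^{\lambda_j - 1} w_j)$ and collect powers of $\tau$ and $(1-\tau)$; (iv) resum the inter-cluster residues into the Cauchy determinant $\det[-1/(w_i \tau^{\lambda_i} - w_j)]$; (v) collect the partition multiplicities into $k_\tau!/(m_1! m_2!\cdots)$. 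Since this is almost verbatim the argument of \cite{BorCor} Proposition 3.2.1 with $q \rightsquigarrow \tau$ and with the contour roles swapped (deforming inward past $z_A=\tau z_B$ poles rather than outward), I would phrase the proof as: ``We argue as in the proof of \cite{BorCor} Proposition 3.2.1; the only difference is that the $z_A$ contour does not enclose $\{\tau z_B\}_{B>A}$, so instead of the poles being already inside we encounter them upon deforming contours together. The resulting residue bookkeeping is identical up to the sign in the determinant entries, which reflects the opposite orientation of the deformation.''

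The main obstacle — really the only nontrivial point — is step (ii)/(iv): correctly matching the combinatorial bookkeeping of nested residue extraction with the opposite-direction deformation so that the signs, the powers of $\tau^{\lambda_i}$ in the determinant, and the $(1-\tau)^k$ prefactor all come out exactly as stated, rather than with spurious factors of $\tau^{\binom{\lambda_i}{2}}$ or sign flips. I would double-check this against the $k=2$ and $k=3$ cases (the latter being the case illustrated in Figure \ref{ASEPunnestedcontours}) before asserting the general formula. Everything else — the vanishing of non-single-file cluster terms and the Cauchy-determinant resummation — is a direct transcription of the $q$-TASEP argument.
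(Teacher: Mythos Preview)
Your proposal is correct and takes essentially the same approach as the paper: the paper's proof likewise says ``the proof is via residue calculus and follows in the same manner as Proposition \ref{mukprop} (\cite{BorCor} Proposition 3.2.1),'' then works out only the $k=2$ case explicitly (deforming the contours onto a common $C$ and picking up the residue at $z_1=\tau z_2$) and leaves the general partition-indexed bookkeeping to that reference. Your remark about the sign $-1$ in the determinant and your plan to verify $k=2,3$ by hand are exactly in the spirit of the paper's illustration.
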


\begin{pf}
The proof is via residue calculus and follows in the same manner as
Proposition~\ref{mukprop}, whose proof is found in \cite{BorCor} as
Proposition 3.2.1. Rather than repeating that proof, we just illustrate
the $k=2$ case.

%
\begin{figure}

\includegraphics{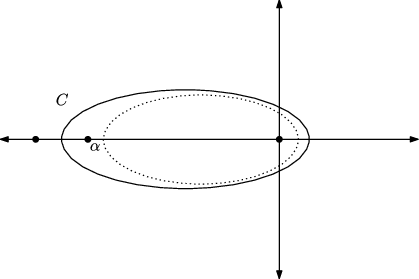}

\caption{The contour $C$ is chosen so as to contain 0, $\alpha$ and
no other poles of $f$ (such as the one indicated with a black dot to
the left of $\alpha$).}\label{ASEPunnestedcontours}
\end{figure}
%

%
\begin{figure}

\includegraphics{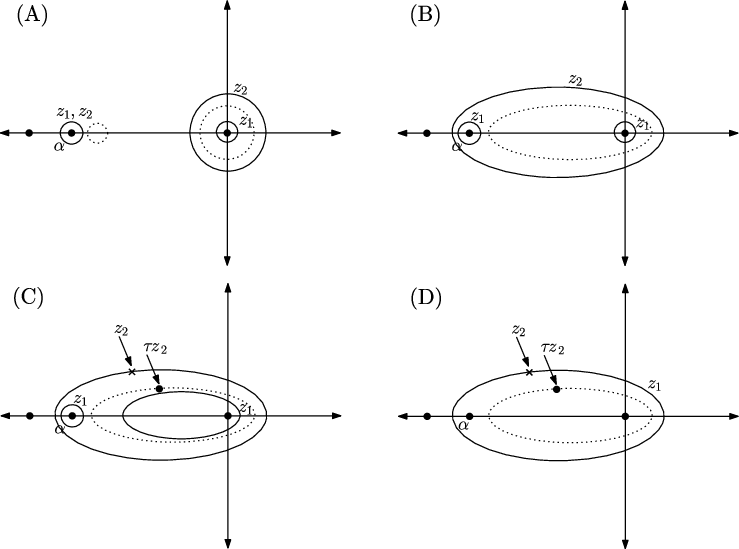}

\caption{\textup{(A)}: Both contours contain $0$ and $\alpha$, but the $z_1$
contour must not contain $\tau z_2$ (and neither contour may contain
any other poles such as the one indicated by the black dot to the right
of $\alpha$). \textup{(B)} The $z_2$ contour may freely (without crossing
poles) be deformed to a single circle containing $0$ and $\alpha$. \textup{(C)}
For $z_2$ fixed along that circle, the $z_1$ contour can be deformed
and only picks a pole when crossing the point $\tau z_2$. \textup{(D)} After
crossing that pole, the $z_1$ contour can be freely deformed to the
same contour on which $z_2$ is integrated.}\label{deformations}
\end{figure}

Consider $\mu_2$ as in Definition~\ref{def:ASEPmukdef} with contours
like in Figure~\ref{ASEPnestedcontours}. Initially, the $z_1$ contour
is chosen so as not to contain $\tau z_2$. Because the contours include
$\alpha$~and~$0$, they must be composed of two disjoint closed parts.
Around $\alpha$, the contours can be the same small circle, but around
0, the $z_2$ contour must have radius which is at least $\tau^{-1}$
times that of the $z_1$ contour. For $k=2$, such a contour is given in
Figure~\ref{deformations}(A). We may freely (without crossing any
poles) deform the $z_2$ contour to a single circle $C$ enclosing $0$
and $\alpha$ (but no poles of $f$). Such a resulting contour is given
in Figure~\ref{deformations}(B). The integration in $z_1$ and $z_2$
may be taken sequentially, so that for each fixed value of $z_2$ along
its contour of integration, we perform the integral in $z_1$. Thinking
of $z_2$ as fixed, we see that the $z_1$ contour can be deformed to the
circle $C$ by crossing a single pole at $z_1=\tau z_2$. This shown in
Figure~\ref{deformations}(C)~and~(D).\vadjust{\goodbreak} On account of crossing a pole,
we find that $\mu_2$ can be expressed as
\begin{eqnarray*}
\mu_2 &=& \frac{\tau}{(2\pi\iota)^2} \int_C \int
_C \frac
{z_1-z_2}{z_1-\tau z_2} f(z_1)f(z_2)
\frac{dz_1}{z_1}\frac{dz_2}{z_2}
\\
&&{} - \frac{1}{2\pi\iota} \int
_C (\tau-1) f(\tau z_2)f(z_2)
\frac{dz_2}{z_2}.
\end{eqnarray*}
Observe that there are also two terms contained in the right-hand side
of (\ref{ASEPmuk})---one term is a single integral and one is a
double integral. The single integral term matches exactly while to
match the double integral we simply symmetrize the integrand (as can be
done since $z_1$ and $z_2$ are on the same contour) and find those
terms match as well. In general, the partition $\lambda$ indexes the
clustering of residues into chains.
\end{pf}

Using the above result as well as Proposition~\ref{gendetprop}, we
find following Fredholm determinant formula for the $e_{\tau}$-Laplace
transform of $Q_x(\eta(t))=\tau^{N_x(\eta(t))}$.

\begin{theorem}\label{ASEPMellinBarnesThm}
Consider ASEP with $0<p<q$ (normalized by $p+q=1$), all bond rate
parameters $a_{x}\equiv1$, and step Bernoulli initial data with
density parameter $\rho\in(0,1]$. Then with notation $\theta=
\frac{\rho}{1-\rho}$ we have that for all $\zeta\in\mathbb
{C}\setminus
\mathbb{R}_{+}$,
\[
\mathbb{E} \biggl[\frac{1}{(\zeta\tau^{N_x(\eta(t))};\tau)_{\infty
}} \biggr] = \det\bigl(I+K^{\mathrm{ASEP}}_{\zeta}
\bigr),
\]
where $\det(I+K^{\mathrm{ASEP}}_{\zeta})$ is the Fredholm
determinant of
$K^{\mathrm{ASEP}}_{\zeta}\dvtx\break  L^2(C_{0,-\tau;-1,\tau\theta})\to
L^2(C_{0,-\tau;-1,\tau\theta})$, where
$C_{0,-\tau;-1,\tau\theta}$ a positively oriented contour containing
0, $-\tau$ on its
interior and with $-1$ and $\tau\theta$ on its exterior. The
operator $K_{\zeta}$ is defined in terms of its integral kernel
\[
K^{\mathrm{ASEP}}_{\zeta}\bigl(w,w'\bigr) =
\frac{1}{2\pi\iota}\int_{D_{R,d}} \Gamma (-s)\Gamma(1+s) (-
\zeta)^s \frac{g_{{w}}({x},{t};{\rho})}{g_{{\tau
^s w}}({x},{t};{\rho})} \frac{-1}{\tau^s w-w'}\,ds.
\]
The contour $D_{R,d}$ is given in Definition~\ref{DRd} with $d>0$
taken to be sufficiently small and $R>0$ sufficiently large so that
\[
\mathop{\inf_{w,w'\in C_{0,-\tau;-1,\tau\theta}}}_{s\in D_{R,d}} \bigl|q^s
w-w'\bigr|>0\quad\mbox{and}\quad \mathop{\sup
_{w,w'\in C_{0,-\tau;-1,\tau\theta}}}_{s\in D_{R,d}} \biggl\llvert \frac
{g(w)}{g(q^sw)}\biggr
\rrvert <\infty.
\]
The function $g_{{z}}({x},{t};{\rho})$, is given by
%
%
\begin{equation}
\label{gASEP} g_{{z}}({x},{t};{\rho}) = \exp \biggl[(q-p)t
\frac{\tau}{z+\tau
} \biggr] \biggl(\frac{\tau}{z+\tau} \biggr)^x
\frac{1}{
(z/(\tau\theta);\tau )_{\infty}}.
\end{equation}
%
\end{theorem}

%
\begin{corollary}
We have that
\[
\mathbb{P}\bigl(N_x\bigl(\eta(t)\bigr) = m\bigr) =
\frac{-\tau^m}{2\pi\iota} \int \bigl(\tau ^{m+1} \zeta; \tau
\bigr)_{\infty} \det\bigl(I+K^{\mathrm{ASEP}}_{\zeta}\bigr) \,d\zeta,
\]
where the contour of integration encloses $\zeta=\tau^{-M}$ for
$0\leq M\leq m$ and only intersects $\mathbb{R}_{+}$ in finitely many points.
\end{corollary}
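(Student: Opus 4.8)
The plan is to invert the $e_{\tau}$-Laplace transform appearing in Theorem~\ref{ASEPMellinBarnesThm}, mirroring the $q$-TASEP inversion via Proposition~\ref{qlaplaceinverse} (with $q$ replaced by $\tau$). The starting point is to re-read the left side of Theorem~\ref{ASEPMellinBarnesThm} as a series. For $\zeta$ in a neighbourhood of $0$, dominated convergence gives
\begin{equation*}
\det(I+\KASEPMB{\zeta}) = \EE\left[\frac{1}{(\zeta\tau^{N_x(\eta(t))};\tau)_\infty}\right] = \PP(N_x(\eta(t))=\infty) + \sum_{n\geq 0}\PP(N_x(\eta(t))=n)\,\frac{1}{(\zeta\tau^{n};\tau)_\infty},
\end{equation*}
the first term accounting for a possible atom at $+\infty$ (under step-Bernoulli data it vanishes, but this is not needed below). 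Since $\PP(N_x=n)\leq 1$ and $(\zeta\tau^n;\tau)_\infty\to 1$ as $n\to\infty$ locally uniformly in $\zeta$, the series converges locally uniformly on $\C\setminus\{\tau^{-M}\}_{M\geq 0}$ to a meromorphic function with at most simple poles at the $\tau^{-M}$; this function is the meromorphic continuation of $\det(I+\KASEPMB{\zeta})$, which by Theorem~\ref{ASEPMellinBarnesThm} is analytic on the connected set $\C\setminus\Rplus$ and there coincides with the series.

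Next I would multiply by the entire function $(\tau^{m+1}\zeta;\tau)_\infty$ and integrate over the contour $\gamma$ in the statement (positively oriented, enclosing $\tau^{-0},\dots,\tau^{-m}$ and avoiding all the points $\tau^{-M}$, $M\geq 0$, so that it meets $\Rplus$ only at finitely many points, none of them a pole; the integrand on those real crossings is read off the meromorphic continuation). The constant $\PP(N_x=\infty)$ contributes $\int_\gamma(\tau^{m+1}\zeta;\tau)_\infty\,d\zeta=0$ since the integrand is entire. Interchanging $\sum_n$ with $\int_\gamma$ is justified by the uniform bound
\begin{equation*}
\sup_{\zeta\in\gamma,\; n\geq 0}\left|\frac{(\tau^{m+1}\zeta;\tau)_\infty}{(\zeta\tau^{n};\tau)_\infty}\right|<\infty,
\end{equation*}
which holds because the numerator is bounded on the compact $\gamma$ and the denominator is bounded away from $0$ there (its zeros are among the $\tau^{-M}$, which $\gamma$ avoids, and it tends to $1$ as $n\to\infty$), together with $\sum_n\PP(N_x=n)\leq 1$. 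For $n>m$ the ratio $(\tau^{m+1}\zeta;\tau)_\infty/(\zeta\tau^{n};\tau)_\infty$ is the polynomial $(1-\tau^{m+1}\zeta)\cdots(1-\tau^{n-1}\zeta)$ in $\zeta$ (the empty product when $n=m+1$), so its contour integral vanishes. For $0\leq n\leq m$ the ratio equals $1/(\zeta\tau^{n};\tau)_{m-n+1}=\prod_{i=n}^{m}(1-\tau^{i}\zeta)^{-1}$, whose poles $\zeta=\tau^{-i}$ ($n\leq i\leq m$) all lie inside $\gamma$.

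It remains to evaluate $\frac{1}{2\pi\iota}\int_\gamma d\zeta/(\zeta\tau^{n};\tau)_{m-n+1}$ for $0\leq n\leq m$. Since $\gamma$ encircles all finite poles of this rational function, the integral equals the negative of its residue at infinity; for $m-n+1\geq 2$ the integrand is $O(\zeta^{-2})$ and the integral is $0$, while for $n=m$ it is $1/(1-\tau^{m}\zeta)$ with residue at infinity $\tau^{-m}$, so the integral equals $-\tau^{-m}$. Hence only the $n=m$ term survives, giving
\begin{equation*}
\frac{1}{2\pi\iota}\int_\gamma (\tau^{m+1}\zeta;\tau)_\infty\,\det(I+\KASEPMB{\zeta})\,d\zeta = -\tau^{-m}\,\PP(N_x(\eta(t))=m),
\end{equation*}
which is the asserted identity after multiplication by $-\tau^{m}$.

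I expect the only genuine subtlety to be the analytic bookkeeping near $\Rplus$: Theorem~\ref{ASEPMellinBarnesThm} furnishes the determinant identity only on $\C\setminus\Rplus$, whereas the inversion contour must cross $\Rplus$ in order to encircle the poles $\tau^{-M}\geq 1$. The device above---working throughout with the honest meromorphic function $\EE[1/(\zeta\tau^{N_x(\eta(t))};\tau)_\infty]$, performing all residue calculus with it, and invoking the Fredholm determinant formula only on the arcs of $\gamma$ lying in $\C\setminus\Rplus$---resolves this, and is exactly the mechanism behind Proposition~\ref{qlaplaceinverse}. Everything else is elementary manipulation of $\tau$-Pochhammer symbols.
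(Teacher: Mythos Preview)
Your argument is correct and follows essentially the same approach as the paper: both invoke the $e_\tau$-Laplace inversion (Proposition~\ref{qlaplaceinverse}) and handle the obstruction that Theorem~\ref{ASEPMellinBarnesThm} is stated only for $\zeta\notin\Rplus$ by passing through the meromorphic function $\zeta\mapsto\EE\left[1/(\zeta\tau^{N_x(\eta(t))};\tau)_\infty\right]$, which is analytic away from $\{\tau^{-M}\}_{M\geq 0}$ and coincides with the determinant off $\Rplus$. The only difference is that you spell out the residue calculus behind Proposition~\ref{qlaplaceinverse} explicitly (the term-by-term analysis of $(\tau^{m+1}\zeta;\tau)_\infty/(\zeta\tau^n;\tau)_\infty$), whereas the paper simply cites that proposition; your final paragraph on the $\Rplus$ bookkeeping matches the paper's discussion almost verbatim.
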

\begin{pf}
This follows almost immediately from the inversion formula in
Proposition~\ref{qlaplaceinverse}. The one small impediment is that
our formula for the $q$-Laplace transform via the Fredholm determinant
$\det(I+K^{\mathrm{ASEP}}_{\zeta})$ is not defined for $\zeta\in
\mathbb{R}_{+}$. On
the other hand, it is easy to see (and explained in the proof of
Theorem~\ref{ASEPMellinBarnesThm}) that the function $f(\zeta)$
defined by $\zeta\mapsto\mathbb{E} [1/ (\zeta\tau
^{N_x(\eta
(t))};\break \tau )_{\infty} ]$ is analytic away from $\zeta=
\tau^{-M}$, for integers $M\geq0$. Thus $\mathbb{P}(N_x(\eta(t)) =
m)$ can
be computed via a contour integral (as specified in the inversion
formula)\vspace*{1.5pt} involving $f(\zeta)$ in the integrand.

On the other hand, we know that $f(\zeta)=\det(I+K^{\mathrm
{ASEP}}_{\zeta})$
for $\zeta$ not on $\mathbb{R}_{+}$, and
hence $\det(I+K^{\mathrm{ASEP}}_{\zeta})$
extends analytically through $\mathbb{R}_{+}\setminus\{\tau^{-M}\}
_{M\geq
0}$. Thus, as long as the integration contour for $\zeta$ only
intersects $\mathbb{R}_{+}$, in finitely
many points, we can compute the
necessary\vspace*{1.5pt} inversion contour integral with $f(\zeta)$ replaced by $\det
(I+K^{\mathrm{ASEP}}_{\zeta})$.
\end{pf}

\begin{pf*}{Proof of Theorem~\ref{ASEPMellinBarnesThm}}
Theorem~\ref{qmomInt} gives a nested contour integral formula for
$\mathbb{E}
[\tau^{kN_x(\eta(t))} ]$. Comparing it with Definition
\ref{def:ASEPmukdef} we see that $\mu_k = \mathbb{E} [\tau
^{kN_x(\eta
(t))} ]$ if
the contour is chosen as the one in Theorem~\ref{qmomInt} and if
\mbox{$f(z)=F_{{z}}({x},{t};{\rho})$}.
This function can be written as $f(z) = g(z)/g(\tau z)$ where
$g(z)=g_{{z}}({x},{t};{\rho})$, is given in (\ref{gASEP}).

We apply Proposition~\ref{ASEPmukprop}, yielding an expression for
$\mu_k$ as in (\ref{ASEPmuk}). This matches the expression in (\ref
{muk}) up to changing $q$ to $\tau$ and sign inside the determinant.
We may therefore apply Proposition~\ref{gendetprop}, followed by
Proposition~\ref{propmellindet} (with $q$ replaced by $\tau$).
At a formal level, this shows that
%
%
\begin{equation}
\label{lhsk1ASEP} \sum_{k\geq0}\mu_k
\frac{\xi^k}{k_\tau!} = \det\bigl(I+K^1_{\xi
}\bigr)=\det
\bigl(I+K^2_{\xi}\bigr)
\end{equation}
the kernels $K^1_{\xi}$ and $K^2_{\xi}$ defined\vspace*{1pt} with respect to
$F_{{z}}({x},{t};{\rho})$ and $g_{{z}}({x},{t};{\rho})$, as above.
The contour $C_{\mathbb{A}}$ in those propositions should be taken to be
$C_{0,-\tau;-1,\tau\theta}$, as in the hypothesis of Theorem~\ref
{ASEPMellinBarnesThm}. In applying Proposition~\ref{propmellindet},
the contour $C_{1,2,\ldots}$ should be chosen to be $D_{R,d}$ with
$d>0$ sufficiently small, and $R>0$ sufficiently large, and the
contours $C_k$, $k\geq1$ should be chosen to be $D_{R,d;k}$. From the
definition of $C_{0,-\tau;-1,\tau\theta}$ and $g_{{z}}({x},{t};{\rho
})$, it is easy to
check that as long as $|\xi|$ is sufficiently small, the criteria for
these to be numerical equalities is satisfied.

Since by definition $N_x(\eta(t)) \geq0$ and $\tau<1$, it is
immediate that $\tau^{kN_x(\eta(t))}\leq1$. Hence, considering the
left-hand side of (\ref{lhsk1ASEP}), by choosing $|\xi|$ small enough
it is justifiable to interchange the summation in $k$ and the
expectation. By the $\tau$-Binomial theorem (see Section~\ref
{secqdef}), we find
%
%
\begin{equation}
\label{lhsk2ASEP} \mathbb{E} \biggl[\frac{1}{ ((1-\tau)\xi\tau^{N_x(\eta
(t))};\tau
)_{\infty}} \biggr]=\det
\bigl(I+K^2_{\xi}\bigr).
\end{equation}
This equality holds for all $|\xi|$ sufficiently small. However, the
right-hand side is analytic in $\xi\notin\mathbb{R}_{+}$ due to Proposition
\ref{propmellindet}. From the definition of the left-hand side,
\[
\mathbb{E} \biggl[\frac{1}{ ((1-\tau)\xi\tau^{N_x(\eta
(t))};\tau
)_{\infty}} \biggr] = \sum
_{\ell=0}^{\infty} \frac{ \mathbb{P}
(N_x(\eta(t)) = \ell)}{ ((1-\tau)\xi\tau^\ell;\tau
)_{\infty}}.
\]
For any $\xi\notin\{(1-\tau)^{-1} \tau^{-M}\}_{M=0,1,\ldots},$
within a neighborhood of $\xi$ the infinite products are uniformly
convergent and bounded away from zero. As a result, the series is
uniformly convergent in a neighborhood of any such $\xi$ which implies
that its limit is analytic. Therefore, both sides of (\ref{lhsk2ASEP})
are analytic for $\xi\notin\mathbb{R}_{+}$ and hence by uniqueness
of the
analytic continuation they are equal on this set.

The desired result for this theorem is achieved by setting $\xi=
(1-\tau)^{-1} \zeta$ thus completing the proof.
\end{pf*}

\subsection{Cauchy-type determinant}
%
\begin{theorem}\label{ASEPcauchy}
Consider ASEP with $0<p<q$ (normalized by $p+q=1$), all bond rate
parameters $a_{x}\equiv1$, and step Bernoulli initial data with
density parameter $\rho\in(0,1]$. Then with notation $\theta=
\frac{\rho}{1-\rho}$ we have that for all $\zeta\in\mathbb{C}$
%
%
\begin{equation}
\label{ASEPcauchyeqn} \mathbb{E} \biggl[ \frac{1}{ (\zeta\tau^{N_x(\eta(t))};\tau
)_{\infty}} \biggr] =
\frac{\det(I-\zeta\widetilde K^{\mathrm
{ASEP}})}{(\zeta;\tau
)_{\infty}},
\end{equation}
where $\det(I-\zeta\widetilde K^{\mathrm{ASEP}})$ is an entire function
of $\zeta$
and is the Fredholm\vspace*{1pt} determinant of
$\widetilde K^{\mathrm{ASEP}}\dvtx  L^2(C_{-\tau;-1})\to L^2(C_{-\tau;-1})$
defined in terms of its integral kernel
\[
\widetilde K^{\mathrm{ASEP}}\bigl(w,w'\bigr) = \frac{F_{{w}}({x},{t};{\rho})}{\tau
w -w'}
\]
with $F_{{w}}({x},{t};{\rho})$ defined in (\ref{fASEPtwodef}),
and $C_{-\tau;-1}$ is a circle around $-\tau$, chosen with small enough
radius so that $-1$ is not included, and nor is the image of the circle
under multiplication by $\tau$ (see Definition~\ref{somedefs}).
\end{theorem}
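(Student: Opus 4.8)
The plan is to follow the Cauchy-type scheme of Section~\ref{sec:cauchy}, with the role played in the $q$-TASEP case by Proposition~\ref{muandmutildeprop} now taken over by the combination of Lemmas~\ref{qmomQtilde} and~\ref{QtildeIntlemma}. Set
\begin{equation*}
a_k:=\frac{\tau^{\frac{k(k-1)}{2}}}{(2\pi\iota)^k}\int_{\Cminustau}\!\cdots\!\int_{\Cminustau}\prod_{1\le A<B\le k}\frac{z_A-z_B}{z_A-\tau z_B}\prod_{i=1}^{k}\fASEPtwo{x}{z_i}{\rho}{t}\,\frac{dz_i}{z_i},\qquad a_0:=1,
\end{equation*}
all of whose contours coincide with $\Cminustau$; this integral is well defined because $\tau\Cminustau$ is disjoint from $\Cminustau$, so no denominator $z_A-\tau z_B$ vanishes. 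Lemma~\ref{QtildeIntlemma} identifies $a_k$ with $(\tau;\tau)_k(-1)^k\sum_{x_1<\cdots<x_k\le x}\EE[\tilde Q_{x_1}(\eta(t))\cdots\tilde Q_{x_k}(\eta(t))]$, and Lemma~\ref{qmomQtilde} then yields, for step Bernoulli initial data and every $n\ge 1$,
\begin{equation*}
\EE\big[(Q_x(\eta(t)))^n\big]=\sum_{k=0}^{n}{n\choose k}_{\tau} a_k
\end{equation*}
(this is precisely identity~(\ref{eqn:this}) in the proof of Theorem~\ref{qmomInt}).

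Next I would form the $e_\tau$-Laplace transform. Since $0\le Q_x(\eta(t))=\tau^{N_x(\eta(t))}\le 1$, for $|\zeta|<1$ the $\tau$-exponential series converges absolutely, so by the $\tau$-binomial theorem (Section~\ref{sec:qdef}) and Fubini,
\begin{equation*}
\EE\!\left[\frac{1}{(\zeta\tau^{N_x(\eta(t))};\tau)_\infty}\right]=\sum_{n\ge 0}\frac{\zeta^n}{(\tau;\tau)_n}\,\EE\big[(Q_x(\eta(t)))^n\big].
\end{equation*}
Substituting the formula above and using ${n\choose k}_{\tau}/(\tau;\tau)_n=1/\big((\tau;\tau)_k(\tau;\tau)_{n-k}\big)$, the (absolutely convergent) double series reorganizes into a Cauchy product, whence, again by the $\tau$-binomial theorem,
\begin{equation*}
\EE\!\left[\frac{1}{(\zeta\tau^{N_x(\eta(t))};\tau)_\infty}\right]=\left(\sum_{k\ge 0}\frac{a_k\zeta^k}{(\tau;\tau)_k}\right)\!\left(\sum_{m\ge 0}\frac{\zeta^m}{(\tau;\tau)_m}\right)=\frac{1}{(\zeta;\tau)_\infty}\sum_{k\ge 0}\frac{a_k\zeta^k}{(\tau;\tau)_k}.
\end{equation*}
It then remains to show $\sum_{k\ge 0}a_k\zeta^k/(\tau;\tau)_k=\det(I-\zeta\KASEPCauchy)$.

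For this I would symmetrize $a_k$. Because all contours coincide with $\Cminustau$ and $\prod_i\fASEPtwo{x}{z_i}{\rho}{t}/z_i$ is symmetric in the $z_i$, the symmetrization identity underlying Proposition~\ref{mukproplarge} applies verbatim (with $q$ replaced by $\tau$): it is a purely algebraic identity for the symmetrization of $\prod_{A<B}(z_A-z_B)/(z_A-\tau z_B)$ and does not require $0$ to lie inside the contour — the lone structural difference from the $q$-TASEP case, where the common contour does encircle the origin. Keeping track of the sign ($a_k$ carries no $(-1)^k$, unlike the $\tilde\mu_k$ of Definition~\ref{def:muktildedef}) and of $(\tau;\tau)_k=k_\tau!(1-\tau)^k$, one obtains
\begin{equation*}
\frac{a_k}{(\tau;\tau)_k}=\frac{1}{k!(2\pi\iota)^k}\int_{\Cminustau}\!\cdots\!\int_{\Cminustau}\det\!\left[\frac{1}{w_i-\tau w_j}\right]_{i,j=1}^{k}\prod_{j=1}^{k}\fASEPtwo{x}{w_j}{\rho}{t}\,dw_j.
\end{equation*}
Pulling $\fASEPtwo{x}{w_i}{\rho}{t}$ into the $i$-th row, using $1/(w_i-\tau w_j)=-1/(\tau w_j-w_i)$ and transposing, the right-hand side is exactly the Fredholm expansion of $\det(I-\zeta\KASEPCauchy)$ for $\KASEPCauchy(w,w')=\fASEPtwo{x}{w}{\rho}{t}/(\tau w-w')$ on $L^2(\Cminustau)$; equivalently one may invoke Proposition~\ref{tildemuDetProp} together with Remark~\ref{k1k2}. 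As $\Cminustau$ is a small circle about $-\tau$ whose $\tau$-dilate is disjoint from it, the denominator $\tau w-w'$ never vanishes, the kernel is smooth on $\Cminustau\times\Cminustau$, so $\KASEPCauchy$ is trace class and $\zeta\mapsto\det(I-\zeta\KASEPCauchy)$ is entire; in particular the series converges for every $\zeta\in\C$.

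Finally I would settle the analytic continuation. The displays above give the asserted identity for $|\zeta|<1$. The left-hand side equals $\sum_{\ell\ge 0}\PP(N_x(\eta(t))=\ell)/(\zeta\tau^\ell;\tau)_\infty$, which converges uniformly on compacts of $\C\setminus\{\tau^{-m}\}_{m\ge 0}$ (the summands are uniformly bounded and tend to $1$), hence extends meromorphically to $\C$ with poles contained in $\{\tau^{-m}\}_{m\ge 0}$; the right-hand side $\det(I-\zeta\KASEPCauchy)/(\zeta;\tau)_\infty$ is likewise meromorphic on $\C$ with poles contained in $\{\tau^{-m}\}_{m\ge 0}$. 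Two meromorphic functions on $\C$ agreeing on $|\zeta|<1$ coincide, giving the identity for all $\zeta\in\C$. The step I expect to require the most care is the symmetrization and bookkeeping in the third paragraph: correctly matching the symmetrized expression for $a_k$ to the Fredholm expansion of the stated kernel, including its $(1-\tau)$-free normalization and the appearance of $\tau w-w'$ (rather than $\tau w'-w$) in the denominator. Conceptually, however, nothing beyond Proposition~\ref{mukproplarge} and Remark~\ref{k1k2} is needed; the genuinely ASEP-specific feature — that the relevant common contour $\Cminustau$ excludes the origin — has already been handled in Theorem~\ref{qmomInt}.
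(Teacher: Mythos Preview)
Your proof is correct and follows essentially the same route as the paper's: both use Lemmas~\ref{qmomQtilde} and~\ref{QtildeIntlemma} to express $\EE[(Q_x)^n]$ in terms of the integrals $a_k$, symmetrize via Proposition~\ref{mukproplarge}/Proposition~\ref{tildemuDetProp} with Remark~\ref{k1k2} to recognize the Fredholm expansion, and finish by analytic continuation. The only cosmetic difference is that you split $\sum_n \zeta^n \EE[(Q_x)^n]/(\tau;\tau)_n$ as a Cauchy product, whereas the paper inverts the triangular relation $\EE[(Q_x)^n]=\sum_k{n\choose k}_\tau a_k$ to obtain a direct probabilistic interpretation $a_k=\EE[(\tau^{N_x}-1)\cdots(\tau^{N_x}-\tau^{k-1})]$ and hence the bound $|a_k|\le 1$, before summing via the $\tau$-binomial theorem.

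One small point to tighten: you assert absolute convergence of the double series for all $|\zeta|<1$ without justification. This does not follow from $|\EE[(Q_x)^n]|\le 1$ alone, since the $a_k$ alternate in sign. Either invoke the paper's bound $|a_k|\le 1$ from the probabilistic interpretation, or use the crude estimate $|a_k|\le C^k$ coming directly from boundedness of the integrand on $\Cminustau$; in the latter case your identity holds initially only for $|\zeta|$ small, but your analytic continuation argument then applies unchanged.
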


%
\begin{corollary}
Consider ASEP with $0<p<q$ (normalized by \mbox{$p+q=1$}), all bond rate
parameters $a_{x}\equiv1$, and step Bernoulli initial data with
density parameter $\rho\in(0,1]$. Then
%
%
\begin{equation}
\label{TWcoreqn} \mathbb{P}\bigl(N_x\bigl(\eta(t)\bigr)=m\bigr) = -
\tau^m \frac{1}{2\pi\iota} \int \frac
{\det(I-\zeta\widetilde K^{\mathrm{TW\mbox{-}ASEP}})}{(\zeta;\tau)_{m+1}} \,d\zeta,
\end{equation}
where\vspace*{2pt} the integral is over a contour enclosing $\zeta=q^{-M}$ for
$0\leq M\leq m-1$. Here, $\det(I-\zeta\widetilde K^{\mathrm{TW\mbox{-}ASEP}})$
is the Fredholm
determinant of
$
\widetilde K^{\mathrm{TW\mbox{-}ASEP}}\dvtx\break   L^2(C_R)\to L^2(C_R)
$
defined in terms of its integral kernel
\[
\widetilde K^{\mathrm{TW\mbox{-}ASEP}}\bigl(\xi,\xi'\bigr) = q
\frac{\xi^x e^{\varepsilon
(\xi)t}}{p+q\xi\xi'-\xi} \frac{\rho(\xi-\tau)}{\xi-1+\rho(1-\tau)}
\]
and $\varepsilon(\xi) = p\xi^{-1}+q\xi-1$ and $C_R$ is a circle around zero
of radius $R$ so large that the denominator $p+q\xi\xi'-\xi$ and
$\xi-1\rho(1-\tau)$ are\vspace*{1pt} nonzero on and outside the contour. As a
function of $\zeta$, $\det(I-\zeta\widetilde K^{\mathrm{TW\mbox{-}ASEP}})$ is entire.
\end{corollary}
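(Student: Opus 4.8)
The plan is to derive \eqref{TWcoreqn} from Theorem \ref{ASEPcauchy} in two moves: first invert the $e_{\tau}$-Laplace transform to isolate the point mass $\PP(N_x(\eta(t))=m)$, and then transport the Cauchy-type kernel $\KASEPCauchy$ to the kernel $\KTW$ by an explicit change of variables. For the first move, Theorem \ref{ASEPcauchy} already identifies $\zeta \mapsto \EE[1/(\zeta\tau^{N_x(\eta(t))};\tau)_{\infty}]$ with $\det(I-\zeta\KASEPCauchy)/(\zeta;\tau)_{\infty}$, the numerator being entire in $\zeta$. Since $N_x(\eta(t))$ takes values in $\Zgeqzero$, the $e_{\tau}$-Laplace inversion formula (Proposition \ref{qlaplaceinverse}, with $q$ replaced by $\tau$) expresses $\PP(N_x(\eta(t))=m)$ as $\frac{-\tau^m}{2\pi\iota}\oint (\tau^{m+1}\zeta;\tau)_{\infty}\,\det(I-\zeta\KASEPCauchy)/(\zeta;\tau)_{\infty}\,d\zeta$ over a contour enclosing precisely the first $m+1$ poles of the transform. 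Using $(\tau^{m+1}\zeta;\tau)_{\infty}/(\zeta;\tau)_{\infty} = 1/(\zeta;\tau)_{m+1}$ turns this into $\frac{-\tau^m}{2\pi\iota}\oint \det(I-\zeta\KASEPCauchy)/(\zeta;\tau)_{m+1}\,d\zeta$, the contour now enclosing the zeros of $(\zeta;\tau)_{m+1}$, inside which $\det(I-\zeta\KASEPCauchy)$ is analytic.

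For the second move I would use the M\"obius substitution $w = \tau(1-\xi)/(\xi-\tau)$, the inverse of $\xi = (1+w)/(1+w/\tau)$ (the change of variables used throughout Section \ref{sec4}). For $R$ sufficiently large this carries $C_R = \{|\xi|=R\}$ onto an admissible contour of the type $\Cminustau$ (a small circle about $-\tau$ avoiding $-1$, $\tau\rhofrac$, and its own image under multiplication by $\tau$), reversing orientation because the interior of $C_R$, which contains $0,1,\tau$, maps to the exterior of $\Cminustau$. Invariance of Fredholm determinants under change of integration variable and under conjugation then reduces the identity $\det(I-\zeta\KASEPCauchy) = \det(I-\zeta\KTW)$ to three short computations, all of which use only the normalizations $p = q\tau$ and $q(1+\tau)=1$: (i) $\fASEPtwo{x}{w}{\rho}{t}$ becomes $\xi^{x}\,e^{\e(\xi)t}\,\rho(\xi-\tau)/(\xi-1+\rho(1-\tau))$, the exponent via $-w(p-q)^2/\big((1+w)(p+qw)\big) = p\xi^{-1}+q\xi-1 = \e(\xi)$ and the last factor by rewriting $-\tau\rhofrac/(w-\tau\rhofrac)$ with $\rhofrac = \rho/(1-\rho)$; (ii) $\tau w - w' = \tau(1-\tau)(p+q\xi\xi'-\xi)/\big(q(\xi-\tau)(\xi'-\tau)\big)$; and (iii) $dw'/d\xi' = -\tau(1-\tau)/(\xi'-\tau)^2$. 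Assembling (i)--(iii), the transformed kernel equals $-\big((\xi-\tau)/(\xi'-\tau)\big)\KTW(\xi,\xi')$; conjugating by $h(\xi) = \xi-\tau$ clears the ratio, and the leftover sign is cancelled by the orientation reversal of the contour, giving $\det(I-\zeta\KASEPCauchy)_{L^{2}(\Cminustau)} = \det(I-\zeta\KTW)_{L^{2}(C_R)}$. Along the way one checks that the defining properties of $\Cminustau$ translate into the requirement that $p+q\xi\xi'-\xi$ and $\xi-1+\rho(1-\tau)$ be nonvanishing on and outside $C_R$, and that entirety of $\det(I-\zeta\KTW)$ in $\zeta$ is inherited from that of $\det(I-\zeta\KASEPCauchy)$. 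Feeding this identity into the inversion integral from the first move yields \eqref{TWcoreqn}.

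The main obstacle is the bookkeeping in the change of variables: one must simultaneously keep track of the Jacobian from (iii), the conjugating function $h$, and --- easiest to get wrong --- the orientation of the image contour, since a sign slip there would turn $\det(I-\zeta\KTW)$ into $\det(I+\zeta\KTW)$. A subsidiary point is to verify that $R$ can be taken large enough for the M\"obius map to be a genuine bijection onto a legitimate $\Cminustau$ while all of the required nonvanishing conditions hold at once; this is precisely where the geometry of the map (in particular $\xi=\infty\mapsto w=-\tau$ and $\xi=0\mapsto w=-1$) is used.
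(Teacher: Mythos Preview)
Your proposal is correct and follows essentially the same approach as the paper: the M\"obius change of variables $\xi=(1+w)/(1+w/\tau)$ together with the $e_{\tau}$-Laplace inversion of Proposition~\ref{qlaplaceinverse}. The paper performs these two moves in the opposite order (change of variables first, then inversion), and is slightly less explicit than you are about the conjugation by $h(\xi)=\xi-\tau$, but the computations (i)--(iii) and the handling of the orientation reversal are the same.
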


\begin{pf}
This follows from Theorem~\ref{ASEPcauchy} (after a change of
variables) and the $e_{\tau}$-Laplace transform inversion formula in
Proposition~\ref{qlaplaceinverse}.
The change of variables is
\[
\xi= \frac{1+w}{1+w/\tau}.
\]
Using the equivalences given in Remark~\ref{xirem}, and using the
definition of $\theta= \rho/(1-\rho)$ we find that
\[
f(w)\mapsto e^{\varepsilon(\xi)} \xi^x \frac{\rho(\xi-\tau)}{\xi
-1+\rho
(1-\tau)}.
\]
Similarly, we find
\[
\frac{1}{\tau w-w'} \mapsto\frac{(\tau-\xi)(\tau-\xi')}{\tau
(1-\tau)} \frac{q}{(p+q\xi\xi'-\xi)}.
\]
The change of variables introduces an additional Jacobian factor into
the new kernel which is given by
\[
\frac{-\tau(1-\tau)}{(\tau-\xi)(\tau-\xi')}.
\]
Finally, under this change of variables, the contour $C_{-\tau;-1}$
becomes $C_R$ as specified in the statement of the corollary, but with
clockwise orientation. Changing this to the standard counterclockwise
orientation introduces a factor of $-1$ into the kernel. Combining
these calculations, we find
\[
\mathbb{E} \biggl[ \frac{1}{ (\zeta\tau^{N_x(\eta(t))};\tau
)_{\infty}} \biggr] = \frac{\det(I-\zeta\widetilde K^{\mathrm
{TW\mbox{-}ASEP}})}{(\zeta;\tau
)_{\infty}},
\]
where $\widetilde K^{\mathrm{TW\mbox{-}ASEP}}$ is as in the statement of the corollary.

From Proposition~\ref{qlaplaceinverse}, it follows that
\begin{eqnarray*}
\mathbb{P}\bigl(N_x\bigl(\eta(t)\bigr)=m\bigr) &=& -\tau^m
\frac{1}{2\pi\iota} \int \bigl(\tau ^{m+1}\zeta;\tau
\bigr)_{\infty} \frac{\det(I-\zeta\widetilde K^{\mathrm
{TW\mbox{-}ASEP}})}{(\zeta;\tau
)_{\infty}} \,d\zeta
\\
&=& -\tau^m
\frac{1}{2\pi\iota} \int\frac{\det
(I-\zeta\widetilde K^{\mathrm{TW\mbox{-}ASEP}})}{(\zeta;\tau)_{m+1}} \,d\zeta,
\end{eqnarray*}
where the integral is taken over a contour enclosing $\zeta=q^{-M}$
for $0\leq M\leq m-1$, thus proving the corollary.
\end{pf}

%
\begin{remark}
For ASEP with step-Bernoulli initial data, Tracy and Widom \cite{TW4}
(for step initial data see \cite{TW1,TW2}) arrive at a very similar
formula which says
%
%
\begin{equation}
\label{TWform} \mathbb{P}\bigl(N_x\bigl(\eta(t)\bigr) \geq m\bigr) =
\frac{1}{2\pi\iota} \int\frac
{\det
(I-\zeta\widetilde K^{\mathrm{TW\mbox{-}ASEP}})}{(\zeta;\tau)_m} \frac{d\zeta
}{\zeta},
\end{equation}
where the integral is taken over a contour enclosing $\zeta=0$ and
$\zeta=q^{-M}$ for $0\leq M\leq m-1$.
Since $\mathbb{P}(N_x(\eta(t)) \geq m)- \mathbb{P}(N_x(\eta(t))
\geq m+1) = \mathbb{P}
(N_x(\eta(t)) = m)$, it is straightforward to go from (\ref{TWform})
to (\ref{TWcoreqn}) since
\[
\frac{1}{(\zeta;\tau)_{m} \zeta} - \frac{1}{(\zeta;\tau)_{m+1}
\zeta} = -\tau^m
\frac{1}{(\zeta;\tau)_{m+1}}.
\]
Going in the reverse direction uses a telescoping sum and would require
an a priori confirmation that the right-hand side of (\ref{TWform})
goes to zero as $m$ goes to infinity.
\end{remark}

\begin{pf*}{Proof of Theorem~\ref{ASEPcauchy}}
Let $\tilde\mu_k$ be given as in (\ref{muktildedef}) with $f(w) =
F_{{w}}({x},{t};{\rho})$ defined by (\ref{fASEPtwodef}) and contour
$C_{-\tau;-1}$ as in Definition~\ref{somedefs}. Then Proposition~\ref
{tildemuDetProp} and Remark~\ref{k1k2} imply that
%
%
\begin{equation}
\label{mseriesabove} \sum_{k\geq0} \tilde\mu_k
\frac{\xi^k}{k_{\tau}!} = \det(I+ \xi \widetilde K),
\end{equation}
where $\det(I+\xi\widetilde K)$ is the Fredholm determinant of
\[
\widetilde{K} \bigl(w,w'\bigr) = (1-\tau) \frac{f(w)}{\tau w-w'}.
\]
We need to check that this is a numerical equality\vspace*{1pt} (not just formal).
Because the kernel is bounded as $w$ varies along $\widetilde C_{-\tau}$
it follows that $\widetilde K$ is trace-class, and hence $\det(I+ \xi
\widetilde K)$ is an entire function of $\xi$.

In order to see that the left-hand side is uniformly convergent for
small enough~$|\xi|$, we utilize the probabilistic interpretation for
$\tilde\mu$. By combining Lemmas~\ref{qmomQtilde} and~\ref
{QtildeIntlemma}, we find that
\[
\mathbb{E} \bigl[\tau^{n N_x(\eta(t))} \bigr] = \sum
_{k=0}^{n} \pmatrix{n\cr k}_{\tau}
(-1)^k \tilde\mu_k.
\]
This transformation from $\tilde\mu_k$ to $\mathbb{E} [\tau^{n
N_x(\eta(t))} ]$ is upper-triangular, and hence can be inverted.
One checks that the inverse is given by
\[
(-1)^k \tilde\mu_k = (-1)^k
\tau^{k(k-1)/2} \sum_{j=0}^{k} \pmatrix{k \cr j}_{\tau^{-1}} \tau^{-j(j-1)/2} (-1)^j \mathbb {E}
\bigl[\tau^{j N_x(\eta(t))} \bigr].
\]
By (\ref{finqBinExp}), we find
\begin{eqnarray*}
(-1)^k \tilde\mu_k &=& (-1)^k
\tau^{k(k-1)/2} \mathbb {E} \bigl[\bigl(1-\tau^{N_x(\eta(t))}\bigr)\cdots
\bigl(1-\tau^{N_x(\eta(t))-k}\bigr) \bigr]
\\
&=& \mathbb{E} \bigl[\bigl(\tau^{N_x(\eta(t))}-1\bigr) \bigl(
\tau^{N_x(\eta
(t))}-\tau \bigr)\cdots\bigl(\tau^{N_x(\eta(t))}-
\tau^k\bigr) \bigr].
\end{eqnarray*}
This probabilistic interpretation of $\tilde\mu_k$ implies that
$|\tilde\mu_k|\leq1$, hence for $|\xi|$ small enough the series on
the left-hand side of (\ref{mseriesabove}) is convergent and the
equality is numerical.

By replacing $\xi= -\zeta/(1-\tau)$ and using the probabilistic
interpretation for $\tilde\mu_k$ to justify the exchange of summation
and expectation (assuming $|\zeta|$ small enough) this left-hand side
series equals
\begin{eqnarray*}
\sum_{k\geq0} \tilde\mu_k
\frac{ (-\zeta/(1-\tau)
)^k}{k_{\tau}!} &=& \mathbb{E} \biggl[\sum_{k\geq0}
\frac{ (\tau
^{-N_x(\eta(t))};\tau)_k}{(\tau;\tau)_k} \bigl(\zeta\tau^{N_x(\eta
(t))}\bigr)^k \biggr]
\\
&=& \mathbb{E} \biggl[ \frac{(\zeta;\tau)_{\infty}}{(\zeta\tau
^{N_{x}(\eta(t))};\tau)_{\infty}} \biggr].
\end{eqnarray*}
Since we already wrote down the Fredholm determinant for this
expression in (\ref{mseriesabove}), this establishes the claimed
result of the theorem, for $|\zeta|$ small enough.

Finally, note that
\[
\mathbb{E} \biggl[ \frac{(\zeta;\tau)_{\infty}}{(\zeta\tau
^{N_{x}(\eta
(t))};\tau)_{\infty}} \biggr] = \sum
_{k\geq0} \mathbb{P}\bigl(N_x\bigl(\eta(t)\bigr)=k
\bigr) (\zeta;\tau)_k.
\]
For any $\zeta\in\mathbb{C}$ and any compact neighborhood $\Omega$ of
$\zeta$, it is clear at as $k\to\infty$, the product defining
$(\zeta;\tau)_k$ converges uniformly over $\Omega$ to a finite
limit. This implies that the series is likewise uniformly convergent in
that compact neighborhood and, therefore, the series is analytic in a
neighborhood of $\zeta$. As $\zeta$ was arbitrary, this implies that
the left-hand side of (\ref{ASEPcauchyeqn}) is an entire function of
$\zeta$. We showed earlier that the right-hand side is entire,
therefore, since the two functions of $\zeta$ are equal for $|\zeta|$
small enough, by the uniqueness of analytic continuations it follows
that the equality holds for all $\zeta\in\mathbb{C}$, completing the proof.
\end{pf*}

\begin{appendix}
\section{Semidiscrete directed polymers}\label{seclimtran}
There are three main parameters in \mbox{$q$-}TASEP: time $t$, particle label
$n$ and the repulsion strength $q$ (the $a_i$ are also present, but
play a somewhat auxiliary role). On account of this, there are many
interesting scaling limits to be explored. We will presently focus on
one which involves scaling $q\to1$ and $t\to\infty$, but keeping $n$
fixed. We show that the limit of \mbox{$q$-}TASEP corresponds to a certain
semidiscrete version of the multiplicative stochastic heat equation
(and hence also the O'Connell--Yor semidiscrete directed polymer
partition function \cite{OY}). We then introduce the limit of the
\mbox{$q$-}TASEP free evolution equation with $k-1$ boundary conditions and
the Schr\"{o}dinger equation with Bosonic Hamiltonian [Proposition~\ref{propsystemsODEqTASEP}(B) and (C)] and show how these limits are
achieved from the analogous statement for \mbox{$q$-}TASEP. Finally, we remark
on the fact that taking a limit of the Mellin--Barnes-type Fredholm
determinant formula for the $e_q$-Laplace transform of \mbox{$q$-}TASEP yields
a rigorous derivation of an analogous formula for the Laplace transform
of the solution to the semidiscrete multiplicative stochastic heat equation.

From this semidiscrete limit, it is possible to take another limit to
the fully continuous (space--time) multiplicative stochastic heat
equation \cite{ACQ}. The free evolution equation with $k-1$ boundary
conditions and the Schr\"{o}dinger equation with Bosonic Hamiltonian
limit to the two different formulations of the attractive quantum delta
Bose gas.

%
\begin{definition}\label{def:semidiscSHE}
The semidiscrete multiplicative stochastic heat equation (SHE) with
initial data $z_0$ and drift vector $\tilde a=(\tilde a_1,\ldots,
\tilde a_N)$ is the solution to the system of stochastic ODEs
\[
dz(\tau,n) = \nabla z(\tau,n) \,d\tau+ z(\tau,n) \,dB_n,\qquad z(0,n) =
z_0(n),\qquad z(\tau,0)\equiv0,
\]
where $(B_1(s),\ldots, B_N(s))$ are independent standard Brownian
motions such that $B_i$ has drift $\tilde a_i$, and we use the notation
$\nabla z(\tau,n) = z(\tau,n-1)-z(\tau,n)$.
\end{definition}

There is a Feynman--Kac path integral representation for $z(\tau,n)$.
Let $\phi$ be a Markov process with state space $\mathbb{Z}$ which increases
by one at rate one (this is a~standard Poisson jump process whose
generator is the adjoint of $\nabla$). Let $\mathcal{E}$ denote the
expectation with respect to this path measure on $\phi$. Define the
energy of $\phi$ as the path integral through the disorder (the white
noises given by $dB_i$) along $\phi$:
\[
E_{\tau}(\phi) = \int_{0}^{\tau}
\,dB_{\phi(s)} \,ds.
\]
Also write: $E_{\tau}(\phi) $: for $E_{\tau}(\phi) - \frac{\tau
}{2}$. Then
%
%
\begin{equation}
\label{eqnpathint} z(\tau,n) = \mathcal{E}^{\phi(\tau)=n} \bigl[e^{:E_{\tau}(\phi
):}
z_0\bigl(\phi(0)\bigr) \bigr].
\end{equation}
This path integral is essentially the O'Connell--Yor semidiscrete
directed polymer partition function \cite{OY}.

\subsection{Semidiscrete limit of \mbox{$q$-}TASEP dynamics}\label{seclimitdyn}
We now show how \mbox{$q$-}\break TASEP rescales to the semidiscrete SHE. We state
the result for step initial data and then provide a scaling argument
which makes clear the correspondence for general initial data. For the
below proposition,\vspace*{1pt} let $C([0,T],\mathbb{R}^N)$ represent the space of
functions from $[0,T]$ to $\mathbb{R}^N$ endowed with the topology of uniform
convergence on compact subsets.

%
\begin{proposition}\label{propqlimitthm}
Consider \mbox{$q$-}TASEP started from step initial data and scaled according to
%
%
\begin{eqnarray}\label{eqnsemiscalings}
q&=&e^{-\varepsilon},\qquad a_i = e^{-\varepsilon\tilde a_i},
\qquad t=\varepsilon^{-2}\tau,
\nonumber\\[-8pt]\\[-8pt]
x_n(t) &=&\varepsilon^{-2}\tau- (n-1)\varepsilon^{-1}\log\varepsilon
^{-1} -\varepsilon^{-1} F^{n}_{\varepsilon}(\tau).\nonumber
\end{eqnarray}
Let $z_{\varepsilon}(\tau,n) = \exp (-\frac{3\tau}{2} +
F^{n}_{\varepsilon
}(\tau) )$. Then for any $N\geq1$, $T>0$, as $\varepsilon\to0$, the
law of the stochastic process $ \{z_{\varepsilon}(\tau,n)\dvtx \tau\in[0,T],
1\leq n\leq N \}$ converges\vspace*{1pt} in the topology of measures on
$C([0,T],\mathbb{R}^N)$ to a limit given by the law of $ \{z(\tau,n)\dvtx \tau\in[0,T], 1\leq n\leq N \}$ where $z(\tau,n)$ solves
the semidiscrete SHE with drift vector $\tilde a=(\tilde a_1,\ldots,
\tilde a_N)$ and initial data $z_0(n)=\delta_{n=1}$.
\end{proposition}

This result is a corollary of \cite{BorCor} Theorem 4.1.26 which deals
with a larger two-dimensional extension of \mbox{$q$-}TASEP and its limit.
That result is not entirely elementary as it relies upon the
convergence of $q$-Whittaker processes to Whittaker processes \cite
{BorCor} as well as the relationship of Whittaker processes to the
semidiscrete directed polymer \cite{OCon}. We will presently provide a
purely probabilistic sketch of why this result is true, without making
any attempt to fill in the details of rigorous justifications.

It is easy to check the initial data. Observe that via the scalings,
$z_{\varepsilon}(0,n) = \varepsilon^{n-1} e^{\varepsilon n}$. Hence, if $n>1$
the limit is 0,
whereas for $n=1$ the limit is~1. This shows\vadjust{\goodbreak} that as $\varepsilon\to0$,
$z_{\varepsilon
}(0,n)\to\delta_{n=1}$. To\vspace*{1pt} achieve a general initial data $z_0$, one
should scale $x_n(0)$ so that $\varepsilon^{n-1}e^{-\varepsilon x_n(0)}\to z_0(n)$.

To see how the dynamics behave under scaling, it is easiest to work in
terms of~$F^n_{\varepsilon}(\tau)$. Observe that
\begin{eqnarray*}
d F^n_{\varepsilon}(\tau) &=& F^n_{\varepsilon}(
\tau) - F^n_{\varepsilon
}(\tau- d\tau)
\\
&=& \bigl(\varepsilon^{-1}\tau-(n-1)\log\varepsilon^{-1} -\varepsilon
x_n\bigl(\varepsilon^{-2}\tau \bigr) \bigr)
\\
&&{} - \bigl(
\varepsilon^{-1}(\tau-d\tau) -(n-1)\log\varepsilon ^{-1} -\varepsilon
x_n\bigl(\varepsilon ^{-2}\tau- \varepsilon^{-2} \,d
\tau\bigr) \bigr)
\\
&=& \varepsilon^{-1} \,d\tau- \varepsilon \bigl( x_n\bigl(
\varepsilon^{-2}\tau\bigr) - x_n\bigl(\varepsilon^{-2}
\tau- \varepsilon^{-2} \,d\tau\bigr) \bigr).
\end{eqnarray*}
The jump rate for \mbox{$q$-}TASEP, in the rescaled variables, is given by
\[
a_n \bigl(1-q^{x_{n-1}(t) - x_{n}(t) -1}\bigr) = 1 - \varepsilon \bigl(\tilde a_n + e^{F^{n-1}_{\varepsilon}(\tau) - F^{n}_{\varepsilon}(\tau)} \bigr) + O\bigl(\varepsilon^2\bigr).
\]
This means that in an increment of time $\varepsilon^{-2} \,d\tau$, we should
see that
\[
\varepsilon \bigl( x_n\bigl(\varepsilon^{-2}\tau\bigr) -
x_n\bigl(\varepsilon^{-2}\tau- \varepsilon^{-2} \,d\tau
\bigr) \bigr) = \varepsilon^{-1} - \bigl(\tilde a_n +
e^{F^{n-1}_{\varepsilon
}(\tau) -
F^{n}_{\varepsilon}(\tau)} \bigr)\,d\tau+dW_n +o(1),
\]
where the $W_n$ are independent Brownian motions which arise from the
approximation of a Poisson process by a Brownian motion. Setting $B_n
=\tilde a_n - W_n$ (a~Brownian motion with drift $\tilde a_n$ now), we
find that
\[
d F^n_{\varepsilon}(\tau) = e^{F^{n-1}_{\varepsilon}(\tau) -
F^{n}_{\varepsilon}(\tau)} + dB_n +
o(1).
\]

By It\^o's lemma,
\[
d \exp \bigl(F^n_{\varepsilon}(\tau) \bigr) = \bigl(
\tfrac{1}{2} \exp \bigl(F^{n}_{\varepsilon}(\tau) \bigr) +
\exp \bigl(F^{n-1}_{\varepsilon
}(\tau) \bigr) \bigr)\,d\tau+ \exp
\bigl(F^{n}_{\varepsilon}(\tau) \bigr) \,dB_n + o(1)
\]
and hence rewriting this in terms of $z_{\varepsilon}(\tau,n)$ we have
\[
d z_{\varepsilon}(\tau,n) = \nabla z_{\varepsilon}(\tau,n) \,d\tau+
z_{\varepsilon}(\tau,n) \,dB_n + o(1).
\]
As $\varepsilon\to0$, this equation limits to that for $z(\tau,n)$ as desired.

\subsection{Semidiscrete limit of \mbox{$q$-}TASEP duality}\label{secrepappr}
By utilizing the path integral formulation of $z(\tau,n)$ given in
(\ref{eqnpathint}) let us compute expressions for joint moments of
$z(\tau,n)$ for fixed $\tau$ but different values of $n$. For
simplicity, we assume below that all $\tilde{a}_i\equiv0$, though the
general case is no more difficult. This procedure is sometimes called
the \textit{replica approach} (not to be confused with the \textit{replica
trick}---see Section~\ref{secreptrick}) as it involves replication
of the path measure.

Observe that
\[
\mathbb{E} \Biggl[\prod_{i=1}^{k} z(
\tau,n_i) \Biggr] = \mathbb {E} \Biggl[\prod
_{i=1}^{k} \mathcal{E}^{\phi_i(\tau)=n_i}
\bigl[e^{:E_{\tau}(\phi
_i):} z_0\bigl(\phi_i(0)\bigr) \bigr]
\Biggr],
\]
where the $\phi_i$'s are independent copies of the Poisson jump
process $\phi$. Interchanging the disorder and path expectations, we
are left to evaluate the (now inner) expectation
\[
\mathbb{E} \Biggl[\prod_{i=1}^{k}
e^{:E_{\tau}(\phi_i):} \Biggr] = \exp \Biggl(\int_0^{\tau}
\sum_{i<j}^k \delta_{\phi_i(s)=\phi_j(s)} \,ds
\Biggr).
\]
This leads to the final formula
%
%
\begin{eqnarray}\label{eqnidreplica}
&& \mathbb{E} \Biggl[\prod_{i=1}^{k}
z(\tau,n_i) \Biggr]
\nonumber\\[-8pt]\\[-8pt]
&&\qquad = \mathcal {E}^{\phi
_1(\tau)=n_1}\cdots
\mathcal{E}^{\phi_k(\tau)=n_k} \Biggl[\exp \Biggl(\int_0^{\tau}
\sum_{i<j}^k \delta_{\phi_i(s)=\phi_j(s)} \,ds
\Biggr) \prod_{i=1}^{k} z_0
\bigl(\phi_i(0)\bigr) \Biggr].\hspace*{-26pt}\nonumber
\end{eqnarray}
This identity should be thought of as a duality between the
semidiscrete SHE and a system of Poisson jump processes energetically
rewarded via the sum of their local times. The proof of the above
identity follows from the simple fact that for $X$ distributed as a
centered normal random variable with variance $\sigma^2$,
\[
\mathbb{E} \bigl[ e^{k(X-\sigma^2/2)} \bigr] = e^{\sigma^2 k(k-1)/2}.
\]
This implies that it is the Gaussian nature of the noise and not the
underlying generator $\nabla$ which is behind this identity.
Therefore, if $\nabla$ is replaced in Definition~\ref
{def:semidiscSHE} by an arbitrary generator $L$, the same identity
holds if $\phi$ is defined via the adjoint generator of $L$. For more
on these generalities, see Section~6 of \cite{BorCor}. Note that for
the continuum SHE, there exist other types of noise for which dualities
have been shown (see, e.g., \cite{HobsonTribe}).

Just as for \mbox{$q$-}TASEP, (\ref{eqnidreplica}) implies that the joint
moments of $z$ satisfy systems of ODEs (recall Proposition~\ref
{propsystemsODEqTASEP}). The (A) system follows from (\ref
{eqnidreplica}) directly. We now record the limiting versions of
Proposition~\ref{propsystemsODEqTASEP}(B) and (C).

%
\begin{proposition}\label{propsemiODEs} Let $z(\tau;n)$ be as above
with initial data $z_0(n)$ supported on $\mathbb{Z}_{>0}$.
\begin{longlist}[(A)]
\item[(B)] \emph{Free evolution equation with $k-1$ boundary
conditions}: If $\tilde u\dvtx \mathbb{R}_{+}\times(\mathbb{Z}_{\geq
0})^k \to\mathbb{R}$ solves:
\begin{enumerate}[(4)]
\item[(1)] For all $\vec{n}\in(\mathbb{Z}_{\geq0})^k$ and $\tau\in
\mathbb{R}_{+}$,
\[
\frac{d}{d\tau} \tilde u(\tau;\vec{n}) = \sum_{i=1}^{k}
\nabla_i \tilde u (\tau;\vec{n});
\]
\item[(2)] For all $\vec{n}\in(\mathbb{Z}_{\geq0})^k$ such that for some
$i\in\{
1,\ldots, k-1\}$, $n_i=n_{i+1}$,
\[
(\nabla_i - \nabla_{i+1} - 1) \tilde u(\tau;\vec{n}) = 0;
\]
\item[(3)] For\vspace*{2pt} all $\vec{n}\in(\mathbb{Z}_{\geq0})^k$ such that $n_k=0$,
$\tilde u(\tau;\vec{n}) \equiv0$ for all $\tau\in\mathbb{R}_{+}$;
\item[(4)] For all $\vec{n}\in W^{k}_{>0}$, $\tilde u(0;\vec{n}) = \prod_{i=1}^{k} z_0(n_i)$.\vadjust{\goodbreak}
\end{enumerate}
Then for all $\vec{n}\in W^{k}_{>0}$, $\mathbb{E} [\prod_{i=1}^{k}
z(\tau,n_i)  ] = \tilde u(\tau;\vec{n})$.

\item[(C)] \emph{Schr\"{o}dinger equation with Bosonic Hamiltonian}:
If $\tilde v\dvtx \mathbb{R}_{+}\times(\mathbb{Z}_{\geq0})^k$ solves:
\begin{enumerate}[(4)]
\item[(1)] For all $\vec{n}\in(\mathbb{Z}_{\geq0})^k$ and $\tau\in
\mathbb{R}_{+}$,
\[
\frac{d}{d\tau} \tilde v(\tau;\vec{n}) = \widetilde H \tilde v(\tau;\vec{n}),
\qquad\widetilde H = \Biggl[\sum_{i=1}^{k}
\nabla_i + \sum_{i<j}^k
\delta_{n_i=n_j} \Biggr];
\]
\item[(2)] For all permutations of indices $\sigma\in S_k$, $\tilde v(\tau;\sigma\vec{n}) = \tilde v(\tau;\vec{n})$;
\item[(3)] For\vspace*{1.5pt} all $\vec{n}\in(\mathbb{Z}_{\geq0})^k$ such that $n_k=0$,
$\tilde v(\tau;\vec{n}) \equiv0$ for all $\tau\in\mathbb{R}_{+}$;
\item[(4)] For all $\vec{n}\in W^{k}_{>0}$, $\tilde v(0;\vec{n}) = \prod_{i=1}^{k} z_0(n_i)$.
\end{enumerate}
Then for all $\vec{n}\in W^{k}_{>0}$, $\mathbb{E} [\prod_{i=1}^{k}
z(\tau,n_i)  ] = \tilde v(\tau;\vec{n})$.
\end{longlist}
\end{proposition}

These systems of ODEs can be proved from (\ref{eqnidreplica})
directly. Instead, we sketch how they arise as limits of the analogous
ODEs for \mbox{$q$-}TASEP.

Let us first consider (B). Recall that $u(t;\vec{n}) = \mathbb
{E}
[\prod_{i=1}^{k} q^{x_{n_i}(t) +n_i} ]$. Thus, defining
\[
\tilde u_{\varepsilon}(\tau,\vec{n}) = \prod_{i=1}^{k}
e^{\varepsilon
^{-1}\tau} \varepsilon ^{(n_i-1)} u\bigl(\varepsilon^{-2}\tau,
\vec{n}\bigr),
\]
we expect (from Section~\ref{seclimitdyn}) that
\[
\lim_{\varepsilon\to0} e^{-(3k\tau)/2}\tilde u_{\varepsilon
}(\tau,
\vec{n}) = \mathbb{E} \Biggl[\prod_{i=1}^k
z(\tau,n_i) \Biggr].
\]
Call this limit $\tilde{u}(\tau,\vec{n})$. We now check that $\tilde{u}$ indeed satisfies conditions (B.1)--(B.4) above. The fact that it
satisfies (B.3) and (B.4) is clear. Note that
%
%
\begin{equation}
\label{eqntildeunabla} \prod_{i=1}^{k}
e^{\varepsilon^{-1}\tau} \varepsilon^{(n_i-1)} \nabla_i u\bigl(\varepsilon
^{-2}\tau,\vec{n}\bigr) = \varepsilon\tilde u_{\varepsilon}\bigl(\tau,
\vec{n}_{i}^{-}\bigr) -\tilde u_{\varepsilon}(\tau,
\vec{n}).
\end{equation}
Using this, it follows by rescaling (B.1) of Proposition~\ref
{propsystemsODEqTASEP} that
\[
\frac{d}{d\tau} \tilde u_{\varepsilon}(\tau,\vec{n}) = k\varepsilon
^{-1} \tilde u_{\varepsilon}(\tau,\vec{n}) + \biggl(
\varepsilon^{-1}-\frac{1}{2}\biggr) \sum
_{i=1}^{k} \bigl(\varepsilon\tilde u_{\varepsilon}
\bigl(\tau,\vec{n}_{i}^{-}\bigr) - \tilde u_{\varepsilon}(
\tau,\vec{n}) \bigr) + O(\varepsilon).
\]
The factor $\varepsilon^{-1}-\frac{1}{2}$ comes from the expansion of
$\varepsilon
^{-2}(1-q)$. The above can be rewritten as
\[
\frac{d}{d\tau} \tilde u_{\varepsilon}(\tau,\vec{n}) = \sum
_{i=1}^{k} \biggl(\tilde u_{\varepsilon}\bigl(
\tau,\vec{n}_{i}^{-}\bigr) + \frac{1}{2} \tilde u_{\varepsilon}(\tau,\vec{n}) \biggr) + O(\varepsilon),
\]
which in turn implies that
\[
\frac{d}{d\tau} e^{-(3k\tau)/2} \tilde u_{\varepsilon}(\tau,\vec {n}) =
\sum_{i=1}^{k} \nabla_i e^{-(3k\tau)/2}\tilde u_{\varepsilon
}(\tau,\vec{n}) +O(\varepsilon).
\]
This shows that in the $\varepsilon\to0$ limit, $\tilde{u}$ satisfies
(B.1) above.

Using (\ref{eqntildeunabla}) and the expansion $q=1-\varepsilon+
O(\varepsilon^2)$,
it follows from (B.2) of Proposition~\ref{propsystemsODEqTASEP} that
\[
\tilde u_{\varepsilon}\bigl(\tau, \vec{n}_{i}^{-}\bigr)
= \tilde u_{\varepsilon
}\bigl(\tau,\vec {n}_{i+1}^{-}
\bigr) + \tilde u_{\varepsilon}(\tau,\vec{n}) + O(\varepsilon).
\]
Multiplying by $e^{-(3k\tau)/2}$ has no effect on this
equality, and so in the limit $\varepsilon\to0$, we find that $\tilde{u}$
satisfies (B.2).

We now consider (C). Define $\tilde v_{\varepsilon}$ and $\tilde v$ analogously
to $\tilde u_{\varepsilon}$ and $\tilde u$ above. The fact that $\tilde v$
satisfies (C.2), (C.3) and (C.4) is clear. Using (\ref
{eqntildeunabla}) and second-order expansions of $(1-q)$ and
$(1-q^{-1})$, we find that
\[
\frac{d}{d\tau} \tilde v_{\varepsilon}(\tau,\vec{n}) = \sum
_{i=1}^{k} \biggl( \tilde v_{\varepsilon}\bigl(
\tau,\vec{n}_{i}^{-}\bigr) + \frac{1}{2} \tilde v_{\varepsilon
}(\tau,\vec{n}) \biggr) + \sum_{i<j}
\delta_{n_i=n_j} \tilde v_{\varepsilon
}(\tau,\vec{n}) + O(\varepsilon).
\]
Multiplying by $e^{-(3k\tau)/2}$ and taking $\varepsilon\to0$ leads
to (C.1) as desired.

For $z_0(\vec{n})=\prod_{i=1}^{k} \delta_{n_i=1}$ initial data, it
is possible to explicitly solve (B)~and (C) in Proposition~\ref
{propsemiODEs} via nested contour integral formulas which arise as
scaling limits (\ref{FdefqTASEP}). In fact, if we change the boundary
condition in (B.2) to $(\nabla_i - \nabla_{i+1} - c)$ for any $c\in
\mathbb{R}$ [or analogously put this $c$ factor in (C.1) in front of
the sum
over $i<j$] essentially the same integral formulas work and we find
that (B) is solved by
%
%
\begin{equation}
\label{zmoments} \tilde{u}(\tau,\vec{n}) = \frac{e^{-k\tau}}{(2\pi\iota)^k} \int \cdots\int
\prod_{1\leq A<B\leq k} \frac{w_A-w_B}{w_A-w_B-c} \prod
_{j=1}^{k} \frac{e^{tw_j}}{w_j^{n_j}} \,dw_j,
\end{equation}
where the integration contour for $w_A$ contains 0 and $\{w_B+c\}
_{B>A}$. These systems of ODEs are semidiscrete versions of the delta
Bose gas, and $c$ plays the role of the coupling constant. This
remarkable symmetry between attractive ($c>0$) and repulsive ($c<0$)
systems is discussed more in Section~6 of \cite{BorCor}.

\subsection{Semidiscrete limit of \mbox{$q$-}TASEP Fredholm determinant}

Proposition~\ref{propqlimitthm} implies that as $q\to1$, under
proper scaling \mbox{$q$-}TASEP converges to the solution of the semidiscrete
SHE. From this weak convergence result, it follows that the
$e_q$-Laplace transform of particle location for \mbox{$q$-}TASEP converges to
the Laplace transform of the limiting SHE. This Laplace transform
completely characterizes the one-point distribution of the solution
$z(\tau,n)$. The \mbox{$q$-}TASEP Mellin--Barnes-type Fredholm determinant
formula has a nice scaling limit, and thus yields (we will state it for
a zero drift vector) the following.

%
\begin{theorem}\label{OConFredDet}
For $\tau\in\mathbb{R}_{+}$, and $n\geq1$, the solution of the SHE with
delta initial data and drift vector $\tilde a=(0,\ldots, 0)$ is
characterized by (for $\operatorname{Re}u \geq0$):
\[
\mathbb{E} \bigl[e^{-u e^{(3 \tau)/2} z(\tau,n)} \bigr] = \det(I+ K_{u}),
\]
where $\det(I+ K_{u})$ is the Fredholm determinant of
$
K_{u}\dvtx  L^2(C_{0})\to L^2(C_{0})
$
for $C_{0}$ a positively oriented contour containing zero and such that
for all $v,v'\in C_{0}$, we have $|v-v'|<1/2$. The operator $K_u$ is
defined in terms of its integral kernel
\[
K_{u}\bigl(v,v'\bigr) = \frac{1}{2\pi\iota}\int
_{-\iota\infty+ 1/2}^{\iota
\infty+1/2}\,ds \Gamma(-s)\Gamma(1+s)
\frac{\Gamma(v)^n}{\Gamma
(s+v)^n} \frac{ u^s e^{vt s+t s^2/2}}{v+s-v'}.
\]
\end{theorem}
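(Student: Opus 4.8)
The plan is to obtain Theorem~\ref{OConFredDet} as the $\e\to 0$ scaling limit of Theorem~\ref{PlancherelfredThm}, using the weak convergence provided by Proposition~\ref{prop:qlimitthm} to transfer the limit of $e_q$-Laplace transforms to the Laplace transform of $z(\tau,n)$, and then performing a careful asymptotic analysis of the Mellin--Barnes kernel $\KqTASEPMB{\zeta}$. Throughout I take $\tilde a=(0,\ldots,0)$, so all $a_m\equiv 1$, and use the scalings $q=e^{-\e}$, $t=\e^{-2}\tau$, together with $x_n(t) = \e^{-2}\tau -(n-1)\e^{-1}\log\e^{-1} -\e^{-1}F^n_\e(\tau)$ from \eqref{eqn:semiscalings}.

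First I would set up the convergence of the transforms. By Proposition~\ref{prop:qlimitthm}, $z_\e(\tau,n) = \exp(-\tfrac{3\tau}{2}+F^n_\e(\tau))$ converges in law to $z(\tau,n)$, and $z(\tau,n)>0$ a.s.\ is bounded so the functional $w\mapsto e^{-uw}$ (for $\Real u\ge 0$) is bounded and continuous; hence $\EE[e^{-u z_\e(\tau,n)}]\to\EE[e^{-u z(\tau,n)}]$. The point is to recognize the left-hand side of \eqref{thmlaplaceeqn} as (asymptotically) such an expectation: one writes $e^{\frac{3\tau}{2}}z(\tau,n) = \lim_\e e^{\e^{-1}\tau}\e^{n-1} q^{x_n(t)+n}$ modulo the explicit deterministic prefactor, choose $\zeta = \zeta(\e)$ so that $\zeta q^{x_n(t)}$ scales to $u e^{\frac{3\tau}{2}}z(\tau,n)$ up to lower order (concretely $\zeta = -\e\, u\,\e^{n-1} q^{-n} e^{\e^{-1}\tau}(1+o(1))$, or the analogous choice matching $-q^{-n}\zeta$ to the kernel variable), and invoke the known fact (see the discussion around Lemma~\ref{gammasumlemma} and \cite{BorCor}) that the $e_q$-Laplace transform $1/(\zeta q^{x_n(t)};q)_\infty$ degenerates to the exponential $e^{-u e^{3\tau/2}z(\tau,n)}$ under this scaling; uniform integrability is immediate since $|1/(\zeta q^{x_n(t)};q)_\infty|\le 1$ for $\zeta\notin\Rplus$. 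Thus $\EE[e^{-u e^{3\tau/2}z(\tau,n)}] = \lim_{\e\to 0}\det(I+\KqTASEPMB{\zeta(\e)})$.

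Next I would compute $\lim_{\e\to 0}\det(I+\KqTASEPMB{\zeta(\e)})$ by analyzing the kernel. Parametrize the $q$-TASEP contour $C_a = \{|w-1|=d\}$ by $w = 1+\e v$ (so $d = d(\e)\to 0$, e.g. $d\sim \e\cdot\text{const}$), which turns $C_a$ into the contour $C_0$ around $0$ with the constraint $|v-v'|<1/2$, and the Jacobian $dw = \e\, dv$ combines with the $\tfrac{1}{q^s w - w'}$ factor to produce $\tfrac{1}{v+s-v'}$ in the limit. The substance is the limit of $g(w)/g(q^s w)$ with $g(w)=\prod_{m=1}^n (w/a_m;q)_\infty^{-1}e^{-tw}$. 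Here the exponential part contributes $e^{-t(w - q^s w)} = e^{-t w(1-q^s)}$; with $w=1+\e v$, $1-q^s = 1-e^{-\e s} = \e s - \tfrac{\e^2 s^2}{2}+O(\e^3)$ and $t=\e^{-2}\tau$, one gets $\e^{-2}\tau\cdot(1+\e v)(\e s - \tfrac{\e^2 s^2}{2}) \to$ a finite quantity producing $e^{vts + ts^2/2}$ after keeping track of the constant $\e^{-1}\tau$ shift absorbed into the choice of $\zeta(\e)$ (this is exactly where the $e^{\e^{-1}\tau}$ in $\zeta(\e)$ and the $e^{3\tau/2}$ normalization get pinned down). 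The $q$-Pochhammer part, using $(x;q)_\infty = \prod_{j\ge 0}(1-xq^j)$ and the known $q\to1$ asymptotic $(q^b;q)_\infty \sim (1-q)^{1-b}\Gamma(b)^{-1}$ (equivalently the $q$-Gamma function degeneration), gives $\prod_{m=1}^n \tfrac{(q^s w/a_m;q)_\infty}{(w/a_m;q)_\infty} \to \tfrac{\Gamma(v)^n}{\Gamma(s+v)^n}$ after the substitution $w/a_m = 1+\e v$ and matching $q^s w/a_m$ to $q^{s+v}$-type arguments; the $(1-q)$ powers cancel against the $((1-q)\zeta)^s$ versus $(-\zeta)^s$ bookkeeping already present in Proposition~\ref{prop:mellindet} vs Theorem~\ref{PlancherelfredThm}. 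Finally the vertical Mellin--Barnes contour $-\iota\infty+\delta \to \iota\infty+\delta$ with $0<\delta<1$ survives the limit as $-\iota\infty+1/2\to\iota\infty+1/2$; the factor $\Gamma(-s)\Gamma(1+s)$ is untouched.

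The main obstacle is justifying the interchange of the $\e\to0$ limit with the Fredholm determinant expansion: one must show the kernels $\KqTASEPMB{\zeta(\e)}(w,w')$, pulled back to fixed contours $C_0$ and $D_{R,d}$, converge uniformly and are dominated by a trace-class (or Hilbert--Schmidt with uniform bound) kernel independent of $\e$, so that $\det(I+\KqTASEPMB{\zeta(\e)})\to\det(I+K_u)$ term by term with a convergent majorant (Hadamard's bound plus dominated convergence on the Fredholm series). This requires: (i) uniform decay of $\Gamma(-s)\Gamma(1+s)$ along $D_{R,d}$ (standard, exponential in $|\Imag s|$), (ii) uniform bounds on $g(w)/g(q^s w)$ along the rescaled contours for small $\e$ — the delicate point being the region where $\Real s$ is large along $D_{R,d}$ and the $q$-Pochhammer ratio must be controlled uniformly in $\e$, for which I would cite or adapt the analytic estimates already invoked in \cite{BorCor} for Theorem~\ref{PlancherelfredThm} and their stability under the $q\to 1$ scaling, (iii) checking $|v+s-v'|$ stays bounded below, which is exactly what the constraints "$C_0$ with $|v-v'|<1/2$" and the $1/2$-offset of the $s$-contour guarantee. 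Once this domination is in place, Montel/Vitali gives analyticity of $u\mapsto\det(I+K_u)$ for $\Real u>0$ and continuity up to $\Real u = 0$, and the identity $\EE[e^{-u e^{3\tau/2}z(\tau,n)}]=\det(I+K_u)$ follows for all $\Real u\ge 0$ by analytic continuation from the regime where the scaling argument is cleanest. Alternatively, and perhaps more cleanly, one can bypass part of (ii) by instead deriving Theorem~\ref{OConFredDet} directly from Proposition~\ref{prop:semiODEs} and the explicit nested-contour solution \eqref{zmoments} with $c=1$, applying the general scheme of Section~\ref{sec3} (Propositions~\ref{mukprop}, \ref{gendetprop}, \ref{prop:mellindet}) to the continuum kernel $f(w) = \Gamma(w)^n e^{tw}/(w^{?})$-type function directly; I would present the scaling-limit route as the primary argument and mention this as the parallel one.
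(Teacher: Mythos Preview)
Your proposal is correct and follows essentially the same route the paper indicates: the paper's proof of this theorem is simply a citation to \cite{BorCor}, Theorem~5.2.10, together with the remark that ``the formula follows from rigorous asymptotic analysis of Theorem~\ref{PlancherelfredThm},'' and your outline is precisely a sketch of that asymptotic analysis --- the scaling $w=1+\e v$, the $q$-Gamma degeneration of the Pochhammer ratio to $\Gamma(v)^n/\Gamma(s+v)^n$, the exponential bookkeeping producing $e^{v\tau s+\tau s^2/2}$, and the dominated-convergence control of the Fredholm series are exactly the ingredients of the cited proof. Your identification of the main technical obstacle (uniform trace-class bounds along the rescaled contours, especially for large $\Real s$ on $D_{R,d}$) is also accurate, and the alternative direct route via Proposition~\ref{prop:semiODEs} and \eqref{zmoments} that you mention at the end is viable as well.
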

\begin{pf}
This is proved in \cite{BorCor}, Theorem 5.2.10. An alternative choice
of contours is developed in \cite{BorCorFer}, Theorem 1.16. The formula
follows from rigorous asymptotic analysis of Theorem~\ref{PlancherelfredThm}.
\end{pf}

\subsection{The replica trick}\label{secreptrick}

It is enticing to think that one might be able to compute the Laplace
transform formula in Theorem~\ref{OConFredDet} directly from the
explicit formula for $\mathbb{E}[z(\tau,n)^k]$ [such as the one given by
combining (\ref{zmoments}) with Proposition~\ref{propsemiODEs}(B)].
If $X$ is a suitably nice nonnegative random variable (e.g., if
$X$ were bounded), then for $u$ with $\operatorname{Re}(u)>0$,
\[
\mathbb{E} \bigl[ e^{-u X} \bigr] = \sum_{k=0}^{\infty}
\frac{(-u)^k}{k!} \mathbb{E}\bigl[X^k\bigr].
\]
This identity only makes sense if one can rigorously justify
interchanging the summation. Yet worse, if the moments of $X$ grow too
rapidly, the right-hand side might not even be convergent for any value
of $u$ even though the left-hand side would be necessarily finite. This
is exactly the case when $X= e^{(3\tau)/2}z(\tau,n)$. From
(\ref{zmoments}), one can estimate that for this choice of $X$,
$\mathbb{E}
[X^k] \approx e^{c_k k^2}$ where $c_k>c>0$ for all $k$. This means
that, from a mathematical perspective, one cannot use this approach to
compute the Laplace transform.

One variation of the so-called replica trick discussed in physics
literature is an attempt to sum this divergent series in such a way as
to guess the Laplace transform. (In fact, the most typical version of
the replica trick asks for less than the Laplace transform, rather just
for $\mathbb{E}[\log z(\tau,n)]$, and tries to access it from analytically
continuing formulas for integer moments to $k=0$.)

This replica trick procedure has been implemented for the continuum SHE
(a~scaling limit of the semidiscrete SHE) in which the ODEs in
Proposition~\ref{propsemiODEs}(B)~and~(C) become two equivalent
forms of the attractive quantum delta Bose gas. The moments of the
solutions of the continuum SHE grow even faster, like $e^{c_k k^3}$ for
$c_k>c>0$. By diagonalizing the Bosonic Hamiltonian [the limit of (C)]
via the Bethe ansatz, \cite{Dot,CDR} both made initial attempts at
computing the Laplace transform via the replica trick. These initial
attempts yielded a wrong answer. However, very soon afterward, the
formula of \cite{ACQ,SaSp1} was posted (with a rigorous proof given in
\cite{ACQ}) and \cite{Dot,CDR} showed that their approach was able to
recover the correct Laplace transform formula.


\section{Combinatorics}\label{sec7}
\subsection{Useful $q$-deformations}\label{secqdef}
We record some $q$-deformations of classical functions and transforms.
Section~10 of \cite{AAR} is a good reference for many of these
definitions and statements. We assume throughout that $|q|<1$. The
classical functions are recovered in the $q\to1$ limit.

The $q$-\textit{Pochhammer symbol} is written as $(a;q)_{n}$ and defined
via the product (infinite convergent product for $n=\infty$)
\begin{eqnarray*}
(a;q)_{n}&=&(1-a) (1-aq) \bigl(1-aq^2\bigr)\cdots
\bigl(1-aq^{n-1}\bigr),
\\
(a;q)_{\infty}&=&(1-a) (1-aq)
\bigl(1-aq^2\bigr)\cdots.
\end{eqnarray*}

There are two different $q$-\textit{exponential functions} which were
introduced by Hahn \cite{Hahn} in 1949. The first (which we will use)
is denoted $e_q(x)$ and defined as
\[
e_q(x) = \frac{1}{ ((1-q)x;q )_{\infty}},
\]
while the second is defined as
\[
E_q(x) = \bigl(-(1-q)x;q \bigr)_{\infty}.
\]

Both $e_q(x)$ and $E_q(x)$ converge to $e^{x}$ as $q\to1$, cf. (\ref
{qLaplace}) below. In fact, $e_q(x)$ converges uniformly to $e^x$ on
$x\in[-\infty,a]$ for any $a\in\mathbb{R}$.

The $q$-\textit{factorial} is written as either $[n]_{q}!$ 
or just $n_q!$ and is defined as
\[
n_q! = \frac{(q;q)_n}{(1-q)^n} = \frac{(1-q)(1-q^2)\cdots
(1-q^n)}{(1-q)(1-q)\cdots(1-q)}.
\]
The $q$-\textit{binomial coefficients} are defined in terms of
$q$-factorials as
\[
\pmatrix{n\cr k}_q = \frac{n_q!}{k_q!(n-k)_q!} = \frac
{(q;q)_{n}}{(q;q)_{k}(q;q)_{n-k}}.
\]

We also have \cite{KC}
%
%
\begin{equation}
\label{qBinexpansion} \pmatrix{n\cr k}_q = \mathop{\sum
_{S\subset\{1,\ldots, n\}}}_{|s|=k} q^{\|S\|-(k(k+1)/2)},
\end{equation}
where
\[
\|S\|= \sum_{i\in S} i.
\]

The $q$-\textit{binomial theorem} (\cite{AAR}, Theorem 10.2.1) says that
for all $|x|<1$ and \mbox{$|q|<1$},
\[
\sum_{k=0}^{\infty} \frac{(a;q)_k}{(q;q)_k}
x^k = \frac
{(ax;q)_{\infty}}{(x;q)_{\infty}}.
\]
Two corollaries of this theorem (\cite{AAR}, Corollary 10.2.2a/b) are
that under the same hypothesis on $x$ and $q$,
%
%
\begin{equation}
\label{qLaplace} \sum_{k=0}^{\infty}
\frac{x^k}{k_q!} = e_q(x), \qquad\qquad\sum
_{k=0}^{\infty} \frac{q^{k(k-1)/2} (-x)^k}{k_q!} =E_q(x).
\end{equation}
For any $x$ and $q$, we also have (\cite{AAR}, Corollary 10.2.2.c)
%
%
\begin{equation}
\label{finqBinExp} \sum_{k=0}^{n} \pmatrix{n\cr k}_{q} (-1)^k q^{k(k-1)/2}x^k =
(x;q)_n.
\end{equation}





Define the following transform of a function $f\in\ell^1(\mathbb
{Z}_{\geq0})$:
%
%
\begin{equation}
\label{qlaplacedef} \hat{f}^{q}(\zeta):= \sum
_{n\geq0} \frac{f(n)}{(\zeta
q^n;q)_{\infty}},
\end{equation}
where $\zeta\in\mathbb{C}$.

We call this the $e_q$-Laplace transform of $q^X$ since if $X$ is a
random variable taking values in $\mathbb{Z}_{\geq0}$ and
$f(n)=\mathbb{P}(X=n)$,
\[
\hat{f}^{q}(\zeta) = \mathbb{E} \biggl[e_q \biggl(
\frac{\zeta
q^X}{1-q} \biggr) \biggr].
\]

An inversion formula is given as Proposition 3.1.1 of \cite{BorCor}
and can also be found in \cite{Banger}.
%
\begin{proposition}\label{qlaplaceinverse}
One may recover the function $f\in\ell^1(\mathbb{Z}_{\geq0})$ from its
transform $\hat{f}^{q}(\zeta)$ with $\zeta\in\mathbb
{C}\setminus\{q^{-k}\}
_{k\geq0}$ via the inversion formula
%
%
\begin{equation}
\label{qlaplaceinverseEQN} f(m) = -q^{m} \frac{1}{2\pi\iota} \int
_{C_m} \bigl(q^{m+1}\zeta;q\bigr)_{\infty}
\hat{f}^{q}(\zeta) \,d\zeta,
\end{equation}
where $C_m$ is any positively oriented contour which encircles $\zeta
=q^{-M}$ for $0\leq M \leq m$.
\end{proposition}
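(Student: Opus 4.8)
The plan is to substitute the series definition (\ref{qlaplacedef}) of $\qhat{f}$ directly into the right-hand side of (\ref{qlaplaceinverseEQN}), interchange the summation with the contour integral, and evaluate each resulting integral by residue calculus. The interchange is legitimate because $C_m$ is a fixed finite contour which (by hypothesis) avoids every point $q^{-k}$, $k\geq 0$: the factor $\zeta\mapsto(q^{m+1}\zeta;q)_{\infty}$ is continuous hence bounded on $C_m$, and the factors $\zeta\mapsto 1/(\zeta q^n;q)_{\infty}$ are bounded uniformly in $n$ (they are nowhere zero on $C_m$ and converge to $1$ uniformly for $\zeta$ in a bounded region). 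Together with $f\in\ell^1(\Zgeqzero)$, Fubini's theorem gives
\[
-q^{m}\frac{1}{2\pi\iota}\int_{C_m}(q^{m+1}\zeta;q)_{\infty}\qhat{f}(\zeta)\,d\zeta=-q^{m}\sum_{n\geq 0}f(n)\,I_n,\qquad I_n:=\frac{1}{2\pi\iota}\int_{C_m}\frac{(q^{m+1}\zeta;q)_{\infty}}{(q^{n}\zeta;q)_{\infty}}\,d\zeta.
\]

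Next I would compute $I_n$ via a short case analysis of the ratio $\frac{(q^{m+1}\zeta;q)_{\infty}}{(q^{n}\zeta;q)_{\infty}}=\prod_{j\geq 0}(1-q^{m+1+j}\zeta)\big/\prod_{j\geq 0}(1-q^{n+j}\zeta)$. If $n\geq m+1$, the numerator's factors absorb all of the denominator's, so the ratio is a polynomial in $\zeta$; being entire, its integral over the closed contour $C_m$ is $0$, so $I_n=0$. If $n\leq m$, the ratio telescopes to $\big((1-q^{n}\zeta)(1-q^{n+1}\zeta)\cdots(1-q^{m}\zeta)\big)^{-1}$, a rational function whose poles $\zeta=q^{-\ell}$ for $n\leq\ell\leq m$ are all among the points $q^{-M}$, $0\leq M\leq m$, hence all enclosed by $C_m$. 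When $n<m$ this rational function decays like $|\zeta|^{-2}$ or faster at infinity, so the sum of all its residues vanishes and $I_n=0$; when $n=m$ the only pole is the simple one at $\zeta=q^{-m}$, and since $1-q^{m}\zeta=-q^{m}(\zeta-q^{-m})$ its residue is $-q^{-m}$, giving $I_m=-q^{-m}$.

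Substituting back, only the $n=m$ term of the sum survives, and the right-hand side of (\ref{qlaplaceinverseEQN}) equals $-q^{m}\cdot f(m)\cdot(-q^{-m})=f(m)$, which is exactly the left-hand side. I do not expect a genuine obstacle here; the only steps needing care are the uniform bound that makes the term-by-term integration valid and the bookkeeping of which poles $q^{-\ell}$ lie inside $C_m$, both of which follow directly from the hypotheses on $f$ and on $C_m$. (Alternatively one could simply cite \cite{BorCor}, Proposition 3.1.1, or \cite{Banger}, but the argument above is self-contained and short.)
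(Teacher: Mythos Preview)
Your argument is correct. The paper does not supply its own proof of this proposition; it simply cites \cite{BorCor}, Proposition~3.1.1 and \cite{Banger}. Your residue computation (telescoping the ratio $(q^{m+1}\zeta;q)_\infty/(q^n\zeta;q)_\infty$ to a polynomial when $n>m$ and to a finite reciprocal product when $n\le m$, then using decay at infinity to kill all cases except $n=m$) is the standard route and is almost certainly what the cited proof does as well. The Fubini justification and the handling of which poles lie inside $C_m$ are both fine as stated.
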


\subsection{Symmetrization identities}
We state and prove the following two useful symmetrization identities.

%
\begin{lemma}\label{combiden}
For all $k\geq1$
%
%
\begin{eqnarray}\label{combiden1}
&& \sum_{\sigma\in S_k} \prod
_{1\leq A<B\leq k} \frac{z_{\sigma
(A)}-z_{\sigma(B)}}{z_{\sigma(A)}-\tau z_{\sigma(B)}}
\nonumber\\[-8pt]\\[-8pt]
&&\qquad  = (\tau;\tau )_k \tau^{-k(k-1)/2} z_1\cdots z_k \det \biggl[
\frac{1}{z_i-\tau z_j} \biggr]_{i,j=1}^k.\nonumber
\end{eqnarray}
Setting $\xi_i = \frac{1+z_i}{1+z_i/\tau}$ we also have
%
%
\begin{eqnarray}\label{combiden2}
&& \sum_{\sigma\in S_k} \prod
_{1\leq A<B\leq k} \frac{z_{\sigma
(A)}-z_{\sigma(B)}}{z_{\sigma(A)}-\tau z_{\sigma(B)}} \prod_{i=1}^k
\frac{1}{\xi_{\sigma(1)}\cdots\xi_{\sigma(i)}-1}
\nonumber\\[-8pt]\\[-8pt]
&&\qquad = (-1)^k \tau ^{-k(k-1)/2} \det \biggl[
\frac{1}{z_i-\tau z_j} \biggr]_{i,j=1}^k \prod
_{i=1}^{k} (\tau+ z_i).\nonumber
\end{eqnarray}
\end{lemma}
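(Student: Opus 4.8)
\textbf{Proof proposal for Lemma \ref{combiden}.}

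The plan is to prove both identities by the same strategy: show that each side, as a rational function of $z_1,\dots,z_k$, has the same poles with the same residues, and that the difference vanishes at infinity, so that the two sides agree identically. For \eqref{combiden1} I would first observe that the right-hand side is, up to the explicit prefactor, $z_1\cdots z_k$ times the Cauchy-type determinant $\det[(z_i-\tau z_j)^{-1}]$, which by the Cauchy determinant formula equals
\begin{equation*}
\det\left[\frac{1}{z_i-\tau z_j}\right]_{i,j=1}^k = \frac{\prod_{i<j}(z_i-z_j)(\tau z_i - \tau z_j)}{\prod_{i,j}(z_i-\tau z_j)} = \tau^{\binom{k}{2}}\frac{\prod_{i<j}(z_i-z_j)^2}{\prod_{i,j}(z_i-\tau z_j)}.
\end{equation*}
Thus the right-hand side of \eqref{combiden1} is a symmetric rational function whose only poles are simple, located along $z_i=\tau z_j$ for $i\neq j$. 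The left-hand side is manifestly symmetric (it is a full symmetrization), and each summand $\prod_{A<B}\frac{z_{\sigma(A)}-z_{\sigma(B)}}{z_{\sigma(A)}-\tau z_{\sigma(B)}}$ has potential poles only at $z_{\sigma(A)}=\tau z_{\sigma(B)}$ for $A<B$. I would then compute the residue of the left-hand side at a generic point of $\{z_i = \tau z_j\}$: only the two permutations $\sigma$ placing $i$ and $j$ in adjacent positions (one with $i$ before $j$, one with $j$ before $i$) contribute, and their residues can be combined and compared — after a short computation involving telescoping of the remaining factors — with the residue of the right-hand side. Matching residues at all such hyperplanes, together with the observation that both sides are $O(1)$ (in fact bounded) as any $z_i\to\infty$, forces equality by Liouville's theorem applied variable-by-variable. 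This is essentially the argument in \cite{TW1} and \cite{BorCor}; I would cite those for the bookkeeping if the residue computation becomes long.

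For \eqref{combiden2} the structure is the same but the extra factor $\prod_{i=1}^k (\xi_{\sigma(1)}\cdots\xi_{\sigma(i)}-1)^{-1}$ must be handled. The key algebraic fact I would use is the telescoping/substitution identity $\xi_i - 1 = \frac{(1+z_i) - (1+z_i/\tau)}{1+z_i/\tau} = \frac{z_i(1-1/\tau)}{1+z_i/\tau}$, and more generally that products $\xi_{\sigma(1)}\cdots\xi_{\sigma(i)}$ telescope nicely in the variables $1+z/\tau$ versus $1+z$. The natural approach is again residue calculus in, say, $z_k$: the poles of the left-hand side in $z_k$ come from $z_k = \tau z_j$ (from the Vandermonde-type denominator) and from the vanishing of a partial product $\xi_{\sigma(1)}\cdots\xi_{\sigma(i)}=1$; one checks that the latter does not actually produce a pole in $z_k$ after summing over $\sigma$ (the apparent singularities cancel among permutations), while the $z_k = \tau z_j$ poles reduce the identity to the $(k-1)$-variable case up to the explicit factor $(\tau+z_k)$, setting up an induction on $k$. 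The base case $k=1$ reads $\frac{1}{\xi_1 - 1} = -(z_1-\tau z_1)^{-1}(\tau+z_1)\cdot\tau^0$, which follows directly from $\xi_1 - 1 = z_1(1-1/\tau)/(1+z_1/\tau) = -\frac{(1-\tau)z_1}{\tau+z_1}$ — a trivial check. Alternatively, one can deduce \eqref{combiden2} from \eqref{combiden1} by performing the change of variables $\xi_i = (1+z_i)/(1+z_i/\tau)$ and recognizing that summing the geometric-type factors $\prod_i(\xi_{\sigma(1)}\cdots\xi_{\sigma(i)}-1)^{-1}$ over $S_k$ is exactly the content of a known symmetrization (this is the route implicitly used in the proof of Lemma \ref{QtildeIntlemma}).

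The main obstacle I anticipate is the residue bookkeeping in \eqref{combiden2}: verifying that the spurious poles arising when a partial product $\xi_{\sigma(1)}\cdots\xi_{\sigma(i)}$ equals $1$ genuinely cancel after the full symmetrization over $S_k$ requires pairing up permutations carefully and tracking signs through the Vandermonde factors, and it is easy to drop a $\tau$-power. To control this I would set up the induction so that at each stage I only need the single residue at $z_k = \tau z_j$ and invoke the inductive hypothesis for the rest, rather than trying to evaluate all residues simultaneously; and I would double-check the final $\tau$-exponent and sign against the $k=1$ and $k=2$ cases by direct expansion before asserting the general formula. Everything else — the Cauchy determinant evaluation, the behavior at infinity, the adjacency argument for which permutations contribute a given residue — is routine.
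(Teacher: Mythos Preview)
Your approach is genuinely different from the paper's. The paper does not prove either identity from scratch: for \eqref{combiden1} it simply cites Macdonald \cite{M}, III(1.4), and for \eqref{combiden2} it performs a short algebraic reduction (expanding the Cauchy determinant, multiplying through by $\prod_{i\neq j}(z_i-\tau z_j)/(z_i-z_j)$, and changing to the $\xi$-variables via the relations in Remark~\ref{xirem}) to show that the statement is equivalent to identity (1.7) of Tracy--Widom \cite{TW1}, which it then quotes. So the paper's ``proof'' is essentially two citations plus a change of variables, whereas you are proposing a self-contained residue/induction argument.

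Your residue strategy is reasonable in spirit, but there is a concrete error in your pole analysis for \eqref{combiden1}. You claim that at a generic point of $\{z_i=\tau z_j\}$ only the two permutations placing $i$ and $j$ in \emph{adjacent} positions contribute. This is not correct: the factor $z_i-\tau z_j$ appears in the denominator of the $\sigma$-summand whenever $\sigma^{-1}(i)<\sigma^{-1}(j)$, regardless of adjacency, so $k!/2$ permutations carry that pole; and permutations with $j$ before $i$ give the distinct denominator $z_j-\tau z_i$, not the same pole. Your residue computation would therefore have to sum over all permutations with $i$ preceding $j$, and the ``short computation involving telescoping'' you allude to is in fact the full identity at level $k-1$ --- in other words the residue step \emph{is} the induction step, not a two-term check. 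This is fixable (and is one known route to Macdonald's identity), but as written the argument is incomplete. For \eqref{combiden2} the analogous concern applies, and you additionally owe a genuine argument for the cancellation of the apparent poles at $\xi_{\sigma(1)}\cdots\xi_{\sigma(i)}=1$; asserting that ``one checks'' this is exactly the hard part. If you want a self-contained proof, the cleanest route is probably to follow the paper: reduce \eqref{combiden2} algebraically to the $\xi$-variable identity and then cite or reprove the Tracy--Widom identity (1.7) directly.
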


%
\begin{remark}\label{xirem}
Before proving these identities note that for $\xi_i = \frac
{1+z_i}{1+z_i/\tau}$ and $\tau=p/q$,
%
%
\begin{eqnarray}\label{ratio1}
\frac{z_i-z_j}{z_i-\tau z_j} &=& q \frac{\xi_i-\xi_j}{p+q\xi_i\xi
_j - \xi_j},\nonumber
\\
-\frac{z_i(p-q)^2}{(z_i+1)(p+qz_i)} &=& p\xi_i^{-1} +q\xi
_i -1,
\\
\tau+ z_i &=& \frac{\tau-1}{1-\xi_i/\tau}.\nonumber
\end{eqnarray}
\end{remark}

\begin{pf*}{Proof of Lemma~\ref{combiden}}
The first identity (\ref{combiden1}) is \cite{M}, Chapter~III, equation~(1.4). The
second identity is equivalent to the identity (1.7) in \cite{TW1}. In
order to see this equivalence expand the Cauchy determinant as
\[
\det \biggl[\frac{1}{z_i-\tau z_j} \biggr]_{i,j=1}^k =
\frac{\tau
^{k(k-1)/2}}{z_1\cdots z_k (1-\tau)^k} \prod_{1\leq i\neq
j\leq k} \frac{ z_i-z_j}{z_i-\tau z_j}.
\]
Multiply both sides of the claimed identity by the factor $\prod_{1\leq i\neq j\leq k} \frac{z_i-\tau z_j}{z_i-z_j}$, reducing the
identity to
\[
\sum_{\sigma\in S_k} \prod_{1\leq A<B\leq k}
\frac{z_{\sigma
(B)}-\tau z_{\sigma(A)}}{z_{\sigma(B)}-z_{\sigma(A)}} \prod_{i=1}^k
\frac{1}{\xi_{\sigma(1)}\cdots\xi_{\sigma(i)}-1} = \prod_{i=1}^k
\frac{- (\tau+z_i)}{z_i (1-\tau)}.\vadjust{\goodbreak}
\]
Noting that $\frac{-(\tau+z_i)}{z_i (1-\tau)} = (\xi_i-1)^{-1}$ and
using the relation (\ref{ratio1}), it remains to prove that
\begin{eqnarray*}
&& \sum_{\sigma\in S_k} q^{-k(k-1)/2} \prod
_{1\leq A<B\leq k} \frac{p+q+\xi_{\sigma(B)}\xi_{\sigma(A)}-\xi_{\sigma(A)}}{\xi
_{\sigma(B)}-\xi_{\sigma(A)}} \prod_{i=1}^k
\frac{1}{\xi_{\sigma
(1)}\cdots\xi_{\sigma(i)}-1}
\\
&&\qquad  = \prod_{i=1}^k (
\xi_i -1)^{-1}.
\end{eqnarray*}
Using the antisymmetry of the Vandermonde determinant, we rewrite the
above as
\begin{eqnarray*}
&&\sum_{\sigma\in S_k} \operatorname{sgn}(\sigma) \prod
_{1\leq
A<B\leq k} (p+q\xi _{\sigma(B)}\xi_{\sigma(A)}-
\xi_{\sigma(A)})\prod_{i=1}^k
\frac
{1}{\xi_{\sigma(1)}\cdots\xi_{\sigma(i)}-1}
\\
&&\qquad  = q^{k(k-1)/2}\frac{\prod_{A<B}(\xi_{B}-\xi_{A})}{\prod_{i=1}^k (\xi
_i -1)}.
\end{eqnarray*}
The above identity is (1.7) in \cite{TW1}, and the proof is complete.
\end{pf*}

\subsection{Defining a Fredholm determinant}
Fix a Hilbert space $L^2(X,\mu)$ where $X$ is a measure space and $\mu
$ is a measure on $X$. When $X=\Gamma$, a simple smooth contour in
$\mathbb{C}
$, we write $L^2(\Gamma)$ where $\mu$ is understood to be the path
measure along $\Gamma$ divided by $2\pi\iota$. When $X$ is the
product of a discrete set $D$ and a contour $\Gamma$, $\mu$ is
understood to be the product of the counting measure on $D$ and the
path measure along $\Gamma$ divided by $2\pi\iota$.

Let $K$ be an \textit{integral operator} acting on $f(\cdot)\in L^2(X,\mu
)$ by $(Kf)(x) = \int_{X} K(x,y)f(y) \,d\mu(y)$. $K(x,y)$ is called the
\textit{kernel} of $K$. A \textit{formal Fredholm determinant expansion} of
$I+K$ is a formal series written as
\[
\det(I+K) = 1+\sum_{n=1}^{\infty}
\frac{1}{n!} \int_{X} \cdots \int
_{X} \det \bigl[K(x_i,x_j)
\bigr]_{i,j=1}^{n} \prod_{i=1}^{n}
\,d\mu(x_i).
\]

If the above series is absolutely convergent, then we call this a \textit{numerical Fredholm determinant expansion} as it actually takes a
numerical value. If $K$ is a \textit{trace-class} operator (see \cite
{Lax} or \cite{Bornemann}), then the expansion is always absolutely
convergent, though it is possible to have operators which are not
trace-class, for which convergence still holds.

\section{Uniqueness of systems of ODEs}\label{appenduniq}
We prove the uniqueness result of Proposition~\ref{asepuniq} by a
probabilistic approach. It is possible to extend this proof to a more
general class of generators, but we do not pursue this here.

\begin{pf*}{Proof of Proposition~\ref{asepuniq}}
Let us first demonstrate the existence of one solution to the system of
ODEs given in Definition~\ref{WASEPtrueevol}. Let $\tilde h^1$ denote
the proposed solution, equation (\ref{propsoln}), in the statement of
the proposition. The definition of the generator implies that $\tilde h^1$ satisfies condition 1 of Definition~\ref{WASEPtrueevol}.

To prove that $\tilde h^1$ satisfies conditions 2 and 3 requires an
estimate. In time $t$, the number of jumps in ASEP is bounded by a
Poisson random variable with parameter\vadjust{\goodbreak} given by constant time $t$. This
means that for some constant $c'>0$
%
%
\begin{equation}
\label{ineq2a} \mathbb{P} \bigl(\bigl\|\vec{x}(-t) -\vec{x}\bigr\|_1 = n
\bigr) \leq e^{-c't} \frac
{(c't)^n}{n!}.
\end{equation}
Observe now that
\begin{eqnarray*}
&& \bigl\llvert \tilde h^1(t;\vec{x}) - \mathbb{P}^{-t;\vec{x}}
\bigl(\vec {x}(0)=\vec {x}\bigr) \tilde h_0(\vec{x})\bigr\rrvert
\\
&&\qquad \leq \sum_{n\geq1} \sum_{\vec
{x}'\dvtx  \|\vec{x}-\vec{x}'\|=n}
\mathbb{P}^{-t;\vec{x}}\bigl(\vec {x}(0)=\vec {x}'\bigr) \bigl|\tilde h_0\bigl(\vec{x}'\bigr)\bigr|
\\
&&\qquad \leq \sum_{n\geq1} \sum
_{\vec{x}'\dvtx  \|\vec{x}-\vec{x}'\|=n} e^{-c' t} \frac{(c't)^n}{n!} C
e^{-c\|\vec{x}'\|_1}
\\
&&\qquad \leq \sum_{n\geq1} \bigl(c''
\bigr)^n e^{-c' t} \frac{(c't)^n}{n!} C e^{-c(\max(0,n-\|\vec{x}\|_1))}
\\
&&\qquad
\leq e^{c\|\vec{x}\|_1} \bigl(e^{c''' t} -1\bigr).
\end{eqnarray*}
The first inequality follows from the definition of $\tilde h^1$ as an
expectation, along with the triangle inequality.
For the second inequality, we can use the bounds (\ref{ineq1a})~and~(\ref{ineq2a}) to replace $\mathbb{P}^{-t;\vec{x}}(\vec{x}(0)=\vec{x}')
|\tilde h_0(\vec{x}')|$ by $e^{-c' t} \frac{(c't)^n}{n!} C
e^{-c\|\vec{x}'\|_1}$. For\vspace*{1pt} the third inequality, we\vspace*{2pt} observe that
$\|\vec{x}'\|_1\geq\max(0,n-\|\vec{x}\|_1)$. Plugging this bound
into $e^{-c\|\vec{x}'\|_1}$, we find that the summand is now
independent of $\vec{x}'$ and the summation over $\vec{x}'$ can be
replaced by a rough combinatorial bound of $(c'')^n$ for the number of
such $\vec{x}'$ ($c''$ is some sufficiently large constant). The
fourth equality comes from factoring out $e^{c\|\vec{x}\|_1}$ from the
summation and then bounding the remaining summation in $n\geq1$ by the
Taylor series for the exponential.

The conclusion of the above line of inequalities is that for some $c'''>0$,
\[
\bigl\llvert \tilde h^1(t;\vec{x}) - \mathbb{P}^{-t;\vec{x}}
\bigl(\vec {x}(0)=\vec {x}\bigr) \tilde h_0(\vec{x})\bigr\rrvert \leq
e^{c\|\vec{x}\|_1} \bigl(e^{c''' t} -1\bigr).
\]
Observe\vspace*{1pt} that using the triangle inequality and the exponential bound on
$\tilde h_0(\vec{x})$, the above inequality implies that $\tilde h^1$
satisfies condition 2. Similarly, as $t\to0$, $\mathbb{P}^{-t;\vec
{x}}(\vec
{x}(0)=\vec{x}) \to1$ and $ e^{c\|\vec{x}\|_1} (e^{c''' t} -1)\to
0$ we obtain the pointwise convergence (condition 3):
\[
\tilde h^1(t;\vec{x}) \to\tilde h_0(\vec{x}), \qquad t
\to0.
\]

The argument to prove uniqueness is very similar to the argument used
to prove condition 3. Assume now that in addition to $\tilde h^1$,
there existed another solution to the true evolution equation which we
will denote by $\tilde h^2$. The idea is to prove that $g:= \tilde h^1-\tilde h^2$ must be identically 0. The solution $g$ has zero
initial data.

To prove that $g\equiv0$ it suffices to show that for any $T>0$ and
any $\vec{x}\in\widetilde W^{k}$, $g(t;\vec{x})=0$ for all $t\in[0,T]$.
Since $\tilde h^1$ and $\tilde h^2$ solve the true evolution equation, so too must their difference $g$. Hence, we readily see that for any
$\delta\in(0,T]$,
\begin{eqnarray*}
&&\Biggl\llvert g(t;\vec{x}) - \sum_{n=0}^{n(T)}
\sum_{\vec{x}'\dvtx  \|\vec
{x}-\vec{x}'\|=n} \mathbb{P}^{-t;\vec{x}}\bigl(\vec{x}(-
\delta)=\vec{x}'\bigr) g\bigl(\delta;\vec{x}'\bigr)
\Biggr\rrvert
\\
&&\qquad \leq \sum_{n>n(T)} \sum
_{\vec{x}'\dvtx  \|\vec{x}-\vec{x}'\|=n} \mathbb{P} ^{-t;\vec{x}}\bigl(\vec{x}(-\delta)=
\vec{x}'\bigr) \bigl|g\bigl(\delta;\vec{x}'\bigr)\bigr|
\\
&&\qquad \leq \sum_{n>n(T)} \bigl(c''
\bigr)^n e^{-c' t} \frac{(c't)^n}{n!} C e^{-c(\max(0,n-\|\vec{x}\|_1))},
\end{eqnarray*}
where $n(T)$ is a positive integer which depends on $T$ and will be
specified soon. The above inequalities follow for similar reasons as in
the proof of condition 3 for $\tilde h^1$. Now observe that by choosing
$n(T)$ sufficiently large, the summation in the last line above can be
made arbitrarily small. This is due to the fact that $1/n!$ decays
super-exponentially. This means that for any $\varepsilon>0$ and any $T>0$,
there exists $n(T)$ such that
\[
\Biggl\llvert g(t;\vec{x}) - \sum_{n=0}^{n(T)}
\sum_{\vec{x}'\dvtx  \|\vec
{x}-\vec{x}'\|=n} \mathbb{P}^{-t;\vec{x}}\bigl(\vec{x}(-
\delta)=\vec{x}'\bigr) g\bigl(\delta;\vec{x}'\bigr)
\Biggr\rrvert \leq\varepsilon.
\]
Since the set of $\vec{x}'$ such that $\|\vec{x}-\vec{x}'\|=n$ with
$n\in\{0,1,\ldots, n(T)\}$ is a finite set, condition 3 implies that
as $\delta\to0$, each $g(\delta;\vec{x}')\to0$ as well. Choosing
$\delta$ sufficiently small, this implies that
\[
\bigl\llvert g(t;\vec{x})\bigr\rrvert \leq2\varepsilon
\]
and since $\varepsilon$ was arbitrary this implies in fact that $g(t;\vec{x})
=0$ for all $t\in[0,T]$. This completes the proof of uniqueness.
\end{pf*}

\section{GUE Tracy--Widom asymptotics for ASEP}\label{GUEasym}
We provide a critical point analysis for the long-time asymptotics of
our Mellin--Barnes-type Fredholm determinant formula for the $e_{\tau
}$-Laplace transform of $\tau^{N_x(t)}$. We assume that $\tau<1$ is
fixed and straightforwardly arrive at the GUE Tracy--Widom limit
theorem recorded in (\ref{ASEPlimitthm}) and proved first (via an
analysis of the Cauchy-type formula) by Tracy and Widom \cite{TW3}. In
order to make this analysis a rigorous one, would need to control the
tails of the integrand defining the kernel. Another scaling limit of
interest is the weakly asymmetric limit in which $\tau$ goes to 1
simultaneously with $t$ going to infinity. Under the correct scaling
(as in \cite{ACQ}), our formula should lead to the Laplace transform
of the Hopf--Cole solution to the KPZ equation with narrow wedge
initial data. We do not pursue these directions presently, but rather
remark that it appears that the Mellin--Barnes-type formula is very
well suited for such a rigorous asymptotic analysis.

For simplicity, let us consider ASEP with step initial data and fix
$x=0$. We seek to study the large $t$ behavior of $N_0(\eta(t))$ via
its $e_{\tau}$-Laplace transform. Let us recall the formula we have
proved in Theorem~\ref{ASEPMellinBarnesThm}:
%
%
\begin{equation}
\label{wehaveproved} \mathbb{E} \bigl[e_{\tau}\bigl(\zeta\tau^{N_x(\eta(t))}
\bigr) \bigr] = \det (I-K_{\zeta}),
\end{equation}
where $\det(I-K_\zeta)$ is the Fredholm determinant of the operator
$K_\zeta\dvtx\break  L^2(C_{0,-\tau;-1,\tau\theta}) \to L^2(C_{0,-\tau;-1,\tau
\theta})$
defined in terms of its integral kernel
\begin{eqnarray*}
&& K_\zeta\bigl(w,w'\bigr)
\\
&&\qquad  = \frac{1}{2\pi\iota} \int
_{D_{R,d}} \Gamma (-s)\Gamma(1+s) \bigl[-(1-\tau)\zeta
\bigr]^{s} \frac{\exp{t(\tau/(\tau+w))}}{\exp{t(\tau/(\tau+\tau^s w))}} \frac{ds}{w'- \tau^s w }.
\end{eqnarray*}

As noted in Section~\ref{secqdef}, $e_{\tau}(z)$ converges uniformly
for $z\in[-\infty,0]$. This means that if $z\to-\infty$ then
$e_{\tau}(z)\to0$ and if $z\to0$ then $e_{\tau}(z)\to1$. On
account of this, if we set
\[
\zeta= \tau^{-(t/4) + t^{1/3} r}
\]
then it follows (cf. \cite{BorCor}, Lemma 4.1.39) that
\begin{eqnarray*}
&&\lim_{t\to\infty}\mathbb{E} \bigl[e_{\tau}\bigl(-\zeta
\tau^{N_0(\eta
(t/\gamma))}\bigr) \bigr]\\
&&\qquad = \lim_{t\to\infty} \mathbb{P}
\biggl(\frac{N_0(\eta(t/\gamma)) - (t/4)}{t^{1/3}} \geq-r \biggr),
\end{eqnarray*}
where we have set $\gamma= q-p$.

Theorem~\ref{ASEPMellinBarnesThm} [see equation (\ref{wehaveproved})
above] reduces this to a problem in asymptotic analysis. We proceed now
without careful estimates and only discuss contours briefly. There are
a few estimates which would be necessary to turn this into a rigorous
proof. Making the change of variables in (\ref{wehaveproved}) $z=\tau
^s w$ and using the fact that $\Gamma(-s)\Gamma(1+s) = \pi/ \sin
(-\pi s)$, we arrive at the following:
\[
\lim_{t\to\infty} \mathbb{P} \biggl(\frac{N_0(\eta(t/\gamma)) -
(t/4)}{t^{1/3}} \geq-r
\biggr) = \lim_{t\to\infty} \det \bigl(I-K'_{\zeta}
\bigr),
\]
where the kernel is now given by
\begin{eqnarray*}
K'_\zeta\bigl(w,w'\bigr) &=&
\frac{1}{2\pi\iota} \frac{1}{\log\tau}
\int \frac{\pi}{\sin(\pi(\log_{\tau} w - \log_{\tau} z))} (1-\tau
)^{\log_\tau z - \log_\tau w}
\\
&&\hspace*{53pt}{}\times\exp \bigl(t\bigl[G(z)-G(w)\bigr]
\\
&&\hspace*{84pt}{} + t^{1/3} \log\tau r [
\log_\tau z - \log_\tau w] \bigr) \frac{1}{z-w'}
\frac{dz}{z}
\end{eqnarray*}
with
\[
G(z) = -\frac{\log z}{4} - \frac{\tau}{\tau+z}.
\]
The critical point of $G(z)$ is readily calculated by solving
\[
G'(z) = -\frac{1}{4z} + \frac{\tau}{(\tau+z)^2} = 0.
\]
This yields $z_c = \tau$ as the critical point. Actually, it is a
double root of the above equation and accordingly one sees that
$G''(z_c)=0$. The fact that the third derivative is nonzero (and the
second derivative is) indicates $t^{1/3}$ scaling. Up to high order
terms in $(z-\tau)$ and $(w-\tau)$, we have
\[
G(z)-G(w) \approx-\frac{ (z-\tau)^3}{48 \tau^3} + \frac{(w-\tau
)^3}{48 \tau^3}.
\]
The $w$ contour can freely be deformed to go through the critical point
$\tau$ and to depart it at angles $\pm\pi/3$ (oriented with
increasing imaginary part). Likewise the $z$ contour can go through
$\tau-t^{1/3}$ and depart at angles $\pm2\pi/3$ (oriented with
decreasing imaginary part---as is a consequence of the change of
variables). The $w$ contour needs to cross the negative real axis
between $-1$ and $-\tau$. The $z$ contour must stay inside the $w$
contour. There is one nuance with the $z$ curve which is that it keeps
wrapping around in a circle (since the imaginary part of $s$ went from
$-\infty$ to $\infty$). However, with a suitable a priori bound one
should be able to show that this $s$ contour can be made finite at a
cost going to 0 as $t$ goes to infinity.

The above considerations suggest scaling into a window of size
$t^{1/3}$ around the critical point $\tau$. Consider the change of
variables $z-\tau= t^{-1/3} \tau\tilde z$ and likewise for $w$ and
$w'$. This leads to
\begin{eqnarray*}
\frac{1}{\log\tau} \frac{ \pi}{\sin(\pi\log_\tau w -\log_\tau
z)} &\approx& t^{1/3}
\frac{1}{\tilde w - \tilde z},
\\
t\bigl[G(z)-G(w)\bigr] &\approx& -\frac{z^3}{48} + \frac{w^3}{48},
\\
t^{1/3} \log\tau r (\log_\tau z - \log\tau w) & \approx& r(
\tilde z - \tilde w),
\\
(1-\tau)^{\log_\tau z -\log_\tau w} &\approx& 1,
\\
\frac{1}{z-w'} &\approx& t^{1/3} \frac{\tau^{-1}}{\tilde z - \tilde w'},
\\
\frac{dz}{z} &\approx& t^{-1/3} \,d\tilde z.
\end{eqnarray*}
Additionally, there is an extra factor of $\tau^{-1} t^{1/3}$ which is
factored into the kernel, due to the Jacobian of the $w$ and $w'$
change of variables. Combining all of these factors, we get that the
kernel has rescaled to
\[
\widetilde K_r\bigl(\tilde w,\tilde w'\bigr) =
\frac{1}{2\pi\iota} \int e^{-(\tilde z^3/48) + (\tilde w^3/48) + r(\tilde z-\tilde w)} \frac
{1}{\tilde w-\tilde z}\frac{d\tilde z}{\tilde z-\tilde w'},
\]
where the $w$ contour is given by two infinite rays departing $1$ at
angles $\pm\pi/3$ (oriented with increasing imaginary part) and the
$z$ contour is given by two infinite rays departing $0$ at angles $\pm
2\pi/3$ (oriented with decreasing imaginary part). Recalling $\tilde z
= 2^{4/3} z$ and likewise for $w$ and $w'$ yields
\[
K_r\bigl(w,w'\bigr) = \frac{1}{2\pi\iota} \int
e^{-(z^3/3) + (w^3/3) + 2^{4/3} r(z-w)} \frac{1}{w-z}\frac{dz}{z-w'}.
\]
The Fredholm determinant with this kernel is readily shown to be
equivalent to the Fredholm determinant of the Airy kernel (see, e.g.,
Lemma 8.6 of \cite{BorCorFer}), and hence its Fredholm
determinant is equal to $F_{\mathrm{GUE}}(2^{4/3} r)$.

This implies that
\[
\lim_{t\to\infty} \mathbb{P} \biggl(\frac{N_0(t/\gamma) - (t/4)}{t^{1/3}} \geq-r
\biggr) = F_{\mathrm{GUE}}\bigl(2^{4/3} r\bigr)
\]
as we desired to show.
\end{appendix}

\section*{Acknowledgements}
We wish to thank P. Ferrari, T. Imamura, J. Quastel, C.~Tracy and H.
Widom for multiple discussions on the subject. Some work on this
subject occurred during the Oberwolfach meeting on Stochastic Analysis,
the Warwick EPSRC Symposium on Probability, the American Institute of
Mathematics workshop on the Kardar--Parisi--Zhang Equation and
Universality Class, and the Institute for Mathematical Sciences, NUS,
workshop on Polymers and Related Topics.



\printaddresses

\end{document}